\documentclass[a4paper, 11pt, reqno]{amsart}

\usepackage{amsmath, amsthm, amssymb}
\numberwithin{equation}{section}
\usepackage{xspace}
\usepackage{graphicx}
\usepackage{array}
\usepackage{braket}
\usepackage{multicol}
\usepackage{mathtools}
\usepackage{enumerate}
\usepackage{delarray}
\usepackage{mathtools}
\usepackage{comment}
\usepackage{faktor} 
\usepackage{mathrsfs}
\usepackage{tikz-cd}
\usepackage{hyperref}
\usepackage{cleveref}
\usepackage[normalem]{ulem} 
\usepackage[italicdiff]{physics} 
\usepackage{bbm} 
\usepackage{float}
\usepackage{stmaryrd} 
\usepackage{aligned-overset}
\usepackage{xcolor}
\usepackage[margin=15truemm]{geometry}
\usepackage{appendix}

\newtheorem{main}{Theorem}
\newtheorem{mcor}[main]{Corollary}

\theoremstyle{plain}
\newtheorem{thm}{Theorem}[section]
\newtheorem{lemma}[thm]{Lemma}
\newtheorem{corollary}[thm]{Corollary}
\newtheorem{prop}[thm]{Proposition}
\theoremstyle{definition}
\newtheorem{remark}[thm]{Remark}

\AddToHook{env/lemma/begin}{\crefalias{thm}{lemma}}
\AddToHook{env/corollary/begin}{\crefalias{thm}{corollary}}
\AddToHook{env/prop/begin}{\crefalias{thm}{prop}}
\AddToHook{env/remark/begin}{\crefalias{thm}{remark}}
\crefname{thm}{theorem}{theorems}
\crefname{lemma}{lemma}{lemmas}
\crefname{corollary}{corollary}{corollaries}
\crefname{prop}{proposition}{propositions}
\crefname{remark}{remark}{remarks}

\Crefname{thm}{Theorem}{Theorems}
\Crefname{lemma}{Lemma}{Lemmas}
\Crefname{corollary}{Corollary}{Corollaries}
\Crefname{prop}{Proposition}{Propositions}
\Crefname{remark}{Remark}{Remarks}

\newtheorem{claim}[thm]{Claim}
\theoremstyle{remark}

\theoremstyle{definition}

\newtheorem{exmp}[thm]{Example}
\AddToHook{env/exmp/begin}{\crefalias{thm}{example}}
\crefname{exmp}{example}{examples}
\Crefname{exmp}{Example}{Examples}

\newtheorem{defn}[thm]{Definition}
\newtheorem*{notation*}{Notation}

\newenvironment{subproof}[1][\proofname]{%
  \begin{proof}[#1]%
}{%
  \end{proof}%
}

\DeclarePairedDelimiterX{\inp}[2]{\langle}{\rangle}{#1, #2}

\makeatletter
\newcommand*{\bigcdot}{}
\DeclareRobustCommand*{\bigcdot}{%
  \mathbin{\mathpalette\bigcdot@{}}%
}
\newcommand*{\bigcdot@scalefactor}{.5}
\newcommand*{\bigcdot@widthfactor}{1.15}
\newcommand*{\bigcdot@}[2]{%
  \sbox0{$#1\vcenter{}$}
  \sbox2{$#1\cdot\m@th$}%
  \hbox to \bigcdot@widthfactor\wd2{%
    \hfil
    \raise\ht0\hbox{%
      \scalebox{\bigcdot@scalefactor}{%
        \lower\ht0\hbox{$#1\bullet\m@th$}%
      }%
    }%
    \hfil
  }%
}
\makeatother

\newcommand{\C}{\mathbb{C}}
\newcommand{\F}{\mathbb{F}}
\newcommand{\E}{\mathbb{E}}
\newcommand{\N}{\mathbb{N}\xspace}

\newcommand{\R}{\mathbb{R}\xspace}
\newcommand{\Z}{\mathbb{Z}\xspace}
\newcommand{\Q}{\mathbb{Q}\xspace}

\newcommand{\B}{\mathbb{B}\xspace}

\renewcommand{\S}{\mathbb{S}\xspace}
\newcommand{\T}{\mathbb{T}\xspace}
\newcommand{\K}{\mathbb{K}\xspace}
\newcommand{\X}{\mathbb{X}\xspace}
\newcommand{\Y}{\mathbb{Y}\xspace}

\DeclareMathOperator{\Ind}{Ind}

\DeclareMathOperator{\supp}{supp}

\let\Span\relax
\DeclareMathOperator{\Span}{Span}
\DeclareMathOperator{\Ran}{Ran}

\DeclareMathOperator{\Aut}{Aut}

\DeclareMathOperator{\id}{id}
\DeclareMathOperator{\nor}{nor}

\DeclareMathOperator{\Ad}{Ad}
\DeclareMathOperator{\Dom}{Dom}
\DeclareMathOperator{\Graph}{Graph}
\DeclareMathOperator{\ME}{ME}
\DeclareMathOperator{\mcpME}{mcpME}
\DeclareMathOperator{\Real}{Re}
\DeclareMathOperator{\conv}{conv}

\newcommand{\eps}{\varepsilon}

\newcommand{\acts}{\curvearrowright}

\newcommand\restr[2]{\ensuremath{\left.#1\right|_{#2}}}

\numberwithin{equation}{section}

\usepackage{enumitem}
\usepackage{color}
\newlist{steps}{enumerate}{1}
\setlist[steps, 1]{label = Step \arabic*:}

\hypersetup{%
  colorlinks=true,%
  linkcolor=blue,%
  citecolor=blue,%
  filecolor=blue,%
  menucolor=blue,%
  urlcolor=blue,%
  pdfnewwindow=true,%
  pdfstartview=FitBH
}   

\usepackage{etoolbox}
\makeatletter
\patchcmd{\@setaddresses}{\indent}{\noindent}{}{}
\patchcmd{\@setaddresses}{\indent}{\noindent}{}{}
\patchcmd{\@setaddresses}{\indent}{\noindent}{}{}
\patchcmd{\@setaddresses}{\indent}{\noindent}{}{}
\usepackage[hang,flushmargin]{footmisc}
\makeatother

\title{Properly proximal countable measured groupoids}

\author[Adriana Fernández Quero]{Adriana Fernández Quero}
\address{Department of Mathematics, KU Leuven, Celestijnenlaan 200B, 3001 Leuven, Belgium}\email{adriana.fernandeziquero@kuleuven.be}

\author[Kai Toyosawa]{Kai Toyosawa}
\address{Mathematical Institute, Department of Mathematics and Computer Science at the University of M\"unster, Einsteinstrasse 62, 48149 M\"unster, Germany}\email{kai.toyosawa@uni-muenster.de}

{\thanks{A.F.Q.\ was supported by the FWO research project G016325N of the Research Foundation Flanders.}}

{\thanks{K.T.\ was funded by the Deutsche Forschungsgemeinschaft (DFG, German Research Foundation) under Germany's Excellence Strategy EXC 2044/2 –390685587, Mathematics Münster: Dynamics–Geometry–Structure}}

\newcommand{\KK}{{\mathcal K}}

\newcommand{\BB}{{\mathcal B}}
\newcommand{\HH}{{\mathcal H}}
\newcommand{\RR}{{\mathcal R}}
\newcommand{\GG}{{\mathcal G}}
\newcommand{\II}{{\mathcal I}}
\newcommand{\PP}{{\mathcal P}}
\newcommand{\NN}{{\mathcal N}}
\newcommand{\ZZ}{{\mathcal Z}}
\newcommand{\UU}{{\mathcal U}}
\newcommand{\VV}{{\mathcal V}}
\newcommand{\SSS}{{\mathcal S}}
\newcommand{\CC}{{\mathcal C}}
\newcommand{\AAA}{{\mathcal A}}
\begin{document}

\begin{abstract} We introduce the notion of proper proximality for countable measured groupoids, extending the corresponding notion for countable groups. We prove that this groupoid property is equivalent to a strengthened form of relative proper proximality for the associated groupoid von Neumann algebra. We investigate permanence properties and show, in particular, that proper proximality is preserved under finite direct products, formation of transformation groupoids, and measure equivalence. We also identify broad classes of properly proximal groupoids, including transverse measured groupoids and free product groupoids, and prove that inner amenable groupoids are never properly proximal. 
\end{abstract}

\maketitle


\section{Introduction}

\noindent A countable discrete group $\Gamma$ is \textit{properly proximal} if there is no left $\Gamma $-invariant state on the Higson compactification \cite{Hig89}, also called the small-at-infinity boundary,
\begin{equation*}\label{eq:small-at-infinity group}
	\mathbb S(\Gamma)=\{f\in\ell^{\infty}(\Gamma):f-R_tf\in c_0(\Gamma)\text{ for every }t\in \Gamma\},
\end{equation*}
where $(R_t f)(s)=f(st)$. This notion was introduced by Boutonnet, Ioana, and Peterson \cite{BIP21} motivated by the study of the von Neumann algebras associated with the groups $\mathrm{SL}_d(\mathbb Z)$, $d\geq 3$, and their probability measure-preserving actions. Among their applications, they obtained the first $W^*$-rigidity results for compact actions of these groups. 

Proper proximality may be viewed as a weakening of the notion of biexactness introduced by Ozawa \cite{Ozawa-solid} which has been shown to exhibit many operator-algebraic rigidity consequences \cite{Pop08,OzawaPopa07,Hou09, CH10,chifansinclair,popavaes12,Ioa12,Vae13,CKP16,CdSS18,Dri20,Pat23,DT25,CTY26}.
The von Neumann algebras associated with properly proximal groups share several rigidity properties with those arising from biexact groups. If $\Gamma$ is properly proximal, then $L\Gamma$ does not have property (Gamma). More generally, $L\Gamma$ admits no diffuse regular von Neumann subalgebra $A$ for which the action $\mathcal N_{L\Gamma}(A)\curvearrowright A$ is weakly compact \cite{BIP21}. Unlike solidity for biexact groups, however, these rigidity properties are not automatically inherited by arbitrary nonamenable von Neumann subalgebras.

The class of properly proximal groups is broad. Besides the groups $\mathrm{SL}_d(\mathbb Z)$, $d\geq 3$, it contains non-elementary relatively hyperbolic groups, non-elementary convergence groups, lattices in noncompact semisimple Lie groups with finite center, and groups admitting a proper cocycle into a nonamenable unitary representation \cite{BIP21,HHL20}. Further examples include many groups acting on trees and graph products, as well as wreath products $A\wr B$ with $A\neq\{e\}$ and $B$ nonamenable \cite{dingKEexamples}. In fact, for a nontrivial group $A$, the wreath product $A\wr B$ is properly proximal precisely when $B$ is nonamenable. By contrast, inner amenability provides a general obstruction.

In \cite{IPR19}, Ishan, Peterson, and Ruth proved that proper proximality is preserved under von Neumann equivalence, and hence under both measure equivalence and $W^*$-equivalence. In particular, if $\Gamma$ and $\Lambda$ are countable groups such that $L\Gamma\simeq L\Lambda$, then $\Gamma$ is properly proximal if and only if $\Lambda$ is properly proximal. This result suggested that proper proximality should admit an intrinsic von Neumann algebraic formulation. Such a characterization was established by Ding, Kunnawalkam Elayavalli and Peterson in \cite{ding2023properproximality}. Given a finite von Neumann algebra $M$ and a von Neumann subalgebra $B\subset M$, they introduced a small-at-infinity boundary $\mathbb S_B(M)$ relative to $B$, an operator system in $\mathbb B(L^2M)$ containing $M$. They defined $M$ to be \textit{proper proximal relative to $B$} if $\mathbb S_B(M)$ admits no $M$-central state whose restriction to $M$ is normal. When $B=M$, this yields the notion of a properly proximal von Neumann algebra. 
In \cite{ding2023properproximality}, the authors further proved that a countable discrete group $\Gamma$ is properly proximal if and only if its group von Neumann algebra $L\Gamma$ is properly proximal \cite[Theorem 6.4]{ding2023properproximality}. 

The purpose of this paper is to develop an analogous theory for countable p.m.p.\ groupoids. This setting simultaneously encompasses countable discrete groups, viewed as groupoids with one unit, and countable p.m.p.\ equivalence relations, viewed as principal groupoids. We introduce the small-at-infinity boundary $\mathbb S(\mathcal G)$ of a countable p.m.p.\ groupoid $(\mathcal G,\nu)$ and define proper proximality through the absence of $[\mathcal G]$-invariant states whose restriction to the unit-space algebra $L^\infty(\mathcal G^{(0)},\mu)$ is normal. Our main result identifies this groupoid notion with a strengthened form of relative proper proximality for (twisted) groupoid von Neumann algebras. This form of relative proper proximality requires the lack of $L\mathcal G$-central states on $\mathbb S_{L^\infty \GG^{(0)}}(L\GG)\cap \langle L\Gamma,e_{L^\infty \GG^{(0)}}\rangle$ that are normal on $L\GG$, see Definition \ref{def: A-properlyproximal rel A}.

\begin{main}[{\Cref{thm:properproxR}}]\label{main:ppvN} Let $(\GG,\nu)$ be a countable p.m.p.\ groupoid with unit space $(\mathcal G^{(0)},\mu)$, and let $c\in Z^2(\GG,\T)$ be a $2$-cocycle. Then $(\GG,\nu)$ is properly proximal if and only if $L_c\GG$ is $L^\infty \GG^{(0)}$-properly proximal relative to $L^\infty \GG^{(0)}$.
\end{main}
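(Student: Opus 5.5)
The plan is to mirror the group case \cite[Theorem 6.4]{ding2023properproximality} and identify both boundary objects inside $\mathbb B(L^2(\GG,\nu))$. Write $A=L^\infty\GG^{(0)}$ and $M=L_c\GG$, and realize $M$ on $L^2(\GG,\nu)$ via the twisted left regular representation. The first step is the identification of the basic construction: $\langle M,e_A\rangle$ is generated by $M$ and $L^\infty(\GG,\nu)$, where $L^\infty(\GG,\nu)$ acts by multiplication. The commutant of the right twisted action contains $\langle M,e_A\rangle\cap(\text{right }A)'$.

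The second step is to show that, under this identification, $\mathbb S_A(M)\cap\langle M,e_A\rangle$ contains the operator system generated by $M$ and $\mathbb S(\GG)$, viewed as multiplication operators. Conversely, the conditional expectation onto the "diagonal" $L^\infty(\GG)$ should map $\mathbb S_A(M)\cap\langle M,e_A\rangle$ into $\mathbb S(\GG)$. The natural candidate for this map is Schur multiplication against diagonal projections, or averaging over the unitaries $u_\phi$ for $\phi$ in the full group $[\GG]$ through an invariant mean argument. The small-at-infinity condition relative to $A$ says that $[T,x]$ lies in the relative compact ideal $\mathbb K_A$ for $x$ in the right copy of $M$. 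Taking $x=u_\phi$ for partial bisections $\phi$, and using that $\mathbb K_A\cap L^\infty(\GG)$ consists of functions that are "$c_0$ along fibers" (the groupoid $c_0$), should give exactly the defining condition $f-R_\phi f\in c_0$ of $\mathbb S(\GG)$.

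For the direction "not properly proximal $\Rightarrow$ not relatively properly proximal", take a $[\GG]$-invariant state $\varphi$ on $\mathbb S(\GG)$ that is normal on $A$. Compose it with the expectation $E$ from the previous step to get a state on $\mathbb S_A(M)\cap\langle M,e_A\rangle$. Covariance of $E$ under $\Ad u_\phi$ gives $M$-centrality, since $M$ is generated by $A$ and the $u_\phi$, and $\varphi$ restricted to $A$ commutes with $A$. Normality on $M$ follows from $\varphi\circ E|_M=\varphi\circ E_A$, which is normal. For the converse, start from an $M$-central state $\psi$ that is normal on $M$ and restrict it to $\mathbb S(\GG)\subset\mathbb S_A(M)\cap\langle M,e_A\rangle$. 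Centrality under $u_\phi$ gives $[\GG]$-invariance, and the restriction to $A$ is normal. The twist $c$ disappears on the diagonal because $\Ad u_\phi$ acts on $L^\infty(\GG)$ by translation regardless of $c$.

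The main obstacle is the second step, namely the precise inclusion of $\mathbb S(\GG)$ into $\mathbb S_A(M)$ and the construction of an $M$-equivariant contractive projection back. I would handle this by first describing $\mathbb K_A\cap L^\infty(\GG)$ concretely as functions $f$ such that, for every $\varepsilon>0$, outside a set that is "finite over the unit space" in the sense of $\nu$, one has $|f|<\varepsilon$. I would then try to use the diagonal expectation $\langle M,e_A\rangle\to L^\infty(\GG)$, $T\mapsto\sum_\phi 1_\phi T1_\phi$ over a countable partition of $\GG$ into bisections. The delicate point is checking that this expectation preserves relative compactness and intertwines the right action, up to $\mathbb K_A$.
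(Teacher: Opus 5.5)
Your overall architecture is the same as the paper's. In one direction you restrict an $M$-central state to $\S(\GG)$, embedded as multiplication operators. In the other you compose a $[\GG]$-invariant state with the fiberwise diagonal expectation $\E\colon\langle M,e_A\rangle=(JAJ)'\to L^\infty(\GG)$; your map $T\mapsto\sum_\phi 1_\phi T1_\phi$ agrees with $\E$ there. This uses $\E\circ\Ad(u^c_\theta)=L_\theta\circ\E$, $\E\circ\Ad(Ju^c_\theta J)=R_\theta\circ\E$ and $\E|_M=\E_A$. Note that the intertwinings hold exactly, by a fiberwise computation, not merely up to $\K_A$.

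The gap is that both directions rest on two inclusions you only announce: $M_f\in\K^{\infty,1}_A(M)$ for $f\in c_0(\GG)$, and above all $\E\bigl(\K^{\infty,1}_A(M)\cap\langle M,e_A\rangle\bigr)\subset c_0(\GG)$. Your sketched route does not reach the second one. Knowing $\K_A\cap L^\infty(\GG)$ tells you nothing about $\E(T)$ for non-diagonal $T$, and identifying that intersection is already the same problem. The obvious strategy is to approximate $T$ in $\|\cdot\|_{\infty,1}$ by $T_0\in\X_A$ and pass to the limit, but it breaks down. First, $\X_A\not\subset(JAJ)'$ in general, so $\E(T_0)$ is not even defined. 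More seriously, $\E$ is not continuous from $\|\cdot\|_{\infty,1}$ to $\|\cdot\|_\infty$: for $p_n=\chi_{B_n}\in A$ with $\mu(B_n)\to0$ one has $\|p_ne_A\|_{\infty,1}\le\mu(B_n)\to0$, while $\E(p_ne_A)=\chi_{B_n}$ has norm $1$. Unlike the group case, the value $\E(T)(\gamma)$ at a single arrow is not a matrix coefficient $\langle T\widehat x,\widehat y\rangle$ with $x,y$ in the unit ball of $M$.

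The missing idea is to test only against bisections. For $\theta\in[[\GG]]$ one has $\langle T\widehat{u^c_\theta},\widehat{u^c_\theta}\rangle=\int_\theta\E(T)\,\mathrm d\nu$, and this is controlled by $\|T-T_0\|_{\infty,1}$ uniformly in $\theta$. For $T_0\in\X_A$ these coefficients are small as soon as $\theta$ avoids a fixed finite-measure set, because $\|\E_A(xu^c_\theta y)\|_2$ is small for such $\theta$. To conclude you need a combinatorial lemma tied to the exact shape of $c_0(\GG)$. If $f\notin c_0(\GG)$, then $\Real(i^kf)\ge\eps$ on some set $S$ that is not a $c_0$-set. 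Then there is $\delta>0$ such that $S\setminus F$ contains a local bisection of measure at least $\delta$ for every finite-measure $F$: a greedy maximal bisection $A\subset S\setminus F$ satisfies $S\setminus F\subset\GG^{r(A)}\cup\GG_{s(A)}$. Choosing such a $\theta$ that also avoids the exceptional set for $T_0$ gives $\eps\delta\le|\langle T\widehat{u^c_\theta},\widehat{u^c_\theta}\rangle|<\eps\delta/2$, a contradiction.

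This also shows why your tentative description of the ideal, as functions small off a set of finite $\nu$-measure, is too small. The operators $\chi_{\GG^B}=p$ and $\chi_{\GG_{B'}}=Jp'J$, with $p,p'\in A$ of small trace, are small in $\|\cdot\|_{\infty,1}$. So the exceptional sets $\GG^B\cup\GG_{B'}$ in the definition of $c_0(\GG)$ are forced, and $c_0$-sets can have infinite measure. With this definition the first inclusion follows in two steps. Use Lusin--Novikov to write the part of $f$ on a finite union of bisections $\phi_i$ as $\sum_i u^c_{\phi_i}a_ie_A(u^c_{\phi_i})^*$ with $a_i\in A$. Then cut down by $1-p$ and $1-Jp'J$.
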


Our definition agrees with the classical notion of proper proximality for groups if we regard groups as groupoids with singleton unit spaces. More generally, the next result shows that proper proximality is preserved and reflected by passing from an essentially free p.m.p.\ action to its transformation groupoid.

\begin{main}[\Cref{prop: transf preserv prop prox}]\label{main:transformationgroupoids} 
Let $\Gamma\curvearrowright (X,\mu)$ be an essentially free p.m.p.\ action. Then $\Gamma$ is properly proximal if and only if the transformation groupoid $\Gamma\ltimes X$ is properly proximal.
\end{main}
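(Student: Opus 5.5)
The plan is to identify $\GG=\Gamma\ltimes X$ with $\Gamma\times X$, using essential freeness, and to compare the two boundaries $\mathbb S(\Gamma)$ and $\mathbb S(\GG)$ through two explicit maps. In one direction we inflate functions on $\Gamma$ to functions on $\GG$. In the other we average functions on $\GG$ over $X$. Each implication will be proved in contrapositive form: an invariant state on one boundary produces an invariant state on the other.

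\emph{Step 1: $\GG$ not properly proximal $\Rightarrow$ $\Gamma$ not properly proximal.}
\begin{itemize}
\item Define $\iota:\ell^\infty(\Gamma)\to L^\infty(\GG)$ by $\iota(f)(\gamma,x)=f(\gamma)$.
\item Check that $\iota(\mathbb S(\Gamma))\subset\mathbb S(\GG)$. For a constant section $t\in\Gamma\subset[\GG]$, the difference $\iota(f)-R_t\iota(f)$ equals $\iota(f-R_tf)$, which is $\iota$ of a $c_0(\Gamma)$ function. It is therefore ``small at infinity'' in the groupoid sense: it is small outside a set of finitely many $\gamma$'s times $X$.
\item For a general element $v\in[\GG]$, write $v$ piecewise as $v=\sum_i 1_{A_i}u_{\gamma_i}$ with $\{A_i\}$ a partition. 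Then $\iota(f)-R_v\iota(f)$ is a countable cut-and-paste of the functions $\iota(f-R_{\gamma_i}f)$. Its smallness at infinity follows by truncating the partition to finitely many pieces of almost full measure.
\item If $\omega$ is a $[\GG]$-invariant state on $\mathbb S(\GG)$ that is normal on $L^\infty(X)$, then $\omega\circ\iota$ is a state on $\mathbb S(\Gamma)$. It is $\Gamma$-invariant because $\Gamma$ embeds in $[\GG]$ as constant sections. Hence $\Gamma$ is not properly proximal.
\end{itemize}

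\emph{Step 2: $\Gamma$ not properly proximal $\Rightarrow$ $\GG$ not properly proximal.}
\begin{itemize}
\item Define the averaging map $E(F)(\gamma)=\int_X F(\gamma,x)\,d\mu(x)$. This is unital, completely positive and $\Gamma$-equivariant, using that the action preserves $\mu$.
\item Show $E(\mathbb S(\GG))\subset\mathbb S(\Gamma)$. For $F\in\mathbb S(\GG)$, smallness at infinity of $F-R_tF$ means it is uniformly small off a set whose intersection with each fibre has bounded size and small measure outside finitely many $\gamma$'s. Integrating over $X$ then yields a $c_0(\Gamma)$ function.
\item Given a $\Gamma$-invariant state $\varphi$ on $\mathbb S(\Gamma)$, set $\omega=\varphi\circ E$. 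On $a\in L^\infty(X)$, viewed as a function on $\GG$ via the range map, $E(a)$ is the constant $\int a\,d\mu$. Thus $\omega|_{L^\infty(X)}=\mu$, which is normal.
\end{itemize}

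The main obstacle is $[\GG]$-invariance of $\omega$, beyond invariance under the constant sections $\gamma\in\Gamma$. For $v=\sum_i 1_{A_i}u_{\gamma_i}$ one needs
\[
\omega(R_vF)=\sum_i\omega\bigl(1_{A_i}R_{\gamma_i}F\bigr),
\]
and then to reassemble the right-hand side as $\omega(F)$. This requires $\omega$ to behave like an $L^\infty(X)$-bimodular, trace-like functional, namely $\omega(1_AF)=\omega(F1_A)$ in the appropriate sense. I would first try to verify this directly: for $F\in\mathbb S(\GG)$, the difference between left and right multiplication by $1_A$ should again be small at infinity, and so be killed by $E$ up to a $c_0(\Gamma)$ error. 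If that fails, I would replace $\varphi$ by a weak$^*$ limit of averages $\varphi\circ E(\,\cdot\,1_{B})/\mu(B)$ over finite partitions, to enforce the bimodularity.

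Throughout, essential freeness is what makes the identification $\GG\cong\Gamma\times X$ valid. It also guarantees that every element of $[\GG]$ decomposes uniquely into such $\Gamma$-pieces, which is what the cut-and-paste arguments in both steps rely on.
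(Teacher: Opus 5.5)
Your strategy is the paper's: inflation $\iota(f)(\gamma,x)=f(\gamma)$ in one direction and fiberwise averaging over $X$ in the other. Step 1, the inclusion $E(\mathbb S(\GG))\subset\mathbb S(\Gamma)$, and the normality of $\omega$ on $L^\infty(X)$ are all fine.

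\textbf{The gap.} The gap is the step you flag yourself: left $[\GG]$-invariance of $\omega=\varphi\circ E$ is never established, and the first route you propose does not work. In the commutative algebra $L^\infty(\GG)$ one has $1_AF=F1_A$ trivially. So the only substantive reading of your ``bimodularity'' is a comparison of multiplication by $1_A\circ r$ with multiplication by $1_A\circ s$. That difference is neither small at infinity nor killed by $E$ modulo $c_0(\Gamma)$. Take a mixing action and $F=1_A\circ r\in\mathbb S(\GG)$ with $0<\mu(A)<1$. Then
\[
E\bigl(F\,(1_A\circ r)-F\,(1_A\circ s)\bigr)(g)=\mu(A)-\mu(A\cap g^{-1}A)\to\mu(A)-\mu(A)^2\neq 0 .
\]
Your fallback of re-averaging $\varphi$ is not made precise, and it is not needed.

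\textbf{The missing idea.} Cut with range projections \emph{before} translating, and use left translations: invariance concerns $L_v$, not $R_v$ as in your display. For $v\in[\GG]$ put $A_g=\{x:(g,x)\in v\}$. Then $\{A_g\}_g$ and $\{gA_g\}_g$ both partition $X$, and a direct computation gives
\[
L_vF=\sum_{g\in\Gamma}L_g\bigl(\chi_{\GG^{A_g}}F\bigr),\qquad \chi_{\GG^{A_g}}=1_{A_g}\circ r\in L^\infty(X)\subset\mathbb S(\GG).
\]
In particular each $\chi_{\GG^{A_g}}F$ lies in $\mathbb S(\GG)$.

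Two facts finish the argument:
\begin{itemize}
\item $E\circ L_g=L_g\circ E$;
\item $|E(\chi_{\GG^C}H)(h)|\le\|H\|_\infty\,\mu(h^{-1}C)=\|H\|_\infty\,\mu(C)$.
\end{itemize}
By the second fact, the series $E(L_vF)=\sum_g L_gE(\chi_{\GG^{A_g}}F)$ and $E(F)=\sum_g E(\chi_{\GG^{A_g}}F)$ both converge in norm, since their tails are bounded by $\|F\|_\infty$ times the measure of a tail of the partition. Applying the $\Gamma$-invariance of $\varphi$ termwise then gives $\omega(L_vF)=\omega(F)$. This is exactly the identity $\Psi(L_Ef)=\sum_iL_{B_i}\Psi(f\chi_{\widetilde\GG^{E_i}})$ used in the paper, where tails are handled by normality on the unit space.

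\textbf{On freeness.} $\Gamma\ltimes X$ is $\Gamma\times X$ by definition, and its bisections always split uniquely into $\Gamma$-pieces. Essential freeness is only what identifies it with the orbit equivalence relation; your argument in this group case does not actually use it.
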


The groupoid formulation nevertheless contains more information than the proper proximality of the ambient von Neumann algebra. For a transformation groupoid $\GG=\Gamma\ltimes X$, it remembers the inclusion $L^\infty X\subset L\GG = L^\infty X\rtimes\Gamma$. Accordingly, \Cref{main:ppvN} characterizes proper proximality of $\GG$ in terms of $L^\infty X$-proper proximality of this inclusion relative to $L^\infty X$, which is stronger than proper proximality of the von Neumann algebra $L\GG$ relative to the boundary piece associated with $L^\infty X$.

The difference can already be seen for wreath products. Let $B$ be a nonamenable countable group that is not properly proximal; for example, one may take $B=\mathrm{SL}_3(\F_p[t^{-1}])\ltimes\mathbb F_p[t,t^{-1}]^3$ as in \cite{IPR19}. Let $\Gamma=(\Z/2\Z)\wr B =(\Z/2\Z)^{(B)}\rtimes B$ and $X=\prod_B\Z/2\Z$. The group $\Gamma$ is properly proximal by \cite[Theorem 1.7]{dingKEexamples}. Hence $L\Gamma$ is properly proximal and, in particular, is properly proximal relative to the boundary piece associated with $L((\Z/2\Z)^{(B)})\subset L\Gamma$. On the other hand, under the Fourier transform on the abelian group $(\Z/2\Z)^{(B)}$, we have $L\Gamma\simeq L^\infty X\rtimes B$, where the subalgebra $L((\Z/2\Z)^{(B)})$ is identified with $L^\infty X$. Since the Bernoulli action $B\curvearrowright X$ is essentially free and $B$ is not properly proximal, \Cref{main:transformationgroupoids} implies that the transformation groupoid $\GG=B\ltimes X$ is not properly proximal and therefore, by \Cref{main:ppvN}, $L\mathcal G=L^\infty X\rtimes B\simeq L\Gamma$ is not $L^\infty X$-properly proximal relative to $L^\infty X$. Thus, for the same von Neumann algebra and the same boundary piece, $L\Gamma$ is properly proximal relative to $L^\infty X$, but it is not $L^\infty X$-properly proximal relative to $L^\infty X$. Equivalently, the full operator system $\mathbb S_{L^\infty X}(L\Gamma)$ admits no $L\Gamma$-central state whose restriction to $L\Gamma$ is normal, whereas the smaller operator system $ \S_{L^\infty X}(L\Gamma)\cap \langle L\Gamma,e_{L^\infty X}\rangle$ does admit such a state. 
This 
condition is the one that detects the inclusion $L^\infty X\subset L^\infty X\rtimes B$ and hence the underlying groupoid structure.

Our main result, \Cref{main:ppvN}, also yields a groupoid analogue of a rigidity consequence of proper proximality established by Boutonnet, Ioana, and Peterson \cite[Theorems 1.1, 1.4]{BIP21}. Namely, if $\Gamma$ is properly proximal, then $L\Gamma$ does not admit any weakly compact Cartan subalgebra, and more generally, in a crossed product $L^{\infty}X\rtimes \Gamma$, if such weakly compact subalgebra exists, it must be $L^{\infty}X$ up to unitary conjugacy. This result was formulated for general properly proximal von Neumann algebras in \cite[Theorem 6.11]{ding2023properproximality}. Combining the aforementioned theorem with \Cref{main:ppvN}, we obtain the following groupoid analogue.

\begin{mcor}[\Cref{cor:weakcompact&properprox}]
    If $\GG$ is a properly proximal countable p.m.p.\ groupoid and $P\subset L\GG$ is a weakly compact regular von Neumann subalgebra of $L\GG$, then $P\preceq_{L\GG} L^\infty \GG^{(0)}$.

    In particular, if $(\RR,\nu)$ is a properly proximal countable measured equivalence relation on $(X,\mu)$, then $L\RR$ admits a weakly compact Cartan subalgebra $A$ if and only if $\RR$ is weakly compact and, in this case, $A$ is unitarily conjugate to $L^\infty X$.
\end{mcor}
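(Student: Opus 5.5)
The first assertion will follow by combining \Cref{main:ppvN} with \cite[Theorem 6.11]{ding2023properproximality}; the second assertion then follows by unpacking it for principal groupoids. Set $M=L\GG$, $A=L^\infty\GG^{(0)}$, and take the trivial cocycle $c=1$. Since $\GG$ is properly proximal, \Cref{main:ppvN} gives that $M$ is $A$-properly proximal relative to $A$. In particular, there is no $M$-central state on $\S_A(M)\cap\langle M,e_A\rangle$ that is normal on $M$.

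Now let $P\subset M$ be weakly compact and regular, and suppose for contradiction that $P\not\preceq_M A$. The strategy of \cite[Theorem 6.11]{ding2023properproximality} is to use weak compactness of $\NN_M(P)\acts P$ to produce a $P$-central state on $\mathbb B(L^2M)$ that is normal on $M$. Concretely, one takes a net of unit vectors $\xi_n\in L^2(P\bar\otimes\bar P)$ that are almost invariant under $u\otimes\bar u$ for $u\in\NN_M(P)$, and restricts the induced vector states, via the standard identification, to the relevant operator system. The assumption $P\not\preceq_M A$ is exactly what makes this state vanish on the compact ideal $\KK_A$ relative to $A$. It therefore factors through the small-at-infinity boundary and becomes $\NN_M(P)''=M$-central.

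The key check, and the main obstacle, is that this construction lands inside the smaller system $\S_A(M)\cap\langle M,e_A\rangle$. It must also give a state there that is normal on $M$, rather than only a state on all of $\S_A(M)$. I expect this to hold because the states produced are of the form $T\mapsto\lim\langle T\hat x_n,\hat y_n\rangle$ built from elements of $M$. Their restriction to $\langle M,e_A\rangle$ is what the argument actually uses, and normality on $M$ comes from the trace bound $|\varphi(x)|\le\|x\|_2$ inherited from the vectors. If the published statement only yields a state on $\S_A(M)$, I would rerun its proof while keeping track of the Jones projection. This gives the contradiction, so $P\preceq_M A$.

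For the equivalence relation case, suppose first that $A\subset L\RR$ is a weakly compact Cartan subalgebra. Then $A\preceq L^\infty X$ by the above, and Popa's conjugacy criterion for Cartan subalgebras gives that $A$ is unitarily conjugate to $L^\infty X$. Hence $L^\infty X\subset L\RR$ is weakly compact, which by definition means $\RR$ is weakly compact. Conversely, if $\RR$ is weakly compact, then $L^\infty X$ itself is a weakly compact Cartan subalgebra.
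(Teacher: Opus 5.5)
Your argument is the paper's: \Cref{main:ppvN} with the trivial cocycle plus \cite[Theorem 6.11]{ding2023properproximality} gives the first part, and Popa's conjugacy criterion for Cartan subalgebras \cite{popabettinumbers01} gives the equivalence-relation statement. The step you call the main obstacle is not an obstacle, because you have the inclusion backwards: since $\S_A(M)\cap\langle M,e_A\rangle\subset\S_A(M)$, any $M$-central state on $\S_A(M)$ that is normal on $M$ restricts to such a state on the smaller system, so $A$-proper proximality relative to $A$ already implies proper proximality relative to $A$ (as remarked after Definition \ref{def: A-properlyproximal rel A}), and \cite[Theorem 6.11]{ding2023properproximality} applies as a black box, with no need to rerun its proof or keep track of the Jones projection.
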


The class of properly proximal groups is stable under commensurability up to finite kernels, under direct products, and under measure-equivalence \cite{BIP21,IPR19}. Our next main result establishes groupoid analogues of these permanence properties. We refer readers to Appendix \ref{sec: appendix} for the definition and detailed explanation for (measure class preserving) measure equivalence of countable p.m.p.\ groupoids.

\begin{main}\label{main:permanence} Proper proximality for countable p.m.p.\ groupoids is preserved under 
    \begin{enumerate}
        \item inflations and restrictions (\Cref{restriction}),
        \item finite index inclusions (\Cref{prop: finite index subgpoid,prop:f.i.subgroupoid}),
        \item co-amenable subrelations (\Cref{prop:coamenable}),
        \item finite products (\Cref{prop:productsG1G2}),
        \item transformation groupoids (\Cref{prop: transf preserv prop prox}), and
        \item (measure class preserving) measure equivalence (\Cref{thm:MEpp}).
    \end{enumerate}
\end{main}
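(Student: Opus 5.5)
The plan is to treat the six items separately, working throughout with the groupoid definition: $\GG$ is properly proximal iff there is no $[\GG]$-invariant state on $\S(\GG)$ whose restriction to $L^\infty\GG^{(0)}$ is normal. Whenever it is easier, I will switch to the von Neumann algebraic form via \Cref{main:ppvN}. Each permanence statement is proved contrapositively: an invariant state $\varphi$ on the boundary of one groupoid is transported to an invariant state on the boundary of the other, while keeping normality on the unit space.

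For \emph{restrictions and inflations} to a non-null $Y\subset\GG^{(0)}$, I restrict $\varphi$ to $\S(\GG_Y)$, identified with $1_Y\S(\GG)1_Y$, and normalise by $\varphi(1_Y)$. This needs $\varphi(1_Y)>0$. That holds because $\varphi|_{L^\infty\GG^{(0)}}$ is a normal $[\GG]$-invariant state, so on the ergodic components it is absolutely continuous with respect to $\mu$; one may first pass to a $\GG$-invariant piece where this works. Conversely, an invariant state on $\S(\GG_Y)$ is extended to $\S(\GG)$ by choosing partial bisections $\theta_n$ from $Y$ that cover the saturation of $Y$, and averaging $\varphi(\theta_n^* f\theta_n)$.

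\emph{Finite index inclusions} $\HH\le\GG$ use two transfers. An $\HH$-invariant state is induced to $\GG$ by averaging over a measurable choice of finitely many coset representatives. In the other direction, an invariant state on $\S(\GG)$ restricts to $\S(\HH)$ through the natural conditional map, which exists because finite index makes $\S(\GG)$ decompose over finitely many translates of $\S(\HH)$. \emph{Co-amenable subrelations} are handled the same way, but the induction step uses an invariant mean on the coset space $\GG/\HH$ in place of a finite average. \emph{Finite products}: I follow the group argument of \cite{BIP21}. Given an invariant state on $\S(\GG_1\times\GG_2)$, I compose it with the two embeddings $\S(\GG_i)\otimes 1$. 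The key point is that $\ell^\infty$ functions on $\GG_1\times\GG_2$ which are $c_0$ in one coordinate form a dichotomy: at least one of the two restricted states must be nonvanishing on a suitable ideal. After normalising, this yields an invariant state for one factor.

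For \emph{transformation groupoids} $\Gamma\ltimes X$, the main tool is the pair of maps $\S(\Gamma)\to\S(\Gamma\ltimes X)$, $f\mapsto f\circ\pi$, and the fibrewise integration $F\mapsto\int_X F(\cdot,x)\,d\mu$ going the other way. Essential freeness is used to check that the integrated function is small at infinity. Both maps preserve invariance, and the resulting states are normal on $L^\infty X$ because integration against $\mu$ is normal.

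The main obstacle will be \emph{measure equivalence}. Here I plan to argue as in \cite{IPR19}. A measure equivalence coupling, as in Appendix~\ref{sec: appendix}, realises $\GG$ and $\HH$ as restrictions of a common groupoid to complementary non-null sets, up to finite index and co-amenable-type corrections. Combining the restriction/inflation step with the finite-index step then reduces the statement to those two cases. The delicate point is the measure class preserving twist. A Radon–Nikodym cocycle appears, and I expect to control it by first cutting the unit space into pieces on which the derivative is bounded, then restricting to such a piece. This is the step where I expect most of the technical work.
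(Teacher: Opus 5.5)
Your outlines for co-amenable subrelations and transformation groupoids match the paper's transfers. Three steps, however, have genuine gaps.

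\emph{Inflation (and the finite-index induction).} Since $1_Y\in L^\infty\GG^{(0)}\subset\S(\GG)$ is the function $\chi_{\GG^Y}$, the corner $1_Y\S(\GG)1_Y=\chi_{\GG^Y}\S(\GG)$ lives on all arrows with range in $Y$, so it is not $\S(\GG^Y_Y)$. Nor does zero extension map $\S(\GG^Y_Y)$ into $\S(\GG)$, because right translations move sources. The unit already fails: for $\theta\in[\GG]$ with induced automorphism $\bar\theta$, one gets $\chi_{\GG^Y_Y}-R_\theta\chi_{\GG^Y_Y}=\chi_{\GG^Y_{Y\setminus\bar\theta(Y)}}-\chi_{\GG^Y_{\bar\theta(Y)\setminus Y}}$, which is not in $c_0(\GG)$ for an ergodic aperiodic relation and non-invariant $Y$. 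You must instead pull $\varphi$ back along the right-inflation $\Ind_Y(f)=\sum_nR_{B_n}\widehat f$ of \Cref{restriction}(ii). Here the $B_n$ are bisections with sources in $Y$ and ranges partitioning $\GG^{(0)}$, and proving $\Ind_Y(\S(\GG^Y_Y))\subset\S(\GG)$ is the real work. This needs $Y$ to be a complete section: for $\GG=\GG_1\sqcup\GG_2$ with only $\GG_1$ properly proximal and $Y=\GG_1^{(0)}$, the implication is false. So passing to an invariant piece cannot secure $\varphi(1_Y)>0$; the paper assumes ergodicity, which forces $\varphi|_{L^\infty\GG^{(0)}}=\mu$. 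Likewise, for finite index the map you need is $f\mapsto\sum_iR_{T_i}(f\chi_\HH)$ from $\S(\HH)$ into $\S(\GG)$ (\Cref{prop: finite index subgpoid}). Individual summands such as $\chi_\HH$ are in general not in $\S(\GG)$, so $\S(\GG)$ does not decompose over translates of $\S(\HH)$.

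Your opposite direction, on the other hand, works for ergodic $\GG$. Take $\theta_n$ with sources in $Y$ and ranges partitioning $\GG^{(0)}$; then $f\mapsto\mu(Y)\sum_n\varphi\bigl((L_{\theta_n^{-1}}f)|_{\GG^Y_Y}\bigr)$ is a left $[[\GG]]$-invariant state, normal on $L^\infty\GG^{(0)}$. Ergodicity pins $\varphi$ on $L^\infty Y$ to the normalized $\mu$, and normality justifies exchanging the sums. Unlike \Cref{restriction}(i), this needs no invariance of $Y$, so it could replace \Cref{cor: f.i. equiv rel restr}.

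\emph{Products.} In general $\S(\GG_1)\otimes1\not\subset\S(\GG_1\times\GG_2)$; \Cref{prop:products} obtains it only when $\nu_2(\GG_2)<\infty$. For groups with $\Gamma_2$ infinite and $s\neq e$, $(\delta_e-R_s\delta_e)\otimes1$ is constant on $\{e\}\times\Gamma_2$, so $\delta_e\otimes1\notin\S(\Gamma_1\times\Gamma_2)$. Hence there are no embeddings to compose with. The dichotomy between $I_1=\overline{L^\infty(\GG)(1\otimes c_0(\GG_2))}$ and $I_2=\overline{L^\infty(\GG)(c_0(\GG_1)\otimes1)}$ also cannot be run directly on a state of $\S(\GG)$, because their support projections $p,q$ exist only in $(L^\infty(\GG)^\sharp)^*$. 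This is why the paper first proves the bidual characterization \Cref{lem: bidual characterization}, which is the main technical work of \Cref{sec:defpp}. It then uses $I_1I_2\subset c_0(\GG)$, hence $pq\le z$, together with \Cref{claim: prod in small-at-infty}, to show that $p\,\widetilde\iota_1$ and $(1-p)\widetilde\iota_2$ map $\widetilde{\S(\GG_1)}$ and $\widetilde{\S(\GG_2)}$ into $\widetilde{\S(\GG)}$. Finally it splits according to whether $\omega(p)>0$ or $\omega(p)=0$.

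\emph{Measure equivalence.} $\GG$ and $\HH$ are not themselves restrictions of a common groupoid. What the coupling gives is $(\GG\ltimes Y)^{Y_0}_{Y_0}\simeq(\HH\ltimes X)^{X_0}_{X_0}$ inside $((\GG\times\HH)\ltimes\Omega)^U_U$ (proof of \Cref{prop: me equiv soe}). The needed correction is a class-bijective extension, not finite index or co-amenability, so proper proximality must pass along this chain:
\begin{itemize}
\item from $\GG$ to $\GG\ltimes Y$ by \Cref{prop: transf preserv prop prox}; its hypothesis $(t_\GG)_*m_Y\sim\mu$ is exactly where measure class preservation enters, via \Cref{lem: inv null set};
\item to the restriction at the non-invariant section $Y_0$, by \Cref{cor: f.i. equiv rel restr} or your induced state;
\item to $\HH\ltimes X$ by \Cref{restriction}(ii);
\item to $\HH$ by the converse in \Cref{prop: transf preserv prop prox}, which needs freeness of $\HH\curvearrowright X$, supplied by the coupling.
\end{itemize}
The bounded-derivative cut you anticipate occurs only inside the transformation-groupoid proof, via a Chebyshev bound on $\mathrm{d}(t_*\tau)/\mathrm{d}\mu$.
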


We also study the behavior of proper proximality under ergodic decomposition of countable p.m.p.\ groupoids in \Cref{thm:ergodicdecomp0}. We show that properly proximal ergodic groupoid fibers $(\mathcal G_z)_z$ give rise to a properly proximal groupoid $\mathcal G$ whenever the unit space $\mathcal G^{(0)}$ is completely atomic, and we prove the reverse direction without needing this additional assumption.

Our final main contribution is to establish proper proximality, and its failure, for several natural classes of measured groupoids. We consider transverse measured groupoids, introduced in \cite{bddcohomologytransversegroupoid} as discrete measured models for p.m.p.\ actions of locally compact second countable groups, nondegenerate free products of measured groupoids, and inner amenable measured groupoids introduced in \cite{Kida-TuckerDrob}. These last two results generalize \cite[Corollary 1.3]{dingKEexamples} and \cite[Proposition 4.11]{BIP21}, respectively, to the groupoid setting. We further produce examples of properly proximal equivalence relations, including ones that cannot be realized as the orbit equivalence relation of an essentially free p.m.p. action.

\begin{main}\label{main:examples} Countable p.m.p.\ groupoids in the following classes are properly proximal:
    \begin{enumerate}
        \item transverse measured groupoids (\Cref{cor:transverse}), 
        \item nondegenerate free products (\Cref{prop:freeproducts}),
        \item nonamenable treeable equivalence relations (\Cref{prop: nonamen treeable equiv rel}), and
        \item irrational compression of higher-rank lattice relations (\Cref{exmp: restr SLn action}).
    \end{enumerate}
    In contrast, inner amenable countable p.m.p.\ groupoids are never properly proximal (\Cref{prop:inneramnotproperprox}).
\end{main}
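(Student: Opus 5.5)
This statement collects results proved in later sections, so the plan is to treat each class separately. Throughout I will use the definition via $\mathbb S(\GG)$: $\GG$ is properly proximal if there is no $[\GG]$-invariant state on $\mathbb S(\GG)$ that is normal on $L^\infty\GG^{(0)}$. The common strategy, following \cite{BIP21}, is to produce a proper-proximality witness. This means a map from the groupoid into some boundary-like object, equivariant up to compact-type (small-at-infinity) errors. Given such a map, any invariant state normal on the units would push forward to a fiberwise invariant measure, and nonamenability forbids this.

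\textbf{(1) Transverse measured groupoids.} These are cross sections of p.m.p.\ actions of l.c.s.c.\ groups $G$. The plan is to use measure equivalence invariance, \Cref{thm:MEpp}: a transverse groupoid should be (measure class preserving) measure equivalent to a lattice-type model of $G$, or to a transformation groupoid $\Gamma\ltimes X$ with $\Gamma$ properly proximal. Then \Cref{prop: transf preserv prop prox} and \cite{BIP21} finish the argument.

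\textbf{(2) Free products and (3) treeable relations.} Here I would build the witness by hand. A nondegenerate free product $\GG_1*\GG_2$ acts fiberwise on its Bass--Serre tree. A treeable relation comes with a Borel treeing, and each fiber $\RR^x$ is identified with the vertex set of a tree. I then define $f\in\ell^\infty$ fiberwise by \emph{which end or branch} a point lies in, composed with a function on the boundary of the tree. Right translation by an element $g$ changes only finitely many initial geodesic segments, uniformly for $g$ in a bounded set of the full group. This should show that these functions lie in $\mathbb S(\GG)$. A $[\GG]$-invariant state normal on the units then induces a measurable invariant field of probability measures on the tree boundaries. For nonamenable treeable relations, and for nondegenerate free products, this forces amenability, which is a contradiction. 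For treeable relations, passing to an ergodic component or restriction via \Cref{restriction} lets me assume everything is ergodic and nowhere amenable.

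\textbf{(4) Irrational compressions and inner amenability.} The compressions $\RR|_Y$ of orbit relations of $\mathrm{SL}_n(\Z)$ acting essentially freely are restrictions of a properly proximal transformation groupoid, so \Cref{prop: transf preserv prop prox} together with \Cref{restriction} handles them. Inner amenability goes the other way: from an inner-invariant mean, namely asymptotically conjugation-invariant unit vectors in $\ell^2$ of the isotropy or of $\GG$ fibers that escape to infinity, I build a $[\GG]$-invariant state on $\mathbb S(\GG)$. Functions in $\mathbb S(\GG)$ satisfy $f-R_tf\in c_0$, so their left and right actions agree at infinity, and conjugation invariance becomes left invariance. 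Normality on $L^\infty\GG^{(0)}$ will come from taking the mean fiberwise and integrating against $\mu$.

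The main obstacle I expect is the measurability and fiberwise bookkeeping in (2). Boundary-valued functions have to be chosen measurably across fibers, and the "small at infinity" condition must hold in the groupoid sense, that is, modulo the appropriate $c_0$-type ideal, not merely fiber by fiber.
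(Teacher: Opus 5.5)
Your proposal has genuine gaps. The most concrete one is in item (4).

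\textbf{Item (4).} For the ergodic relation $\RR=\RR(\mathrm{SL}_d(\mathbb Z)\curvearrowright\mathbb T^d)$ and $0<\mu(A)<1$, the set $A$ is not $\RR$-invariant. So \Cref{restriction}(i) does not apply, and \Cref{restriction}(ii) is the reverse implication (from a restriction up to the whole groupoid). Transferring proper proximality \emph{down} to a non-invariant restriction is exactly the nontrivial point; it is the groupoid analogue of passing to a finite-index subgroup. The paper gets it from measure equivalence (\Cref{lem: red is me} and \Cref{thm:MEpp}). That proof rests on \Cref{cor: f.i. equiv rel restr}, i.e.\ on the finite-index subrelation $\bigsqcup_{i<n}\RR^{\varphi^i(A')}_{\varphi^i(A')}$ for $\mu(A')=1/n$ together with the averaging in \Cref{extension[H]invariant}. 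A direct repair is as follows: pick $A'\subset A$ with $\mu(A')=1/n$, get $\RR^{A'}_{A'}$ properly proximal from \Cref{cor: f.i. equiv rel restr}, and then apply \Cref{restriction}(ii) to the ergodic relation $\RR^A_A$ and its subset $A'$.

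\textbf{Item (1).} This is essentially the paper's route, but only after you restrict to a countable discrete, properly proximal $G$; what is actually proved is the equivalence in \Cref{cor:transverse}. You then need \cite[Proposition 4.11]{bddcohomologytransversegroupoid}, which gives that $G$ and $\GG$ are measure class preserving measure equivalent, and \Cref{thm:MEpp} applies directly.

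\textbf{Items (2) and (3).} Your route is genuinely different, but its decisive step is only asserted. Getting from a $[\GG]$-invariant state normal on the units to an invariant \emph{measurable} field of probability measures on the tree boundaries already needs real work. You must disintegrate the state measurably and rule out mass on the vertices, which uses normality and aperiodicity. For trees with infinite-degree vertices (Bass--Serre trees of free products with infinite factors, or treeings with infinite valence), the end space is not compact, and mass can also sit at vertex-type points of the compactification. The next step, that such a field ``forces amenability'', is a substantial theorem: amenability of the boundary action of a treed relation, or of a free product groupoid on its Bass--Serre tree, in the spirit of Adams, plus a Zimmer-type argument. It is neither proved nor available in the paper. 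For free products you would also need that nondegenerate free products are nonamenable, which is not a hypothesis.

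The paper avoids all of this. In \Cref{prop:freeproducts}, the two ``first letter'' projections $p_1,p_2$ lie in $\mathbb S(\GG)$: off $\GG_{B_N}$ with $B_N=\{x:\ell(\eta_x)>N\}$, each source point carries at most $N$ bad arrows. The ping-pong relations $L_{\theta_1}p_2+L_{\theta_2}p_2\le p_1$, $L_{\theta_3}p_1\le p_2$ and $p_1+p_2=1-\chi_{\GG^{(0)}}$ then force $\varphi(p_1)=\varphi(p_2)=0$, and aperiodicity rules out $\varphi(\chi_{\GG^{(0)}})=1$. In \Cref{prop: nonamen treeable equiv rel}, Gaboriau's compression formula and Hjorth's theorem realize a restriction as an essentially free $\mathbb F_n$-orbit relation, and \Cref{prop: transf preserv prop prox} with \Cref{restriction}(ii) finish.

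Your reduction to the ergodic, nowhere amenable case is also invalid. \Cref{thm:ergodicdecomp0} lifts proper proximality from ergodic components only for atomic unit spaces. An amenable invariant piece makes $\RR$ non-properly proximal by \Cref{restriction}(i). So ergodicity must be a hypothesis, as in \Cref{prop: nonamen treeable equiv rel}.

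\textbf{Inner amenability.} Your argument essentially matches the paper's, with two conditions. The conjugation-invariant mean must live on $\GG$ itself, not on the isotropy, which is trivial for equivalence relations. It must also vanish on all of $c_0(\GG)$, and this uses balancedness ($m(\GG^B)=m(\GG_B)=\mu(B)$), not only diffuseness.
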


\vspace{2mm}

\noindent\textbf{Organization of the paper.} Aside from the introduction, this paper contains five sections and an appendix. In \Cref{sec:groupoids}, we recall the basics of countable p.m.p.\ groupoids, including their actions and the associated von Neumann algebras. \Cref{sec:defpp} introduces the small-at-infinity boundary associated to a groupoid, provides the definition of proper proximality, and gives some characterizations. \Cref{sec:permanence} contains the proof of \Cref{main:permanence}, and \Cref{sec:examples} contains the proof of \Cref{main:examples}. Appendix \ref{sec: appendix} provides the background on measure equivalence for groupoids required for \Cref{thm:MEpp} in \Cref{sec:permanence}. Finally, the proof of \Cref{main:ppvN} and further von Neumann algebraic consequences for groupoid von Neumann algebras appear in \Cref{sec: invariant vNa}.

\vspace{2mm}

\noindent\textbf{Acknowledgements.} The authors thank Jesse Peterson and David Jekel for many helpful comments.

This project started during KT's visit to KU Leuven in November 2025. KT acknowledges the Department of Mathematics at KU Leuven for its hospitality during this visit. The collaboration continued during the conference ``Ergodic Group Actions and Unitary Representations III” hosted by the IMPAN in Warsaw and during AFQ's visit to Universität Münster in May 2026. For the conference, the authors acknowledge the partial support of the Simons Foundation grant (award no. SFI-MPS-T-Institutes-00010825) and from the State Treasury funds commissioned by the Minister of Science and Higher Education under the project ``Organization of the Simons Semesters at the Banach Center - New Energies in 20206-2028" (agreement no. MNiSW/2025/DAP/491); and for the visit to Münster, AFQ would like to express her gratitude to David Kerr for his hospitality. 

\vspace{2mm}

\noindent\textbf{AI disclosure.} ChatGPT 5.6 was used for language editing, proofreading, and literature searching. All mathematical ideas, arguments, and proofs presented in the article were developed entirely by the authors. 

\section{Countable measure-preserving measured groupoids}\label{sec:groupoids}

\noindent In this section we recall the basic notions and facts concerning countable measured groupoids that will be used throughout the paper.

A \emph{groupoid} $\GG$ is a small category where all arrows are invertible, i.e., a set equipped with partially defined multiplication together with an inverse operation satisfying the natural properties such as associativity. We denote the \emph{unit space} by $\GG^{(0)}$ or $X$ interchangeably, the latter notation motivated by the countable equivalence relations and transformation groupoids, which are the major examples of groupoids we consider in this paper. The \emph{source} and \emph{range} maps are written $s,r\colon \GG \to \GG^{(0)}$, respectively, and multiplication is defined on the set of composable pairs $\GG^{(2)}\coloneqq \{(\gamma,\eta)\in \GG\times \GG : s(\gamma)=r(\eta)\}$. For subsets $A,B\subset \GG$, we define their product $AB\coloneqq \{\gamma\eta:\gamma\in A,\eta\in B,s(\gamma)=r(\eta)\}$; that is, the multiplication of elements from $A$ and $B$ whenever well-defined.

A groupoid $\GG$ is called a \emph{countable Borel groupoid} if $\GG$ is equipped with a Borel space structure such that the unit space $\GG^{(0)}$ is a Borel subset of $\GG$, the maps $s,r \colon \GG \to \GG^{(0)}$ are Borel and countable-to-one, and the multiplication and inverse maps are Borel. When moreover the unit space $\GG^{(0)}$ admits a probability measure $\mu$, we consider two measures defined on $\GG$ given by
\[\mu_1(A)=\int_{\GG^{(0)}} |r^{-1}(x)\cap A|\, \mathrm{d}\mu(x) \:\:\text{ and }\:\: \mu_2(A)=\int_{\GG^{(0)}} |s^{-1}(x)\cap A|\,\mathrm{d}\mu(x),\]
for $A\subset\GG$. When these two measures coincide, we denote by $\mu\circ \lambda \coloneqq \mu_1 = \mu_2$ and call the pair $(\GG,\mu\circ\lambda)$ a \emph{countable p.m.p.\ (probability measure preserving) groupoid over $(\GG^{(0)},\mu)$}, where $\lambda$ denotes the counting measure on the fibers. We will usually use the notation $\nu=\mu\circ\lambda$. In this paper we only consider countable p.m.p.\ groupoids. Note that by definition, $\restr{\mu\circ\lambda}{\GG^0} = \mu$.

For each subset $A\subset \GG^{(0)}$, let $\GG_A \coloneqq s^{-1}(A)= \{\gamma\in \GG:s(\gamma) \in A\}$ and $\GG^A \coloneqq r^{-1}(A)= \{\gamma\in \GG:r(\gamma) \in A\}$. For each $A,B\subset \GG^{(0)}$, let $\GG^B_A \coloneqq \GG_A\cap \GG^B$, and write $\GG^A_A=\GG_A\cap \GG^A$ when $B=A$. If $A\subset \GG^{(0)}$ is a non-null Borel subset, then $\GG^A_A$ is a countable p.m.p.\ groupoid over $(A,\frac{1}{\mu(A)}\mu|_A)$ called the \emph{restriction} of $\GG$ to $A$. We say $(\GG,\nu)$ is \emph{aperiodic} if $\GG_x\coloneqq\GG_{\{x\}}$ is infinite for $\mu$-almost every $x\in\GG^{(0)}$, and \emph{principal} if $|\GG_x^x| = 1$ for $\mu$-almost every $x\in\GG^{(0)}$.

We associate to $\GG$ an equivalence relation $\RR_\GG$ on $\GG^{(0)}$ defined by $\RR_\GG = \{(r(\gamma),s(\gamma)):\gamma\in \GG\}$. A Borel subset $A \subset \GG^{(0)}$ is called \emph{$\GG$-invariant} if $r(\GG_x) \subset A$ for $\mu$-almost every $x \in A$. The groupoid $(\GG,\nu)$ is \emph{ergodic} if every $\GG$-invariant Borel subset is null or conull, or equivalently, the equivalence relation $\RR_\GG$ is ergodic. A \emph{complete unit section} of $\GG$ is a subset $A$ of $\GG^{(0)}$ satisfying $r(\GG_A)=\GG^{(0)}$ modulo a null set, equivalently, $A\subset\GG^{(0)}$ meets every equivalence class of the equivalence relation $\RR_{\GG}$.

A \emph{(local) bisection} of $\GG$ is a Borel subset $B\subset \GG$ such that both $s$ and $r$ are injective on $B$. The sets $s(B)$ and $r(B)$ are called the \emph{source} and \emph{range} of $B$, respectively. The set of all local bisections is called the \emph{full pseudogroup} of $\GG$, denoted by $[[\GG]]$. We say a local bisection $B$ is a \emph{global bisection} if $s(B)$ and $r(B)$ coincide with $\GG^{(0)}$ up to null sets, and we let $[\GG]$ be the \emph{full group} of $(\GG,\nu)$ consisting of global bisections.

For each local bisection $B\subset \GG$, left multiplication by $B$ determines uniquely a partial Borel isomorphism $\theta_B \colon \GG^{s(B)} \to \GG^{r(B)}$ up to null sets, so we will identify $B$ with $\theta_B$ whenever no confusion arises, and even drop the subscript $B$ in $\theta_B$ if the choice of the local bisection $B$ is clear from the context. For example, we will denote by $s(\theta)\coloneqq \Dom(\theta) = s(B)$ and $r(\theta) \coloneqq \Ran(\theta) = r(B)$. From this perspective, the full pseudogroup $[[\GG]]$ is identified with the set of partial Borel isomorphisms $\theta$ on $\GG$ such that $\Dom(\theta),\Ran(\theta)$ are preimages of subsets of $\GG^{(0)}$ under the range map $r$ and $\theta(\gamma)\gamma^{-1} = \theta(\eta)\eta^{-1}$ whenever $\gamma,\eta\in \Dom(\theta)$ with $r(\gamma) = r(\eta)$. For each $\theta\in [[\GG]]$, we let $s_\theta^{-1}$ and $r_\theta^{-1}$ denote the inverse maps of the restrictions $\restr{s}{B}, \restr{r}{B}$ of the corresponding local bisection $B$.  

The full pseudogroup $[[\GG]]$ admits a left action and a right action on $L^\infty(\GG,\nu)$. For fixed $f\in L^\infty(\GG,\nu)$ and $\theta \in [\GG]$, we define the left action $L_\theta$ of $\theta$ on $f$ by
\[(L_\theta f)(\gamma) = \begin{cases} f([r_\theta^{-1}(r(\gamma))]^{-1}\gamma) & \text{if }r(\gamma)\in r(\theta), \\
0 & \text{otherwise}.\end{cases}\]
Equivalently, if we identify $\theta$ with the corresponding Borel section $B$, then $(L_\theta f)(\gamma) = f(\theta^{-1}\gamma) = f(B^{-1}\gamma)$. We similarly define the right action $R_\theta$ of $\theta$ on $f$ by
\[(R_\theta f)(\gamma) = \begin{cases} f(\gamma [r_\theta^{-1}(s(\gamma))]) & \text{if }s(\gamma)\in r(\theta), \\
0 & \text{otherwise},\end{cases}\]  
which can also be written as $(R_\theta f)(\gamma) = f(\gamma\theta) = f(\gamma B)$.

\subsection{Measured equivalence relations}

Let $(X,\mu)$ be a standard probability space. A \emph{countable Borel equivalence relation} on $X$ is a Borel subset $\RR\subset X\times X$ such that each equivalence class $[x]_\RR \coloneqq \{ y \in X : (x,y) \in \RR\}$ is countable for $\mu$-almost every $x\in X$. The set $\RR$ admits a natural structure of a countable Borel groupoid with unit space $X$ identified with the diagonal subset $\Delta\coloneqq \{(x,x):x\in X\}$, where the range and source maps are given by $r(x,y)=x, s(x,y)=y$ for every $(x,y)\in \RR$, respectively. The multiplication is $(x,y)(y,z)=(x,z)$ and inversion map is $(x,y)^{-1}=(y,x)$.

For a subset $A \subset X$, its \emph{saturation} (or \emph{$\RR$-saturation}) is defined by $\RR(A)\coloneqq \{ y \in X : \text{ there exists } x \in A \text{ with } (y,x) \in \RR \} = r(s^{-1}(A))$. 
The measure $\mu$ on $X$ is said to be \emph{quasi-invariant with respect to $\RR$} if $\nu(\RR(A)) = 0$ whenever $\mu(A) = 0$ with $A\subset X$ and the inversion map preserves the measure class of the induced measure $\nu$ on $\RR$. If in addition $\nu$ is invariant under the inversion map, then $\mu$ is called \emph{invariant with respect to $\RR$}. It follows that $(\RR,\nu)$ is a countable p.m.p.\ groupoid if and only if $\mu$ is invariant, and in this case we call $(\RR,\nu)$ a \emph{(countable) p.m.p.\ equivalence relation over $(X,\mu)$}. A Borel subset $A\subset X$ is called \textit{saturated} (or \textit{invariant}) if $\RR(A)=A$ up to a null set. The relation $(\RR,\nu)$ is called \textit{ergodic} if every invariant Borel subset is either null or conull.


A Borel bisection $\theta \in [[\RR]]$ is identified with a partial Borel isomorphism $\theta \colon A \to B$ between measurable subsets $A,B \subset X$ such that $(\theta(x),x) \in \RR$ for $\mu$-almost every $x\in A$, so the full pseudogroup $[[\RR]]$ coincides with the collection of all partial automorphisms of $(X,\mu)$ whose graphs are contained in $\RR$. The full group $[\RR]$ consists of those $\theta \in [[\RR]]$ which are defined almost everywhere on $X$; that is, measurable automorphisms $\theta \in \Aut(X,\mu)$ satisfying $(\theta(x),x) \in \RR$ for $\mu$-almost every $x\in X$.

\subsection{Actions of groupoids}\label{Sec:actionsofgroupoids}

Let $(\GG,\nu)$ be a countable p.m.p.\ groupoid with unit space $(\GG^{(0)},\mu)$. A \textit{(left) measurable action of $\GG$ on a fibered measurable measure space $(A,\tau)$} consists of a standard space $A$, a measurable anchor map $t:A\to\GG^{(0)}$ (so that $A_x\coloneqq t^{-1}(x)$ is the fiber over $x\in\GG^{(0)}$), and a measurable map from $\GG\times_{\GG^{(0)}}A\coloneqq \{(\gamma,a):s(\gamma)=t(a)\}$ to $A$ given by $(\gamma,a)\mapsto \gamma\cdot a$ and satisfying:
\begin{enumerate}
    \item $t(\gamma\cdot a)=r(\gamma)$, for every $(\gamma,a)\in \GG\times_{\GG^{(0)}}A$,
    \item $x\cdot a=a$ for every $x\in\GG^{(0)}$ and $a\in t^{-1}(x)$,
    \item $(\gamma\rho)\cdot a=\gamma\cdot (\rho\cdot a)$, whenever $(\gamma\rho,a)\in \GG\times_{\GG^{(0)}}A$ and $(\gamma,\rho\cdot a)\in\GG\times_{\GG^{(0)}}A$.
\end{enumerate}
Equivalently, for each $\gamma\in\GG$, the action determines a map $\alpha(\gamma):A_{s(\gamma)}\to A_{r(\gamma)}$ given by $\alpha(\gamma)a=\gamma\cdot a$, and the above conditions state that $\alpha(\gamma\rho)=\alpha(\gamma)\alpha(\rho)$ and $\alpha(x)=\text{id}_{A_x}$. 

Now suppose $A$ is endowed with a measurable family of probability measures $\tau=\{\tau_x\}_{x\in\GG^{(0)}}$ where each $\tau_x$ is a probability measure on the fiber $A_x$. Then $A$ is called a \emph{fibered probability space} over $\GG^{(0)}$. A fibered probability space $A$ over $\GG^{(0)}$ may equivalently be viewed as a measure space equipped with a measurable disintegration over the unit space. More precisely, if $(A,\tau)$ is a standard Borel space and $t\colon A\to \GG^{(0)}$ is a measurable map, then a measurable family of $\sigma$-finite measures $\{\tau_x\}_{x\in\GG^{(0)}}$ on the fibers $A_x=t^{-1}(x)$ determines a measure $\tau$ on $A$, given by the integral decomposition
\begin{equation}\label{eq:disintegrationmeasure}
    \tau=\int_{\GG^{(0)}}\tau_x\, \mathrm{d}\mu(x),
\end{equation}
making the anchor map $t\colon (A,\tau)\to(\GG^{(0)},\mu)$ measure preserving. Conversely, by the measure disintegration theorem, if $t\colon (A,\tau)\to (\GG^{(0)},\mu)$ is a measure-preserving map between standard $\sigma$-finite measure spaces, then there exists an essentially unique measurable family of probability measures $\{\tau_x\}_{x\in\GG^{(0)}}$, with each $\tau_x$ supported on the fiber $A_x=t^{-1}(x)$, for which \eqref{eq:disintegrationmeasure} holds.

The action $\alpha:\GG\curvearrowright A$ is called \emph{measure preserving} if for every $\gamma\in\GG$, the map $\alpha(\gamma)\colon A_{s(\gamma)}\to A_{r(\gamma)}$ is a measure-space isomorphism; i.e. $\alpha(\gamma)_*\tau_{s(\gamma)}=\tau_{r(\gamma)}$. If each fiber $A_x$ is countable, then the action is called a \emph{discrete action}. For a measurable action $\GG\curvearrowright A$, we can associate to it the corresponding transformation groupoid, $\GG\ltimes A$, defined as follows. The set of groupoid elements is $\GG\times_{\GG^{(0)}}A$ with unit space $A$. The source and range maps $\widetilde s, \:\widetilde r\colon\GG\ltimes A\to A$ are given by $\widetilde s(\gamma,a)=a$ and $\widetilde r(\gamma,a)=\gamma\cdot a$, respectively, and the multiplication and inverse operations are given by $(\gamma,\rho\cdot a)(\rho,a)=(\gamma\rho,a)$ and $(\gamma,a)^{-1}=(\gamma^{-1},\gamma\cdot a)$. The associated transformation groupoid $\GG\ltimes A$ carries a natural measure $\nu_{\GG\ltimes A}=\tau\circ\lambda$, where $\lambda$ denotes counting measure on the fibers.

We say that the $\GG$-action is \emph{free} if $\GG\ltimes A$ is a principal groupoid. In this case, a subset $F\subset A$ is called a \emph{strict fundamental domain} for the $\GG$-action if it intersects each class in $\RR_{\GG\ltimes A}$ in precisely one element. Then $F$ is a complete section for the action groupoid $\GG\ltimes A$.

If $\sigma_1,\sigma_2$ are two measures on a measurable space $X$, $\sigma_1$ is said to be absolutely continuous with respect to $\sigma_2$, and written $\sigma_1\ll\sigma_2$ if $\sigma_1(X_0)=0$ for every set $X_0\subset X$ for which $\sigma_2(X_0)=0$. Equivalently, $\sigma_1\ll\sigma_2$ if and only if for every $\eps>0$ there exists $\delta>0$ such that $\sigma_1(X_0)\leq\eps$ whenever $\sigma_2(X_0)\leq\delta$. If $\sigma_1\ll\sigma_2$ and $\sigma_2\ll\sigma_1$, we say the measures $\sigma_1$ and $\sigma_2$ are equivalent (or have the same measure class) and we denote it by $\sigma_1\sim\sigma_2$. In the context of actions of groupoids, we will generally assume $t_*\tau\sim\mu$, although this will be explicitly stated.

A particular example of interest is when the acting groupoid is a countable discrete group $\Gamma$. In this case, the \emph{transformation groupoid} associated to a p.m.p.\ action $\Gamma\curvearrowright(X,\mu)$ of a countable group $\Gamma$ is the groupoid $\Gamma\ltimes(X,\mu)=(\GG,\nu)$ defined as follows: the set of groupoid elements is $\GG=\Gamma\times X$ with unit space $\GG^{(0)}=\{e\}\times X$, which is again identified with $X$. The source and range maps $s,r\colon\GG\to \GG^{(0)}$ are given by $s(g,x)=x$ and $r(g,x)=g\cdot x$, respectively, and the multiplication and inverse operations are given by $(g,h\cdot x)(h,x)=(gh,x)$ and $(g,x)^{-1}=(g^{-1},g\cdot x)$. If the action $\Gamma\curvearrowright(X,\mu)$ is essentially free, that is, the stabilizer of $\mu$-almost every point of $X$ is trivial, then the groupoid $\Gamma\ltimes (X,\mu)$ is naturally isomorphic to the orbit equivalence relation $\RR(\Gamma\curvearrowright X)\coloneqq\{(g\cdot x,x):g\in \Gamma, x\in X\}$ of the action.

\subsection{von Neumann algebras associated to groupoids and twists by \texorpdfstring{$2$}{2}-cocycles}

We first recall the von Neumann algebra of multiplier operators associated with countable p.m.p.\ groupoids. Let $(\GG,\nu)$ be a countable p.m.p.\ groupoid over the unit space $(\GG^{(0)},\mu)$. Let $\B(L^2(\GG,\nu))$ denote the space of bounded operators on the Hilbert space $L^2(\GG,\nu)$. Each essentially bounded function $f\in L^\infty(\GG,\nu)$ can be identified with the multiplier operator $M_f \in \B(L^2(\GG,\nu))$ given by 
\[(M_f \xi)(\gamma)=f(\gamma)\xi(\gamma), \quad \xi \in L^2(\GG,\nu).\]
Under this identification, $L^\infty(\GG,\nu)$ is a maximal abelian subalgebra (MASA) of $\B(L^2(\GG,\nu))$. We let $L^\infty(\Delta)$ denote the space of functions $f\in L^\infty(\GG,\nu)$ that are essentially supported on $\GG^{(0)}$, so then $L^\infty\Delta$ is identified with a corner of $ L^\infty(\GG,\nu) \subset \B(L^2(\GG,\nu))$. This notation is motivated by the fact that in the case when the groupoid is a countable measured equivalence relation, the unit space is identified with the diagonal set.

Next we consider the groupoid von Neumann algebras $L_c(\GG)$ associated with $(\GG,\nu)$ and $2$-cocycles $c$ on $\GG$. To define $2$-cocycles, we extend the range map on $\GG$ and define $r^{(2)}\colon \GG^{(2)}\to \GG^{(0)}$ by $r^{(2)}(\gamma,\eta) = r(\gamma) \in \GG^{(0)}$ for each pair $(\gamma,\eta)\in \GG^{(2)}$. We equip $\GG^{(2)}$ with the $\sigma$-finite measure $\nu^{(2)}$ defined by
\[\nu^{(2)}(C) = \int_{\GG^{(0)}} |(r^{(2)})^{-1}(x)\cap C|\,\mathrm{d}\mu(x), \quad C\subset \GG^{(2)} \text{ Borel subset}.\]
We let $Z^2(\GG,\T)$ denote the space of $2$-cocycles for $(\GG,\nu)$ which consists of Borel maps $c\colon \GG^{(2)}\to \T$ such that
\[c(\gamma,\eta)c(\gamma\eta,\kappa) = c(\gamma,\eta\kappa)c(\eta,\kappa), \quad \nu^{(2)}\text{-almost everywhere.}\]
We say the cocycle $c\in Z(\GG^{(2)},\T)$ is \emph{trivial} if $c=1$ almost everywhere; two cocycles $c_1,c_2$ are \emph{cohomologous} if there exists a Borel map $\omega\colon \GG\to\T$ such that 
\[c_1(\gamma,\eta)\omega(\gamma)\omega(\eta) = c_2(\gamma,\eta)\omega(\gamma\eta),  \quad \nu^{(2)}\text{-almost everywhere;}\]
and a $2$-cocycle $c\in Z^2(\GG,\T)$ is \emph{normalized} if $c(\gamma,\id_{s(\gamma)}) = c(\id_{r(\gamma)},\gamma) = 1$ for almost every $\gamma\in \GG$. It turns out that every $2$-cocycle is cohomologous to a normalized one, so in what follows we always assume the $2$-cocycles are normalized.

Let $\mathcal{I}(\GG)$ denote the space of Borel functions $f \colon \GG \to \mathbb{C}$ such that
\[\left\Vert f \right\Vert_\II \coloneqq 
\max\left\{\left\Vert x \mapsto \sum_{r(\gamma)=x} |f(\gamma)| \right\Vert_\infty,\;
\left\Vert x \mapsto \sum_{s(\gamma)=x} |f(\gamma)| \right\Vert_\infty \right\}<\infty.\]
Then $\mathcal{I}(\GG)$ is a $*$-algebra under twisted convolution product and involution given by
\[(f*_cg)(\gamma)=\sum_{\gamma_1\gamma_2=\gamma}f(\gamma_1)g(\gamma_2)c(\gamma_1,\gamma_2), \qquad f^*(\gamma)=\overline{f(\gamma^{-1})}\overline{c(\gamma,\gamma^{-1})},\qquad f,g\in \II(\GG).\]
Given a $2$-cocycle $c\in Z^2(\GG,\T)$, we define the $c$-twisted left representation $\lambda^c \colon \mathcal{I}(\GG) \to \B(L^2(\GG,\nu))$ by
\[(\lambda^c(f)\xi)(\gamma) =\sum_{\gamma_1\gamma_2=\gamma}f(\gamma_1)\xi(\gamma_2)c(\gamma_1,\gamma_2) =\sum_{s(\eta)=s(\gamma)}f(\gamma\eta^{-1})\xi(\eta)c(\gamma\eta^{-1},\eta), \quad \xi\in L^2(\GG,\nu).\]
It follows that $\lambda^c(f)$ is a bounded operator and that $\lambda^c$ is a projective $*$-representation. The \emph{groupoid von Neumann algebra $L_c(\GG)$ associated to the groupoid $\GG$ and $2$-cocycle $c$} is defined by $L_c(\GG)\coloneqq \lambda^c(\II(\GG))'' \subset \B(L^2(\GG,\nu))$. Note that any two cohomologous $2$-cocycles $c_1,c_2$ give rise to spatially isomorphic twisted group von Neumann algebras $L_{c_1}(\GG)$ and $L_{c_2}(\GG)$. 

Since $\GG$ is a countable p.m.p.\ groupoid, $L_c(\GG)$ is a tracial von Neumann algebra with a canonical faithful normal tracial state $\tau$ given by the inner product $\tau(T) = \langle T\chi_{\GG^{(0)}},\chi_{\GG^{(0)}}\rangle$ for $T\in L_c(\GG)$, where $\chi_{\GG^{(0)}} \in L^2(\GG,\nu)$ is the characteristic function supported on the unit space $\GG^{(0)}$. For $f \in \II(\GG)$, the trace can be written as $\tau(\lambda^c(f))=\int_{\GG^{(0)}} f(x)\,\mathrm{d}\mu(x)$.

Note that $L_c(\GG)$ contains a unital subalgebra canonically identified with $L^\infty(\GG^{(0)},\mu)$ via
\begin{equation} \label{eqn: identify Linfty X}
    (f\xi)(\gamma) = f(r(\gamma))\xi(\gamma), \quad f\in L^\infty(\GG^{(0)},\mu),\:\xi \in L^2(\GG,\nu).
\end{equation}
In what follows, whenever we regard $L^\infty(\GG^{(0)},\mu)$ as a subalgebra of $\B(L^2(\GG,\nu))$, this identification is understood to be given by the above formula. Note that, although this algebra is algebraically isomorphic to $L^\infty(\Delta)\subset L^\infty(\GG,\nu)$, they are two distinct operator-theoretic realizations inside $\B(L^2(\GG,\nu))$.

Let $B\subset\GG$ be the bisection corresponding to $\theta\in[[\GG]]$ and put $u_\theta^c\coloneqq \lambda^c(\chi_B)$.  Then
\begin{equation} \label{eqn: left conv formula}
    \begin{aligned}
        (u^c_\theta \xi)(\gamma)&= \sum_{\gamma_1\gamma_2=\gamma}\chi_B(\gamma_1)\xi(\gamma_2)c(\gamma_1,\gamma_2) = \begin{cases}
        c(r_\theta^{-1}(r(\gamma))\theta^{-1}\gamma)\xi\bigl([r_\theta^{-1}(r(\gamma))]^{-1}\gamma\bigr)
        & \text{if } r(\gamma) \in r(\theta),\\
        0 & \text{if } r(\gamma) \not\in r(\theta)\end{cases}\\
        &=c(r_\theta^{-1}(r(\gamma)),\theta^{-1}\gamma)\xi(\theta^{-1}\gamma).
\end{aligned}
\end{equation}
If $B_1,B_2$ correspond to $\theta_1,\theta_2 \in [[\GG]]$, there are unitary functions $\omega_{\theta_1,\theta_2},\eta_{\theta_1}\in L^\infty\GG^{(0)}$ on the appropriate supports such that
\begin{equation*}
    \begin{aligned}
    u_{\theta_1}^cu_{\theta_2}^c &=\omega_{\theta_1,\theta_2}u_{\theta_1\theta_2}^c\:\:\text{ and }\:\: (u_{\theta_1}^c)^* =\eta_{\theta_1}u_{\theta_1^{-1}}^c.
    \end{aligned}
\end{equation*}
More explicitly, if $\gamma_1\in B_1$ and $\gamma_2\in B_2$ are the unique composable arrows with $r(\gamma_1\gamma_2)=x$, then $\omega_{\theta_1,\theta_2}(x) = c(\gamma_1,\gamma_2)$; if
$\gamma\in B_1^{-1}$ is the unique arrow with $r(\gamma)=x$, then $\eta_{\theta_1}(x)=\overline{c(\gamma,\gamma^{-1})}$.  Consequently,
\[(u_\theta^c)^*u_\theta^c=\chi_{s(\theta)}\:\:\text{ and }\:\: u_\theta^c(u_\theta^c)^*=\chi_{r(\theta)}.\]
Thus, $u_\theta^c$ is a partial isometry for $\theta\in[[\GG]]$ and it is unitary whenever $\theta\in[\GG]$. For $f \in L^\infty \GG$, we have
\begin{equation*}
    \begin{aligned}
        (M_f u_\theta^c \xi)(\gamma) &= f(\gamma)c(r_\theta^{-1}(r(\gamma)),\theta^{-1}\gamma)\xi(\theta^{-1}\gamma) = f(\gamma)c(r_\theta^{-1}(r(\gamma)),\theta^{-1}\gamma)\xi(\theta^{-1}\gamma)\chi_{\GG^{s(\theta)}}(\theta^{-1}\gamma),\\
        (u_\theta^cM_f\xi)(\gamma)&= c(r_\theta^{-1}(r(\gamma)),\theta^{-1}\gamma)(M_f\xi)(\theta^{-1}\gamma) = c(r_\theta^{-1}(r(\gamma)),\theta^{-1}\gamma)f(\theta^{-1}\gamma)\xi(\theta^{-1}\gamma) = (L_\theta(f))(\gamma) (u_\theta^c\xi)(\gamma),
    \end{aligned}
\end{equation*}
so $u^c_\theta M_f = M_{L_\theta f}u^c_\theta$, $u^c_\theta M_f (u^c_\theta)^* = M_{L_\theta f}$, and $[M_f,u^c_\theta] =  M_{f-L_\theta f}u^c_\theta$. In particular, each $u^c_\theta$ is an element in the \emph{quasi-normalizer $\GG\NN_{L_c\GG}(L^\infty \GG^{(0)})$} consisting of partial isometries $v\in L_c\GG$ such that $v(L^\infty \GG^{(0)})v^*\subset L^\infty \GG^{(0)}$ and $v^*(L^\infty \GG^{(0)})v\subset L^\infty \GG^{(0)}$. If $\theta \in [\GG]$ is in the full group, then $u^c_\theta$ is a unitary that lies in the \emph{normalizer group} $\NN_{L_c\GG}(L^\infty \GG^{(0)}) \coloneqq \{u\in L_c\GG: u $ is a unitary such that $u(L^\infty \GG^{(0)})u^*= L^\infty \GG^{(0)}\}$, and the map
\[[\GG] \to \mathcal{U}(L(\GG)), \qquad \theta \mapsto u_\theta\]
is projective with coefficients in the unit-space algebra $L^\infty \GG^{(0)}$.

By \cite[Proposition 3.1]{groupoidfactor}, $\GG$ admits a countable collection of partial bisections $\{B_i\} \subset [[\mathcal{G}]]$ such that the entire groupoid $\GG= \bigsqcup_i B_i$ is their disjoint union and yields a Fourier decomposition for elements of $L_c(\GG)$. Hence, $L_c(\GG)$ is generated by the partial isometries $\{u^c_\theta : \theta \in [[\GG]]\} \subset \GG\NN_{L_c\GG}(L^\infty \GG^{(0)})$, so $L^\infty \GG^{(0)}$ is a \emph{regular subalgebra} of $L_c\GG$ in the sense that the normalizer group $\NN_{L_c\GG}(L^\infty \GG^{(0)})$ generates $L_c\GG$ as a von Neumann algebra by \cite[Lemma 12.1.2]{anantharaman2017introduction}. However, we note that $L^{\infty}\GG^{(0)}$ is not a maximal abelian subalgebra of $L_c\GG$ unless $\GG$ is principal, so $L^\infty \GG^{(0)}\subset L_c\GG$ is not a Cartan subalgebra in general.

The standard Hilbert space $L^2(L_c(\GG))$ of $L_c(\GG)$ is canonically identified with $L^2\GG$ via mapping $L^2(L_c\GG)\ni \widehat{\lambda^c(f)} \mapsto f \in L^2\GG$ for $f\in \II(\GG)$ since $\chi_{\GG^{(0)}}$ is cyclic and separating. There is also a natural twisted right convolution representation $\rho^c$ of $\mathcal{I}(G)$ on $L^2(\GG,\nu)$ given by
\begin{equation} \label{eqn: right conv formula}
    (\rho^c(f)\xi)(\gamma) = \sum_{\gamma_1\gamma_2 = \gamma}\xi(\gamma_1) f(\gamma_2^{-1}) c(\gamma_1,\gamma_2^{-1}), \quad \xi\in L^2(\GG,\nu).
\end{equation}
Then $R_c(\GG)\coloneqq \rho^c(\mathcal{I}(\GG))'' = L_c(\GG)' = JL_c(\GG)J$, where $J = J_c$ is the (twisted) modular conjugation map in $\B(L^2(\GG,\nu))$ given by 
\[(J\xi)(\gamma)=\overline{c(\gamma^{-1},\gamma)}\,\overline{\xi(\gamma^{-1})}.\]
Thus, the commutant $R_c(\GG)$ of $L_c(\GG)$ in $\B(L^2(\GG,\nu))$ is generated by right convolution operators, and the left and right regular representations commute. Similar to the left regular case, each partial Borel isomorphism $\theta \in [[\GG]]$ with the corresponding Borel bisection $B\subset \GG$ gives rise to a partial isometry $v^c_\theta\coloneqq \rho^c(u_\theta^c) =  Ju_\theta^cJ\in R_c(\GG)$ that satisfies 
\[v^c_\theta M_f (v^c_\theta)^* = M_{R_\theta f}\:\:\text{ and }\:\: [M_f,v^c_\theta] =  M_{f\chi_{\GG_{r(\theta)}}-R_\theta f}v^c_\theta,\]
for $f \in L^\infty \GG$. When $c\in Z^2(\GG,\T)$ is the trivial $2$-cocycle, we omit the symbol $c$ in the superscripts and subscripts in above notations. 

\subsection{Direct integral decomposition} \label{sec: Direct int decomp}

For a countable p.m.p.\ discrete measured groupoid $(\GG,\nu)$ over the unit space $(\mathcal G^{(0)},\mu)$, we can decompose $L^2(\GG,\nu)$ as a direct integral of Hilbert spaces
\[L^2(\GG,\nu) = \int_{\GG^{(0)}}^\oplus \ell^2(\GG_x)\,\mathrm{d}\mu(x).\]
Note that the $J(L^\infty \GG^{(0)})J$ is the algebra of diagonalizable operators with respect to the above disintegration of $L^2(\GG,\nu)$, where the right multiplication operators in $J(L^\infty \GG^{(0)})J$ are explicitly given by
\[(JfJ\xi)(\gamma) = f(s(\gamma))\xi(\gamma), \qquad f\in L^\infty \GG^{(0)},\: \xi\in L^2(\GG,\nu).\]
The commutant $(J(L^\infty \GG^{(0)})J)'$, which admits a decomposition
\[(J(L^\infty \GG^{(0)})J)' = \int_{\GG^{(0)}}^\oplus \B(\ell^2(\GG_x))\,\mathrm{d}\mu(x) \subset \B(L^2(\GG,\nu)),\]
is the algebra of decomposable operators with essentially bounded measurable field $x\mapsto T_x\in \B(\ell^2(\GG_x))$ \cite[Corollary IV.8.16]{Takesaki1}. It is clear that the elements in $L_c\GG$ and $L^\infty(\GG,\nu)$ commute with $J(L^\infty \GG^{(0)})J$, so they are decomposable operators. For $f\in \II(\mathcal G)$, we have $\lambda^c(f) = \int_{\GG^{(0)}}^\oplus \lambda^c_x(f)\,\mathrm{d}\mu(x)$, where $\lambda^c_x(f)\colon \ell^2(\GG_x)\to \ell^2(\GG_x)$ is defined as
\[(\lambda^c_x(f)\xi)(\gamma) =\sum_{\gamma_1\gamma_2=\gamma}f(\gamma_1)\xi(\gamma_2)c(\gamma_1,\gamma_2),  \qquad \xi\in \ell^2(\GG_x).\]
On the other hand, under the above identification we have
\[L^\infty(\GG,\nu) = \int_{\GG^{(0)}}^\oplus \ell^\infty(\GG_x)\,\mathrm{d}\mu(x).\]
There is a canonical normal faithful conditional expectation $\E\colon (J(L^\infty \GG^{(0)})J)'\to L^\infty(\GG)$ given by the fiberwise diagonal map
\begin{equation} \label{eqn: cond exp}
    \E(T)(\gamma) = \langle T_{s(\gamma)}\delta_\gamma,\delta_\gamma\rangle_{\ell^2(\GG_{s(\gamma)})}, \qquad T = \int_{\GG^{(0)}}^\oplus T_{x}\,\mathrm{d}\mu(x)\in (J(L^\infty \GG^{(0)})J)',
\end{equation}
which defines an essentially bounded measurable function on $\GG$ because $|\E(T)(\gamma)| \leq \| T\|$ and matrix coefficients of measurable decomposable fields are measurable. Normality of $\E$ follows from monotone convergence on diagonal coefficients fiberwise.

Fix a $2$-cocycle $c \in Z^2(\GG,\T)$. For a partial Borel isomorphism $\theta \in [[\GG]]$ with the corresponding Borel bisection $B\subset \GG$, under the above direct integral decomposition of $L^2(\GG)$, the vector $\widehat{u_\theta^c}$ corresponds to the measurable field
\[x\mapsto \begin{cases}
    \delta_{\theta \cdot 1_x} & \text{if }x\in s(\theta),\\
    0 & \text{if }x\not\in s(\theta),
\end{cases}\]
since a local bisection contains at most one arrow above each source point. Hence, for any $T\in (J(L^\infty \GG^{(0)})J)'$,
\begin{equation} \label{eqn: inner prod}
    \langle T\widehat{u_\theta^c}, \widehat{u_\theta^c}\rangle 
    = \int_{s(\theta)} \langle T_x \delta_{\theta \cdot 1_x}, \delta_{\theta \cdot 1_x}\rangle_{\ell^2(\GG_x)}\,\mathrm{d}\mu(x) 
    = \int_{s(\theta)} \E(T)(\theta \cdot 1_x)\,\mathrm{d}\mu(x) 
    = \int_B \E(T)(\gamma)\,\mathrm{d}\nu(\gamma). 
\end{equation}
Let $\theta\in [[\GG]]$ and $B\subset \GG$ be the corresponding local bisection. In general $v^c_\theta\coloneqq \rho^c(\chi_B) = Ju_\theta^cJ$ is not decomposable since it does not lie inside $(JL^\infty \GG^{(0)} J)'$, but it still admits a fiberwise decomposition as follows. For each $x\in r(\theta) = r(B)$, recall that $r_\theta^{-1}(x)$ is the unique arrow in $B$ with $r(r_\theta^{-1}(x)) = x$. Let $\alpha\colon r(B)\to s(B)$ be a partial Borel isomorphism on $\GG^{(0)}$ induced by $B$ and given by $\alpha(x) = s(r_\theta^{-1}(x))$. For each $x\in r(B)$, right multiplication by $r_\theta^{-1}(x)$ induces a unitary $V_x^c\colon \ell^2(\GG_{\alpha(x)})\to \ell^2(\GG_{x})$ defined by 
\[(V_x^c \zeta)(\gamma) = c(\gamma r_\theta^{-1}(x),r_\theta^{-1}(x))\zeta(\gamma r_\theta^{-1}(x)), \qquad \gamma\in \GG_x.\]
On the other hand, by the right convolution formula (\ref{eqn: right conv formula}), we have for $\xi\in L^2(\GG)$,
\[(v_\theta^c\xi)(\gamma) = (\rho^c(\chi_B)\xi)(\gamma) = \begin{cases}
    c\bigl(\gamma r_\theta^{-1}(x), r_\theta^{-1}(x)\bigr)\xi(\gamma r_\theta^{-1}(x)) & \text{ if }s(\gamma) = x \in r(B),\\
    0 & \text{ if }s(\gamma)\not\in r(B).
\end{cases}\]
Using the direct integral decomposition of $\xi$, this gives a fiberwise formula for $v_\theta^c$ such that
\[(v_\theta^c\xi)_x = \begin{cases}
    V_x^c \xi_{\alpha(x)} & \text{ if }s(\gamma) = x \in r(B),\\
    0 & \text{ if }s(\gamma)\not\in r(B).
\end{cases}\]
Using this fiberwise formula we can show $\E\colon (J(L^\infty \GG^{(0)})J)'\to L^\infty \GG$ is equivariant by the right $[[\GG]]$-action in the sense that $\E\circ \Ad(Ju_\theta^c J)= R_\theta \circ \E$. Indeed, for $T\in (J(L^\infty \GG^{(0)}) J)'$ with decomposition $T = \int_{\GG^{(0)}}^\oplus T_x\,\mathrm{d}\mu(x)$, we know $v_\theta^c T (v_\theta^c)^* \in v_\theta^c(J(L^\infty \GG^{(0)})J)'(v_\theta^c)^* \subset (J(L^\infty \GG^{(0)})J)'$ is also decomposable, and its fiber at $x\in r(\theta)$ is
\[(v_\theta^c T (v_\theta^c)^*)_x = V_x^c T_{\alpha(x)}(V_x^c)^*,\]
and zero when $x\not\in r(\theta)$. Therefore, for $\gamma\in \GG_x$, $x\in r(\theta)$, we have
\begin{equation*}
    \begin{aligned}
        \E(v_\theta^c T(v_\theta^c)^*)(\gamma) &=  \langle (v_\theta^c T(v_\theta^c)^* )_{x} \delta_\gamma,  \delta_\gamma\rangle_{\ell^2(\GG_x)} = \langle V_x^c T_{\alpha(x)}(V_x^c)^* \delta_\gamma,\delta_\gamma\rangle_{\ell^2(\GG_x)} \\
        & = \langle T_{\alpha(x)}(V_x^c)^* \delta_\gamma, (V_x^c)^*\delta_\gamma\rangle_{\ell^2(\GG_{\alpha(x)})}
        = \langle T_{\alpha(x)} \delta_{\gamma r_\theta^{-1}(x)}, \delta_{\gamma r_\theta^{-1}(x)}\rangle_{\ell^2(\GG_{\alpha(x)})}\\
        & = \E(T)(\gamma r_\theta^{-1}(x)) = R_\theta (\E(T))(\gamma).
    \end{aligned}
\end{equation*}
When $x = s(\gamma)\not\in r(\theta)$, then $ \E(v_\theta^c T(v_\theta^c)^*)(\gamma) = 0 = R_\theta (\E(T))(\gamma)$. Hence $\E\circ \Ad(Ju_\theta^c J)= R_\theta \circ \E$ as we claimed.

Also, $\E$ is left equivariant under the conjugate actions of twisted unitaries, i.e., $L_\theta\circ \E = \E\circ \Ad(u_\theta^c)$ for $\theta \in [[\GG]]$. Indeed, fix $\theta\in [[\GG]]$ with corresponding local Borel bisection $B\subset \GG$. Recall that for any $x \in s(\theta) = s(B)$, $s_\theta^{-1}(x)\in B$ is the unique arrow with $s(s_\theta^{-1}(x)) = x$. Left multiplication by $B$ induces fiberwise a unitary $U_x^c\colon \ell^2(\GG_x)\to \ell^2(\GG_x)$, for each $x\in \GG^{(0)}$, given by 
\[U_x^c\delta_\gamma = \begin{cases}
    c(s_\theta^{-1}(r(\gamma)),\gamma)\delta_{s_\theta^{-1}(r(\gamma))\gamma} & \text{ if }r(\gamma)\in s(\theta),\\
    0 & \text{ otherwise.}
\end{cases}\]
On the other hand, from the left convolution formula (\ref{eqn: left conv formula}) we have
\[(u_\theta^c \xi)(\gamma) = \begin{cases}
    c(r_\theta^{-1}(r(\gamma)),r_\theta^{-1}(r(\gamma))^{-1}\gamma)\xi(r_\theta^{-1}(r(\gamma))^{-1}\gamma)  & \text{ if }r(\gamma)\in r(\theta),\\
    0 & \text{ otherwise}.
\end{cases}\]
Since $\theta\in[[\GG]]$, it is clear that $s_\theta^{-1}(r(\gamma)) = r_\theta^{-1}(r(\gamma))^{-1}$, so $u_\theta^c$ admits the direct integral decomposition $u_\theta^c = \int_{\GG^{(0)}}^\oplus U_x^c\,\mathrm{d}\mu(x)$. For any $T \in (J(L^\infty \GG^{(0)})J)'$, it follows that
\[u_\theta^c T (u_\theta^c)^* = \int_{\GG^{(0)}}^\oplus U_x^c T_x (U_x^c)^*\,\mathrm{d}\mu(x) \in (J(L^\infty \GG^{(0)})J)'.\]
Hence for $\gamma\in \GG$ with $r(\gamma)\in B$, we have
\begin{equation*}
    \begin{aligned}
        \E(u_\theta^c T (u_\theta^c)^*)(\gamma) &= \langle U_x^cT_x (U_x^c)^*\delta_\gamma, \delta_\gamma\rangle_{\ell^2(\GG_x)} = \langle T_x (U_x^c)^*\delta_\gamma, (U_x^c)^* \delta_\gamma\rangle_{\ell^2(\GG_x)} \\
        &= \langle T_x \delta_{r_\theta^{-1}(r(\gamma))^{-1}\gamma}, \delta_{r_\theta^{-1}(r(\gamma))^{-1}\gamma}\rangle_{\ell^2(\GG_x)} = (\E(T))(r_\theta^{-1}(r(\gamma))^{-1}\gamma) = L_\theta(\E(T))(\gamma),
    \end{aligned}
\end{equation*}
and this proves the left $[[\GG]]$-equivariance formula $\E\circ \Ad(u_\theta^c )= L_\theta \circ \E$.

Moreover, the restriction of $\E$ on $L_c\GG$ is the (trace-preserving) normal canonical conditional expectation $\E_{L^\infty \GG^{(0)}}\colon L_c\GG \to L^\infty \GG^{(0)}$, where we as usual identify $L^\infty \GG^{(0)}$ as a unital von Neumann algebra of $L_c\GG$ as in \eqref{eqn: identify Linfty X}. Indeed, for $f\in \II(\GG)$ and $\gamma\in \GG$,
\[\E(\lambda^c(f))(\gamma)=\langle\lambda^c_{s(\gamma)}(f)\delta_\gamma,  \delta_\gamma\rangle = f(1_{r(\gamma)})c(1_{r(\gamma)},\gamma)=f(r(\gamma)),\]
where the last equality follows from the normalization of the $2$-cocycle $c$. On the other hand, $\E_{L^\infty \GG^{(0)}}(\lambda^c(f))=f|_{\GG^{(0)}}$. Hence $\E(\lambda^c(f))=\E_{L^\infty \GG^{(0)}}(\lambda^c(f))$. Since $\lambda^c(\II(\GG))$ is ultraweakly dense in $M$ and both sides are
normal in $x\in L_c\GG$, we conclude that
\begin{equation} \label{eqn: restr of cond exp}
    \E|_{L_c\GG} = \E_{L^\infty \GG^{(0)}}|_{L_c\GG}.
\end{equation}

\subsection{C\texorpdfstring{$^*$}{*}-bimodule topologies and bidual spaces associated with von Neumann algebras}
In this section we collect some basic facts about a locally convex topology on C$^{*}$-bimodules relative to von Neumann algebras, introduced by Magajna in \cite{MR1616512} and \cite{MR1750836}, and later adapted and generalized in \cite{ding2023properproximality} and \cite{ding2023biexact}. This topology and the corresponding bidual spaces will be used next section for giving equivalent characterizations of properly proximal groupoids in terms of biduals.

Let $M$ be a von Neumann algebra. A \textit{(concrete) operator $M$-system} $X$ consists of a concrete embedding of an operator system $X\subset \mathbb{B}(\mathcal{H})$ into the space of bounded linear operators on a Hilbert space $\mathcal{H}$ and a faithful non-degenerate $*$-representation $\pi\colon M\to \mathbb{B}(\mathcal{H})$ such that $X$ is a $\pi(M)$-bimodule. We call such an embedding of $X\subset \mathbb{B}(\mathcal{H})$ together with $\pi$ \textit{a concrete realization of $X$ as an $M$-system}. We say the operator $M$-system $X$ is \textit{($M$-)normal} if the concrete realization can be made so that $\pi$ extends to a normal representation of $M$. If $X$ is moreover a unital C$^{*}$-algebra, we will say that it is an \textit{operator $M$-C$^{*}$-algebra} or an \textit{($M$-)normal} $M$-C$^{*}$-algebra if $X$ is so as an operator system. We will often drop $\pi$ in the notation.

Given positive normal linear functionals $\omega,\rho \in M_*$ in the predual of $M$, we consider the seminorm on $X$ as in \cite[Section 3]{ding2023biexact} given by
\[s^{\rho}_{\omega}(x) = \inf\{\rho(a^{*}a)^{\frac{1}{2}}\left\Vert y \right\Vert\omega(b^{*}b)^{\frac{1}{2}} : x= a^{*}yb, a,b\in M, y\in X\}.\]
We call the topology on $X$ induced by the seminorms $\{s^{\rho}_{\omega}:\omega,\rho \in (M_*)_{+}\}$ the \textit{$M$-topology} on $X$.

We denote by $X^{M\sharp M}$, or just by $X^{\sharp}$ if no confusion will arise, the space of linear functionals $\varphi \in X^{*}$ such that for any $x\in X$, the map $M\times M \ni (a,b)\mapsto \varphi(axb)$ is an ultraweakly continuous bilinear form on $M$, i.e., the linear functionals $M\ni a\mapsto \varphi(ax)$ and $M\ni b\mapsto \varphi(xb)$ are normal. We call the $\sigma(X, X^{M\sharp M})$-topology the \textit{weak $M$-topology}. By \cite[Thoerem 3.7]{MR1750836} or \cite[Proposition 3.3]{ding2023biexact}, a functional $\varphi\in X^{*}$ is continuous in the $M$-topology if and only if $\varphi\in X^{M\sharp M}$.

Now, suppose $B$ is a unital $M$-normal $M$-C$^{*}$-algebra. We let $p_{\nor}\in M^{**}\subset B^{**}$ denote the projection corresponding to the support of the identity representation $M$. As explained in \cite[Section 2]{ding2023properproximality} and \cite[Section 3.1]{ding2023biexact}, we may identify $(p_{\nor}B^{**}p_{\nor})_{*} \simeq B^{\sharp}$ by considering the restriction map to $B$, so the dual map naturally gives $B^{\sharp *}$ a von Neumann algebra structure so that 
\[B^{\sharp *}\simeq p_{\nor}B^{**}p_{\nor}\]
as von Neumann algebras. Since $p_{\nor}$ commutes with $A\subset B^{**}$, the above isomorphism preserves the natural $M$-bimodule structures on $B^{\sharp *}$ and $p_{\nor}B^{**}p_{\nor}$, so we can view $M$ as a von Neumann subalgebra of $B^{\sharp *}$. 

If $i_{B}\colon B\to B^{\sharp *}$ denotes the canonical inclusion map of $B$, then $i_{B}$ is an $M$-bimodular complete order isomorphism, but it is not a $*$-homomorphism in general because $p_{\nor}$ might not be central in $B^{**}$.

\section{Properly proximal groupoids}\label{sec:defpp}

\noindent This section introduces the definition of proper proximality for countable measured groupoids and discusses some equivalent characterizations.

\subsection{The \texorpdfstring{$c_0$}{c0} and small-at-infinity boundary spaces} For each measurable subset $B\subset \mathcal G^{(0)}$, recall that $\GG^B=r^{-1}(B)= \{\gamma \in \GG: r(\gamma) \in B\}$ and $\GG_B = s^{-1}(B)= \{\gamma \in \GG: s(\gamma) \in B\}$. If we let $p \coloneqq \chi_B \in L^\infty \mathcal G^{(0)} \subset L\GG$, then $p = M_{\chi_{\GG^B}}$ and $JpJ = M_{\chi_{\GG_B}}$ in $\B(L^2\GG)$.

\begin{defn}
    For a countable p.m.p.\ groupoid $(\GG,\nu)$ with unit space $(\mathcal G^{(0)},\mu)$, define $c_0(\GG)$ to be the collection of all functions $f\in L^{\infty}(\GG,\nu)$ such that for every $\eps>0$ there exist measurable subsets $B,B' \subset \mathcal G^{(0)}$ with $\mu(B),\mu(B') < \eps$ and a finite measure subset $F\subset \GG$ such that
\begin{equation*}
   \| (1-\chi_{\GG^B})(1-\chi_{\GG_{B'}})f(1-\chi_{F})\| _\infty\leq \eps.
\end{equation*}
The small-at-infinity boundary of $\GG$, denoted by $\mathbb S(\mathcal G)$, consists of all functions $f\in L^{\infty}(\mathcal G,\nu)$ satisfying $f\chi_{\GG_{r(\theta)}}-R_{\theta}f\in c_0(\GG)$ for every $\theta \in [[\GG]]$. 
\end{defn}

It is clear that $c_0(\GG)$ is an ideal of $L^\infty(\GG,\nu)$ and $\S(\GG)$ is a C$^*$-subalgebra of $L^\infty(\GG,\nu)$.

\begin{remark}\label{rem:3.2}
\begin{enumerate}
    \item[(i)] Notice that $[[\GG]]$ acts by left translation on $\mathbb{S}(\GG)$. Moreover, $L^{\infty}(\mathcal G^{(0)})\subset\mathbb{S}(\GG)$ since for $f\in L^{\infty}(\GG^{(0)})$ and $\theta\in [[\GG]]$, we have $(f\chi_{\GG_{r(\theta)}}-R_{\theta}f)(\gamma)=f(r(\gamma))-f(r(\gamma))=0$.
    \item[(ii)] We note that $\mathbb{S}(\GG) = \{f\in L^{\infty}(\GG,\nu):f-R_{\theta}f\in c_0(\GG)$ for every $\theta\in[\GG]\}$. Indeed, for a fixed $\theta\in [[\GG]]$, if we let $B\subset\GG$ denote the corresponding local section, then there exists a global section $A\subset \GG$ such that $B\subset A$ by the Lusin-Novikov Theorem \cite[Theorem 18.10]{Kechris}. Let $\theta'\in [\GG]$ denote the Borel isomorphism on $\GG$ given by the left multiplication by $A$, so that $\restr{\theta'}{ s(\theta)} = \theta$. If $f-R_{\theta'}f\in c_0(\GG)$, then we have $f\chi_{\GG_{r(\theta)}}-R_{\theta}f = (f-R_{\theta'}f)\chi_{\GG_{r(\theta)}} \in c_0(\GG)$. 
\end{enumerate}
\end{remark}

We discuss two ways in which the full pseudogroup $[[\mathcal G]]$ can be generated: with respect to the uniform topology metric $d_u$ and through finite compositions of elements from a subset $\mathcal P\subset[[\mathcal G]]$ (including their inverses). First, we can endow $[\mathcal G]$ with the uniform metric $d_u(\phi,\psi)=\mu(\{x\in\mathcal G^{(0)}:\phi^{-1}(x)\neq\psi^{-1}(x)\})$, which turns $([\mathcal G],d_{u})$ into a Polish group (see e.g. \cite[Lemma 9.1]{Kida-TuckerDrob}). If every element in $[\mathcal G]$ can be approximated in the uniform metric by an element in $\mathcal P$, we write $\overline{\mathcal P}^{d_u}=[\mathcal G]$. Second, let $\mathcal P \subset [[\mathcal G]]$ be a set of Borel partial bisections, and denote by $\langle\mathcal P\rangle$ the set of all finite compositions of elements in $\mathcal{P}$ and their inverses, that is, $\langle\mathcal P\rangle = \{ \theta_{n_1} \circ \ldots \circ \theta_{n_k} : \theta_{n_i} \in \mathcal P \cup \mathcal P^{-1}, k \in \mathbb N \}$. Since $\langle \mathcal P \rangle$ is countable, we enumerate its elements as $\langle\mathcal P\rangle = \{\sigma_n\}_{n \in \mathbb N}$. For any $\rho \in [[\mathcal G]]$ and each $n$, define the set $E_n = \{ x \in s(\rho) : \rho(x) = \sigma_n(x)\}$. Since both $\rho$ and $\sigma_{n}$ are measurable, each $E_{n}$ is a measurable set. We write $\langle \mathcal P\rangle=[[\GG]]$ when for every $\rho\in[[\GG]]$ and almost every $x\in s(\rho)$, there exists some $n\in\mathbb N$ for which $x\in E_n$. To make the sets $E_n$ disjoint, we define a sequence of measurable sets by $D_1 = E_1$ and $D_n = E_n \setminus \cup_{k<n}E_k$. By construction, $\{D_n\}_{n \in \mathbb N}$ forms a measurable partition of $s(\rho)$. Consequently, restricted to each $D_{n}$, $\rho$ matches $\sigma _{n}$, yielding $\rho|_{D_n} = \sigma_n|_{D_n}$, which implies $\rho = \sqcup_n \sigma_n|_{D_n}$.

The following lemma establishes that verifying whether a function $f$ belongs to $\mathbb S(\mathcal G)$ only requires checking that $f-R_{\theta}f\in c_0(\mathcal G)$ for all $\theta$ in a suitably chosen countable subset of $[[\mathcal G]]$.

\begin{lemma}\label{claim:generating set} Let $(\mathcal G,\nu)$ be a countable p.m.p.\ groupoid with unit space $(\mathcal G^{(0)},\mu)$. Let $\mathcal P\subset[[\mathcal G]]$ be a countable set and denote by $\langle\mathcal P\rangle=\{\theta_{n_1}\circ\ldots\circ\theta_{n_k}:\theta_{n_i}\in\mathcal P\cup\mathcal P^{-1}, k\in\mathbb N\}$. Then, the following are equivalent:
\begin{enumerate}
    \item[(a)] $f\chi_{\mathcal G_{r(\theta)}}-R_{\theta}f\in c_0(\mathcal G)$ for every $\theta\in\mathcal P$.
    \item[(b)] $f\chi_{\mathcal G_{r(\theta)}}-R_{\theta}f\in c_0(\mathcal G)$ for every $\theta\in \langle \mathcal P\rangle$.
    \item[(c)] $f\chi_{\mathcal G_{r(\theta)}}-R_{\theta}f\in c_0(\mathcal G)$ for every $\theta\in \overline{\langle \mathcal P\rangle}^{d_u}$.
\end{enumerate}
In particular, if $\mathcal P$ generates $[[\mathcal G]]$ in the sense that $\langle \mathcal P\rangle=[[\mathcal G]]$ or $\overline{\mathcal P}^{d_u}=[[\mathcal G]]$, and one of the above holds, then $f\in\mathbb S(\mathcal G)$.
\end{lemma}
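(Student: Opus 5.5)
Write $D_\theta f\coloneqq f\chi_{\GG_{r(\theta)}}-R_\theta f$ for $\theta\in[[\GG]]$. The implications (c)$\Rightarrow$(b)$\Rightarrow$(a) are trivial, since $\mathcal P\subset\langle\mathcal P\rangle\subset\overline{\langle\mathcal P\rangle}^{d_u}$. So the plan is to prove (a)$\Rightarrow$(b) by an algebraic cocycle identity, then (b)$\Rightarrow$(c) by an approximation argument. The ``in particular'' clause follows from (c) together with \Cref{rem:3.2}(ii).

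For (a)$\Rightarrow$(b), first handle inverses. Since $R_{\theta^{-1}}$ is induced by the measure-preserving partial isomorphism $\gamma\mapsto\gamma\theta^{-1}$, I expect
\[
D_{\theta^{-1}}f=-R_{\theta^{-1}}(D_\theta f),
\]
up to the support cutoffs $\chi_{\GG_{s(\theta)}}$. Next handle compositions. For $\theta_1,\theta_2$, write
\[
D_{\theta_1\theta_2}f = D_{\theta_2}f\cdot\chi_{\GG_{r(\theta_1\theta_2)}}+R_{\theta_2}(D_{\theta_1}f),
\]
again up to multiplication by suitable characteristic functions of sets $\GG_C$ with $C\subset\GG^{(0)}$. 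By induction on word length, (b) then follows from two stability facts about $c_0(\GG)$:
\begin{enumerate}
\item $c_0(\GG)$ is stable under multiplication by $L^\infty$ functions, since it is an ideal.
\item $R_\theta(c_0(\GG))\subset c_0(\GG)$ for each $\theta\in[[\GG]]$.
\end{enumerate}
For the second fact, take $g\in c_0(\GG)$ with witnesses $B,B',F$. Then $R_\theta g$ is witnessed by the following data:
\begin{itemize}
\item $B$, unchanged, because right translation preserves ranges;
\item $\alpha^{-1}(B')\cup(\GG^{(0)}\setminus r(\theta))$-type sets, adjusted so that the measure stays below $\eps$ (measure is preserved by the partial isomorphism, and off $r(\theta)$ the function $R_\theta g$ vanishes anyway);
\item $F\theta^{-1}$, which has the same finite measure because $\nu$ is invariant under right multiplication by bisections.
\end{itemize}

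For (b)$\Rightarrow$(c), let $\theta\in[[\GG]]$ and $\eps>0$, and pick $\sigma\in\langle\mathcal P\rangle$ with $\mu(E)<\eps$, where $E=\{x:\theta^{-1}(x)\ne\sigma^{-1}(x)\}$ (including points in the symmetric difference of domains). Outside $\GG_E$, meaning $s(\gamma)\notin E$, the functions $R_\theta f$ and $R_\sigma f$ agree, and so do the cutoffs. Hence
\[
D_\theta f-D_\sigma f=(D_\theta f-D_\sigma f)\chi_{\GG_E}.
\]
Now take witnesses $B,B',F$ for $D_\sigma f$ at level $\eps$ and use $B'\cup E$ as the new source set. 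This gives
\[
\|(1-\chi_{\GG^B})(1-\chi_{\GG_{B'\cup E}})D_\theta f(1-\chi_F)\|_\infty\le\eps,
\]
with $\mu(B'\cup E)<2\eps$. This is exactly where the source-side exceptional set $B'$ in the definition of $c_0(\GG)$ is used. Note that $d_u$ is defined on $[\GG]$, so for partial bisections I would either extend them to global ones by \Cref{rem:3.2}(ii) or use the same definition of disagreement sets.

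The main obstacle is the bookkeeping in the composition formula and in the right invariance of $c_0(\GG)$. The domains, the characteristic cutoffs $\chi_{\GG_{r(\cdot)}}$, and the direction of the partial isomorphism $\alpha$ on units must be tracked correctly, and one must check that the image of a small set under $\alpha^{\pm1}$ stays small (it does, by measure preservation). The approximation step is short once one notices that $c_0(\GG)$ tolerates an arbitrary function on $\GG_{B'}$ with $\mu(B')$ small.
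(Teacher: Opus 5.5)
Your proposal is correct and follows the paper's proof. You prove (a)$\Rightarrow$(b) by induction on word length, using that $c_0(\GG)$ is an ideal stable under right translations by $[[\GG]]$; you make explicit the inverse and composition identities and the right-invariance check, which the paper only asserts. You prove (b)$\Rightarrow$(c) by the same disagreement-set argument, absorbing $\GG_E$ into the source-side exceptional set.

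One small addition concerns the ``in particular'' clause when $\langle\PP\rangle=[[\GG]]$ is taken in the paper's piecewise sense, $\rho=\bigsqcup_n\sigma_n|_{P_n}$ with $\sigma_n\in\langle\PP\rangle$. There it is not literally (c) plus \Cref{rem:3.2}(ii), since such a $\rho$ need not be a $d_u$-limit of elements of $\langle\PP\rangle$. It does follow from the same trick: $D_\rho f=\sum_n D_{\sigma_n}f\,\chi_{\GG_{r(\rho|_{P_n})}}$, and the tail beyond $N$ is supported on $\GG_{U_N}$, where $U_N=\bigcup_{n>N}r(\rho|_{P_n})$ and $\mu(U_N)\to 0$.
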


\begin{proof} It is clear that (c) implies (a) and that (a) implies (b). The second assertion holds by induction and using that $c_0(\mathcal G)$ is stable under multiplication by characteristic functions of source and range sets, under right translation by elements of $[[\mathcal G]]$, and under finite sums.

Assume (b) holds, and assume, per \Cref{rem:3.2}, that $\mathcal P\subset [\mathcal G]$. Let $\theta_n\in \langle \mathcal P\rangle$ such that $\theta_n\to\theta$ uniformly. For $f\in L^{\infty}(\mathcal G)$ satisfying $f-R_{\theta_n}f\in c_0(\mathcal G)$ for every $n$, letting $\gamma\in\mathcal G$ with $s(\gamma)\not\in D_n=\{x:\theta_n^{-1}(x)\neq \theta^{-1}(x)\}$, one sees that
\begin{equation*}
    (f-R_{\theta}f)(\gamma)=f(\gamma)-f(\gamma\theta)=f(\gamma)-f(\gamma\theta_n)=(f-R_{\theta_n}f)(\gamma).
\end{equation*}
Since $\mu(D_n)=d_u(\theta_n,\theta)\to 0$, this shows that $f-R_{\theta}f\in c_0(\mathcal G)$, proving (c).
\end{proof}

\begin{corollary} \label{lem: right countable family}
There exists a countable family $\Sigma=\{\rho_m:m\geq 1\}\subset [\GG]$ such that for any $f\in L^\infty(\GG,\mu\circ\lambda)$, $f - R_\rho f \in c_0(\GG)$ for every $\rho\in [\GG]$ if and only if $f - R_{\rho_m} f \in c_0(\GG)$ for every $m\geq 1$.
\end{corollary}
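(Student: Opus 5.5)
The plan is to take $\Sigma$ to be a countable $d_u$-dense subset of the full group and then invoke \Cref{claim:generating set}. First I will show that the Polish group $([\GG],d_u)$ is separable; a Polish space is separable by definition, so this is essentially immediate from \cite[Lemma 9.1]{Kida-TuckerDrob}. For a direct argument, I will use the countable decomposition $\GG=\bigsqcup_i B_i$ into Borel partial bisections from \cite[Proposition 3.1]{groupoidfactor}. Any $\rho\in[\GG]$ is then determined, up to null sets, by the measurable partition $\{x\in\GG^{(0)}: \rho\cdot 1_x\in B_i\}$ of $\GG^{(0)}$. Fix a countable algebra $\mathcal A$ of Borel sets generating the measure algebra of $(\GG^{(0)},\mu)$. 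Approximating finitely many pieces of this partition by sets in $\mathcal A$, and discarding a small-measure remainder, gives elements of $[[\GG]]$ built from countably many data that are $d_u$-close to $\rho$.

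The resulting approximants are only partial bisections, and the main obstacle is to make them global without losing countability. To handle this, I will extend each such partial bisection to a global one using \Cref{rem:3.2}(ii), that is, the Lusin--Novikov extension. For each of the countably many approximants I make one fixed choice of extension. The extension changes the approximant only off a set of small measure, so the extended elements of $[\GG]$ remain $d_u$-close to $\rho$. Let $\Sigma=\{\rho_m\}$ denote this countable family; it is $d_u$-dense in $[\GG]$. If one simply cites separability of the Polish group instead, this step disappears entirely.

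The conclusion then follows from \Cref{claim:generating set} with $\mathcal P=\Sigma$. The forward implication is trivial, since $\Sigma\subset[\GG]$. For the converse, suppose $f-R_{\rho_m}f\in c_0(\GG)$ for all $m$. Condition (a) holds because $\chi_{\GG_{r(\rho_m)}}=1$ for global bisections, so the lemma yields (c) for all $\theta\in\overline{\langle\Sigma\rangle}^{d_u}\supseteq\overline{\Sigma}^{d_u}=[\GG]$.

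For this last step to be airtight, I will check that the argument in the lemma proving (b)$\Rightarrow$(c) applies directly to $d_u$-limits of elements of $\Sigma$. This is the computation showing that $f-R_\theta f$ agrees with $f-R_{\theta_n}f$ off $\GG_{D_n}$, where $\mu(D_n)\to 0$. It lets us use the sets $B'=D_n$ in the definition of $c_0(\GG)$, so that $f-R_\theta f\in c_0(\GG)$ for every $\theta\in[\GG]$.
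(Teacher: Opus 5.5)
Your proof is correct, but it obtains the countable family by a different route than the paper.

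\textbf{The paper's route.} The paper takes $\Sigma$ to be global extensions $\rho_m\in[\GG]$ of the pieces of a Lusin--Novikov decomposition $\GG=\bigsqcup_m B_m$ into Borel bisections. Every $\rho\in[\GG]$ is then a countable disjoint gluing of restrictions of the $\rho_m$, since $\rho\cap B_m$ is a sub-bisection of $\rho_m$. The conclusion comes from the proof of \Cref{claim:generating set}:
\begin{itemize}
\item Finite gluings are handled by the identity $f\chi_{\GG_E}-R_{\rho_m|_E}f=(f-R_{\rho_m}f)\chi_{\GG_E}$ for a restriction to a range set $E$.
\item This uses the stability of $c_0(\GG)$ under such cut-downs and under finite sums.
\item The tail of the countable gluing is then absorbed by the $d_u$-approximation step (b)$\Rightarrow$(c).
\end{itemize}

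\textbf{Your route.} You take $\Sigma$ to be a $d_u$-dense countable subset of $[\GG]$, which exists because $([\GG],d_u)$ is Polish. Only the (b)$\Rightarrow$(c) computation is then needed, applied directly to sequences in $\Sigma$ converging to $\theta$. The restriction and gluing bookkeeping disappears.

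\textbf{Comparison.} The paper's family is explicit. It trades separability of the full group for the closure properties of $c_0(\GG)$ under cut-downs to arbitrary measurable range sets; those restrictions form an uncountable set, so no density of $\Sigma$ itself is needed. Your argument is shorter once you cite \cite[Lemma 9.1]{Kida-TuckerDrob}.

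\textbf{One unchecked detail.} If you build the dense set by hand, approximating the partition pieces by sets from a countable algebra can produce collisions of ranges. You must delete those points to obtain an element of $[[\GG]]$. This is harmless: partial bisections are measure preserving, so the deleted good points have measure at most that of the bad set. It is, however, the one step in your sketch that is asserted rather than verified.
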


\begin{proof}
By Lusin-Novikov, we can decompose the countable Borel groupoid $\GG$ into countably many Borel bisections $\GG=\sqcup_m B_m$.  Each local bisection extends, modulo null sets, to a global bisection $\rho_m\in [\GG]$. From the proof of \Cref{claim:generating set}, the statement holds for the family $\{\rho_m\}_{m\geq 1}$. 
\end{proof}

\vspace{1mm}

We continue with a sequence of three lemmas. These will supply some calculations to establish that proper proximality for groupoids is an invariant of the associated von Neumann algebra in Section \ref{sec: invariant vNa}.

\begin{defn}
    We say a measurable subset $S \subset \GG$ is a \emph{$c_0$-set} if the characteristic function $\chi_S \in c_0(\GG)$. Equivalently, $S$ is a $c_0$-set if for any $0<\eps<1$, there exist measurable subsets $B,B'\subset \GG^{(0)}$ with $\mu(B),\mu(B')<\eps$ and a finite-measure subset $F\subset\GG$ such that $S\subset \GG^B \cup \GG_{B'}\cup F$. 
\end{defn}

It is clear that subsets of $c_0$-sets and finite unions of $c_0$-sets are again $c_0$-sets. Moreover, we note that every finite-measure subset of $\GG$ is a $c_0$-set, but there might exist $c_0$-sets of infinite measure in general. For example, consider the trivial transformation groupoid $\GG =\Gamma\ltimes X = \Gamma\times X$ with $\Gamma$ a countable infinite group acting trivially on a diffuse standard probability space $(X,\mu)$. The source and range maps are given by $s(g,x) = r(g,x) = x$ for any $(g,x)\in \Gamma\times X$. Let $X = \sqcup_{n\geq1} B_n$ with $\mu(B_n) = 2^{-n}$, and let $E_n \subset \Gamma$ be a subset with $|E_n| = 2^n$, for each $n\geq1$. Then the set $S\coloneqq \sqcup_n E_n\times B_n \subset \GG$ is Borel and
\[(\mu\circ\lambda)(S) = \sum_n (\mu\circ\lambda)(E_n\times B_n) = \sum_n |E_n|\mu(B_n) = \sum_n 2^n\cdot 2^{-n} = \infty,\]
but $S$ is a $c_0$-set since the head part $\cup_{n\leq N}E_n\times B_n$ always has finite measure, while the tail part is contained in $\GG_{\cup_{n>N}B_n}$, whose source projection can be chosen to have arbitrarily small $\mu$-measure.

\begin{lemma} \label{lem: non c_0-func}
    If $f\in L^\infty(\GG,\nu)$ and $f\not\in c_0(\GG)$, then there exist $\eps>0$, $k\in \{0,1,2,3\}$, and a Borel subset $S\subset \GG$ which is not a $c_0$-set such that $\Real(i^k f)(\gamma) \geq\eps$ for almost every $\gamma\in S$.
\end{lemma}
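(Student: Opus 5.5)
The plan is to reduce the statement to one structural fact about $c_0$-sets and then apply it twice, through a real/imaginary split and a positive/negative split.

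First, the structural fact: for every $\eps>0$ set $S_\eps\coloneqq\{\gamma:|f(\gamma)|\geq\eps\}$. If $S_\eps$ is a $c_0$-set for every $\eps>0$, then $f\in c_0(\GG)$. To check this, fix $\eps>0$ and use the definition of a $c_0$-set to choose $B,B'\subset\GG^{(0)}$ with $\mu(B),\mu(B')<\eps$ and a finite-measure set $F$ with
\[S_\eps\subset\GG^B\cup\GG_{B'}\cup F.\]
Off this union we have $|f|<\eps$. Hence
\[\|(1-\chi_{\GG^B})(1-\chi_{\GG_{B'}})f(1-\chi_F)\|_\infty\leq\eps,\]
which is exactly the condition defining $c_0(\GG)$. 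So if $f\notin c_0(\GG)$, there is some $\eps_0>0$ for which $S_{\eps_0}$ is not a $c_0$-set.

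Next, split into real and imaginary parts. Since $|f|\geq\eps_0$ forces $|\Real f|\geq\eps_0/2$ or $|\operatorname{Im} f|\geq\eps_0/2$, we get
\[S_{\eps_0}\subset\{|\Real f|\geq\eps_0/2\}\cup\{|\Real(-if)|\geq\eps_0/2\},\]
using $\Real(-if)=\operatorname{Im} f$. Then split each of these sets by sign:
\[\{|\Real g|\geq\delta\}=\{\Real g\geq\delta\}\cup\{\Real(-g)\geq\delta\}.\]
Since $-1=i^2$ and $-i=i^3$, this writes $S_{\eps_0}$ as a subset of the union of the four sets
\[S_k\coloneqq\{\gamma:\Real(i^kf)(\gamma)\geq\eps_0/2\},\qquad k\in\{0,1,2,3\}.\]

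Finally, conclude by contradiction. Finite unions of $c_0$-sets are $c_0$-sets, and subsets of $c_0$-sets are $c_0$-sets. So if every $S_k$ were a $c_0$-set, then $S_{\eps_0}$ would be one too, which we ruled out. Hence some $S_k$ is not a $c_0$-set, and taking $S=S_k$ and $\eps=\eps_0/2$ gives the lemma.

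The only step needing care is measurability. Because $f$ is only defined almost everywhere, I would fix a Borel representative of $f$ so that each $S_k$ is a genuine Borel set. Changing $f$ on a null set changes $S_k$ only by a null set, and null sets have finite (zero) measure, so they are $c_0$-sets and do not affect whether $S_k$ is one. Beyond this point the argument is routine; the one genuine idea is the contrapositive in the first step.
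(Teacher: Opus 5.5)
Your proof is correct and takes essentially the same approach as the paper. Both arguments first show that some level set of $|f|$ (yours $\{|f|\geq\eps_0\}$, the paper's $\{|f|>\eps_0\}$) fails to be a $c_0$-set, since otherwise $f\in c_0(\GG)$. Both then cover that set by the four sets $\{\Real(i^kf)\geq\eps_0/2\}$ and use that finite unions of $c_0$-sets are $c_0$-sets; your remark about fixing a Borel representative is a harmless extra detail.
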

\begin{proof}
    Suppose towards a contradiction that $\{\gamma\in \GG:|f(\gamma)| > 1/n\}$ is a $c_0$-set for every $n\geq 1$. Given $\eps >0$, choose $n$ with $1/n<\eps$. Then there exist subsets $B,B'\subset \GG^{(0)}$ with $\mu(B),\mu(B')<\eps$ and a Borel subset $F\subset \GG$ of finite measure such that $\{\gamma\in \GG:|f(\gamma)|>1/n\} \subset F\cup \GG^B \cup \GG_{B'}$. On the complement we have $|f| \leq 1/n <\eps$. Hence $f\in c_0(\GG)$, contradicting our assumption. Therefore, there exists $\eps_0 > 0$ such that the set $A\coloneqq \{\gamma\in \GG:|f(\gamma)|>\eps_0\}$ is not a $c_0$-set.

    For each $k\in \{0,1,2,3\}$, let $A_k\coloneqq \{\gamma\in \GG: \Real(i^k f(\gamma)) \geq \eps_0/2\}$. By the triangle inequality, we have $A \subseteq \cup_{k=0}^3 A_k$. If all $A_k$ were $c_0$-sets, then $A$ would also be a $c_0$-set, a contradiction. Hence there exists some $k\in \{0,1,2,3\}$ for which $A_k$ is not a $c_0$-set. We then take such $k$, $S = A_k$ and $\eps = \eps_0/2$.
\end{proof}

\begin{lemma} \label{lem: non c_0-set}
    Let $S\subset (\GG,\nu)$ be a Borel subset that is not a $c_0$-set. Then there exists $\delta > 0$ such that for every finite $\nu$-measure subset $F\subset S$, there exists a local bisection $A\subset S\setminus F$ with $\nu(A) = \mu(s(A)) = \mu(r(A)) \geq\delta$.
\end{lemma}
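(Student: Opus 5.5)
The plan is a maximal-bisection (exhaustion) argument. Since $S$ is not a $c_0$-set, there is $\eps_0\in(0,1)$ such that $S$ is \emph{not} contained in $\GG^B\cup\GG_{B'}\cup F'$ for any measurable $B,B'\subset\GG^{(0)}$ with $\mu(B),\mu(B')<\eps_0$ and any finite-measure $F'\subset\GG$. I will take $\delta\coloneqq\eps_0$. Fix a finite-measure subset $F\subset S$ and put $T\coloneqq S\setminus F$. Since $F$ is a $c_0$-set and finite unions of $c_0$-sets are $c_0$-sets, $T$ is not a $c_0$-set. More precisely, if $T\subset \GG^B\cup\GG_{B'}\cup F'$, then $S\subset\GG^B\cup\GG_{B'}\cup(F'\cup F)$. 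Hence the same $\eps_0$ witnesses that $T$ is not a $c_0$-set.

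\textbf{Step 1: a maximal bisection.} I will build a local bisection $A\subset T$ that is maximal in the sense that
\[T\subset \GG^{r(A)}\cup\GG_{s(A)}\quad\text{up to a null set.}\]
The construction is greedy. Set $A_0=\emptyset$. Given a bisection $A_n\subset T$, let
\[T_n\coloneqq T\setminus\bigl(\GG^{r(A_n)}\cup\GG_{s(A_n)}\bigr),\qquad m_n\coloneqq\sup\{\nu(C): C\subset T_n \text{ a local bisection}\}.\]
Choose a bisection $C_n\subset T_n$ with $\nu(C_n)\geq m_n/2$ and set $A_{n+1}=A_n\sqcup C_n$. This is still a bisection, because the range and source of $C_n$ avoid $r(A_n)$ and $s(A_n)$ respectively. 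Let $A=\bigcup_n A_n$; an increasing union of bisections is again a bisection.

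Because $\nu$ restricted to a bisection equals $\mu$ of its source, we get $\sum_n m_n/2\le\sum_n\nu(C_n)=\nu(A)\le 1$, so $m_n\to0$.

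Suppose $T\setminus(\GG^{r(A)}\cup\GG_{s(A)})$ were non-null. By Lusin--Novikov, $\GG$ is a countable union of Borel bisections, so this set contains a bisection $C$ with $\nu(C)>0$. Since $C$ avoids $\GG^{r(A)}\cup\GG_{s(A)}$, it lies in every $T_n$. Hence $m_n\geq\nu(C)$ for all $n$, contradicting $m_n\to0$. This proves maximality.

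\textbf{Step 2: the measure bound.} Since $A$ is a bisection and $\nu$ is the p.m.p.\ measure, $\nu(A)=\mu(s(A))=\mu(r(A))$. Suppose $\nu(A)<\eps_0$. Then $B\coloneqq r(A)$ and $B'\coloneqq s(A)$ have measure $<\eps_0$, and Step 1 gives $T\subset\GG^B\cup\GG_{B'}$ up to a null set. Absorbing the null set into the finite-measure part, this contradicts the choice of $\eps_0$. Therefore $\nu(A)\geq\eps_0=\delta$, and $A\subset S\setminus F$, as required.

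\textbf{Main obstacle.} The only delicate point is making the maximal bisection rigorous measure-theoretically, i.e.\ producing a genuinely maximal bisection rather than appealing to Zorn's lemma. The sup-halving exhaustion in Step 1 handles this, together with Lusin--Novikov to find a positive-measure bisection inside any non-null Borel subset of $\GG$. Everything else is bookkeeping with the definition of $c_0$-sets.
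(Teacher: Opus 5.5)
Your proof is correct, and its overall shape agrees with the paper's. Both find a bisection $A\subset S\setminus F$ with $S\setminus F\subset \GG^{r(A)}\cup\GG_{s(A)}$, and then use $\nu(A)=\mu(r(A))=\mu(s(A))$ together with the failure of the $c_0$ condition to force $\nu(A)\geq\delta$.

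Where you differ is in how the maximal bisection is constructed.

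\begin{itemize}
\item The paper decomposes $S\setminus F=\bigsqcup_n B_n$ into Borel bisections by Lusin--Novikov and trims along this fixed enumeration, setting $A_n=B_n\setminus\bigl(\GG^{r(\bigcup_{k<n}A_k)}\cup\GG_{s(\bigcup_{k<n}A_k)}\bigr)$. Every arrow of $B_n\setminus A_n$ then lies in $\GG^{r(A)}\cup\GG_{s(A)}$ by construction. So maximality holds everywhere, with no supremum, no summability estimate, and no use of $\nu(A)\leq 1$.
\item Your sup-halving exhaustion gives maximality only up to a null set. It needs the bound $\sum_n m_n/2\leq\nu(A)\leq 1$ to force $m_n\to 0$. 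It also needs a second application of Lusin--Novikov to extract a positive-measure bisection from a non-null remainder.
\end{itemize}

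Both routes are rigorous. The paper's fixed enumeration simply makes the ``main obstacle'' you flagged disappear. Your argument is the generic measure-algebra exhaustion, which would also work in settings where one only knows that every non-null set contains a positive-measure good piece.
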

\begin{proof}
    Since $S$ is not a $c_0$-set, there is $\delta>0$ such that for any measurable subsets $B,B'\subset \GG^{(0)}$ with $\mu(B),\mu(B')<\delta$ and every finite-measure subset $F\subset\GG$, the set $S\setminus (\GG^B\cup \GG_{B'}\cup F)$ is not null.

    Fix a finite $\nu$-measure subset $F\subset S$. By the Lusin-Novikov theorem, we can write $S\setminus F = \sqcup_{n\geq1}B_n$ as a countable disjoint union of local bisections $B_n\in [[\GG]]$. We define recursively local bisections as follows: let $A_1\coloneqq B_1$, and recursively for $n\geq 2$, 
    \[A_n \coloneqq B_n \setminus (\GG^{r(\cup_{k<n}A_k)}\cup \GG_{s(\cup_{k<n}A_k)}).\]
    Since each $A_n \subset B_n$, every $A_n$ is again a Borel local bisection. Also, by construction we have
    \[r(A_n) \cap r(\cup_{k<n}A_k) = \varnothing\:\:\text{ and }\:\: s(A_n)\cap s(\cup_{k<n}A_k) =\varnothing,\]
    so the union $A\coloneqq \cup_{n\geq1}A_n$ is a Borel local bisection because the range sets $r(A_n)$ and the source sets $s(A_n)$ are pairwise disjoint. Moreover, we have $A\subset S\setminus F$ by construction.

    If $\gamma\in B_n\setminus A_n$, then by definition of $A_n$ we have $r(\gamma)\in r(\cup_{k<n}A_k) \subset r(A)$ or $s(\gamma)\in s(\cup_{k<n}A_k) \subset s(A)$, so $\gamma \in \GG^{r(A)}\cup \GG_{s(A)}$. On the other hand, if $\gamma\in A_n$, then $\gamma\in A$ by construction. This proves that $S\setminus F\subset \GG^{r(A)}\cup \GG_{s(A)}$, and thus, $S = \GG^{r(A)}\cup \GG_{s(A)}\cup F$.

    If $\nu(A) < \delta$, then taking $B = r(A),B' = s(A)$ we have $\mu(B),\mu(B')<\delta$ and $S = \GG^{B}\cup \GG_{B'}\cup F$, contradicting our choice of $\delta>0$ at the beginning. Hence, $\nu(A)\geq \delta$.
\end{proof}

\subsection{Proper proximality} We are now ready to introduce proper proximality for countable p.m.p.\ groupoids.

\begin{defn} A countable p.m.p.\ groupoid $(\GG,\nu)$ with unit space $(\mathcal G^{(0)},\mu)$ is said to be properly proximal if there is no $[[\GG]]$-invariant 
state on $\mathbb{S}(\GG)$ whose restriction to $L^{\infty}(\mathcal G^{(0)})$ is normal.
Here, a state $\varphi$ on $\mathbb{S}(\GG)$ is left $[[\GG]]$-invariant if for every $f\in \S(\GG)$ and $\theta\in[[\GG]]$, $\varphi(f\chi_{\GG^{s(\theta)}} - L_\theta f) = 0$.
\end{defn}

\begin{remark}\label{rem:3.5}
    The full pseudogroup $[[\GG]]$ in the definition of proper proximality can again be replaced by just the full group $[\GG]$, so that $\GG$ is properly proximal if there is no $[\GG]$-invariant state on $\mathbb{S}(\GG)$ whose restriction to $L^{\infty}(\mathcal G^{(0)})$ is normal. Indeed, suppose $\varphi$ is a $[\GG]$-invariant state on $\S(\GG)$, let $f\in \S(\GG),\: \theta\in [[\GG]]$, and take a Borel isomorphism $\theta'\in [\GG]$ such that $\restr{\theta'}{ s(\theta)} = \theta$. Let $\tilde{f} = f\chi_{\GG^{s(\theta)}}$, so then $\tilde{f} \in \S(\GG)$ and $L_{\theta} f = L_{\theta} (f\chi_{\GG^{s(\theta)}}) = L_{\theta'} \tilde{f}$ with
    \[\varphi(f\chi_{\GG^{s(\theta)}} - L_{\theta} f) = \varphi(\tilde{f} - L_{\theta'}\tilde{f}) = 0,\]
    since $\varphi$ is $[\GG]$-invariant by assumption. Thus every $[\GG]$-invariant state on $\S(\GG)$ is necessarily $[[\GG]]$-invariant.
\end{remark}

\begin{lemma}\label{lem:uniformtopdensity} Let $\psi$ be a state on $\mathbb S(\mathcal G)$ whose restriction to
$L^\infty(\mathcal G^{(0)})$ is normal. Let $\mathcal P$ be a symmetric countable set and denote by $\langle\mathcal P\rangle=\{\theta_{n_1}\circ\cdots\circ\theta_{n_k}:\theta_{n_i}\in\mathcal P, k\in\mathbb N\}$. Then, the following are equivalent:
\begin{enumerate}
    \item[(a)] $\psi(L_{\theta}f)=\psi(f\chi_{\mathcal G^{s(\theta)}})$ for every $\theta\in\mathcal P$.
    \item[(b)] $\psi(L_{\theta}f)=\psi(f\chi_{\mathcal G^{s(\theta)}})$ for every $\theta\in\langle\mathcal P\rangle$.
    \item[(c)]  $\psi(L_{\theta}f)=\psi(f\chi_{\mathcal G^{s(\theta)}})$ for every $\theta\in\overline{\langle\mathcal P\rangle}^{d_u}$.
\end{enumerate}
In particular, if $\mathcal P$ generates $[[\mathcal G]]$ in the sense that $\langle \mathcal P\rangle=[[\mathcal G]]$ or $\overline{\mathcal P}^{d_u}=[[\mathcal G]]$, and one of the above holds, then the state $\psi$ is left $[[\mathcal G]]$-invariant.
\end{lemma}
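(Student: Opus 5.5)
The trivial implications come first. Since $\mathcal P\subset\langle\mathcal P\rangle\subset\overline{\langle\mathcal P\rangle}^{d_u}$, (c) implies (b) and (b) implies (a). The work is in showing (a) implies (b) and (b) implies (c).

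For (a)$\Rightarrow$(b) I will argue by induction on word length. Take $\theta=\theta_1\circ\theta_2$ with $\theta_1,\theta_2\in\mathcal P$, for instance. Then $L_\theta f=L_{\theta_1}(L_{\theta_2}f)$. Because $L_{\theta_2}f$ is supported in $\GG^{r(\theta_2)}$, we get $L_{\theta_1}L_{\theta_2}f=L_{\theta_1}\bigl((L_{\theta_2}f)\chi_{\GG^{s(\theta_1)}}\bigr)$. Applying (a) to $\theta_1$ and the element $L_{\theta_2}f\in\S(\GG)$ gives
\[\psi(L_\theta f)=\psi\bigl((L_{\theta_2}f)\chi_{\GG^{s(\theta_1)}}\bigr)=\psi\bigl(L_{\theta_2}(f\chi_{\GG^{\theta_2^{-1}(s(\theta_1)\cap r(\theta_2))}})\bigr).\]
Here I use $\chi_{\GG^{E}}L_{\theta_2}g=L_{\theta_2}(\chi_{\GG^{\theta_2^{-1}(E)}}g)$, which holds since left translation moves range sets by $\theta_2$ on the unit space. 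Applying (a) again, now to $\theta_2$ and the function $f\chi_{\GG^{D}}$ with $D=\theta_2^{-1}(s(\theta_1)\cap r(\theta_2))$, yields $\psi(f\chi_{\GG^{D}})$. Since $D=s(\theta_1\circ\theta_2)$, this is exactly the required identity. Symmetry of $\mathcal P$ ensures that inverses are already covered, so the induction runs over arbitrary finite compositions. Throughout, I use that $\S(\GG)$ is invariant under left translations (\Cref{rem:3.2}(i)) and under multiplication by $\chi_{\GG^E}$, which lies in $L^\infty(\GG^{(0)})\subset\S(\GG)$.

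For (b)$\Rightarrow$(c) I will use the normality of $\psi$ on $L^\infty(\GG^{(0)})$. Let $\theta_n\in\langle\mathcal P\rangle$ with $\theta_n\to\theta$ uniformly, and let $D_n\subset\GG^{(0)}$ be the set of $x$ where $\theta_n^{-1}(x)$ and $\theta^{-1}(x)$ differ, or where only one of them is defined. Then $\mu(D_n)\to0$. Off $\GG^{D_n}$ we have $L_\theta f=L_{\theta_n}f$, so by Cauchy--Schwarz
\[|\psi(L_\theta f)-\psi(L_{\theta_n}f)|=|\psi\bigl(\chi_{\GG^{D_n}}(L_\theta f-L_{\theta_n}f)\bigr)|\le 2\|f\|\,\psi(\chi_{D_n})^{1/2}.\]
Normality of $\psi|_{L^\infty(\GG^{(0)})}$, which is absolutely continuous with respect to $\mu$, gives $\psi(\chi_{D_n})\to0$. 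In the same way, $\psi(f\chi_{\GG^{s(\theta_n)}})\to\psi(f\chi_{\GG^{s(\theta)}})$, since $\mu(s(\theta_n)\triangle s(\theta))\to0$. Passing to the limit in (b) gives (c).

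The final assertion follows from (c) or (b) in the two generation senses. For the $\langle\mathcal P\rangle=[[\GG]]$ case, decompose $\rho=\sqcup_n\sigma_n|_{D_n}$ and write $L_\rho f=\sum_n L_{\sigma_n}(f\chi_{\GG^{D_n}})$. Normality again lets me exchange $\psi$ with the tail of this sum, controlled by $\psi(\chi_{\cup_{n>N}\sigma_n(D_n)})^{1/2}\to0$. I expect the main obstacle to be this bookkeeping: keeping track of domains and range sets under composition, and justifying these countable-sum and approximation limits carefully using normality only on $L^\infty(\GG^{(0)})$.
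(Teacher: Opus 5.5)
Your proof is correct and takes essentially the paper's route. The paper calls (a)$\Rightarrow$(b) clear, which your induction on word length (with the bookkeeping $s(\theta_1\circ\theta_2)=\theta_2^{-1}(s(\theta_1)\cap r(\theta_2))$) makes explicit, and it proves (b)$\Rightarrow$(c) as you do, by localizing $L_\theta f-L_{\theta_n}f$ to $\GG^{D_n}$ with $\mu(D_n)\to0$ and using normality of $\psi$ on $L^\infty(\GG^{(0)})$. The differences are minor: the paper first reduces to global bisections via \Cref{rem:3.5}, uses commutativity of $\S(\GG)$ to get the bound $2\|f\|_\infty\psi(\chi_{\GG^{D_n}})$ instead of your Cauchy--Schwarz square root, and leaves unstated the countable-decomposition argument for the ``in particular'' clause, which you supply correctly.
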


\begin{proof} It is clear that (c) implies (a) and that (a) implies (b). 

Per \Cref{rem:3.5}, assume $\mathcal P\subset[\mathcal G]$ and take $\theta_n\in\langle \mathcal P\rangle$ such that $\theta_n\to \theta$ in the uniform topology. Define $D_n=\{x:\theta_n^{-1}(x)\neq \theta^{-1}(x)\}$ and observe that $L_{\theta}f-L_{\theta_n}f=\chi_{\mathcal G^{D_n}}\cdot (L_{\theta}f-L_{\theta_n}f)$. Therefore, by assumption (b),
\begin{equation*}
    |\psi(L_{\theta}f-f)|=|\psi(\chi_{\mathcal G^{D_n}}(L_{\theta}f-L_{\theta_n}f))|\leq 2\|f\|_{\infty}\cdot \psi(\chi_{\mathcal G^{D_n}}).
\end{equation*}
Since $\mu(D_n)\to 0$ and $\psi|_{L^{\infty}(\mathcal G^{(0)})}$ is normal, the above equation converges to zero. Therefore, $\psi(L_{\theta}f)=\psi(f)$ for every $\theta\in\overline{\langle\mathcal P\rangle}^{d_u}$.
\end{proof}

The subsequent result establishes a characterization of non-proper proximality, which will be applied in the section on ergodic decomposition.

\begin{lemma}\label{lem:finitedimcriterion} For a $(\mathcal{G},\nu)$ be a countable p.m.p.\ groupoid with unit space $(\mathcal G^{(0)},\mu)$, the following are equivalent.
\begin{enumerate}
    \item[(i)] $\mathcal{G}$ is not properly proximal.
    \item[(ii)] There exists a normal state $\omega$ on $L^{\infty}(\mathcal{G}^{(0)})$ such that for every finitely generated unital C$^*$-algebra $A\subset\mathbb S(\mathcal G)$ and every finite set $T\subset[\mathcal G]$, there exists a state $\psi:C^*(A,(L_{\theta}A)_{\theta\in T})\to \mathbb C$ such that $\psi(L_{\theta}a)=\psi(a)$ for every $a\in A$ and $\theta\in T$, and whose restriction to $L^{\infty}(\mathcal G^{(0)})\cap C^*(A,(L_{\theta}A)_{\theta\in T})$ is precisely $\omega$.  
\end{enumerate}
\end{lemma}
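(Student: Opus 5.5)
The implication (i)$\Rightarrow$(ii) is a restriction argument. If $\mathcal G$ is not properly proximal, there is a $[[\mathcal G]]$-invariant state $\varphi$ on $\mathbb S(\mathcal G)$ whose restriction to $L^\infty(\mathcal G^{(0)})$ is normal. Set $\omega=\varphi|_{L^\infty(\mathcal G^{(0)})}$. For any finitely generated unital $A\subset\mathbb S(\mathcal G)$ and finite $T\subset[\mathcal G]$, the C$^*$-algebra $C^*(A,(L_\theta A)_{\theta\in T})$ sits inside $\mathbb S(\mathcal G)$, because $[[\mathcal G]]$ acts on $\mathbb S(\mathcal G)$ by left translation (\Cref{rem:3.2}(i)). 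Take $\psi$ to be the restriction of $\varphi$. Invariance under $\theta\in T\subset[\mathcal G]$ gives $\psi(L_\theta a)=\psi(a)$, since $\chi_{\mathcal G^{s(\theta)}}=1$ for a global bisection. The restriction of $\psi$ to $L^\infty(\mathcal G^{(0)})\cap C^*(\cdots)$ is exactly the restriction of $\omega$.

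For (ii)$\Rightarrow$(i) I would use a weak$^*$-compactness (net) argument.
\begin{itemize}
\item Index a net by pairs $i=(A,T)$, ordered by inclusion. Each finite set of elements of $\mathbb S(\mathcal G)$ and each finite subset of $[\mathcal G]$ is eventually contained in some $(A,T)$.
\item For each $i$, extend the given state $\psi_i$ on $C^*(A,(L_\theta A)_{\theta\in T})$ to a state $\tilde\psi_i$ on $\mathbb S(\mathcal G)$ by Hahn--Banach (Krein extension of states).
\item Let $\varphi$ be a weak$^*$ cluster point of $(\tilde\psi_i)$ in the state space of $\mathbb S(\mathcal G)$.
\item Given $f\in\mathbb S(\mathcal G)$ and $\theta\in[\mathcal G]$, we eventually have $f\in A$ and $\theta\in T$, so $\tilde\psi_i(L_\theta f-f)=0$. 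Hence $\varphi(L_\theta f)=\varphi(f)$ for all $\theta\in[\mathcal G]$.
\item By \Cref{rem:3.5}, $\varphi$ is then $[[\mathcal G]]$-invariant.
\end{itemize}

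It remains to see that $\varphi|_{L^\infty(\mathcal G^{(0)})}=\omega$, which is normal; this shows $\mathcal G$ is not properly proximal. For $g\in L^\infty(\mathcal G^{(0)})$, eventually $g\in A$. Then $g$ lies in $L^\infty(\mathcal G^{(0)})\cap C^*(\cdots)$, so $\tilde\psi_i(g)=\omega(g)$, and passing to the limit gives $\varphi(g)=\omega(g)$.

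The only point needing care is this last step: fixing $\omega$ independently of $(A,T)$ is exactly what makes the restriction pass to the limit. We also need to choose $A$ to contain the prescribed elements, which is allowed since $A$ ranges over all finitely generated unital subalgebras. I expect no real obstacle beyond bookkeeping.
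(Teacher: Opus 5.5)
Your proposal is correct and is essentially the paper's argument: (i)$\Rightarrow$(ii) by restricting the invariant state, and (ii)$\Rightarrow$(i) by weak$^*$ compactness of the state space of $\mathbb S(\mathcal G)$, with $\omega$ fixed in advance so that the restriction to $L^\infty(\mathcal G^{(0)})$ survives the limit. The only difference is presentational: you take a cluster point of Hahn--Banach extensions along the directed set of pairs $(A,T)$, whereas the paper states the same compactness step as the finite intersection property of the weak$^*$-closed sets $\Omega(A,T)$.
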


\begin{proof} By definition (i) implies (ii). For the converse, take $\omega\in L^{\infty}(\mathcal G^{(0)})_*$ as given in (ii). Let $\text{States}(\mathbb S(\mathcal G))$ be the state space of $\mathbb S(\mathcal G)$, endowed with the weak$^*$ topology. This space is compact. For every finitely generated unital C$^*$-subalgebra $A\subset \mathbb S(\mathcal G)$ and finite set $T\subset [\mathcal G]$, define the set $\Omega(A,T)$ consisting of all $\psi\in\text{States}(\mathbb S(\mathcal G))$ such that $\psi(a)=\psi(L_{\theta}a)$ for all $a\in A$ and $\theta\in T$, and whose restriction to $L^{\infty}(\mathcal G^{(0)})\cap C^*(A,(L_{\theta}A)_{\theta\in T})$ equals $\omega$. Each $\Omega(A,T)$ is weak$^*$ closed. Moreover, by assumption (ii), there is a state on $C^*(A,(L_\theta A)_{\theta\in T})$ satisfying the above conditions. By the Hahn-Banach Theorem, such state extends to $\mathbb S(\mathcal G)$, showing that $\Omega(A,T)\neq\emptyset$.

For a finite family $(A_1,T_1),(A_2,T_2),\ldots, (A_n,T_n)$ of finitely generated unital C$^*$-subalgebras $A_i\subset\mathbb S(\mathcal G)$ and finite sets $T_i\subset [\mathcal G]$, define $A=C^*(A_1,\ldots,A_n)$ and $T=T_1\cup\ldots\cup T_n$. From (ii), we obtain a state $\psi:C^*(A,(L_{\theta}A)_{\theta\in T})\to\mathbb C$ that is $T$-invariant on $A$ and whose restriction to $L^{\infty}(\mathcal G^{(0)})\cap C^*(A,(L_{\theta}A)_{\theta\in T})$ is $\omega$; that is, $\cap_{i=1}^n\Omega(A_i,T_i)\neq\emptyset$. In other words, the family of sets $(\Omega(A,T))_{(A,T)}$ has the finite intersection property. Since $\text{States}(\mathbb S(\mathcal G))$ is compact with respect to the weak$^*$-topology, we obtain $\cap_{(A,T)}\Omega(A,T)\neq\emptyset$. The state $\psi:\mathbb S(\mathcal G)\to\mathbb C$ in the intersection is left $[\mathcal G]$-invariant and restricts to $\omega$ on $L^{\infty}(\mathcal G^{(0)})$, which shows that $\mathcal G$ is not properly proximal.
\end{proof}

\subsection{A bidual characterization} In this section, we establish a bidual characterization of proper proximality for groupoids, adapting the methods used in \cite[Theorem 4.3]{BIP21} and \cite[Lemma 8.5]{ding2023properproximality}.

\begin{lemma} \label{lem: relative separation}
Let $E$ be a unital operator system containing a unital $C^*$-subalgebra $B$, and let a group $\Gamma$ act on $E$ by unital complete order isomorphisms preserving $B$. Let $\tau$ be a $\Gamma$-invariant state on $B$. Suppose that there is no $\Gamma$-invariant state $\Phi$ on $E$ satisfying $\restr{\Phi}{B}=\tau$. Then, for every $\eps>0$, there exist $\theta_1,\dots,\theta_d\in\Gamma$, elements $T_1,\dots,T_d\in E$, an element $T_0\in E_+$, and a self-adjoint $b\in B$ with $\tau(b)=0$, such that
\[\left\Vert1+b+T_0+\sum_{k=1}^d(T_k-\theta_k T_k)\right\Vert<\eps.\]
\end{lemma}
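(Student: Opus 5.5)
We argue by Hahn--Banach separation in the real space of self-adjoint elements of $E$. Consider the real subspace
\[W\coloneqq \Span_{\R}\{T-\theta T: T\in E_{sa},\ \theta\in\Gamma\}+\{b\in B_{sa}:\tau(b)=0\}\subset E_{sa},\]
and the convex cone $C\coloneqq W+E_+$. The statement to prove is that $-1$ lies in the norm closure of $C$. Indeed, any element of $C$ can be written as $b+T_0+\sum_k(T_k-\theta_kT_k)$, and by absorbing real scalars and using $\lambda(T-\theta T)=(\lambda T)-\theta(\lambda T)$ we may take the coefficients to be $1$. So $\|1+c\|<\eps$ for some $c\in C$ is exactly the claimed inequality. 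Non-self-adjoint $T_k$ are allowed in the statement, so restricting to self-adjoint ones is harmless.

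Suppose instead that $-1\notin\overline{C}$. Since $\overline{C}$ is a closed convex cone in the real Banach space $E_{sa}$, Hahn--Banach gives a continuous real functional $\phi$ on $E_{sa}$ with $\phi\ge 0$ on $\overline C$ and $\phi(-1)<0$, that is, $\phi(1)>0$. To obtain nonnegativity on the cone (rather than just a lower bound), use that $C$ is closed under positive scalars, so that $\phi(c)\geq \alpha>\phi(-1)$ forces $\phi\geq 0$ on $C$. Normalize so that $\phi(1)=1$ and extend complex-linearly to $E$.

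Now read off the properties of $\phi$. Since $\phi\ge0$ on $E_+$, it is a positive unital functional, hence a state. Since $W$ is a subspace, $\phi\ge0$ on $W$ forces $\phi|_W=0$. In particular $\phi(T)=\phi(\theta T)$ for all self-adjoint $T$, and hence for all $T$ by splitting into real and imaginary parts, using that $\theta$ acts by $*$-preserving maps (unital complete order isomorphisms are self-adjoint). So $\phi$ is $\Gamma$-invariant. Moreover $\phi(b)=0$ whenever $b\in B_{sa}$ and $\tau(b)=0$. Given a general $b\in B_{sa}$, the element $b-\tau(b)1$ lies in $B_{sa}$ and has $\tau$-value $0$, so $\phi(b)=\tau(b)\phi(1)=\tau(b)$. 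Thus $\phi|_B=\tau$, contradicting the hypothesis.

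The only delicate point is the separation step: one must separate the point $-1$ from a convex cone that need not be closed. Passing to the norm closure and exploiting the cone structure to force nonnegativity handles this. The $\Gamma$-invariance of $\tau$ is not actually needed for the contradiction; it only ensures the hypothesis is meaningful.
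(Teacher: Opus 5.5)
Your proposal is correct and is essentially the paper's proof. The paper also separates $-1$ from the cone $E_+ + \mathcal{V}$ in the real space $E_{\rm sa}$, with $\mathcal{V}$ the same subspace as your $W$, and then reads off $\Gamma$-invariance and $\Phi|_B=\tau$ from the vanishing on $\mathcal{V}$. Your extra step, using the cone structure to upgrade a lower bound to $\phi\geq 0$ on $C$, fills in a detail the paper leaves implicit.
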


\begin{proof}
We work in the real Banach space $E_{\rm sa}$.  Let
\[\VV=\Span_{\R}\{T-\theta(T):T\in E_{\rm sa}, \theta\in\Gamma\}+\{b\in B_{\rm sa}:\tau(b)=0\}.\]
If the conclusion failed for some $\eps>0$, then the distance from $-1$ to $E_+ + \VV$ would be positive. By Hahn-Banach separation, there is a non-zero real bounded functional $\psi$ on $E_{\rm sa}$ such that $\psi(\VV)=0$, $\psi(E_+)\subset [0,\infty)$, and $\psi(1)>0$. Extending by complex linearity and normalizing gives a state $\Phi=\psi/\psi(1)$ on $E$. It follows that $\Phi$ vanishes on the coboundary part $\{T - \theta(T):T\in E,\theta\in \Gamma\}$, so $\Phi$ is $\Gamma$-invariant. Also, $\Phi$ vanishes on $\{b\in B_{\rm sa}:\tau(b)=0\}$, so $\restr{\Phi}{B}=\tau$, a contradiction.
\end{proof}

\begin{lemma} \label{lem: cutdown mazur}
Let $E$ be a normal operator $L^\infty(X, \mu)$-system and $\tau$ a normal state on $L^\infty X$. Suppose $\{x_i\}_i$ is a uniformly bounded net in $E$ such that $x_i\to 0$ in $\widetilde E=(E^\sharp)^*$ with respect to the weak$^*$ topology. Then for every index lower bound $i_0$, every $\eps>0$, and every $\delta>0$, there are indices $i_1,\ldots,i_N\geq i_0$, coefficients $\lambda_1,\cdots \lambda_N\geq 0$ with $\sum_{j=1}^N\lambda_j=1$, and a projection $p\in L^\infty X$ such that writing $x=\sum_{j=1}^N\lambda_j x_{i_j}$ we have
\[\tau(1-p)\leq\delta\:\:\text{ and }\:\: \| p x p\| \leq  \eps .\]
The same assertion holds simultaneously for finitely many such nets indexed by the same directed set, with the same indices, the same convex coefficients, and the same projection $p$.
\end{lemma}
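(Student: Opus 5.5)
The plan is to turn weak$^*$ convergence in $\widetilde E=(E^\sharp)^*$ into convergence of convex combinations in the $M$-topology, with $M=L^\infty X$, via Mazur's theorem. The $M$-topology is commutative on the scalar side, so a small seminorm should directly yield a good projection.

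\textbf{Step 1: Mazur.} Weak$^*$ convergence $x_i\to0$ in $\widetilde E$ means $\varphi(x_i)\to 0$ for every $\varphi\in E^\sharp$. By the cited result (\cite[Theorem 3.7]{MR1750836}, \cite[Proposition 3.3]{ding2023biexact}), $E^\sharp$ is exactly the dual of $E$ equipped with the locally convex $M$-topology. So $x_i\to 0$ weakly for the dual pair coming from that locally convex space. Mazur's theorem then says the $M$-topology closure of $\conv\{x_i:i\geq i_0\}$ equals its weak closure, which contains $0$. Hence for any $\eta>0$ there is a convex combination $x=\sum_j\lambda_jx_{i_j}$ with $i_j\geq i_0$ and $s^\tau_\tau(x)<\eta$.

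\textbf{Step 2: producing $p$.} Using $s^\tau_\tau(x)<\eta$, write $x=a^*yb$ with $a,b\in L^\infty X$, $y\in E$, and $\tau(a^*a)^{1/2}\|y\|\tau(b^*b)^{1/2}<\eta$. Rescale $a$, $b$ and $y$ so that $\|y\|\le 1$ and $\tau(a^*a),\tau(b^*b)\le\eta$; this is possible since only the product is constrained. Since $L^\infty X$ is abelian, $h:=|a|^2+|b|^2$ is a positive function. Take $p=\chi_{\{h\le\sqrt\eta\}}$. Chebyshev gives $\tau(1-p)\le\tau(h)/\sqrt\eta\le2\sqrt\eta$. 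Moreover
\[
\|pxp\|=\|(ap)^*y(bp)\|\le\|ap\|\,\|bp\|\le\sqrt\eta .
\]
Choosing $\eta$ with $2\sqrt\eta\le\min(\delta,\eps)$ gives the claim.

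\textbf{Step 3: finitely many nets.} For nets $x^1_i,\dots,x^n_i$, pass to the normal operator $L^\infty X$-system $E^{\oplus n}$, realized block-diagonally in $\B(\mathcal H^{\oplus n})$ with $L^\infty X$ acting diagonally. A functional in $(E^{\oplus n})^\sharp$ is a sum of functionals in $E^\sharp$ on each coordinate. Hence $(x^1_i,\dots,x^n_i)\to0$ weak$^*$ in the corresponding bidual. Steps 1--2 applied there produce a single choice of indices, coefficients, $a$, $b$ and $y=(y^1,\dots,y^n)$. Each coordinate satisfies $x^k=a^*y^kb$ with $\|y^k\|\le\|y\|$, so the same $p$ works for every $k$.

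The main obstacle is Step 1: one must justify that the pairing $(E,E^\sharp)$ is the dual pair of a locally convex topology, the $M$-topology, so that Mazur applies. I would rely on the cited characterization of $M$-continuous functionals. Boundedness of the net is not needed beyond ensuring the convex combinations lie in $E$.
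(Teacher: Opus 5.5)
Your proposal is correct and follows the paper's argument. In both, Hahn--Banach (Mazur), together with the fact that $E^\sharp$ is the dual of $E$ for the $L^\infty X$-topology, gives a tail convex combination with small $s_\tau$-seminorm; a Chebyshev cut-down of the factorization then yields $p$, and the simultaneous version comes from passing to a finite direct sum. The only cosmetic difference is that you rescale and take one spectral projection of $|a|^2+|b|^2$, whereas the paper takes the product $p_ap_c$ of separate spectral projections of $|a|$ and $|c|$; your version also covers the degenerate case $\tau(|a|^2)=0$ directly, while the paper's remark that $x=0$ there tacitly uses faithfulness of $\tau$.
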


\begin{proof}
Let $s_\tau$ be the seminorm
\[s_\tau(x)=\inf\{\tau(a^*a)^{1/2}\|y\|\tau(c^*c)^{1/2}:x=ayc,\ a,c\in L^{\infty}X,\ y\in E\}.\]
Since $E^\sharp$ is precisely the space of functionals continuous for the strong $L^\infty X$-$L^\infty X$-topology, by Hahn-Banach, weak$^*$ convergence to $0$ in $(E^\sharp)^*$ implies that for each index $i_0$, the element $0$ belongs to the $s_\tau$-closure of the convex hull $\conv\{x_i: i\geq i_0\}$. 

Choose indices $i_1,\ldots,i_N\ge i_0$ and coefficients $\lambda_j\geq 0$, $\sum_j\lambda_j=1$, such that $x=\sum_j\lambda_j x_{i_j}$ satisfies $s_\tau(x)<\eps\delta/8$.  Pick a factorization $x=ayc$ with
$a,c\in L^\infty X$, $y\in E$, and
\[\tau(|a|^2)^{1/2} \| y \| \tau(|c|^2)^{1/2}<\eps\delta/4.\]
Let $A=\tau(|a|^2)^{1/2}$ and $C=\tau(|c|^2)^{1/2}$. If $A=0$ or $C=0$ then $x=0$, and we pick $p=1$. Otherwise put
\[p_a\coloneq 1_{\{|a|\leq \sqrt2 A/\sqrt\delta\}}, \qquad
 p_c \coloneqq 1_{\{|c|\leq \sqrt2 C/\sqrt\delta\}}, \qquad p\coloneqq p_a p_c.\]
By Chebyshev's inequality, we have $\tau(p_a),\tau(p_c) \geq 1-\delta/2$, so $\tau(1-p) \leq \tau(1-p_a)+\tau(1-p_c) \leq\delta$.  Moreover,
\[\| p x p\| = \| paycp\| \leq (\sqrt2 A/\sqrt\delta)\| y\| (\sqrt2 C/\sqrt\delta)<\eps.\]
The finite simultaneous version follows by applying the argument in the finite $\ell^\infty$ direct sum of the involved operator systems.
\end{proof}

We have the following corollary applied to quotients.

\begin{corollary} \label{cor: cutdown quotient}
Let $E$ be a normal $L^\infty X$-C$^*$-algebra and $J\triangleleft E$ a closed ideal that is also an $L^\infty X$-C$^*$-algebra.  If $\{x_i\}$ is a bounded net in $E$ such that $x_i+J\to 0$ weak$^*$ in $((E/J)^\sharp)^*$, then for every index lower bound $i_0$ and every $\eps,\delta>0$, there are indices $i_1,\ldots,i_N \geq i_0$, coefficients $\lambda_j \geq 0$ with $\sum_j\lambda_j=1$, a projection $p\in L^\infty X$ with $\tau(1-p)<\delta$, and an element $u\in J$ such that writing $x=\sum_j\lambda_j x_{i_j}$ we have
\[ \| p(x+u)p\| <\eps . \]
The same holds simultaneously for finitely many nets.
\end{corollary}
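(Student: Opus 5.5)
The plan is to reduce the statement to \Cref{lem: cutdown mazur}, applied to the quotient operator system $E/J$, and then lift the resulting estimate back to $E$ through the quotient norm. First we check that $E/J$ is a normal operator $L^\infty X$-system. Since $J$ is a closed ideal that is also an $L^\infty X$-bimodule, the quotient $E/J$ is a unital C$^*$-algebra with an $L^\infty X$-bimodule structure $a(x+J)c = axc+J$, coming from the composition $L^\infty X\to E\to E/J$. Normality is the delicate point here. Rather than look for a concrete normal realization of $E/J$, we note that the proof of \Cref{lem: cutdown mazur} uses only two facts: the seminorm $s_\tau$ on $E/J$, and the duality between $s_\tau$-continuous functionals and $(E/J)^\sharp$ given by Hahn-Banach and Magajna's theorem \cite[Theorem 3.7]{MR1750836}. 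Both make sense for any operator $L^\infty X$-system on which the bimodule actions are normal in the $\sharp$-sense. If needed, we realize $E/J$ inside $\widetilde{E/J}=((E/J)^\sharp)^*\simeq p_{\nor}(E/J)^{**}p_{\nor}$, where $L^\infty X$ acts normally. That gives a concrete normal realization.

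Next we apply \Cref{lem: cutdown mazur} to the bounded net $y_i=x_i+J$. The hypothesis is exactly that $y_i\to 0$ weak$^*$ in $((E/J)^\sharp)^*$. The lemma gives indices $i_1,\dots,i_N\ge i_0$, convex coefficients $\lambda_j$, and a projection $p\in L^\infty X$ with $\tau(1-p)\le\delta$ and $\|p\,y\,p\|_{E/J}<\eps$, where $y=\sum_j\lambda_j y_{i_j}=x+J$. If strict inequality $\tau(1-p)<\delta$ is wanted, we simply run the lemma with $\delta/2$.

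Then we lift the estimate. Since $p(x+J)p=pxp+J$, the quotient norm gives $\inf_{u'\in J}\|pxp+u'\|<\eps$. So there is $u'\in J$ with $\|pxp+u'\|<\eps$. Put $u=pu'p\in J$; this lies in $J$ because $J$ is an $L^\infty X$-bimodule. Because $p$ is a projection,
\[\|p(x+u)p\|=\|p(pxp+u')p\|\le\|pxp+u'\|<\eps.\]
The simultaneous version for finitely many nets follows in the same way. We apply the simultaneous clause of \Cref{lem: cutdown mazur}, or equivalently pass to the finite $\ell^\infty$-direct sum $\bigoplus_k E$ with ideal $\bigoplus_k J$, and then lift each coordinate separately using the common $p$.

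The main obstacle is the first step: justifying that $E/J$ fits the framework of \Cref{lem: cutdown mazur}, in particular that weak$^*$ convergence in $((E/J)^\sharp)^*$ yields $0$ in the $s_\tau$-closure of the tail convex hulls. The approach is to verify directly that every $s_\tau$-continuous functional on $E/J$ lies in $(E/J)^\sharp$, which is the direction Hahn-Banach actually needs. This direction is elementary: if $|\varphi(z)|\le C s_\tau(z)$, then $b\mapsto\varphi(zb)$ is bounded by $C\|z\|\tau(b^*b)^{1/2}$, hence normal on bounded sets, and the same argument works on the left.
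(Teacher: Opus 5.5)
Your argument is correct, and it is the argument the paper leaves implicit. Apply \Cref{lem: cutdown mazur} to the quotient $E/J$ with its quotient norm, then lift $\|p(x+J)p\|_{E/J}<\eps$ to some $u\in J$. Your direct check that every $s_\tau$-continuous functional lies in $(E/J)^\sharp$ is the only direction the Hahn--Banach step needs, and it is the right way to sidestep normality of $E/J$.

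Do drop the aside about realizing $E/J$ normally inside $((E/J)^\sharp)^*$. In general $E/J$ is not a normal $L^\infty X$-system, and the canonical map $E/J\to((E/J)^\sharp)^*$ need not be injective. For example, take $E=L^\infty[0,1]$ and $J$ the ideal of functions essentially vanishing near $0$; then $(E/J)^\sharp=0$. That realization would therefore lose exactly the quotient norm your lifting step relies on.
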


For the rest of the section, $(\mathcal G,\nu)$ is a countable p.m.p.\ groupoid with unit space $(\GG^{(0)},\mu)$. 

\begin{lemma} \label{lem: quasi-central approx unit}
Let a countable group $\Gamma$ act on $L^\infty \GG$ by $*$-automorphisms preserving $c_0(\GG)$. Let $F\subset\Gamma$ be a finite subset, and let $(C_n)_{n\geq 1}$ be a sequence of finite subsets of $c_0(\GG)$. Given $\eps >0$ and positive summable numbers $\{\eps_n\}_{n\geq 1}$, there is an increasing approximate unit of positive contractions
\[0=\alpha_0\leq\alpha_1\leq\alpha_2\leq\cdots\leq1, \qquad \alpha_n\in c_0(\GG),\]
such that $\alpha_n \nearrow1$ pointwise almost everywhere on $\GG$, $\| (1-\alpha_{n})c\| < \eps_n$ for all $c\in C_n$, $n\geq 1$, and letting $e_n=\alpha_n-\alpha_{n-1}$, one has
\begin{equation*}
    \begin{aligned}
        \sum_{n=1}^\infty\| e_n-\gamma(e_n)\| <\eps\:\: \text{ for all } \gamma\in F\:\: \text{ and }\:\:
        \sum_{n=1}^\infty\| e_n-\gamma(e_n)\| < \infty\:\: \text{ for all }\gamma\in \Gamma.
    \end{aligned}
\end{equation*}
\end{lemma}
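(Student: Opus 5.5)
This is the standard quasi-central approximate unit argument (Arveson / Higson), adapted to the commutative C$^*$-algebra $c_0(\GG)$. The plan has three steps: build an increasing approximate unit for $c_0(\GG)$, pass to convex combinations so it becomes asymptotically invariant, then extract a sparse subsequence so the increments are summably invariant.

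\textbf{Step 1: a cofinal approximate unit.} First I will check that $c_0(\GG)$ has an increasing approximate unit of positive contractions $u_k\in c_0(\GG)$ with $u_k\nearrow 1$ pointwise almost everywhere. A natural choice is
\[u_k=\chi_{F_k}\,(1-\chi_{\GG^{B_k}})(1-\chi_{\GG_{B_k'}}),\]
with $F_k$ increasing, of finite measure and exhausting $\GG$, and with $B_k,B_k'$ decreasing and $\mu(B_k),\mu(B'_k)\to 0$. Chosen so that each element of a fixed countable dense set is approximated, this family is cofinal: for any $c\in c_0(\GG)$ we get $\|(1-u_k)c\|\to0$, directly from the definition of $c_0(\GG)$. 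This is where a separability issue arises. Since $c_0(\GG)$ is not separable, I only need cofinality for the countably many prescribed sets $C_n$ together with everything generated during the construction, which is a countable collection. So I will work inside a separable $\Gamma$-invariant C$^*$-subalgebra $D\subset c_0(\GG)$ containing $\bigcup_n C_n$ and all $u_k$, enlarged so that it is still $\Gamma$-invariant (possible since $\Gamma$ is countable).

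\textbf{Step 2: asymptotic invariance in norm.} Next I show that the $u_k$ can be replaced by convex combinations that are asymptotically $\Gamma$-invariant in norm. For each $\gamma\in\Gamma$, the element $\gamma(u_k)$ is again an approximate unit of $D$. Hence $u_k-\gamma(u_k)\to0$ weakly in $D$: against any state $\phi$ of $D$ we have $\phi(u_k)\to\|\phi\|$ and $\phi(\gamma(u_k))\to\|\phi\|$. Applying Hahn--Banach/Mazur to the finite direct sum $\bigoplus_{\gamma\in F'}D$, for an increasing exhausting sequence of finite sets $F'\subset\Gamma$, yields convex combinations $v_m$ of tails of $(u_k)$ with $\|v_m-\gamma(v_m)\|\to0$ for every $\gamma\in\Gamma$. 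These $v_m$ are still an increasing-able approximate unit, because convex combinations of tails preserve $\|(1-v)c\|$ small.

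\textbf{Step 3: summability.} Finally, pass to a subsequence $\alpha_n=v_{m_n}$ chosen inductively so that:
\begin{itemize}
\item $\|(1-\alpha_n)c\|<\eps_n$ for $c\in C_n$;
\item $\|\alpha_n-\gamma(\alpha_n)\|<\eps 2^{-n-2}/|F|$ for $\gamma\in F$, and $<2^{-n}$ for $\gamma$ among the first $n$ elements of an enumeration of $\Gamma$;
\item monotonicity $\alpha_{n-1}\le\alpha_n$ holds.
\end{itemize}
Then $e_n=\alpha_n-\alpha_{n-1}$ satisfies $\|e_n-\gamma(e_n)\|\le\|\alpha_n-\gamma\alpha_n\|+\|\alpha_{n-1}-\gamma\alpha_{n-1}\|$. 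This is summable for every $\gamma$, and has total sum less than $\eps$ for $\gamma\in F$ once we also take $\alpha_1$ sufficiently invariant; note $\alpha_0=0$ is trivially invariant.

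\textbf{Main obstacle.} The delicate point is preserving monotonicity while taking convex combinations. I would handle it by exploiting commutativity: each $u_k$ is built so that $u_k=1$ on the support of $u_{k-1}$, which works with characteristic functions. Then any convex combination of the $u_j$ with $j\ge k$ dominates $u_{k-1}$, and in fact equals $1$ on its support. So at each inductive step I choose the next convex combination from a tail far enough out that it is $\ge$ the previous $\alpha_{n-1}$, since the previous one is a finite combination of earlier $u_j$. Pointwise convergence $\alpha_n\nearrow1$ a.e.\ then follows from $u_k\nearrow1$ a.e.
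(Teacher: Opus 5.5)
\textbf{Gap in Step~1.} Step~1 fails. The functions $u_k=\chi_{F_k}(1-\chi_{\GG^{B_k}})(1-\chi_{\GG_{B_k'}})$ are not an approximate unit, not even for a single element of $c_0(\GG)$, whatever $F_k,B_k,B_k'$ you choose. Each $u_k$ is the indicator of $F_k\setminus(\GG^{B_k}\cup\GG_{B_k'})$, which has finite measure, and $u_k$ vanishes on $\GG^{B_k}\cup\GG_{B_k'}$.

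The definition of $c_0(\GG)$ only says that $c$ is small \emph{off} $\GG^{B}\cup\GG_{B'}\cup F$. On $\GG^B\cup\GG_{B'}$ it may be large, and $c_0$-sets may have infinite measure. Take the paper's example $S=\bigsqcup_n E_n\times B_n$ in the trivial groupoid $\Gamma\times X$. Since $\nu(S)=\infty>\nu(F_k)$, the set $S\setminus F_k$ is non-null and $u_k=0$ there, so $\|(1-u_k)\chi_S\|=1$ for every $k$. Thus the claimed cofinality ``directly from the definition'' is false, and Step~2 has no approximate unit to average.

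Step~1 is also circular as written: $D$ must contain the $u_k$ and their $\Gamma$-translates, while the $u_k$ are supposed to be chosen cofinal for $D$.

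\textbf{How to repair it.} Build the unit out of level sets, which are exactly what the definition controls.
\begin{itemize}
\item For $c\in c_0(\GG)$, every $\{|c|>1/j\}$ is a $c_0$-set.
\item A $*$-automorphism $\gamma$ preserving $c_0(\GG)$ sends $\chi_T$ to an indicator $\chi_{\gamma(T)}$ in $c_0(\GG)$, so it maps $c_0$-sets to $c_0$-sets.
\end{itemize}
Enumerate $\Gamma=\{\gamma_i\}$. List the countably many level sets $L_1,L_2,\dots$ of the elements of $\Gamma\cdot\bigcup_n C_n$, and fix finite-measure sets $K_k\nearrow\GG$. Put $T_0=\emptyset$ and
\[T_k=T_{k-1}\cup K_k\cup L_k\cup\bigcup_{i,j<k}\gamma_i(T_j).\]
The projections $u_k=\chi_{T_k}\in c_0(\GG)$ then have the following properties:
\begin{itemize}
\item they increase, with $u_k\nearrow1$ a.e.;
\item $u_ju_i=u_i$ for $j>i$;
\item they form an approximate unit for the separable $\Gamma$-invariant C$^*$-algebra $D$ generated by $\Gamma\cdot\bigcup_n C_n$ and $\Gamma\cdot\{u_k\}$.
\end{itemize}
With this input your Steps~2 and~3 go through as written: Mazur on $\bigoplus_{\gamma\in F'}D$, then monotonicity by drawing each new convex combination from a tail beyond the indices already used.

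Once repaired, this is the paper's route. The paper cites Arveson's quasi-central approximate unit theorem for $c_0(\GG)\rtimes\Gamma\triangleleft L^\infty\GG\rtimes\Gamma$ and passes to a subsequence; your Mazur argument is the proof of that theorem.

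Your separability concern is well placed. $c_0(\GG)$ need not be $\sigma$-unital: for $\Gamma\times X$ with trivial action and diffuse $X$, a diagonal argument over a partition of $X$ defeats any sequence. So some reduction to a separable invariant subalgebra, like the one above, is needed to get a \emph{sequence} at all.
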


\begin{proof}
By Arveson's quasi-central approximate unit theorem applied to the crossed-product ideal $c_0(\GG)\rtimes\Gamma\triangleleft L^\infty\GG\rtimes\Gamma$, there exists an increasing quasi-central approximate unit $\{F_j\}_{j \geq 1}\subset c_0(\GG)_+$ such that $\lim_{j\to\infty} \| (1-F_j)c\| = 0$ for all $c\in c_0(\GG)$ and $\lim_{j\to\infty} \| F_j-\gamma(F_j)\| = 0$ for all $\gamma\in\Gamma$.

Note that $F_j \nearrow1$ pointwise almost everywhere on $\GG$. Indeed, since $(\GG,\nu)$ is $\sigma$-finite, there exists an increasing sequence of finite-measure subsets $K_1\subset K_2\subset \cdots \subset \GG$ such that $\cup_{i\geq 1}K_i = \GG$ modulo null sets. For each fixed $i$, we have $\chi_{K_i} \in c_0(\GG)$, so $\lim_{j\to\infty}\| (1-F_j) \chi_{K_i}\| = 0$. Hence $F_j(\gamma) \to 1$ for almost every $\gamma \in K_i$, and since $\cup_{i\geq 1}K_i = \GG$ modulo null set, we have $F_j \nearrow1$ pointwise almost everywhere on $\GG$.

We enumerate $\Gamma = \{\gamma_r\}_{r\geq 1}$, and choose a summable sequence $(a_n)_n$ of positive numbers with $\sum_n a_n<\eps/2$. Inductively choose an increasing subsequence $\alpha_n=F_{j(n)}$ so that $\| \alpha_n-\gamma(\alpha_n) \| < a_n$ for all $\gamma\in F$, $\| \alpha_n-\gamma_r(\alpha_n) \| < 2^{-n}$ for all $1\leq r\leq n$, and $\| (1-\alpha_{n})c \| <\eps_n$ for all $c\in C_n$, $n\geq 1$. The last requirement is possible because $C_n$ is finite and $F_j$ is an approximate unit. We have $\alpha_n \nearrow1$ pointwise almost everywhere on $\GG$ since it is a subnet of $F_j$. For $\gamma\in F$, we have
\[\| e_n-\gamma(e_n)\|\leq \| \alpha_n-\gamma(\alpha_n)\| + \| \alpha_{n-1}-\gamma(\alpha_{n-1})\| \leq a_n+a_{n-1},\]
with $a_0=0$, so $\sum_{n=1}^\infty\| e_n-\gamma(e_n)\| <\eps$. For a fixed $\gamma_r\in\Gamma$, the same estimate with $2^{-n}$ holds for all $n>r$ and the finitely many terms $n\leq r$ are harmless for taking the sum. Hence the sum is finite.
\end{proof}

\begin{lemma} \label{lem: approx bidual}
Let $\tau$ be a normal state on $L^\infty \GG^{(0)}$. Let $\theta_1,\ldots,\theta_d\in [\GG]$, $T_1,\ldots,T_d\in \widetilde{\S(\GG)}_{\rm sa}$, $T_0\in \widetilde{\S(\GG)}_+$, and $b\in (L^\infty \GG^{(0)})_{\rm sa}$. Suppose $\eps >0$ and
\[\left\Vert 1+b+T_0+\sum_{k=1}^d(T_k-\widetilde L_{\theta_k}T_k)\right\|<\eps.\]
Let $M\coloneqq \max\{\| b\|, \| T_0 \|, \| T_1\|, \cdots, \| T_d\|\}$. 
Then, for every $\delta>0$, there exist $y_1,\cdots,y_d\in \S(\GG)_{\rm sa}$, $y_0\in \S(\GG)_+$, and a projection $p\in L^\infty \GG^{(0)}$ with $\tau(1-p)<\delta$ such that, writing
\[H=1+b+y_0+\sum_{k=1}^d(y_k-L_{\theta_k}y_k)\in\mathbb S(\GG),\]
we have 
\begin{enumerate}
    \item $\| H\| \leq 1 + 2(d+1)M$, and
    \item $\| pHp\| < 2\eps$.
\end{enumerate}
\end{lemma}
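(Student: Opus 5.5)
The plan is to approximate the bidual elements $T_k$ by nets in $\S(\GG)$, and then use \Cref{lem: cutdown mazur} to move the norm estimate from $\widetilde{\S(\GG)}$ back into $\S(\GG)$, at the price of cutting down by a projection $p\in L^\infty\GG^{(0)}$.

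Here $\widetilde{\S(\GG)}=(\S(\GG)^\sharp)^*$ is regarded as the bidual of the normal $L^\infty\GG^{(0)}$-C$^*$-algebra $\S(\GG)$, and $\widetilde L_\theta$ is the bidual extension of $L_\theta$. I first check that $\widetilde L_\theta$ is weak$^*$ continuous. This holds because $L_\theta=\Ad(u_\theta)$ is implemented by a normalizing unitary, so $L_\theta$ maps $\S(\GG)^\sharp$ to itself. Note that $L_\theta$ is twisted on the coefficients by the automorphism of $L^\infty\GG^{(0)}$, but normality is preserved.

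\textbf{Step 1 (approximating nets).} By Kaplansky density in the von Neumann algebra $\widetilde{\S(\GG)}\simeq p_{\nor}\S(\GG)^{**}p_{\nor}$, I choose bounded nets $y_k^{(i)}\in\S(\GG)_{\rm sa}$ with $\|y_k^{(i)}\|\le M$ and $y_k^{(i)}\to T_k$ weak$^*$ for $k=1,\dots,d$. I also choose a net $y_0^{(i)}\in\S(\GG)_+$ with $\|y_0^{(i)}\|\le M$ and $y_0^{(i)}\to T_0$. For positivity I can take $y_0^{(i)}=(z^{(i)})^*z^{(i)}$, with $z^{(i)}$ approximating $T_0^{1/2}$; more simply, I use Kaplansky density for positive elements of the C$^*$-algebra. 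All nets are indexed by a common directed set, which I get by taking the product of the index sets. The key point here is that $i_{\S(\GG)}$ is a complete order isomorphism and is weak$^*$ dense, so these approximations exist inside the unit ball scaled by $M$.

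\textbf{Step 2 (convergence of $H$).} Set $H_i=1+b+y_0^{(i)}+\sum_k(y_k^{(i)}-L_{\theta_k}y_k^{(i)})$. Then $H_i\to \widetilde H:=1+b+T_0+\sum_k(T_k-\widetilde L_{\theta_k}T_k)$ weak$^*$, and $\|\widetilde H\|<\eps$. The triangle inequality gives $\|H_i\|\le 1+M+M+2dM\le 1+2(d+1)M$, which is bound (1). This bound survives convex combinations, since $b$ and $1$ are fixed and the other terms are convex combinations of bounded elements.

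\textbf{Step 3 (cutdown).} This is the main obstacle. I want $\|pHp\|<2\eps$, but weak$^*$ convergence to an element of small norm does not directly give this, because \Cref{lem: cutdown mazur} handles nets converging to $0$. I therefore work in the Hilbert-module picture. Since $\widetilde H$ is self-adjoint with $\|\widetilde H\|<\eps$, the element $\eps-\widetilde H$ is positive, and so is $\eps+\widetilde H$. The natural route is to apply the Mazur argument to the seminorm $s_\tau$ on the self-adjoint net $H_i$ directly: for a functional $\varphi\in\S(\GG)^\sharp$, one has $\varphi(H_i)\to\varphi(\widetilde H)$, with $|\varphi(\widetilde H)|\le\eps\|\varphi\|$.

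A cleaner way is to estimate $\|pHp\|$ via states. For the self-adjoint element $pHp$, its norm is the supremum of $|\psi(pHp)|$ over states $\psi$. Fix $\delta>0$. By the Hahn–Banach argument of \Cref{lem: cutdown mazur}, the convex hull of the tail of $(H_i)$ meets every $s_\tau$-neighbourhood of $\widetilde H$, in the sense that $H'-\widetilde H$ is $s_\tau$-small after embedding; concretely, I apply that lemma to the net $x_i=H_i-\widetilde H$. This net lives in the bidual rather than in $E$, so I redo the lemma's proof for $E=\widetilde{\S(\GG)}$ itself. There $\S(\GG)^\sharp$ still sits inside the normal predual, and the factorization $x=ayc$ with $a,c\in L^\infty\GG^{(0)}$ holds in $\widetilde{\S(\GG)}$ by Magajna's theorem. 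This produces a convex combination $H=\sum\lambda_jH_{i_j}$ (of the form required in the statement, with $y_k=\sum_j\lambda_jy_k^{(i_j)}$ and $y_0$ still positive) and a projection $p$ with $\tau(1-p)<\delta$ such that $\|p(H-\widetilde H)p\|<\eps$. Therefore
\[\|pHp\|\le\|p(H-\widetilde H)p\|+\|\widetilde H\|<2\eps.\]
Finally, since $i_{\S(\GG)}$ is isometric, the norm of $pHp$ is the same whether it is computed in $\S(\GG)$ or in $\widetilde{\S(\GG)}$.
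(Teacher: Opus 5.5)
Your proposal reads $\widetilde{\S(\GG)}$ as the bidual $(\S(\GG)^\sharp)^*$ of $\S(\GG)$, and that is not the object in the statement. The notation $\widetilde E=(E^\sharp)^*$ in \Cref{lem: cutdown mazur} invites this reading. But in \Cref{lem: bidual characterization}, which is where this lemma is used, the paper sets $\widetilde{\S(\GG)}=\{T\in (L^\infty\GG^{\sharp})^*: T-\widetilde R_\theta T\in (c_0(\GG)^{\sharp})^*\ \text{for all } \theta\in[\GG]\}$. This is a subspace of the bidual of $L^\infty\GG$, defined only by the requirement that right translates agree modulo the weak$^*$-closed ideal $(c_0(\GG)^\sharp)^*$. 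Nothing makes its elements weak$^*$ limits of elements of $\S(\GG)$; fixed points of a bidual action are in general larger than the weak$^*$ closure of the fixed-point algebra. So Step 1 is unavailable. Goldstine only gives bounded approximants $t^i_k\in L^\infty\GG$, and every convex combination of the resulting $H_i$ then lies only in $L^\infty\GG$, not in $\S(\GG)$. Producing elements of $\S(\GG)$ is the actual content of the conclusion. Your reading would also make the lemma useless for its application: a state on the commutative algebra $\S(\GG)$ that is normal on $L^\infty\GG^{(0)}$ already lies in $\S(\GG)^\sharp$, so it extends weak$^*$-continuously and invariantly to $(\S(\GG)^\sharp)^*$ for free.

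The missing idea is how to manufacture genuine elements of $\S(\GG)$ from $L^\infty\GG$-approximants.
\begin{itemize}
\item The hypothesis $T_k\in\widetilde{\S(\GG)}$ is used precisely to make the quotient nets $t^i_k-R_{\rho_m}t^i_k+c_0(\GG)$ tend to $0$ weak$^*$ in the bidual of $L^\infty\GG/c_0(\GG)$, for the countable family $\{\rho_m\}$ of \Cref{lem: right countable family}.
\item For each $n$, one applies \Cref{cor: cutdown quotient} to the finitely many $m\le n$ with tolerance $\eps_n$, simultaneously with the net $1+b+t^i_0+\sum_k(t^i_k-L_{\theta_k}t^i_k)-s^i$. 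This yields convex combinations $x_{k,n}$, error terms $u_{k,m,n}\in c_0(\GG)$, and projections $p_n$.
\item These stage-wise pieces are glued as $y_k=\sum_n e_nx_{k,n}$. Here $e_n=\alpha_n-\alpha_{n-1}$ comes from a quasi-central approximate unit of $c_0(\GG)$ (\Cref{lem: quasi-central approx unit}) that is almost invariant under all $L_{\theta_k}$ and $R_{\rho_m}$ and absorbs the $u_{k,m,n}$.
\item Membership $y_k\in\S(\GG)$ is checked after cutting by $P_\ell=\bigwedge_{n\ge\ell}p_n$, using that $c_0(\GG)$ allows one to discard range and source sets of small measure.
\item The bound $\|pHp\|<2\eps$ comes from $\sum_n\|e_n-L_{\theta_k}e_n\|$ being small.
\end{itemize}
None of this appears in your proposal.

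Your Step 3 is also flawed on its own. \Cref{lem: cutdown mazur} needs convergence against all of $E^\sharp$. Weak$^*$ convergence of $H_i-\widetilde H$ inside a von Neumann algebra does not give convergence against its non-normal $\sharp$-functionals. The correct move is the paper's: approximate $\widetilde H$ itself by a net $s^i$ in the C$^*$-algebra with $\|s^i\|<\eps$, and cut down the net $H_i-s^i$.
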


\begin{proof}
Let $\Sigma=\{\rho_m:m\ge1\}$ be the countable family from Corollary \ref{lem: right countable family}. By Goldstine's theorem and convexity of positive elements, we can choose weak$^*$ approximating nets $t^i_k\in L^\infty\GG$ for $T_k$ with $\| t^i_k\| \leq \| T_k\|$, and $t^i_0\in (L^\infty\GG)_+$ converging in the weak$^*$ topology to $T_0$ with $\| t^i_0\| \leq \| T_0\|$. Also choose a net $s^i\in L^\infty\GG$ with $\| s^i\| <\eps$ converging weak$^*$ to $1+b+T_0+\sum_{k=1}^d(T_k-\widetilde L_{\theta_k}T_k)$.

Let $M\coloneqq \max\{\| b\|, \| T_0 \|, \| T_1\|, \cdots, \| T_d\|\}$. Let $\omega\coloneqq \frac{1}{2}(\mu+\tau)$ be a faithful normal state on $L^\infty (\GG^{(0)},\mu)$, where by abusing notation $\mu$ denotes integration on $L^\infty \GG^{(0)}$ with respect to the measure $\mu$. Choose positive summable numbers $\{\eps_n\}_{n\ge1}$ such that $\sum_{n\geq 1}\eps_n <\eps/2$. For each $n \geq 1$, applying Corollary \ref{cor: cutdown quotient} and \Cref{lem: cutdown mazur} simultaneously for the finitely many nets of quotients
\[t^i_k - R_{\rho_m}t^i_k + c_0(\GG) \in L^\infty(\GG)/c_0(\GG),
 \:\:\text{ for all }\: 0\leq k\leq d,\ 1\leq m\leq n,\]
and for the net of elements
\[ 1 + b + t^i_0 + \sum_{k=1}^d(t^i_k-L_{\theta_k}t^i_k)-s^i\in L^\infty\GG,\]
we obtain elements $x_{k,n}\in (L^\infty \GG)_{\rm sa}$ for $1\leq k \leq d$, $x_{0,n}\in (L^\infty \GG)_+$, $s_n\in L^\infty \GG$, a projection $p_n\in L^\infty \GG^{(0)}$, and elements $u_{k,m,n}\in c_0(\GG)$ for $0\leq k\leq d$, $1\leq m\leq n$ such that
\begin{equation} \label{eqn: quotient ineq}
    \begin{aligned}
        &\| x_{k,n}\| \leq M, \qquad \| s_n\| <\eps, \qquad \omega(1-p_n)<2^{-n-1}\delta,\\
        \| p_n(x_{k,n}&-R_{\rho_m}x_{k,n}+u_{k,m,n})p_n\| < \eps_n\:\:\text{ for all }\:  0\leq k\leq d,\ m\leq n,
    \end{aligned}
\end{equation}
and
\begin{equation} \label{eqn: large ineq}
    \left\Vert p_n\left(1+b+x_{0,n} + \sum_{k=1}^d(x_{k,n}-L_{\theta_k}x_{k,n}) - s_n\right)p_n\right\Vert < \eps_n.
\end{equation}
Let $\Gamma$ be the countable group generated by $L_{\theta_1},\ldots,L_{\theta_d}$ and $\{R_{\rho_m}:m \geq 1\}$ acting on $L^\infty\GG$ and the ideal $c_0(\GG)$. For each $n$ let $C_n \coloneqq \{u_{k,m,n}:0\leq k\leq d,1\leq m\leq n\}$ a finite subset of $c_0(\GG)$. By \Cref{lem: quasi-central approx unit}, there exist an approximate unit $(\alpha_n)_n$ converging pointwise to $1$ almost everywhere on $\GG$ with $0 = \alpha_0 \leq \alpha_1 \leq \alpha_2 \leq \cdots\leq 1$, and elements $e_n \coloneqq \alpha_n-\alpha_{n-1} \geq 0$ in $c_0(\GG)$ such that $\sum_n e_n=1$ pointwise almost everywhere, 
\begin{equation} \label{eqn: translation ineq}
    \begin{aligned}
        \sum_{n=1}^\infty \| e_n - L_{\theta_k}e_n&\| < \frac{\eps}{2Md}\:\:\text{ for all }\: 1\leq k\leq d,\:\:\: \sum_{n=1}^\infty\| e_n-R_{\rho_m}e_n\| <\infty \:\:\text{ for all }\: m\geq 1,\text{ and}\\
        &\| (1-\alpha_{n-1})u_{k,m,n}\| <\eps_n, \:\:\text{ for all }\: n\geq 1, 1\leq k \leq d, 1\leq m \leq n.
    \end{aligned}
\end{equation} 
Since $0\leq e_n\leq 1-\alpha_{n-1}$, the last inequality implies \begin{equation} \label{eqn: bound quotient norm}
    \| e_nu_{k,m,n}\| <\eps_n \:\:\text{ for all }\: n\geq 1, 1\leq k \leq d, 1\leq m \leq n.
\end{equation}
Define
\[ y_k \coloneqq \sum_{n\geq 1}e_nx_{k,n},\quad 1\leq k\leq d,
 \qquad y_0\coloneqq \sum_{n\geq 1}e_nx_{0,n}, \qquad s\coloneqq \sum_{n\ge1}e_ns_n.\]
The sums converge weak$^*$ in $L^\infty\GG$ and are bounded. Since $\{e_n\}$ is a positive partition of unity, we have $\| y_k\| \leq M$ for $0\leq k \leq d$, $y_0\geq 0$, and $\| s\| \leq \eps$.

We now show $y_k\in \S(\GG)$ for $0\leq k\leq d$. For this, for each $\ell \geq 1$ we define $P_\ell \coloneqq \bigwedge_{n\ge\ell}p_n\in L^{\infty}\GG^{(0)}$ and first show that $P_\ell (y_k - R_{\rho_m}y_k)P_\ell \in c_0(\GG)$ for every $\ell$ and $m$. Note that since $\omega(1-p_n)$ is summable, we have $\omega(1-P_\ell)\to 0$, so in particular $\mu(1-P_\ell)\to 0$.

Fix $\ell,k$, and $m$, and let $N_0 \coloneqq \max\{m,\ell\}$. Since $\sum_n e_n = 1$, we have
\[P_\ell(y_k-R_{\rho_m}y_k)P_\ell = \sum_{n\geq 1} \left( P_\ell e_n(x_{k,n}-R_{\rho_m}x_{k,n})P_\ell + P_\ell (e_n -R_{\rho_m}e_n) R_{\rho_m}x_{k,n}P_\ell \right).\]
The part with $n<N_0$ belongs to $c_0(\GG)$, because each $e_n\in c_0(\GG)$, $c_0(\GG)$ is an ideal and it is invariant under right translations. Also, the series $\sum_{n\geq N_0}P_\ell(e_n-R_{\rho_m}e_n)R_{\rho_m}x_{k,n}P_\ell$ is a norm-convergent sum of elements of $c_0(\GG)$ because $\| x_{k,n}\| \leq M$ for all $k,n$ and $\sum_n \| e_n-R_{\rho_m}e_n\|<\infty$. It remains to handle the term $A\coloneqq \sum_{n\ge N_0}P_\ell e_n(x_{k,n}-R_{\rho_m}x_{k,n})P_\ell$. For $n\geq N_0 = \max\{m,\ell\}$, we have $m\leq n$ and $P_\ell\leq p_n$, so by (\ref{eqn: quotient ineq}), $\| P_\ell e_n(x_{k,n}-R_{\rho_m}x_{k,n}+u_{k,m,n})P_\ell\| <\eps_n$, and thus we have $\sum_{n\geq N_0} P_\ell e_n(x_{k,n}-R_{\rho_m}x_{k,n}+u_{k,m,n})P_\ell \in c_0(\GG)$ since the series is again a norm-convergent sum of elements of $c_0(\GG)$. Similarly, by (\ref{eqn: bound quotient norm}) we have $\| P_\ell e_n u_{k,m,n}\| <\eps_n$, so the series $\sum_{n\geq N_0} P_\ell e_n u_{k,m,n} \in c_0(\GG)$. Therefore, we have
\[A 
= \sum_{n\geq N_0} P_\ell e_n(x_{k,n}-R_{\rho_m}x_{k,n}+u_{k,m,n})P_\ell -  \sum_{n\geq N_0} P_\ell e_n u_{k,m,n}P_\ell \in c_0(\GG).\]
Combining all terms together, we obtain that $P_\ell (y_k - R_{\rho_m}y_k)P_\ell \in c_0(\GG)$, for every $\ell$.

Since $\mu(1-P_\ell)\to 0$, for any $\eta>0$ there exists $\ell \geq1$ such that $\mu(1-P_\ell)<\eta/2$. Since $P_\ell (y_k - R_{\rho_m}y_k)P_\ell \in c_0(\GG)$, there exist $B,B'\subset \GG^{(0)}$ with $\mu(B),\mu(B')<\eta/2$ and a finite-measure subset $F\subset \GG$ such that 
\[ \| (1-\chi_{\GG^B})(1-\chi_{\GG_{B'}})P_\ell (y_k - R_{\rho_m}y_k)P_\ell (1-\chi_F)\| < \eta/2.\]
If we let $C\subset \GG^{(0)}$ such that $\chi_C = 1-P_\ell$, then $\mu(C)<\eta/2$ and $\mu(B\cup C),\mu(B'\cup C)<\eta$, and by construction it follows that
\[ \| (1-\chi_{\GG^{B\cup C}})(1-\chi_{\GG_{B'\cup C}}) (y_k - R_{\rho_m}y_k) (1-\chi_F)\| < \eta/2.\]
Since $\eta>0$ was arbitrary, this proves that $y_k-R_{\rho_m}y_k\in c_0(\GG)$. Since this holds for every $m$, by Corollary \ref{lem: right countable family} we conclude that $y_k\in \S(\GG)$.

Set $H=1+b+y_0+\sum_{k=1}^d(y_k-L_{\theta_k}y_k)$. Since $\| b\|, \| y_k\| \leq M$ for $0\leq k \leq d$, it is clear that $\|H\|\leq 1+2(d+1)M$, establishing (1). 
Let $p\coloneqq P_1 = \bigwedge_{n\ge1}p_n$, so then $\tau(1-p)\leq 2\omega(1-p) < \delta$. We claim that $\| pHp\| < 2\eps$. 
We have
\[ H-\sum_{n\geq 1}e_n\left(1+b+x_{0,n} + \sum_{k=1}^d(x_{k,n}-L_{\theta_k}x_{k,n})\right) = \sum_{k=1}^d\sum_{n\geq 1}(e_n-L_{\theta_k}e_n)L_{\theta_k}x_{k,n},\]
and by (\ref{eqn: translation ineq}),
\[\left\Vert \sum_{k=1}^d\sum_{n\geq 1}(e_n-L_{\theta_k}e_n)L_{\theta_k}x_{k,n}\right\Vert \leq M\sum_{k=1}^d \sum_{n\geq 1}\| e_n - L_{\theta_k}e_n\| < \frac{\eps}{2}.\]
Moreover, since $p\leq p_n$ for every $n\geq 1$, by (\ref{eqn: large ineq}),
\[\left\Vert p\left(\sum_{n\ge1}e_n\left(1+b+x_{0,n} + \sum_{k=1}^d(x_{k,n}-L_{\theta_k}x_{k,n})-s_n\right)\right)p\right\Vert \leq \sum_{n\geq1}\eps_n < \frac{\eps}{2}.\]
Since $\| s\| = \| \sum e_n s_n\| \leq\eps$, we obtain 
\begin{equation*}
    \begin{aligned}
        \| pHp\| &\leq \left\Vert p\sum_{n\ge1}e_n\left(1+b+x_{0,n} + \sum_{k=1}^d(x_{k,n}-L_{\theta_k}x_{k,n})-s_n\right)\right\Vert + \left\Vert p\sum_{k=1}^d\sum_{n\geq 1}(e_n-L_{\theta_k}e_n)L_{\theta_k}x_{k,n}\right\Vert + \left\Vert p\sum e_n s_n\right\Vert\\
        &< \eps/2 + \eps/2+ \eps = 2\eps.
    \end{aligned}
\end{equation*}
This proves (2) and finishes the proof.
\end{proof}

\begin{thm} \label{lem: bidual characterization}
    Let $(\GG,\nu)$ be a countable p.m.p.\ groupoid with unit space $(\GG^{(0)},\mu)$. Then $\GG$ is properly proximal if and only if $\widetilde{\S(\GG)}\coloneqq \{T\in L^\infty\GG^{\sharp *} :T - R_\theta^{\sharp *}(T) \in c_0(\GG)^{\sharp *}\}$ does not admit a $[\GG]$-invariant state that is normal on $L^\infty \GG^{(0)}$.
\end{thm}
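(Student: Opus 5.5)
We prove both implications separately. The easy direction is the passage from $\S(\GG)$ to $\widetilde{\S(\GG)}$; the converse uses \Cref{lem: relative separation} and \Cref{lem: approx bidual}.

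\emph{Invariant state on $\S(\GG)$ gives one on $\widetilde{\S(\GG)}$.} Suppose $\varphi$ is a $[\GG]$-invariant state on $\S(\GG)$ whose restriction to $L^\infty\GG^{(0)}$ is normal. The natural map $i\colon L^\infty\GG\to L^\infty\GG^{\sharp *}$ is $L^\infty\GG^{(0)}$-bimodular and intertwines $L_\theta$ with $\widetilde L_\theta$ and $R_\theta$ with $R_\theta^{\sharp*}$. It sends $c_0(\GG)$ into $c_0(\GG)^{\sharp*}$, so $i(\S(\GG))\subset\widetilde{\S(\GG)}$.

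Conversely, we want to restrict from $\widetilde{\S(\GG)}$ back to $\S(\GG)$. Given a $[\GG]$-invariant state $\Phi$ on $\widetilde{\S(\GG)}$ that is normal on $L^\infty\GG^{(0)}$, the composition $\Phi\circ i$ is a state on $\S(\GG)$. It is $[\GG]$-invariant by the intertwining relations. Its restriction to $L^\infty\GG^{(0)}$ is normal, because $i$ is the identity on $L^\infty\GG^{(0)}\subset L^\infty\GG^{\sharp*}$. Taking contrapositives, proper proximality of $\GG$ implies that $\widetilde{\S(\GG)}$ has no such state, since any such state on $\widetilde{\S(\GG)}$ would restrict to one on $\S(\GG)$.

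\emph{No invariant state on $\widetilde{\S(\GG)}$ implies proper proximality.} This is the substantive direction. Suppose $\widetilde{\S(\GG)}$ has no $[\GG]$-invariant state normal on $L^\infty\GG^{(0)}$. Assume towards a contradiction that $\varphi$ is a $[\GG]$-invariant state on $\S(\GG)$ with $\tau\coloneqq\varphi|_{L^\infty\GG^{(0)}}$ normal; then $\tau$ is $[\GG]$-invariant.

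We apply \Cref{lem: relative separation} with $E=\widetilde{\S(\GG)}$, $B=L^\infty\GG^{(0)}$ and the group $[\GG]$ acting through $\widetilde L_\theta$. We must check that $\widetilde L_\theta$ preserves $\widetilde{\S(\GG)}$, which holds because left and right translations commute. Any invariant state $\Phi$ on $E$ with $\Phi|_B=\tau$ would be normal on $B$, which is excluded. So for each $\eps>0$ we obtain $\theta_k$, $T_k$, $T_0\geq0$ and $b=b^*$ with $\tau(b)=0$ such that
\[\Big\|1+b+T_0+\sum_k(T_k-\widetilde L_{\theta_k}T_k)\Big\|<\eps.\]
The elements $T_k$ may be taken self-adjoint by taking real parts.

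Next we transfer this estimate to $\S(\GG)$. By \Cref{lem: approx bidual}, with $\delta>0$ to be chosen, we get $H=1+b+y_0+\sum_k(y_k-L_{\theta_k}y_k)\in\S(\GG)$. It satisfies $\|H\|\leq C\coloneqq 1+2(d+1)M$ and $\|pHp\|<2\eps$ for a projection $p\in L^\infty\GG^{(0)}$ with $\tau(1-p)<\delta$.

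Finally we evaluate $\varphi$ on $H$. Invariance gives $\varphi(y_k-L_{\theta_k}y_k)=0$, and $\varphi(b)=\tau(b)=0$. Since $y_0\geq0$, we get $\Real\varphi(H)\geq1$. On the other hand, because $L^\infty\GG^{(0)}$ lies in the multiplicative domain issue-free setting of the commutative C$^*$-algebra $\S(\GG)$, Cauchy–Schwarz gives
\[|\varphi(H)-\varphi(pHp)|\leq 2C\,\varphi(1-p)^{1/2}=2C\,\tau(1-p)^{1/2}.\]
Hence $1\leq 2\eps+2C\delta^{1/2}$.

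The constant $C$ depends on $M$, which is fixed once $\eps$ is fixed, but $\delta$ is chosen afterwards. So we choose $\eps<1/4$ first, and then $\delta$ so small that $2C\delta^{1/2}<1/4$. This gives a contradiction, completing the proof.

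The main obstacle was the approximation step, which \Cref{lem: approx bidual} already provides. What remains to check is that $\widetilde L_\theta$ preserves $\widetilde{\S(\GG)}$, together with the order of choosing the constants.
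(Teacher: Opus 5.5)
Your proof is correct and follows the paper's argument: separation in $\widetilde{\S(\GG)}$ via \Cref{lem: relative separation}, transfer to an element $H\in\S(\GG)$ via \Cref{lem: approx bidual}, and a Cauchy--Schwarz estimate with the projection $p$ that contradicts $\varphi(H)\geq 1$, with $\eps$ fixed first and $\delta$ chosen afterwards, as in the paper. The only addition is that you write out the easy restriction direction, which the paper leaves implicit. Note, though, that the heading and opening sentence of your first part announce the opposite implication from the one that part actually proves, so the unused hypothesis on $\varphi$ there should be deleted.
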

\begin{proof}
    Suppose $\S(\GG)$ admits a left $[\GG]$-invariant state $\varphi$ such that the restriction $\tau\coloneqq \restr{\varphi}{L^\infty \GG^{(0)}}$ is normal. Note that $\tau$ is also left $[\GG]$-invariant. Suppose towards contradiction that $\widetilde{\S(\GG)}$ does not admit any left $[\GG]$-invariant state whose restriction to $L^\infty \GG^{(0)}$ is normal. By \Cref{lem: relative separation}, there exist $\theta_1,\cdots,\theta_d\in [\GG]$, elements $T_1,\cdots, T_d \in \widetilde{\S(\GG)}_{sa}$, $T_0\in \widetilde{\S(\GG)}_+$, and a self-adjoint $b\in L^\infty \GG^{(0)}$ with $\tau(b) = 0$ such that
    \[\left\Vert1+b+T_0+\sum_{k=1}^d(T_k-\widetilde{L}_{\theta_k} T_k)\right\Vert< \frac{1}{8}.\]
    Let $M\coloneqq \max\{\| b\|, \| T_0 \|, \| T_1\|, \cdots, \| T_d\|\}$. By \Cref{lem: approx bidual}, there exist $y_1,\cdots,y_d\in \S(\GG)_{\rm sa}$, $y_0\in \S(\GG)_+$, and a projection $p\in L^\infty \GG^{(0)}$ such that $H=1+b+y_0+\sum_{k=1}^d(y_k-L_{\theta_k}y_k) \in \S(\GG)$ satisfies $\|H\|\leq 1+2(d+1)M$, $\|pHp\|<1/4$ and
    \begin{equation*}
        \tau(1-p) < \bigg(\frac{1}{4(1+2(d+1)M)}\bigg)^2.
    \end{equation*}
    Since $\varphi$ is left $[\GG]$-invariant, $y_0\geq 0$, and $\varphi(b)=\tau(b) = 0$, we have $\varphi(H) = 1+ \varphi(y_0) \geq 1$. On the other hand, by Cauchy-Schwarz inequality, we have
    \[|\varphi(H)| \leq |\varphi(pHp)| + |\varphi((1-p)H)| < \frac{1}{4} + \tau(1-p)^{1/2}\| H\| <\frac{1}{4} + \frac{1}{4} = \frac{1}{2}, \]
    a contradiction. 
\end{proof}

\section{Permanence of proper proximality}\label{sec:permanence}

\noindent In this section, we discuss permanence of proper proximality under the following constructions: inflations, restrictions, finite-index inclusions, direct products, ergodic decompositions, transformation groupoids and measure equivalence. We adapt some of these proofs from the group case \cite{BIP21,dingKEexamples} to the groupoid setting.

\subsection{Inflations and restrictions}

\begin{prop}\label{restriction} Suppose $(\mathcal{G},\nu)$ is a countable p.m.p.\ groupoid with unit space $(\GG^{(0)},\mu)$. Let $A\subset \GG^{(0)}$ be a measurable subset with positive measure. Then the following hold:

\begin{enumerate}
    \item[(i)] Assume $A$ is invariant for $\mathcal{G}$. If $(\mathcal{G},\nu)$ is properly proximal, then $(\mathcal{G}^A_A,\nu_A)$ is properly proximal.
    \item[(ii)] Assume $(\GG,\nu)$ is ergodic. If $(\mathcal{G}^A_A,\nu_A)$ is properly proximal, then $(\mathcal{G},\nu)$ is properly proximal. 
\end{enumerate}
\end{prop}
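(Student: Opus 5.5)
Both parts will be proved by contraposition: starting from an invariant state that witnesses non-proper proximality of one groupoid, we transport it to a state with the same properties on the other groupoid.

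For (i), suppose $\varphi_A$ is a $[[\GG^A_A]]$-invariant state on $\S(\GG^A_A)$ whose restriction to $L^\infty(A)$ is normal. Since $A$ is $\GG$-invariant, $\GG^A=\GG_A=\GG^A_A$ up to null sets. Hence $\chi_{\GG^A_A}$ is a central projection in $L^\infty\GG$, and it is fixed by $L_\theta$ and $R_\theta$ for every $\theta\in[[\GG]]$. Restriction $f\mapsto f|_{\GG^A_A}$ maps $L^\infty\GG$ onto $L^\infty(\GG^A_A)$ and sends $c_0(\GG)$ into $c_0(\GG^A_A)$, using the exceptional sets $B\cap A$, $B'\cap A$ and $F\cap\GG^A_A$. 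Every $\theta\in[[\GG^A_A]]$ is also an element of $[[\GG]]$, and conversely every $\theta\in[[\GG]]$ restricts to $\GG^A_A$. Therefore restriction maps $\S(\GG)$ into $\S(\GG^A_A)$. The state $\varphi(f)\coloneqq\varphi_A(f|_{\GG^A_A})$ is then a state on $\S(\GG)$ that is normal on $L^\infty\GG^{(0)}$. It is $[[\GG]]$-invariant because $(L_\theta f)|_{\GG^A_A}=L_{\theta_A}(f|_{\GG^A_A})$, where $\theta_A$ denotes the restriction of $\theta$. Thus $\GG$ is not properly proximal, which proves (i).

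For (ii), suppose $\varphi$ is a $[\GG]$-invariant state on $\S(\GG)$ that is normal on $L^\infty\GG^{(0)}$. By ergodicity $A$ is a complete unit section, so there are $\theta_1,\theta_2,\dots\in[[\GG]]$ with $s(\theta_n)\subset A$ and with the ranges $r(\theta_n)$ partitioning $\GG^{(0)}$. First note that $\varphi(\chi_{\GG^A})>0$: otherwise invariance gives $\varphi(\chi_{\GG^{r(\theta_n)}})=0$ for every $n$, and normality on $L^\infty\GG^{(0)}$ then forces $\varphi(1)=0$. Given $g\in\S(\GG^A_A)$, extend it to $\GG^A$ by right-$\GG$-equivariance,
\[\tilde g(\gamma)=g(\gamma\,\theta_n\cdot 1_{\,\cdot\,})\quad\text{for } r(\gamma)\in A,\ s(\gamma)\in r(\theta_n),\]
that is, $\tilde g(\gamma)=g(\gamma\eta_n)$, where $\eta_n$ is the unique arrow of the bisection $\theta_n$ with range $s(\gamma)$. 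Set $\tilde g=0$ off $\GG^A$. Then define
\[\psi(g)=\varphi(\tilde g)/\varphi(\chi_{\GG^A}).\]

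Several things must now be checked. First, $\tilde g\in\S(\GG)$: by \Cref{rem:3.2} and \Cref{claim:generating set} it suffices to test against right translations. Right translation by $\rho\in[\GG]$ differs from right translation by an element of $[[\GG^A_A]]$ of the form $\theta_m^{-1}\rho\,\theta_n$ on the piece $s(\gamma)\in r(\theta_n)\cap\rho(r(\theta_m))$. Only finitely many pieces matter up to sets of small measure, and $c_0$-sets pull back to $c_0$-sets under these countably many partial maps. Second, $\psi$ is normal on $L^\infty(A)$, because for $f\in L^\infty(A)$ the extension $\tilde f$ is $f\circ r$ restricted to $\GG^A$. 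Third, $\psi$ is $[[\GG^A_A]]$-invariant: any $\sigma\in[[\GG^A_A]]$ extends to an element of $[\GG]$, which acts by $L$ on the left, and left translation commutes with the right-equivariant extension.

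The main obstacle is the first check, namely verifying that $\tilde g$ satisfies the $c_0$-condition near infinity, because the partition $\{r(\theta_n)\}$ is infinite. I would handle this by truncating to finitely many $n$, at the cost of $\mu(\bigcup_{n>N}r(\theta_n))<\eps$, which is absorbed in the set $B'$ of the $c_0$-definition. On each of the finitely many remaining pieces, $c_0(\GG^A_A)$ transports to $c_0(\GG)$ because the bisections $\theta_n$ are measure preserving.
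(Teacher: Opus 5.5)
Your proposal is correct and follows the paper's proof. Part (i) transports an invariant state along the restriction $f\mapsto f|_{\mathcal G^A_A}$. Part (ii) uses the same right-equivariant induction $\operatorname{Ind}_A(g)=\sum_n R_{\theta_n}\widehat g$ over bisections with sources in $A$ and ranges partitioning $\mathcal G^{(0)}$, and checks membership in $\mathbb S(\mathcal G)$ by the same truncation to finitely many pieces $\theta_n^{-1}\rho\,\theta_m$ (your $\theta_m^{-1}\rho\,\theta_n$ has the indices swapped, which is harmless).

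The one deviation is how you show the normalizing constant is nonzero. You get $\varphi(\chi_{\mathcal G^A})>0$ from invariance and normality, whereas the paper uses ergodicity to identify $\varphi|_{L^\infty(\mathcal G^{(0)})}$ with $\int\cdot\,\mathrm d\mu$. As a result, your version of (ii) only needs $A$ to be a complete unit section.
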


\begin{proof} (i) Suppose $\mathcal{G}^A_A$ is not properly proximal, and let $\phi_0:\mathbb S(\mathcal{G}^A_A)\to\mathbb C$ be a left $[\mathcal{G}^A_A]$-invariant state whose restriction to $L^{\infty}(A)$ is normal. Let $f\in\mathbb S(\mathcal{G})$ and observe that $f|_{\mathcal{G}^A_A}\in\mathbb S(\GG_A^A)$. Indeed, fix $\theta\in [[\GG_A^A]]\subset[[\GG]]$ and $\eps>0$. Then, there exists measurable subsets $B,B'\subset \GG^{(0)}$ with $\mu(B),\mu(B')\leq\eps\cdot \mu(A)$, and a subset $F\subset\GG$ of finite $\nu$-measure for which $\|(1-\chi_{\GG^B})(1-\chi_{\GG_{B'}})(f\chi_{\mathcal G_{r(\theta)}}-R_{\theta}f)(1-\chi_F))\|_{\infty}\leq\eps$. Notice that letting $C=A\cap B$, $C'=A\cap B'$ and $L=F\cap\GG_{A}^A$, we get that $\mu_A(C),\mu_A(C')\leq\eps$ and $\nu_A(L)<\infty$, and moreover,
\begin{equation*}
    \|(1-\chi_{(\GG_A^A)^C})(1-\chi_{(\GG_A^A)_{C'}})(f\chi_{(\mathcal G_A^A)_{r(\theta)}}-R_{\theta}f)(1-\chi_L)\|_{\infty}\leq\|(1-\chi_{\GG^B})(1-\chi_{\GG_{B'}})(f\chi_{\mathcal G_{r(\theta)}}-R_{\theta}f)(1-\chi_F)\|_{\infty}\leq \eps.
\end{equation*}
Define the state $\phi:\mathbb S(\mathcal{G})\to\mathbb C$ by $\phi(f)=\phi_0(f|_{\mathcal{G}^A_A})$. Thus, $\phi$ is a left $[[\mathcal{G}^A_A]]$-invariant state whose restriction to $L^{\infty}(\GG^{(0)})$ is normal since $\phi|_{L^{\infty}(\GG^{(0)})}=\phi_0|_{L^{\infty}(A)}$ is normal. From the assumption that $A\subset \GG^{(0)}$ is $\mathcal{G}$-invariant, it follows that for each $\theta\in[\mathcal{G}]$, $\theta|_A\in[\mathcal{G}^A_A]$. Therefore, we have $\phi(L_{\theta}f)=\phi_0(L_{\theta|_A}f|_{\mathcal{G}^A_A})=\phi_0(f|_{\mathcal{G}^A_A})=\phi(f)$. Thus, $\phi$ is also left $[[\mathcal{G}]]$-invariant, and hence, $\mathcal{G}$ is not properly proximal.

\vspace{2mm}

(ii) Again we prove the contrapositive. Assume that $\GG$ is not properly proximal. Since $\GG$ is ergodic and $\mu(A)>0$, the saturation of $A$ is a conull set, or equivalently $r(\GG_A)=\GG^{(0)}$ modulo null sets. By Lusin-Novikov Theorem, there exist local bisections $B_n\subset\GG_A$ satisfying $B_1=A$, $r(B_n)$ are pairwise disjoint and $\sqcup_{n\geq 1}r(B_n)=\GG^{(0)}$ modulo null sets.

For $f\in L^\infty(\GG_A^A)$, let $\widehat f\in L^\infty(\GG)$ denote its extension by zero, and define $\Ind_A(f)\coloneqq \sum_{n\ge1}R_{B_n}\widehat f$. The summands have pairwise disjoint sources. More explicitly, for almost every $\gamma\in\GG$,
\[(R_{B_n}\widehat f)(\gamma) =\chi_{r(B_n)}(s(\gamma))\chi_A(r(\gamma))f\bigl(\gamma B_n\bigr).\]
Indeed, if $s(\gamma)\in r(B_n)$, then $\gamma B_n=\gamma r_{B_n}^{-1}(s(\gamma))$ has source in $A$ and range $r(\gamma)$; hence it belongs to $\GG_A^A$ exactly when $r(\gamma)\in A$. It follows that $\Ind_A$ is a positive $*$-homomorphism into the corner $\chi_{\GG^A}L^\infty(\GG)$ and $\Ind_A(1) = \Ind_A(\chi_{\GG_A^A})=\chi_{\GG^A}$. We also note that extension by zero maps $c_0(\GG^A_A)$ into $c_0(\GG)$.

We claim that $\Ind_A(\S(\GG_A^A))\subset\S(\GG)$. Let $\theta\in[\GG]$ be a global bisection. Let $\bar\theta: \GG^{(0)}\to \GG^{(0)}$ be its induced p.m.p.\ automorphism. Thus, for every $x\in \GG^{(0)}$, the unique arrow $r_\theta^{-1}(x)\in\theta$ satisfies $r(r_\theta^{-1}(x))=x$ and $s(r_\theta^{-1}(x))=\bar\theta^{-1}(x)$. For $i,j\ge1$, let $C_{ij}\coloneqq r(B_i)\cap\bar\theta(r(B_j))$. On $C_{ij}$ define the local bisection
\begin{equation*}
   \Sigma_{ij}\coloneqq \Bigl\{  \bigl(r_{B_i}^{-1}(x)\bigr)^{-1} r_\theta^{-1}(x) r_{B_j}^{-1}(\bar\theta^{-1}(x)): x\in C_{ij} \Bigr\}\subset\GG_A^A.
\end{equation*}
This is the corresponding restriction of $B_i^{-1}\theta B_j$. Fix $\gamma\in\GG$ with $r(\gamma)\in A$ and
$x=s(\gamma)\in C_{ij}$, and set $\eta=\gamma r_{B_i}^{-1}(x)\in\GG_A^A$. Associativity gives
\[\gamma r_\theta^{-1}(x)r_{B_j}^{-1}(\bar\theta^{-1}(x))
   =\eta\Bigl[\bigl(r_{B_i}^{-1}(x)\bigr)^{-1} r_\theta^{-1}(x) r_{B_j}^{-1}(\bar\theta^{-1}(x)) \Bigr].\]
Consequently, for $\gamma\in \GG$ with $s(\gamma) \in C_{ij}$,
\begin{equation} \label{eqn: C_ij subtract}
    \bigl(\Ind_A(f)-R_\theta\Ind_A(f)\bigr)(\gamma) =\bigl(f\chi_{(\GG_A^A)_{r(\Sigma_{ij})}}  -R_{\Sigma_{ij}}f\bigr)(\eta).
\end{equation} 
Let $f\in\mathbb S(\GG_A^A)$ and denote by $h\coloneqq \Ind_A(f)-R_\theta\Ind_A(f)$ and $d_{ij}\coloneqq f\chi_{(\GG_A^A)_{r(\Sigma_{ij})}} -R_{\Sigma_{ij}}f\in c_0(\GG_A^A)$. For $N\ge1$, define
\[h_N\coloneqq \sum_{1\le i,j\le N} \chi_{\GG_{C_{ij}}}\,R_{B_i}\widehat d_{ij} \in c_0(\GG).\]
Let $U_N\coloneqq \bigcup_{n>N}r(B_n)$ and $F_N\coloneqq U_N\cup\bar\theta(U_N)$. Since the sets $r(B_n)$ partition $\GG^{(0)}$, we have $\mu(U_N)\to 0$ and therefore $\mu(F_N)\le2\mu(U_N)\to 0$ as $N\to\infty$. Note that $s(\gamma\theta)=\bar\theta^{-1}(s(\gamma))$, so the index associated with $\gamma\theta$ exceeds $N$ precisely when
$s(\gamma)\in\bar\theta(U_N)$. By \eqref{eqn: C_ij subtract}, $h-h_N$ is supported in $\GG_{F_N}=s^{-1}(F_N)$. 

Given $\eps>0$, choose $N$ with $\mu(F_N)<\eps/2$. Since $h_N\in c_0(\GG)$, there exist $B,B'\subset \GG^{(0)}$ with $\mu(B),\mu(B')<\eps/2$ and a finite-measure subset $F\subset \GG$ such that $\| (1-\chi_{\GG^B})(1-\chi_{\GG_{B'}})h_N(1-\chi_{F})\| _\infty\leq \eps$. Since $h-h_N$ is supported in $\GG_{F_N}=s^{-1}(F_N)$, up to replacing $B'$ by its union with $F_N$, we see that $h\in c_0(\GG)$, proving our claim.

Now choose a left $[[\GG]]$-invariant state $\phi \colon \S(\GG)\to\C$ whose restriction to $L^\infty\GG^{(0)}$ is normal. Then there exists $h\in L^\infty \GG^{(0)}$ such that $\phi|_{L^\infty(X)}(f)=\int_X\! fh\,\mathrm d\mu$ for any $f \in L^\infty\GG^{(0)}$. Since $\GG$ is ergodic and $\phi$ is left $[[\GG]]$-invariant, it follows that $h = 1$ and $\phi|_{L^\infty(X)}=\int_X\!\cdot\,\mathrm d\mu$. Define 
\[\phi_A(f)\coloneqq \frac{1}{\mu(A)}\phi\bigl(\Ind_A(f)\bigr), \:\:\text{ for } f\in\S(\GG_A^A).\]
It follows that $\phi(\Ind_A(1))=\phi(\chi_{\GG^A})=\mu(A)$, so $\phi_A$ is a state. If $a\in L^\infty(A)$ is viewed on
$\GG_A^A$ through the range map and $\widetilde a$ denotes its zero extension to $X$, then $\Ind_A(a)=\widetilde a\circ r$. Consequently, $\phi_A(a)=\frac{1}{\mu(A)}\int_A a\,\mathrm{d}\mu$, so $\phi_A|_{L^\infty(A)}$ is normal. Finally, let $\sigma\in[[\GG_A^A]]$. Left and right multiplication commute, and extension by zero is compatible with the left action of $\sigma$, therefore $\Ind_A(L_\sigma f)=L_\sigma\Ind_A(f)$. Since right multiplication does not change the range coordinate, $\Ind_A\bigl(f\chi_{(\GG_A^A)^{s(\sigma)}}\bigr) =\Ind_A(f)\chi_{\GG^{s(\sigma)}}$. Using the left $[[\GG]]$-invariance of $\phi$, we obtain
\[\phi_A(L_\sigma f) =\frac{1}{\mu(A)}\phi(L_\sigma\Ind_A(f)) =\frac{1}{\mu(A)}\phi(\Ind_A(f)\chi_{\GG^{s(\sigma)}}) =\phi_A(f\chi_{(\GG_A^A)^{s(\sigma)}}).\]
Thus $\phi_A$ is left $[[\GG_A^A]]$-invariant. This proves that $\GG_A^A$ is not properly proximal and finishes the proof.
\end{proof}

\subsection{Finite index inclusions} Let $(\GG,\nu)$ be a countable p.m.p.\ groupoid and let $\mathcal H$ be a Borel subgroupoid of $\GG$. For each $x\in\GG^{(0)}$, we have an equivalence relation on $\GG^x\coloneqq \GG\cap r^{-1}(x)$ given by: two elements $\gamma,\delta\in \GG^x$ are equivalent if and only if $\gamma^{-1}\delta\in\mathcal{H}$. The function assigning to each $x\in\GG^{(0)}$ the number of equivalence classes in $\GG^{(0)}$ is Borel and $\GG$-invariant, and hence constant on a conull set if $(\GG,\nu)$ is ergodic. If $(\GG,\nu)$ is ergodic, this constant value is called the index of $\mathcal{H}$ in $\GG$. This definition extends the index of a subrelation of a countable p.m.p.\ equivalence relation given in \cite[Section 1]{FSZ}.

\begin{prop} \label{prop: finite index subgpoid}
    Suppose $(\mathcal{G},\nu)$ is an ergodic countable p.m.p.\ groupoid with unit space $(\GG^{(0)},\mu)$ and $\mathcal{H}\subset\mathcal{G}$ is a finite index ergodic subgroupoid. If $(\mathcal{H},\nu)$ is properly proximal, then $(\mathcal{G},\nu)$ is properly proximal.
\end{prop}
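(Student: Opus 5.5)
We prove the contrapositive: if $\GG$ is not properly proximal, then neither is $\mathcal H$. Fix a left $[[\GG]]$-invariant state $\phi$ on $\S(\GG)$ whose restriction to $L^\infty\GG^{(0)}$ is normal. As in the proof of \Cref{restriction}(ii), ergodicity of $\GG$ and $[[\GG]]$-invariance force $\phi|_{L^\infty\GG^{(0)}}=\int\cdot\,\mathrm d\mu$. The idea is to transport functions on $\mathcal H$ to $\GG$ by averaging over the finitely many cosets, and then pull $\phi$ back.

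\textbf{Step 1: coset decomposition.} Let $n$ be the index. By Lusin--Novikov, choose Borel local bisections $\sigma_1=\GG^{(0)},\sigma_2,\dots,\sigma_n$ of $\GG$ with full source and range (so they lie in $[\GG]$) such that, for a.e.\ $x$, the arrows $\sigma_i\cdot 1_x$ form a set of representatives for the classes $\gamma\mathcal H$ in $\GG_x$. This gives $\GG=\bigsqcup_{i=1}^n\mathcal H\sigma_i$. To get right cosets, apply the same construction to inverses.

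\textbf{Step 2: induction map.} For $f\in L^\infty(\mathcal H)$, let $\widehat f$ be its extension by zero to $\GG$, and define
\[
\Ind(f)=\sum_{i=1}^n R_{\sigma_i^{-1}}\widehat f .
\]
Equivalently, $\Ind(f)(\gamma)=f(\gamma\sigma_i^{-1})$ for the unique $i$ with $\gamma\sigma_i^{-1}\in\mathcal H$; this unique $i$ is chosen pointwise, measurably in $\gamma$. The map $\Ind$ is a unital $*$-homomorphism $L^\infty\mathcal H\to L^\infty\GG$. It restricts to the identity on $L^\infty\GG^{(0)}$, because range functions are preserved under right multiplication. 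It also intertwines the left actions: $\Ind(L_\sigma f)=L_\sigma\Ind(f)$ for $\sigma\in[[\mathcal H]]$.

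We need $\Ind(c_0(\mathcal H))\subset c_0(\GG)$. This holds because each $R_{\sigma_i^{-1}}$ preserves $c_0$: it maps finite-measure sets to finite-measure sets and $\GG_{B'}$ to $\GG_{\bar\sigma_i(B')}$. The argument is the same as for $\widehat{\ }$ in \Cref{restriction}.

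\textbf{Step 3: boundary compatibility (the main obstacle).} We must show $\Ind(\S(\mathcal H))\subset\S(\GG)$. By \Cref{claim:generating set} it suffices to check $\Ind(f)-R_\theta\Ind(f)\in c_0(\GG)$ for $\theta\in[\GG]$. The key point is that for each $i$ there is a measurable partition of $\GG^{(0)}$ into pieces $D_{ij}$ on which the right action of $\theta$ sends the coset $\mathcal H\sigma_i$ to the coset $\mathcal H\sigma_j$. On the piece over $D_{ij}$, the bisection $\sigma_i\theta\sigma_j^{-1}$ restricts to an element $\Sigma_{ij}\in[[\mathcal H]]$.

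Here I would exploit that $\mathcal H$ is a subgroupoid with the same unit space, so the coset of $\gamma$ depends only on $\gamma$ and $s(\gamma)$. Unwinding the definitions, on the corresponding region
\[
\Ind(f)-R_\theta\Ind(f)=R_{\sigma_i^{-1}}\bigl(\widehat{f\chi-R_{\Sigma_{ij}}f}\bigr),
\]
which is a finite sum of $c_0$ terms. Care is needed because the partition may depend on $\gamma$ and not only on $s(\gamma)$; I expect to refine by finitely many bisections, giving at most $n^2$ pieces.

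\textbf{Step 4: define the pulled-back state.} Set $\psi(f)=\phi(\Ind f)$ for $f\in\S(\mathcal H)$. Then $\psi$ is a state, its restriction to $L^\infty\mathcal H^{(0)}=L^\infty\GG^{(0)}$ is $\mu$, hence normal, and it is $[[\mathcal H]]$-invariant by Step 2, since $[[\mathcal H]]\subset[[\GG]]$. Therefore $\mathcal H$ is not properly proximal.

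The ergodicity of $\mathcal H$ is used only to make the index constant and the coset representatives globally defined; one could alternatively argue fiberwise. I expect Step 3 to be the only delicate point.
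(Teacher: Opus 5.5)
Your proposal is correct and follows essentially the paper's route: your $\Ind(f)=\sum_i R_{\sigma_i^{-1}}\widehat f$ is the paper's $\tilde f=\sum_i R_{T_i}(f\chi_{\mathcal H})$ with $T_i=\sigma_i^{-1}$, and your pieces $D_{ij}$ and bisections $\Sigma_{ij}\in[[\mathcal H]]$ are the paper's $A_{i,j}$ and $\theta_{i,j}$. In Step 3 no further refinement is needed, since the target coset $j$ depends only on $i$ and $s(\gamma)$. One correction: Lusin--Novikov by itself only gives measurable coset representatives, and it is the ergodicity of $\mathcal H$ (via an exhaustion argument) that lets you assemble them into global bisections $\sigma_i\in[\GG]$, which is exactly where the paper invokes that hypothesis.
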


\begin{proof} For $\gamma\in\GG$, we set $\gamma\mathcal{H}\coloneqq \{\gamma\delta\in\GG:\delta\in\mathcal{H}\cap r^{-1}(s(\gamma))\}$. Let $N$ be the index of $\mathcal{H}$ in $\GG$. Since $\mathcal{H}$ is ergodic, we may choose $\{T_1,...,T_N\}\subset[\mathcal{G}]$ such that for all $x\in\GG^{(0)}$, the sets $r_{T_i}^{-1}(x)\mathcal{H}$, with $1\leq i\leq N$, partition $\mathcal{G}\cap r^{-1}(x)$. 

Let $f\in\mathbb S(\mathcal{H})$ and consider $\tilde f=\sum_{i=1}^NR_{T_i}(f\chi_{\mathcal{H}})\in L^{\infty}(\mathcal G,\nu)$. We claim $\tilde f\in\mathbb S(\mathcal{G})$. Fix $\eps>0$ and $\rho\in[[\mathcal{G}]]$. For $\gamma\in\GG_{r(\rho)}$, we have 
\begin{equation*}
    R_{\rho}\tilde{f}(\gamma)=\sum_{i=1}^Nf(\gamma\rho T_i)\delta_{\gamma\rho T_i, \mathcal H}.
\end{equation*}
Let $A_{i,j}\subset\GG^{(0)}$ be the subset of all $x\in s(\rho)$ for which $s_{\rho}^{-1}(x)r_{T_i}^{-1}(x)\mathcal{H}=r_{T_j}^{-1}(y)\mathcal{H}$ with $y=\rho(x)=r(s_{\rho}^{-1}(x))$. Notice that for almost every $x\in s(\rho)$ and for every $1\leq i\leq N$, there exists a unique $1\leq j\leq N$ for which $s_{\rho}^{-1}(x)r_{T_i}^{-1}(x)\mathcal{H}=r_{T_j}^{-1}(y)\mathcal{H}$, and moreover, the assignment $i\mapsto j$ is bijective. Therefore, $s(\rho)=\sqcup_{i=1}^NA_{i,j}=\sqcup_{j=1}^NA_{i,j}$, for every $1\leq i,j\leq N$. Define $\theta_{i,j}\coloneqq \{(s_{\rho}^{-1}(x)r_{T_i}^{-1}(x))^{-1}r_{T_j}^{-1}(y):x\in A_{i,j},\: y=\rho(x)\}$. Then, $\theta_{i,j}\in[[\mathcal{H}]]$ is a local bisection with $s(\theta_{i,j})=T_j^{-1}\rho(A_{i,j})$ and $r(\theta_{i,j})=T_i^{-1}(A_{i,j})$. 

For $\gamma\in\mathcal{G}_{r(\rho)}$, we have that $\rho^{-1}s(\gamma)\in s(\rho)$, and therefore, there exists $j$ for which $\rho^{-1}s(\gamma)\in A_{i,j}$. From this, we have $T_i^{-1}\rho^{-1}(s(\gamma))=\theta_{i,j}T_j^{-1}\rho(\rho^{-1}s(\gamma))=\theta_{i,j}T_j^{-1}(s(\gamma))$, which gives $\gamma\rho T_i=\gamma T_j\theta_{i,j}^{-1}$. Thus, for $\gamma\in\mathcal{G}_{r(\rho)\cap \rho(A_{i,j})}$, we have
\begin{equation*}
    R_{\rho}R_{T_i}(f\chi_{\mathcal{H}})(\gamma)=f(\gamma \rho T_i)\delta_{\gamma\rho T_i,\mathcal{H}}=f(\gamma T_j\theta_{i,j}^{-1})\delta_{\gamma T_j\theta_{i,j}^{-1},\mathcal H}=R_{T_j}R_{\theta_{i,j}^{-1}}(f\chi_{\mathcal{H}})(\gamma).
\end{equation*}
Hence, we obtain that
\begin{equation*}
    \tilde{f}\chi_{\GG_{r(\rho)}}-R_{\rho}\tilde{f}=\sum_{i=1}^NR_{T_i}(f\chi_{\mathcal{H}})\chi_{\GG_{r(\rho)}}-\sum_{i=1}^NR_{\rho T_i}(f\chi_{\mathcal{H}})=\sum_{i=1}^N\sum_{j=1}^N(R_{T_i}-R_{T_i}R_{\theta_{j,i}^{-1}})(f\chi_{\mathcal{H}})\chi_{\GG_{r(\rho)\cap\rho(A_{j,i})}}.
\end{equation*}
Since $f\in \mathbb S(\mathcal H)$ and $\theta_{j,i}\in[[\mathcal{H}]]$, let $B_{j,i},B_{j,i}'\subset\mathcal{H}^{(0)}=\GG^{(0)}$, and $F_{i,j}\subset\mathcal{H}\subset\GG$ satisfying $\mu(B_{j,i}),\mu(B_{j,i}')\leq\eps/N^2$ and $\nu(F_{j,i})<\infty$, and for which $\|(1-\chi_{\GG^{B_{{j,i}}}})(1-\chi_{\GG_{B'_{{j,i}}}})(f\chi_{\GG_{r(\theta_{j,i}^{-1})}}-R_{\theta_{j,i}^{-1}}f)(1-\chi_{F_{j,i}})\|_{\infty}\leq\eps/N^2$. Denote by $B=\cup_{i,j}B_{j,i}$, $B'=\cup_{i,j}T_iB_{j,i}'$ and $F=\cup_{i,j}F_{j,i}T_i^{-1}$ so that $\mu(B),\mu(B')\leq\eps$ and $\nu(F)<\infty$. It follows that 
\begin{align*}
    \|(1-\chi_{\GG^B})&(1-\chi_{\GG_{B'}})(\tilde{f}\chi_{\GG_{r(\rho)}}-R_{\rho}\tilde{f})(1-\chi_{F})\|_{\infty}\leq N\cdot\sup_{1\leq i,j\leq N}\sup_{\gamma\in\GG_{\rho(A_{j,i})}\setminus (\GG^B\cup\GG_{B'}\cup F)}|(R_{T_i}-R_{T_i}R_{\theta_{j,i}^{-1}})(f\chi_{\mathcal{H}})(\gamma)|\\
    &=N\cdot\sup_{1\leq i,j\leq N}\sup_{\gamma\in\GG_{\rho(A_{j,i})}\setminus (\GG^B\cup\GG_{B'}\cup F)}|f(\gamma T_i)\delta_{\gamma T_i,\mathcal{H}}-f(\gamma T_i\theta_{j,i}^{-1})\delta_{\gamma T_i\theta_{j,i}^{-1},\mathcal{H}}|\\
    &=N\cdot\sup_{1\leq i,j\leq N}\sup_{\gamma T_i\in\mathcal{H}_{T_i^{-1}\rho(A_{j,i})}\setminus(\mathcal H^{B_{j,i}}\cup \mathcal H_{B_{j,i}'}\cup F_{j,i})}|f(\gamma T_i)-R_{\theta_{j,i}^{-1}}f(\gamma T_i)|\\
    &\leq N\cdot\sup_{1\leq i,j\leq N}\|(1-\chi_{\GG^{B_{{j,i}}}})(1-\chi_{\GG_{B'_{{j,i}}}})(f\chi_{\GG_{r(\theta_{j,i}^{-1})}}-R_{\theta_{j,i}^{-1}}f)(1-\chi_{F_{j,i}})\|_{\infty}\leq\eps.
\end{align*}
Now, if $\mathcal{G}$ is not properly proximal, there exists a left $[[\mathcal{G}]]$-invariant state $\phi:\mathbb S(\mathcal{G})\to\mathbb C$ whose restriction to $L^{\infty}(\GG^{(0)})$ is normal. Define the state $\phi_0:\mathbb S(\mathcal{H})\to \mathbb C$ given by $\phi_0(f)= \phi(\sum_{i=1}^NR_{T_i}f\chi_{\mathcal{H}})$. It is clear that $\phi_0|_{L^{\infty}(\mathcal H^{(0)})}$ is normal, and since the left action commutes with the right action, $\phi_0$ is left $[[\mathcal{H}]]$-invariant. This means $\mathcal{H}$ is not properly proximal.
\end{proof}

\begin{lemma}\label{extension[H]invariant} Suppose $(\mathcal{G},\nu)$ is an ergodic countable p.m.p.\ groupoid with unit space $(\GG^{(0)},\mu)$ and $\mathcal{H}\subset\mathcal{G}$ is an ergodic subgroupoid. Suppose $\varphi:\mathbb S(\mathcal{G})\to\mathbb C$ is a left $[[\mathcal{H}]]$-invariant state whose restriction to $L^{\infty}(\GG^{(0)})$ is normal. If $\mathcal{H}\subset\mathcal G$ is finite index, then there exists a left $[[\mathcal{G}]]$-invariant state on $\mathbb S(\mathcal{G})$ whose restriction to $L^{\infty}(\GG^{(0)})$ is normal.
\end{lemma}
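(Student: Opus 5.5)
Following the group case, I plan to average $\varphi$ over coset representatives. As in the proof of \Cref{prop: finite index subgpoid}, let $N$ be the index. Using ergodicity of $\mathcal H$, choose $T_1,\dots,T_N\in[\GG]$ with $T_1=\GG^{(0)}$ such that, for almost every $x$, the cosets $r_{T_i}^{-1}(x)\mathcal H$ partition $\GG^x$. Define
\[\psi(f)\coloneqq \frac1N\sum_{i=1}^N\varphi(L_{T_i^{-1}}f),\qquad f\in\S(\GG).\]
This is a state, since $L_{T_i^{-1}}$ is a unital $*$-automorphism of $\S(\GG)$: left and right translations commute, so $\S(\GG)$ is left $[\GG]$-invariant. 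On $L^\infty\GG^{(0)}$ each $L_{T_i^{-1}}$ acts by composition with a p.m.p.\ automorphism of $\GG^{(0)}$. Hence $\psi|_{L^\infty\GG^{(0)}}$ is an average of normal functionals and is normal.

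The main step is left $[\GG]$-invariance; by \Cref{rem:3.5} it suffices to treat $\rho\in[\GG]$. Left multiplication by $\rho$ permutes the cosets fiberwise. Precisely, let $A_{i,j}$ be the set of $x$ with $(\rho T_i)\mathcal H$ at $x$ equal to $T_j\mathcal H$ at $x$, suitably transported by $\rho$ as in the earlier proof. Then, for each $i$, $\GG^{(0)}=\bigsqcup_j A_{i,j}$, for each $j$, $\GG^{(0)}=\bigsqcup_i A_{i,j}$, and on $A_{i,j}$ we have $T_j^{-1}\rho T_i=\sigma_{i,j}$ for a local bisection $\sigma_{i,j}\in[[\mathcal H]]$. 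I expect this part to be a direct analogue of the construction of $\theta_{i,j}$ in the proof of \Cref{prop: finite index subgpoid}, now with left multiplication. One then writes
\[L_{T_i^{-1}}L_{\rho}f=\sum_j L_{T_i^{-1}\rho^{-1}T_j}\bigl(L_{T_j^{-1}}f\bigr)\chi_{\GG^{D_{i,j}}},\]
where the sets $D_{i,j}$ partition $\GG^{(0)}$ in the appropriate variables. Here $T_i^{-1}\rho^{-1}T_j$ restricted to the relevant piece equals $\sigma_{i,j}^{-1}$ (up to inversion conventions) and lies in $[[\mathcal H]]$. The left $[[\mathcal H]]$-invariance of $\varphi$ in its pseudogroup form,
\[\varphi(L_\sigma g)=\varphi(g\chi_{\GG^{s(\sigma)}}),\]
then gives
\[\varphi(L_{T_i^{-1}}L_\rho f)=\sum_j\varphi\bigl(L_{T_j^{-1}}f\,\chi_{\GG^{E_{i,j}}}\bigr),\]
with $E_{i,j}=s(\sigma_{i,j}^{-1})$. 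Summing over $i$, the sets $E_{i,j}$ partition $\GG^{(0)}$ for each fixed $j$, because $i\mapsto j$ is a bijection fiberwise. Hence the total is $\sum_j\varphi(L_{T_j^{-1}}f)$, which gives $\psi(L_\rho f)=\psi(f)$.

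The obstacle is bookkeeping of the pieces. One must check that the multiplication by $\chi_{\GG^{D}}$ commutes correctly with $L_{T^{-1}}$, using $L_\theta(g\chi_{\GG^{D}})=L_\theta g\,\chi_{\GG^{\theta(D)}}$. One must also check that the resulting range sets form genuine partitions, which follows from the coset permutation being bijective almost everywhere. I would first verify these identities at the level of a single arrow $\gamma$, computing $(L_{T_i^{-1}}L_\rho f)(\gamma)=f(\rho^{-1}T_i\gamma)$, and then pass to sets. Where $\varphi$ is only known to be $[[\mathcal H]]$-invariant on whole elements, I would use \Cref{rem:3.5} with $\tilde f=f\chi_{\GG^{s(\sigma)}}$ to justify invariance for local bisections.
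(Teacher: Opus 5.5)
Your proposal is correct and is essentially the paper's proof. Both arguments average $\varphi$ over $L_{T_i^{-1}}$ for coset representatives $T_i\in[\GG]$, cut along the fiberwise coset permutation into pieces $\sigma_{i,j}=T_j^{-1}\rho T_i\in[[\mathcal H]]$, apply the pseudogroup form of $[[\mathcal H]]$-invariance, and resum using bijectivity of $i\mapsto j$; the paper works directly with $\theta\in[[\GG]]$ and proves $\psi(L_{\theta^{-1}}f)=\psi(f\chi_{\GG^{r(\theta)}})$ instead of reducing to $[\GG]$ via \Cref{rem:3.5}, and your displayed identity holds with $L_{T_i^{-1}}L_{\rho^{-1}}f$ rather than $L_{T_i^{-1}}L_{\rho}f$ on the left (exactly the paper's identity), which is harmless since $\rho$ ranges over the group $[\GG]$.
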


\begin{proof} Let $N$ be the index of $\mathcal{H}$ in $\GG$. Since $\mathcal{H}$ is ergodic, we may choose $\{T_1,...,T_N\}\subset[\mathcal{G}]$ such that for all $x\in\GG^{(0)}$, the sets $r_{T_i}^{-1}(x)\mathcal{H}$, with $1\leq i\leq N$, partition $\mathcal{G}\cap r^{-1}(x)$. 

Define $\psi:\mathbb S(\mathcal{G})\to\mathbb C$ by $\psi(f)=N^{-1}\sum_{i=1}^N\varphi(L_{T_i^{-1}}f)$. Notice that $\psi$ is a state whose restriction to $L^{\infty}(\GG^{(0)})$ is normal. It remains to show $\psi$ is left $[[\GG]]$-invariant. 

Fix $\theta\in[[\GG]]$. Define $X_{i,j}=\{x\in s(\theta):r_{T_j}^{-1}(\theta(x))\mathcal{H}=s_{\theta}^{-1}(x)r_{T_i}^{-1}(x)\mathcal{H}\}$, and notice that for almost every $x\in s(\theta)$ and for every $1\leq i\leq N$, there exists a unique $1\leq j\leq N$ for which $s_{\theta}^{-1}(x)r_{T_i}^{-1}(x)\mathcal{H}=r_{T_j}^{-1}(\theta(x))\mathcal{H}$, with the assignment $i\mapsto j$ being bijective. Therefore, $s(\theta)=\sqcup_{i=1}^NX_{i,j}=\sqcup_{j=1}^NX_{i,j}$, for every $1\leq i,j\leq N$. Define $\theta_{i,j}\coloneqq \{(r_{T_j}^{-1}(\theta(x)))^{-1}s_{\theta}^{-1}(x)r_{T_i}^{-1}(x):x\in X_{i,j}\}$, which is a local bisection in $[[\mathcal{H}]]$ with $s(\theta_{i,j})=T_i^{-1}X_{i,j}$ and $r(\theta_{i,j})=T_j^{-1}\theta(X_{i,j})$. For $\gamma\in\mathcal{G}^{T_i^{-1}(s(\theta))}$, there exists $1\leq j\leq N$ with $r(\gamma)\in T_i^{-1}(X_{i,j})$, which gives $\theta_{i,j}(r(\gamma))=(T_j^{-1}\circ \theta\circ T_i)(r(\gamma))$ or $(T_j\circ\theta_{i,j})(r(\gamma))=(\theta\circ T_i)(r(\gamma))$. It follows that $L_{T_i^{-1}\theta^{-1}}(f)\cdot \chi_{\GG^{s(\theta_{i,j})}}=L_{\theta_{i,j}^{-1}}L_{T_j^{-1}}(f)\cdot \chi_{\GG^{s(\theta_{i,j})}}=L_{\theta_{i,j}^{-1}}(L_{T_j^{-1}}(f)\chi_{\GG^{r(\theta_{i,j})}})$. Then, using the left $[[\mathcal{H}]]$-invariance of $\varphi$, we obtain
\begin{align*}
    \psi(L_{\theta^{-1}}f)&=\frac{1}{N}\sum_{i=1}^N\varphi(L_{T_i^{-1}\circ\theta^{-1}}f)=\frac{1}{N}\sum_{i=1}^N\varphi(L_{T_i^{-1}\circ\theta^{-1}}(f)\cdot \chi_{\GG^{r(T_i^{-1}\circ\theta^{-1})}})\\
    &=\frac{1}{N}\sum_{i,j=1}^N\varphi(L_{\theta_{i,j}^{-1}}(L_{T_j^{-1}}(f)\chi_{\GG^{r(\theta_{i,j})}}))=\frac{1}{N}\sum_{i,j=1}^N\varphi(L_{T_j^{-1}}(f)\chi_{\GG^{r(\theta_{i,j})}})=\frac{1}{N}\sum_{j=1}^N\varphi(L_{T_j^{-1}}(f\cdot\chi_{\GG^{r(\theta)}}))\\
    &=\psi(f\cdot \chi_{\GG^{r(\theta)}}).
\end{align*}
This shows $\psi$ is left $[[\mathcal{G}]]$-invariant, finishing the proof.
\end{proof}

\begin{prop} \label{prop:f.i.subgroupoid} 
    Suppose $(\GG,\nu)$ is a countable p.m.p.\ groupoid with unit space $(\GG^{(0)},\mu)$ and $\mathcal{H}\subset\GG$ is an ergodic finite index subgroupoid. If $(\GG,\nu)$ is properly proximal, then $(\mathcal H,\nu)$ is properly proximal.
\end{prop}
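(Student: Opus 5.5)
We argue by contraposition. Suppose $\mathcal H$ is not properly proximal, and fix a left $[[\mathcal H]]$-invariant state $\varphi_0$ on $\mathbb S(\mathcal H)$ whose restriction to $L^\infty(\mathcal H^{(0)})=L^\infty(\GG^{(0)})$ is normal. The aim is to produce a left $[[\mathcal H]]$-invariant state $\varphi$ on $\mathbb S(\GG)$ that is normal on $L^\infty(\GG^{(0)})$. Then \Cref{extension[H]invariant} upgrades it to a left $[[\GG]]$-invariant state, normal on $L^\infty(\GG^{(0)})$, which contradicts proper proximality of $\GG$. Since $\mathcal H$ is ergodic, it has full unit space and $\GG$ is ergodic, so the hypotheses of that lemma hold.

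The natural candidate is restriction: set $\varphi(f)\coloneqq\varphi_0(f|_{\mathcal H})$ for $f\in\mathbb S(\GG)$. Three things must be checked.

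\textbf{Step 1: restriction maps $\mathbb S(\GG)$ into $\mathbb S(\mathcal H)$.} Take $f\in\mathbb S(\GG)$ and $\theta\in[[\mathcal H]]\subset[[\GG]]$. For $\gamma\in\mathcal H$ with $s(\gamma)\in r(\theta)$, the product $\gamma\theta$ lies in $\mathcal H$. Hence
\[(f|_{\mathcal H}\chi_{\mathcal H_{r(\theta)}}-R_\theta(f|_{\mathcal H}))=(f\chi_{\GG_{r(\theta)}}-R_\theta f)|_{\mathcal H}.\]
Restricting a $c_0(\GG)$ function to $\mathcal H$ gives a $c_0(\mathcal H)$ function. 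To see this, take the witnesses $B,B',F$ for $\GG$ and replace $F$ by $F\cap\mathcal H$; the units and measure $\mu$ are unchanged, and $\mathcal H$ carries the restricted measure. This is the same computation as in \Cref{restriction}(i).

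\textbf{Step 2: invariance and normality.} For $\sigma\in[[\mathcal H]]$ and $\gamma\in\mathcal H$, the element $\sigma^{-1}\gamma$ lies in $\mathcal H$. So left translation commutes with restriction:
\[(L_\sigma f)|_{\mathcal H}=L_\sigma(f|_{\mathcal H}), \qquad (f\chi_{\GG^{s(\sigma)}})|_{\mathcal H}=f|_{\mathcal H}\chi_{\mathcal H^{s(\sigma)}}.\]
It follows that $\varphi$ is left $[[\mathcal H]]$-invariant. A unit-space function $a\circ r$ on $\GG$ restricts to $a\circ r$ on $\mathcal H$, so $\varphi|_{L^\infty(\GG^{(0)})}=\varphi_0|_{L^\infty(\mathcal H^{(0)})}$ is normal. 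Positivity and unitality are immediate, since restriction is a unital $*$-homomorphism.

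\textbf{Step 3: apply \Cref{extension[H]invariant}.} This is the main point; the first two steps are routine. Note that finite index is used only here: it lets us average $\varphi$ over coset representatives $T_1,\dots,T_N\in[\GG]$ to obtain full $[[\GG]]$-invariance. With that lemma available, the proof closes directly.
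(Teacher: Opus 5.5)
Your proposal is correct and follows the paper's proof: you restrict a left $[[\mathcal H]]$-invariant state on $\mathbb S(\mathcal H)$ to a state on $\mathbb S(\GG)$, using the computation from \Cref{restriction}(i), and then average it over coset representatives via \Cref{extension[H]invariant}. One small point: $\mathcal H^{(0)}=\GG^{(0)}$ comes from the subgroupoid and finite-index setup (the coset relation needs all units in $\mathcal H$), not from ergodicity; ergodicity of $\mathcal H$ does give the ergodicity of $\GG$ that the lemma requires.
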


\begin{proof} If $\mathcal{H}$ is not properly proximal, there exists a state $\varphi_0:\mathbb S(\mathcal{H})\to\mathbb C$ that is left $[[\mathcal{H}]]$-invariant and whose restriction to $L^{\infty}(\GG^{(0)})$ is normal. The same proof as in \Cref{restriction}(i) shows that for $f\in\mathbb S(\GG)$, $f|_{\mathcal{H}}\in\mathbb S(\mathcal H)$, and so, defining $\varphi:\mathbb S(\GG)\to\mathbb C$ by $\varphi(f)=\varphi_0(f|_{\mathcal H})$, we obtain a left $[[\mathcal{H}]]$-invariant state whose restriction to $L^{\infty}(\GG^{(0)})$ is normal. From \Cref{extension[H]invariant}, $\GG$ is not properly proximal.
\end{proof}

Having established that proper proximality passes to finite index subgroups, we can drop the assumption that $A\subset\GG^{(0)}$ is invariant in \Cref{restriction}(i) when $\GG$ is an ergodic equivalence relation $\RR$.

\begin{corollary} \label{cor: f.i. equiv rel restr}
    Suppose $(\RR,\nu)$ is a countable ergodic p.m.p.\ equivalence relation with unit space $(\RR^{(0)},\mu)$ and $A\subset \RR^{(0)}$ is a Borel subset such that $\mu(A) = 1/n$ for some $n\in\mathbb{N}$. If $(\RR,\nu)$ is properly proximal, then the restriction $(\RR_A^A,\nu_A)$ is properly proximal.
\end{corollary}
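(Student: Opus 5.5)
The idea is to realize $\RR$ as a finite index extension of an equivalence relation built from $\RR_A^A$, and then combine \Cref{prop:f.i.subgroupoid} with \Cref{restriction}.

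Since $\RR$ is ergodic and $\mu(A)=1/n$, we first partition $\RR^{(0)}=A_1\sqcup\cdots\sqcup A_n$ with $A_1=A$ and $\mu(A_k)=1/n$. Ergodicity of a p.m.p.\ equivalence relation gives partial isomorphisms $\phi_k\in[[\RR]]$ with $\phi_k\colon A\to A_k$ and $\phi_1=\id_A$. Using these we build an isomorphism of measured equivalence relations
\[
\RR\supset \mathcal{S}\coloneqq \{(\phi_k(x),\phi_l(y)) : (x,y)\in \RR_A^A,\ 1\leq k,l\leq n\}\cong \RR_A^A\times \mathcal{T}_n,
\]
where $\mathcal{T}_n$ is the full relation on $n$ points with uniform measure. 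The set $\mathcal{S}$ is a subrelation of $\RR$ and is defined using only the $\phi_k$. The key observation is that $\mathcal{S}$ need not have finite index in $\RR$. We therefore proceed in the other direction.

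The main route is as follows. We consider the subrelation $\mathcal{H}\coloneqq \mathcal{S}'\coloneqq\{(\phi_k(x),\phi_k(y)):(x,y)\in\RR_A^A,\ 1\le k\le n\}\subset \mathcal{S}$. This is the disjoint union of $n$ copies of $\RR_A^A$ on the invariant pieces $A_k$. This $\mathcal{H}$ has index $n$ in $\mathcal{S}$, but it is not ergodic, so \Cref{prop:f.i.subgroupoid} does not apply directly. Instead I would argue directly with states, mimicking \Cref{restriction}(ii) but in reverse. Suppose $\RR_A^A$ is not properly proximal, and let $\phi_0$ be a left $[[\RR_A^A]]$-invariant state on $\S(\RR_A^A)$ that is normal on $L^\infty(A)$. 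By the ergodicity of $\RR_A^A$ (inherited from $\RR$), its normal part is $n\mu|_A$. Define $\varphi(f)\coloneqq \frac1n\sum_{k=1}^n\phi_0\bigl((L_{\phi_k^{-1}}f)|_{\RR_A^A}\bigr)$ on $\S(\RR)$; this uses the restriction map from \Cref{restriction}(i), which sends $\S(\RR)$ into $\S(\RR_A^A)$. This is a state that is normal on $L^\infty\RR^{(0)}$.

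Next we check invariance. Take $\theta\in[[\RR]]$ and decompose it as $\theta=\bigsqcup_{k,l}\phi_l\sigma_{k,l}\phi_k^{-1}$ on the pieces $A_k\cap\theta^{-1}(A_l)$, where $\sigma_{k,l}\in[[\RR_A^A]]$. The point here is that for an equivalence relation, any $\theta\in[[\RR]]$ factors this way; this is exactly where principality is used. However, left invariance of $\phi_0$ under the $\sigma_{k,l}$ only gives
\[
\varphi(L_\theta f)=\frac1n\sum_{k,l}\phi_0\bigl((L_{\phi_k^{-1}}f\,\chi_{\cdots})|\bigr),
\]
which mixes the index pairs $(k,l)$. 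Summing over $l$ then recovers $\varphi(f\chi_{\RR^{s(\theta)}})$, as in the computation of \Cref{extension[H]invariant}. The result is a left $[[\RR]]$-invariant state on $\S(\RR)$ that is normal on the unit space, so $\RR$ is not properly proximal. This contradicts the hypothesis.

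I expect the main obstacle to be verifying that the pieces $(L_{\phi_k^{-1}}f)|_{\RR_A^A}$ indeed lie in $\S(\RR_A^A)$, and that the bookkeeping of the $\chi$-cutoffs matches. If this direct approach becomes messy, the fallback is to apply \Cref{restriction}(i) to $\mathcal{S}$. In that fallback, we would first show that $\mathcal{S}\cong\RR_A^A\times\mathcal{T}_n$ is properly proximal iff $\RR_A^A$ is, using the product result and the fact that the finite factor is irrelevant. We would then invoke \Cref{extension[H]invariant}-type averaging over the $\phi_k$ to pass between $\mathcal{S}$ and $\RR$.
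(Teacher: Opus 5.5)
Your main route is correct and is essentially the paper's proof with the intermediate subrelation $\bigsqcup_k\RR_{A_k}^{A_k}$ (your $\mathcal H$) unrolled: composing the restriction state of \Cref{restriction}(i) with the averaging of \Cref{extension[H]invariant} over the representatives $\varphi^0,\dots,\varphi^{n-1}$ gives exactly your state $\frac1n\sum_k\phi_0\bigl((L_{\phi_k^{-1}}f)|_{\RR_A^A}\bigr)$ with $\phi_k=\varphi^{k-1}|_A$, and your factorization $\theta=\bigsqcup_{k,l}\phi_l\sigma_{k,l}\phi_k^{-1}$ is the coset decomposition used there. (This factorization needs no principality. The bookkeeping you worry about closes up because left translations preserve $\S(\RR)$, and the restriction step of \Cref{restriction}(i) does not use invariance of $A$.) One correction to your discarded remark: your $\mathcal S$ is all of $\RR$, since any $(u,v)\in\RR$ with $u\in A_k$, $v\in A_l$ equals $(\phi_k(x),\phi_l(y))$ with $x=\phi_k^{-1}(u)$, $y=\phi_l^{-1}(v)$ and $(x,y)\in\RR_A^A$; hence $\RR\cong\RR_A^A\times\mathcal T_n$, and \Cref{prop:products} (the second factor has total mass $n<\infty$) proves the statement directly, in both directions.
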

\begin{proof}
    Since $(\RR,\nu)$ is ergodic, there exists $\varphi\in [\RR]$ such that the sets $\{\varphi^i(A)\}_{i=0}^{n-1}$ form a partition of $\RR^{(0)}$, where $\varphi^i$ denotes the $i$-th iterate of $\varphi$. Write $A_i \coloneqq \varphi^i(A)$,
    and define $\SSS \coloneqq \bigsqcup_{i=0}^{n-1} \RR_{A_i}^{A_i}$, namely the subequivalence relation of $\RR$ consisting of all arrows whose source and range both belong to the same $A_i$.

    We claim that $\SSS$ has index at most $n$ in $\RR$. Fix $x\in \RR^{(0)}$. For each $0\leq i\leq n-1$, consider two elements $(x,y),(x,z)\in \RR$ with $y,z\in A_i$. Write $y=\varphi^i(y_0)$ and $z=\varphi^i(z_0)$ for some $y_0,z_0\in A$. Since $\bigl(\varphi^i(y_0),\varphi^i(z_0)\bigr)=(y,z)\in \RR$ and $\varphi\in[\RR]$, it follows that $(y_0,z_0)\in\RR$. Hence, $(y_0,z_0)\in \RR_A^A$ which further implies $(y,z)=\bigl(\varphi^i(y_0),\varphi^i(z_0)\bigr)\in \RR_{A_i}^{A_i}\subset \SSS$. Therefore, any two arrows in $\RR^x$ whose sources belong to the same $A_i$ are $\SSS$-equivalent. Since the sets $A_i$ partition $\RR^{(0)}$, it follows that $\RR^x$ decomposes into at most $n$ $\SSS$-equivalence classes. Thus, $\SSS$ has index at most $n$ in $\RR$.

    By construction, for every $x\in \RR^{(0)}$, the sets $r^{-1}_{\varphi^i}(x)\SSS$, $i=0,\dots,n-1$, partition $\RR\cap r^{-1}(x)$. Since $\SSS$ has finite index in $\RR$, if $\RR$ is properly proximal, then the same argument as in \Cref{prop:f.i.subgroupoid} shows that $\SSS$ is properly proximal as well. Indeed, although $\SSS$ need not be ergodic, the proof applies here because the explicitly displayed bisections $\varphi^0,\ldots,\varphi^{n-1}$ form a global system of finite-index coset representatives, and the subgroupoid ergodicity in \Cref{prop:f.i.subgroupoid} was used only to obtain such representatives. Finally, since $A$ is $\SSS$-invariant, we have $(\RR_A^A,\nu_A)=(\SSS_A^A,\nu_A)$ is properly proximal by \Cref{restriction}.
\end{proof}

\subsection{Co-amenable inclusions} For countable groups, one has that if $G$ is a properly proximal group and $H\leq G$ is a co-amenable subgroup, then $H$ is also properly proximal \cite[Proposition 4.10(2)]{BIP21}. Since the concept of co-amenability is defined for inclusions of equivalence relations $\mathcal{S}\subset\mathcal{R}$, we extend \Cref{prop:f.i.subgroupoid} to show that proper proximality is preserved under co-amenable inclusions of equivalence relations. We briefly recall the definition introduced in \cite{hayescoamenable}.

Let $\mathcal{S}\subset\mathcal{R}$ be a subequivalence relation. If $\mathcal{R}$ is ergodic, there exists Borel functions $(T_i)_i$ for which $\{[T_i(x)]_{\mathcal{S}}:1\leq i\leq n(x)\}$ partitions each fiber $[x]_{\mathcal{R}}/\mathcal{S}$. The subrelation $\mathcal{S}\subset\mathcal{R}$ is said to be co-amenable if there exists an $[\mathcal{R}]$-invariant mean on $L^{\infty}(\mathcal{R}/\mathcal{S})$ such that $m|_{L^{\infty}X}=\int\cdot\: \,\mathrm{d}\mu$. Here, $\mathcal{R}/\mathcal{S}=\{(x,[T_i(x)]_{\mathcal{S}}):x\in X,i\}$, $L^{\infty}X\subset L^{\infty}(\mathcal{R}/\mathcal{S})$ by $f(x,c)=f(x)$, and $\lambda_{\mathcal{R}/\mathcal{S}}(\theta)f(x,c)=f(\theta^{-1}(x),c)$ for $\theta\in [\mathcal{R}]$, and $f\in L^{\infty}(\mathcal{R}/\mathcal{S})$. As an example, if $\mathcal{S}$ is finite index in $\mathcal{R}$, then $\mathcal{S}\subset\RR$ is co-amenable. Indeed, if $\mathcal{S}\subset\mathcal{R}$ is finite index, each fiber $[x]_{\mathcal{R}}/\mathcal{S}$ is finite and so, there exists Borel functions $T_i \colon X\to X$ for which $\{[T_i(x)]_{\mathcal{S}}:1\leq i\leq n(x)\}$ partitions $[x]_{\mathcal{R}}/\mathcal{S}$, with $n(x)<\infty$ for almost every $x$. The map $\Phi \colon L^{\infty}(\mathcal{R}/\mathcal{S})\to L^{\infty}X$ given by $\Phi(F)(x)\coloneqq \frac{1}{n(x)}\sum_{i=1}^{n(x)}F(x,T_i(x))$ is positive, unital, $[\mathcal{R}]$-equivariant and $\Phi|_{L^{\infty}X}=\text{id}$. By \cite[Theorem 3.5]{hayescoamenable}, $\mathcal{S}\subset\mathcal{R}$ is coamenable.

\begin{prop}\label{prop:coamenable} Suppose $\mathcal{R}$ is a countable ergodic measured equivalence relation on $(X,\mu)$ and $\mathcal{S}\subset\mathcal{R}$ is a co-amenable ergodic subrelation. If $\mathcal{R}$ is properly proximal, so is $\mathcal{S}$. 
\end{prop}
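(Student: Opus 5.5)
We argue by contraposition, following \cite[Proposition 4.10(2)]{BIP21} and the proof of \Cref{prop:f.i.subgroupoid}. Suppose $\mathcal S$ is not properly proximal. As in \Cref{prop:f.i.subgroupoid}, restriction $f\mapsto f|_{\mathcal S}$ maps $\mathbb S(\mathcal R)$ into $\mathbb S(\mathcal S)$, by the argument of \Cref{restriction}(i). Composing a left $[[\mathcal S]]$-invariant state on $\mathbb S(\mathcal S)$ with this map gives a left $[[\mathcal S]]$-invariant state $\varphi$ on $\mathbb S(\mathcal R)$ whose restriction to $L^\infty X$ is normal. Since $\mathcal S$ is ergodic and $\varphi|_{L^\infty X}$ is $[\mathcal S]$-invariant, $\varphi|_{L^\infty X}=\int\cdot\,\mathrm d\mu$. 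The task is then to replace the finite averaging $\psi=N^{-1}\sum_i\varphi\circ L_{T_i^{-1}}$ of \Cref{extension[H]invariant} by integration against the co-amenability mean $m$ on $L^\infty(\mathcal R/\mathcal S)$.

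\textbf{Step 1: the coset function.} For $f\in\mathbb S(\mathcal R)$, define $F_f$ on $\mathcal R/\mathcal S$ by
\[
F_f(x,c)\coloneqq\varphi\bigl(L_{\sigma_c^{-1}}f\bigr),
\]
where $\sigma_c$ is a choice of bisection carrying $x$ into the class $c$. Because $\varphi$ is only a state, not a pointwise object, this needs care. The $\mathcal S$-invariance of $\varphi$ makes the value independent of the representative within the class $c$, exactly as the bisections $\theta_{i,j}\in[[\mathcal S]]$ were used in \Cref{extension[H]invariant}. We would make this rigorous by replacing pointwise evaluation with a linear map
\[
\Phi\colon \mathbb S(\mathcal R)\to L^\infty(\mathcal R/\mathcal S)^{*}\quad\text{or}\quad \mathbb S(\mathcal R)\otimes L^\infty(\mathcal R/\mathcal S)\to\mathbb C.
\]
Concretely: fix the Borel selectors $T_i$ of coset representatives. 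For $g=\sum_i g_i\,\chi_{\{(x,[T_i(x)])\}}$ with $g_i\in L^\infty X$, set
\[
\Psi_f(g)\coloneqq\sum_i\varphi\bigl(g_i\cdot L_{T_i^{-1}}f\bigr).
\]
This is a sum over cosets of $\varphi$ applied to the cut-down of the translate. We would interpret it with $T_i$ as partial bisections $x\mapsto T_i(x)$, which exist by Lusin--Novikov and can be chosen with disjoint graphs.

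\textbf{Step 2: the invariant state.} Ultimately we would define
\[
\psi(f)\coloneqq m\bigl(x\mapsto \text{``}\varphi\text{-average over cosets''}\bigr).
\]
The cleaner route is the standard one: the map $f\mapsto \Psi_f$ is positive, and for fixed $f$ it is a normal-on-$L^\infty X$ functional. It therefore corresponds to an element $\widehat\Psi_f\in L^\infty(\mathcal R/\mathcal S)$ in an appropriate sense. One then takes $\psi(f)=m(\widehat\Psi_f)$. We would then verify three things.
\begin{enumerate}
\item $\psi(1)=1$ and $\psi|_{L^\infty X}=\int\cdot\,\mathrm d\mu$. This uses $\varphi|_{L^\infty X}=\mu$ and $m|_{L^\infty X}=\mu$.
\item $\psi$ is positive, since $m$ and $\varphi$ are positive.
\item $\psi(L_\theta f)=\psi(f\chi)$ for $\theta\in[\mathcal R]$. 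Here $L_\theta$ permutes cosets via $\lambda_{\mathcal R/\mathcal S}(\theta)$ up to the $\mathcal S$-correction bisections $\theta_{i,j}$, and these corrections are absorbed by the $[[\mathcal S]]$-invariance of $\varphi$. Then $[\mathcal R]$-invariance of $m$ finishes the computation, mirroring the displayed computation in \Cref{extension[H]invariant}.
\end{enumerate}

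\textbf{Main obstacle.} The hardest step is making $\widehat\Psi_f$ a genuine element of $L^\infty(\mathcal R/\mathcal S)$. This requires $g_i\mapsto\varphi(g_iL_{T_i^{-1}}f)$ to be normal on $L^\infty X$ for each $i$, so that Radon--Nikodym gives a density bounded by $\|f\|$. We would get this from Cauchy--Schwarz: for a positive $f$ and a projection $g$,
\[
|\varphi(g\,h)|\le\varphi(g)^{1/2}\varphi(h^*gh)^{1/2}\le\|h\|\,\mu(g),
\]
with $\varphi(g)=\mu(g)$ by normality. This shows the functional is dominated by $\|h\|\mu$, hence normal, with density of norm at most $\|f\|_\infty$.

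With this density in hand, $\widehat\Psi_f(x,[T_i(x)])\coloneqq \frac{d\varphi(\cdot\,L_{T_i^{-1}}f)}{d\mu}(x)$ is well defined. The invariance computation then reduces to a change of variables plus the coset bookkeeping already carried out in \Cref{extension[H]invariant}. The result is a left $[\mathcal R]$-invariant state on $\mathbb S(\mathcal R)$ that is normal on $L^\infty X$; by \Cref{rem:3.5}, $\mathcal R$ is then not properly proximal.
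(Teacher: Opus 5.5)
The first part of your plan is fine and matches the paper: restricting a left $[[\mathcal S]]$-invariant state gives $\varphi$ on $\mathbb S(\mathcal R)$ with $\varphi|_{L^\infty X}=\int\cdot\,\mathrm d\mu$, and your normality estimate yields a map $\Phi\colon\mathbb S(\mathcal R)\to L^\infty X$ with $\varphi(gf)=\int_X g\,\Phi(f)\,\mathrm d\mu$. This $\Phi$ is positive, unital, $L^\infty X$-modular, and satisfies $\Phi(L_\sigma f)=\Phi(f)\circ\sigma^{-1}$ for $\sigma\in[\mathcal S]$.

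\textbf{The gap is the coset function.} Your $\widehat\Psi_f(x,[T_i(x)]_{\mathcal S})=\Phi(L_{T_i^{-1}}f)(x)$ has the wrong orientation, and its value depends on the chosen representative, not only on the coset. Replacing $T_i$ by $\sigma T_i$ with $\sigma\in[\mathcal S]$ turns $L_{T_i^{-1}}f$ into $L_{T_i^{-1}}L_{\sigma^{-1}}f$. The $\mathcal S$-translation then sits on the inside, while the $[\mathcal S]$-invariance of $\varphi$ (equivalently, the equivariance of $\Phi$) only controls the outermost translation. So your Step 1 claim that $\mathcal S$-invariance makes the value representative-independent is false.

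A quick symptom: for $a\in L^\infty X$ you get $L_{T_i^{-1}}a=a\circ T_i$, hence $\widehat\Psi_a(x,[T_i(x)]_{\mathcal S})=a(T_i(x))$. This is not the canonical copy $(x,c)\mapsto a(x)$ of $a$, so your check (1) does not follow from $m|_{L^\infty X}=\int\cdot\,\mathrm d\mu$.

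In check (3), $\lambda_{\mathcal R/\mathcal S}(\theta)$ moves your evaluation point from $x$ to $\theta^{-1}(x)$, which is not $\mathcal S$-equivalent to $x$. No $[[\mathcal S]]$-correction can absorb that. The bookkeeping of \Cref{extension[H]invariant} does not transfer: there one only needs equality after summing and integrating, whereas an arbitrary invariant mean $m$ needs the pointwise identity $h_{L_\theta f}=\lambda_{\mathcal R/\mathcal S}(\theta)h_f$.

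The error comes from transplanting the group formula $\varphi(L_{g^{-1}}f)$, which lives on left cosets $g\Lambda$. For the orbit relations of a free action of $\Gamma\supset\Lambda$, the $\mathcal S$-class of $gx$ is $\Lambda gx$. So the classes in $[x]_{\mathcal R}$ are indexed by right cosets $\Lambda g$, and the correct transplant translates by the representative itself.

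\textbf{The missing idea (the paper's choice).} Translate by the representative map and evaluate the density at the representative point: $h_f(x,[T(x)]_{\mathcal S})\coloneqq\Phi(L_Tf)(T(x))$.
\begin{itemize}
\item Well-definedness: suppose $[T(x)]_{\mathcal S}=[\rho(x)]_{\mathcal S}$ for $x\in A$, and $\sigma\in[\mathcal S]$ satisfies $\sigma\rho=T$ on $A$. After cutting down to $T(A)$ using modularity of $\Phi$, one gets $\Phi(L_Tf)(T(x))=\Phi(L_\sigma L_\rho f)(\sigma(\rho(x)))=\Phi(L_\rho f)(\rho(x))$. Both evaluation points lie in one $\mathcal S$-class, which is exactly what the equivariance of $\Phi$ handles.
\item Equivariance: using $T\circ\theta$ as the representative at $\theta^{-1}(x)$ gives $h_{L_\theta f}=\lambda_{\mathcal R/\mathcal S}(\theta)h_f$ for $\theta\in[\mathcal R]$.
\item Restriction to $L^\infty X$: for $a\in L^\infty X$, $\Phi(L_Ta)(T(x))=a(T^{-1}(T(x)))=a(x)$, so $h_a=a$.
\end{itemize}
Then $\psi(f)\coloneqq m(h_f)$ is a left $[\mathcal R]$-invariant state on $\mathbb S(\mathcal R)$ restricting to $\int\cdot\,\mathrm d\mu$ on $L^\infty X$.
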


\begin{proof} Using the proof of \Cref{prop:f.i.subgroupoid}, this result will follow after we prove the following version of \Cref{extension[H]invariant}.

\begin{claim}\label{claim:coamenable} If $\mathcal{S}\subset\mathcal{R}$ is a co-amenable ergodic subrelation and $\varphi:\mathbb S(\mathcal{R})\to\mathbb C$ is an $[\mathcal{S}]$-invariant state whose restriction to $L^{\infty}X$ is normal, then there exists an $[\mathcal{R}]$-invariant state on $\mathbb S(\mathcal{R})$ whose restriction to $L^{\infty}X$ is normal.
\end{claim}

Since the restriction of $\varphi$ to $L^{\infty}X$ is normal, there is a function $a\in L^1(X,\mu)_+$ such that $\varphi(g)=\int_X g(x)a(x)\,\mathrm d\mu(x)$, for all $g\in L^{\infty}X$. The $[\mathcal S]$-invariance of $\varphi$ implies that $a$ is $\mathcal S$-invariant, and since $\mathcal S$ is ergodic, $a$ is essentially constant. From $\varphi(1)=1$, we conclude that $a=1$ almost everywhere. Thus $\varphi(g)=\int_X g\,\mathrm d\mu$ for all  $g\in L^{\infty}X$.

Take $f\in\mathbb S(\mathcal R)$ and define $\omega_f\colon L^{\infty}X\to\mathbb C$ by $\omega_f(g)=\varphi(fg)$. Notice that $\omega_f$ is a bounded functional on $L^{\infty}X$ and, moreover, we have $|\omega_f(g)|\leq \varphi(|fg|)\leq \|f\|_{\infty}\cdot \|g\|_1$, where the first inequality holds since $\mathbb S(\mathcal R)$ is a commutative C$^*$-algebra. This shows $\omega_f$ is continuous on $L^{\infty}X$ with respect to the $L^1$-norm, and therefore, extends uniquely to a bounded linear functional on $L^1X$. By the duality $(L^1X)^*=L^\infty X$, there is a unique function $\Phi(f)\in L^{\infty}X$, with $\|\Phi(f)\|_{\infty}\leq\|f\|_{\infty}$, such that 
\begin{equation*} 
    \varphi(gf) = \int_X g(x)\Phi(f)(x)\,\mathrm d\mu(x), \:\:\text{ for all } g\in L^{\infty}X. 
\end{equation*} 
The map $\Phi:\mathbb S(\mathcal R)\to L^{\infty}X$ is linear, positive and unital. For $f,g\geq 0$, $\int_Xg(x)\Phi(f)(x)\,\mathrm d\mu(x)=\varphi(gf)\geq 0$, so $\Phi(f)\geq 0$, and for $b\in L^{\infty}X$, $\int_Xgb\,\mathrm d\mu=\varphi(gb)=\int_Xg\Phi(b)\,\mathrm d\mu$ for every $g\in L^{\infty}X$ so that $\Phi(b)=b$. The same computation also gives $\Phi(bf)=b\Phi(f)$, for $b\in L^{\infty}X$ and $f\in\mathbb S(\mathcal R)$. For $\sigma\in [\mathcal S]$, we have 
\begin{equation*}
    \int_X(g\circ\sigma^{-1})(x)\Phi(L_{\sigma}f)(x)\,\mathrm d\mu(x)=\varphi((g\circ\sigma^{-1})L_{\sigma}f)=\varphi(L_{\sigma}(gf))=\varphi(gf)=\int_Xg(x)\Phi(f)(x)\,\mathrm d\mu(x),
\end{equation*}
showing that $\Phi(L_{\sigma}(f))(\sigma x)=\Phi(f)(x)$, for almost every $x\in X$.

Let $f\in\mathbb S(\mathcal R)$, and define $h_f:\mathcal R/\mathcal S\to\mathbb C$ by $h_f(x,[T(x)]_{\mathcal S})=\Phi(L_Tf)(T(x))$. We first show $h_f$ is well-defined. Let $\rho\in[\mathcal R]$, and set $A=\{x\in X:[T(x)]_{\mathcal S}=[\rho(x)]_{\mathcal S}\}$. Define $\sigma\in[[\mathcal S]]$ with source $\rho(A)$ and range $T(A)$ by $\sigma(\rho(x))=T(x)$, for $x\in A$, and extend $\sigma$ to an element in $[\mathcal S]$. It follows that $\chi_{T(A)\times X}L_Tf=L_{\sigma}(\chi_{\rho(A)\times X}L_{\rho}f)$. Applying $\Phi$, we obtain $\chi_{T(A)}\Phi(L_Tf)=\chi_{\rho(A)}\Phi(L_{\rho}f)\circ\sigma^{-1}$, giving
\begin{equation*}
    (\Phi(L_Tf)\circ T)\chi_A=(\Phi(L_{\rho}f)\circ\rho)\chi_A.
\end{equation*}
Therefore, $h_f$ defines a measurable bounded function on $\mathcal R/\mathcal S$, since $|h_f(x,[T(x)]_{\mathcal S})| \leq \|\Phi(L_Tf)\|_\infty \leq \|L_Tf\|_\infty = \|f\|_\infty$. 

Define $\psi\colon \mathbb S(\mathcal R)\to\mathbb C$ by $\psi(f)=m(h_f)$, where $m:L^{\infty}(\mathcal{R}/\mathcal{S})\to\mathbb C$ is the $[\mathcal{R}]$-invariant mean with $m|_{L^{\infty}X}=\int_X\cdot\:\,\mathrm{d}\mu$ given by co-amenability. Since both $f\mapsto h_f$ and $m$ are positive and unital, $\psi$ is a state. We finish by proving $\psi$ is the desired state. For $\theta\in[\mathcal R]$, we have $\lambda_{\mathcal R/\mathcal S}(\theta)h_f(x,[T(x)]_{\mathcal S})=h_f(\theta^{-1}(x),[(T\circ\theta)(\theta^{-1}(x))]_{\mathcal S})=\Phi(L_{T\circ\theta}f)(T(x))=h_{L_{\theta}f}(x,[T(x)]_{\mathcal S})$, so that $h_{L_{\theta}f}=\lambda_{\mathcal R/\mathcal S}(\theta)h_f$. The $[\mathcal R]$-invariance of $m$ gives $\psi$ is $[\mathcal R]$-invariant since
\begin{equation*}
    \psi(L_{\theta}f)=m(h_{L_{\theta}f})=m(\lambda_{\mathcal R/\mathcal S}(\theta)h_f)=m(h_f)=\psi(f).
\end{equation*}
Finally, let $f\in L^{\infty}X$. Since $L_Tf=f\circ T^{-1}$ and $\Phi$ restricts to the identity on $L^{\infty}X$, we have $h_f(x,[T(x)]_{\mathcal S})=\Phi(L_Tf)(T(x))=(f\circ T^{-1})(T(x))=f(x)$. It follows that $h_f$ is exactly the canonical copy of $f$ in $L^{\infty}(\mathcal R/\mathcal S)$ and hence, $\psi(f)=m(h_f)=\int_Xfd\mu$ is normal. This proves the claim.
\end{proof}

\begin{remark} We expect the preceding result to extend to general countable p.m.p.\ groupoids, rather than only to equivalence relations. We restrict to equivalence relations here because the notion of co-amenability used in the argument has so far been developed only for inclusions of equivalence relations; see \cite{hayescoamenable}.
\end{remark}

\subsection{Direct products} Let $(\GG_1,\nu_1)$ and $(\GG_2,\nu_2)$ be countable p.m.p.\ groupoids with unit space $(\GG_1^{(0)},\mu_1)$ and $(\GG_2^{(0)},\mu_2)$ respectively. The direct product $\mathcal G\coloneqq \GG_1\times \GG_2$ is defined as follows. The unit space of $\mathcal G$ is $\GG_1^{(0)}\times \GG_2^{(0)}$ equipped with the product measure $\mu_1\times\mu_2$. The elements of $\GG$ consist on pairs of arrows $(\gamma_1,\gamma_2)$, where $\gamma_i\in\GG_i$ for $i=1,2$. The source map $s$ and $r$ are defined component-wise; that is, $s(\gamma_1,\gamma_2)=(s_1(\gamma_1),s_2(\gamma_2))\in\mathcal G^{(0)}$ and $r(\gamma_1,\gamma_2)=(r_1(\gamma_1),r_2(\gamma_2))\in\mathcal G^{(0)}$. The multiplication is defined on composable pairs $(\gamma_1,\gamma_2)(\rho_1,\rho_2)=(\gamma_1\rho_1,\gamma_2\rho_2)$, whenever $s(\gamma_1,\gamma_2)=r(\rho_1,\rho_2)$, and the inverse map is defined by $(\gamma_1,\gamma_2)^{-1}=(\gamma_1^{-1},\gamma_2^{-1})$. The product measure $\nu=\nu_1\times\nu_2$ is a well-defined measure on $\mathcal G$.

\begin{remark}\label{rem:prodofbisections} Every $\theta\in [[\mathcal G]]$ can be approximated in the uniform metric by finite
disjoint unions $\rho=\sqcup_{k=1}^n \alpha_k\times \beta_k$, where $\alpha_k\in [[\mathcal G_1]]$, $\beta_k\in [[\mathcal G_2]]$, and the
rectangles are pairwise disjoint. Indeed, since each $\mathcal G_i$ is a countable p.m.p.\ groupoid, we can partition $\mathcal G_i$ into countably many Borel bisections, and hence, $\mathcal G$ can be partitioned into countably many rectangular bisections. Intersecting $\theta$ with these rectangles, we can reduce to the case where $\theta$ is contained in a rectangle $\alpha\times\beta$. Its domain is a measurable set $D\subset s(\alpha)\times s(\beta)$, and this can be approximated in measure by a finite disjoint union of measurable rectangles $\sqcup_{j=1}^m A_j\times B_j$. Then, $\rho_m=\sqcup_{j=1}^m\alpha|_{A_j}\times\beta|_{B_j}$ agrees with $\theta$ except on a set of arbitrarily small measure.
\end{remark}

We begin this subsection with straightforward lemma establishing that proper proximality is preserved when taking the product with a finite measured groupoid.

\begin{lemma}\label{prop:products} Let $(\mathcal G_1,\nu_1)$ and $(\mathcal G_2,\nu_2)$ be countable p.m.p.\ groupoids with unit spaces $(\GG^{(0)}_1,\mu_1)$ and $(\GG^{(0)}_2,\mu_2)$, respectively. Denote by $\mathcal G=\mathcal G_1\times\mathcal G_2$ the direct product of $\mathcal G_1$ and $\mathcal{G}_2$ with measure $\nu$. Assume $\nu_2(\mathcal G_2)<\infty$. Then $\mathcal G$ is properly proximal if and only if $\mathcal G_1$ is properly proximal.
\end{lemma}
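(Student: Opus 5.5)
The plan is to prove both implications by contraposition, transferring invariant states through two explicit maps between $\mathbb S(\mathcal G_1)$ and $\mathbb S(\mathcal G)$. The first is the pull-back $\pi(f)=f\circ \mathrm{pr}_1$. The second is a fiberwise average over $\mathcal G_2$,
\[(\mathbb E f)(\gamma_1)=\frac{1}{\nu_2(\mathcal G_2)}\int_{\mathcal G_2} f(\gamma_1,\gamma_2)\,\mathrm d\nu_2(\gamma_2),\]
which is well defined precisely because $\nu_2(\mathcal G_2)<\infty$. Invariance only needs to be checked on product bisections. By \Cref{rem:prodofbisections}, finite disjoint unions of rectangles $\alpha\times\beta$ are $d_u$-dense. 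So \Cref{claim:generating set}(c) reduces membership in $\mathbb S(\mathcal G)$ to these bisections, and \Cref{lem:uniformtopdensity} does the same for invariance of states.

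\emph{Step 1: if $\mathcal G$ is not properly proximal, neither is $\mathcal G_1$.} First show $\pi(c_0(\mathcal G_1))\subset c_0(\mathcal G)$. If $S_1\subset \mathcal G_1^{B}\cup(\mathcal G_1)_{B'}\cup F$, then
\[S_1\times\mathcal G_2\subset \mathcal G^{B\times\mathcal G_2^{(0)}}\cup\mathcal G_{B'\times \mathcal G_2^{(0)}}\cup (F\times\mathcal G_2).\]
Here $F\times\mathcal G_2$ has finite measure because $\nu_2$ is finite, and the small sets keep their measure. For a rectangle $\alpha\times\beta$ one has $\pi(f)\chi-R_{\alpha\times\beta}\pi(f)=\pi(f\chi-R_\alpha f)$ cut down by a source projection. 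Hence $\pi(\mathbb S(\mathcal G_1))\subset\mathbb S(\mathcal G)$ by \Cref{claim:generating set}. Given a $[\mathcal G]$-invariant state $\varphi$ that is normal on $L^\infty(\mathcal G^{(0)})$, set $\varphi_1=\varphi\circ\pi$. Its restriction to $L^\infty(\mathcal G_1^{(0)})$ is $\varphi$ on functions of $x_1$, hence normal. Invariance under $\alpha\in[\mathcal G_1]$ follows from $\pi L_\alpha=L_{\alpha\times\mathrm{id}}\pi$.

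\emph{Step 2: if $\mathcal G_1$ is not properly proximal, neither is $\mathcal G$.} Take $\varphi_1$ on $\mathbb S(\mathcal G_1)$ and set $\varphi=\varphi_1\circ\mathbb E$. Right translation by a global bisection $\beta\in[\mathcal G_2]$ preserves $\nu_2$. This gives $\mathbb E(R_{\alpha\times\beta}f)=R_\alpha\mathbb E f$, $\mathbb E(L_{\alpha\times\beta}f)=L_\alpha\mathbb E f$, and in particular $\mathbb E L_{\mathrm{id}\times\beta}=\mathbb E$. Granting that $\mathbb E(c_0(\mathcal G))\subset c_0(\mathcal G_1)$, these identities show three things. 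First, $\mathbb E$ maps $\mathbb S(\mathcal G)$ into $\mathbb S(\mathcal G_1)$, using product global bisections and \Cref{claim:generating set}(c). Second, $\varphi$ is invariant under all $\alpha\times\beta$, hence under $[[\mathcal G]]$ by \Cref{lem:uniformtopdensity}. Third, $\varphi$ is normal on the unit space: for $h\in L^\infty(\mathcal G_1^{(0)}\times\mathcal G_2^{(0)})$ viewed via $r$, $\mathbb E h$ is a function of $x_1$ given by integrating $h(x_1,\cdot)$ against the finite measure $x_2\mapsto|r^{-1}(x_2)|\,\mathrm d\mu_2$. This is normal, and composing with the normal restriction of $\varphi_1$ keeps it normal.

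\emph{Main obstacle: showing $\mathbb E(c_0(\mathcal G))\subset c_0(\mathcal G_1)$.} Suppose $|f|\le\varepsilon$ off $\mathcal G^{B}\cup\mathcal G_{B'}\cup F$ with $B,B'\subset\mathcal G^{(0)}$ small and $\nu(F)<\infty$. The plan is to estimate the three exceptional pieces separately.
\begin{enumerate}
\item The set $\{x_1:\mu_2(B_{x_1})>\delta\}$ has measure at most $\mu(B)/\delta$ by Chebyshev, and similarly for $B'$.
\item For the remaining $x_1$, use that $\nu_2$ is finite: the fiber-count function $x_2\mapsto|r^{-1}(x_2)|$ is integrable, so $\nu_2(\mathcal G_2^{C})\to0$ uniformly as $\mu_2(C)\to0$. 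This bounds the contribution of $\mathcal G^B$ to $\mathbb E f(\gamma_1)$ by $\|f\|\cdot o(1)$, and likewise for $B'$ on the source side.
\item By Fubini, $\{\gamma_1:\nu_2(F_{\gamma_1})>\delta\}$ has finite $\nu_1$-measure, since $\nu(F)<\infty$.
\end{enumerate}
Off these three exceptional sets, $|\mathbb E f|\lesssim\varepsilon+o(1)\|f\|$, which shows $\mathbb E f\in c_0(\mathcal G_1)$. Choosing the parameters $\delta$ compatibly is the part I expect to need the most care.
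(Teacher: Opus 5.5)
Your proposal is correct and follows the paper's proof essentially step for step. It uses the same pull-back $f\mapsto f\otimes 1$ and the same fiberwise average $\Psi$, the same reduction to finite disjoint unions of rectangular bisections, and the same Chebyshev, fiber-count-integrability and Fubini argument for $\Psi(c_0(\mathcal G))\subset c_0(\mathcal G_1)$. The only difference there is the order of steps, which is immaterial: you apply Chebyshev to $x_1\mapsto\mu_2(B_{x_1})$ and then use uniform absolute continuity of $C\mapsto\int_C|r_2^{-1}(y)|\,\mathrm d\mu_2(y)$ fiberwise, whereas the paper applies absolute continuity on $\mathcal G^{(0)}$ first and then Chebyshev to $x\mapsto\nu_2(\{\gamma_2:(x,r_2(\gamma_2))\in E\})$.

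One point in Step 2 needs tightening. Global rectangles $\alpha\times\beta$ generate only $[\mathcal G_1]\times[\mathcal G_2]$, which in general is not $d_u$-dense in $[\mathcal G]$, so \Cref{lem:uniformtopdensity} does not apply to them directly. You should first pass to local rectangles by cutting down with unit-space projections as in \Cref{rem:3.5}, then sum over finite disjoint unions; this is exactly the computation the paper performs.
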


\begin{proof}
    Since $\GG_2^{(0)}\subset\GG_2$ and $\mu_2(\GG_2^{(0)})=1$, we have $\nu_2(\GG_2^{(0)})\geq 1$. We first record two facts that will be used later.

    \begin{claim} \label{claim: first}
        If $g\in c_0(\GG_1)$ and $h\in L^\infty(\GG_2)$, then $g\otimes h\in c_0(\GG_1\times\GG_2)$.
    \end{claim}
    \begin{subproof}[Proof of Claim \ref{claim: first}]
        Fix $\eps>0$ and let $M\coloneqq \max\{\|h\|_\infty,1\}$, $\delta \coloneqq \min\{\eps,\eps/M\}$. Since $g\in c_0(\GG_1)$, there exist measurable subsets $B,B'\subset\GG_1^{(0)}$ with $\mu_1(B),\mu_1(B')<\delta$ and a Borel subset $F\subset\GG_1$ with $\nu_1(F)<\infty$ such that $\| (1-\chi_{\GG_1^B})(1-\chi_{(\GG_1)_{B'}}) g(1-\chi_F)\|_\infty \leq\delta$. Consider the sets $B\times\GG_2^{(0)}$, $B'\times\GG_2^{(0)}$, and the Borel subset $F\times\GG_2$ of $\GG_1\times\GG_2$. Their measures satisfy $(\mu_1\times\mu_2)(B\times\GG_2^{(0)})=\mu_1(B)<\eps$ with the same estimate for $B'$, and $(\nu_1\times\nu_2)(F\times\GG_2)=\nu_1(F) \nu_2(\GG_2)<\infty$. If $r_1(\gamma_1)\notin B$, $s_1(\gamma_1)\notin B'$, and $\gamma_1\notin F$, then $|g(\gamma_1)h(\gamma_2)| \leq \delta\|h\|_\infty \leq\eps$. This proves the first claim.
    \end{subproof}
    Define a normal positive unital map $\Psi \colon L^\infty(\GG_1\times\GG_2)\longrightarrow L^\infty(\GG_1)$ by $\Psi(f)(\gamma_1) =\frac{1}{\nu_2(\GG_2)}\int_{\GG_2}f(\gamma_1,\gamma_2)\,\mathrm{d}\nu_2(\gamma_2)$.
    \begin{claim} \label{claim: second}
        $ \Psi\bigl(c_0(\GG_1\times\GG_2)\bigr) \subset c_0(\GG_1)$.
    \end{claim}
    \begin{subproof}[Proof of Claim \ref{claim: second}]
        Let $H\in c_0(\GG_1\times\GG_2)$ and fix $0<\eps<1$. Let $M \coloneqq \max\{\|H\|_\infty,1\}$ and $\kappa \coloneqq \frac{\eps\, \nu_2(\GG_2)}{8M}$. The two functions $y\mapsto |r_2^{-1}(y)|$ and $y\mapsto |s_2^{-1}(y)|$ belong to $L^1(\GG_2^{(0)},\mu_2)$, because their integrals are both $\nu_2(\GG_2)<\infty$.  Hence there exists $\delta>0$, with $\delta<\eps/4$, such that every measurable subset $E\subset\GG_1^{(0)}\times\GG_2^{(0)}$ satisfying $(\mu_1\times\mu_2)(E)<\delta$ also satisfies
        \begin{equation} \label{eqn: absolute range and source}
            \int_E |r_2^{-1}(y)|\,\mathrm d(\mu_1\times\mu_2)(x,y) <\eps \kappa, \qquad \int_E |s_2^{-1}(y)|\,\mathrm d(\mu_1\times\mu_2)(x,y) <\eps \kappa.  
        \end{equation}
        Since $H\in c_0(\GG_1\times\GG_2)$, there exist measurable subsets $E,E'\subset\GG_1^{(0)}\times\GG_2^{(0)}$ with $(\mu_1\times\mu_2)(E),(\mu_1\times\mu_2)(E')<\delta$ and a Borel subset $F\subset\GG_1\times\GG_2$ with $(\nu_1\times\nu_2)(F)<\infty$ such that $ \left\|(1-\chi_{\GG^E})(1-\chi_{\GG_{E'}})H(1-\chi_F)\right\|_\infty\leq\delta$. For $x\in\GG_1^{(0)}$, define
        \[a(x) \coloneqq \nu_2\bigl(\{\gamma_2\in\GG_2:(x,r_2(\gamma_2))\in E\} \bigr), \qquad a'(x) \coloneqq \nu_2\bigl(\{\gamma_2\in\GG_2: (x,s_2(\gamma_2))\in E'\} \bigr).\]
        By Fubini's theorem and \eqref{eqn: absolute range and source},
        \[\int_{\GG_1^{(0)}}a(x)\,\mathrm d\mu_1(x) =\int_E |r_2^{-1}(y)|\, \mathrm d(\mu_1\times\mu_2)(x,y) <\eps \kappa, \qquad \int_{\GG_1^{(0)}}a'(x)\,\mathrm d\mu_1(x)<\eps \kappa.\]
        Therefore the measurable sets $C\coloneqq \{x\in\GG_1^{(0)}:a(x)\geq \kappa\}$ and $C' \coloneqq \{x\in\GG_1^{(0)}:a'(x)\geq \kappa\}$ satisfy $\mu_1(C),\mu_1(C')<\eps$.

        For $\gamma_1\in\GG_1$, let $F_{\gamma_1} \coloneqq \{\gamma_2\in\GG_2:(\gamma_1,\gamma_2)\in F\}$. Again by Fubini's theorem, $\int_{\GG_1}\nu_2(F_{\gamma_1})\,\mathrm d\nu_1(\gamma_1) =(\nu_1\times\nu_2)(F)<\infty$. Consequently, the set $K \coloneqq \{\gamma_1\in\GG_1:\nu_2(F_{\gamma_1})\geq \kappa\}$ has finite $\nu_1$-measure.

        Fix $\gamma_1\notin (\GG_1)^C\cup(\GG_1)_{C'}\cup K$. The set of $\gamma_2\in\GG_2$ for which $(r_1(\gamma_1),r_2(\gamma_2))\in E$ has $\nu_2$-measure smaller than $\kappa$, and the set for which $(s_1(\gamma_1),s_2(\gamma_2))\in E'$ also has $\nu_2$-measure smaller than $\kappa$. Also, $\nu_2(F_{\gamma_1})<\kappa$.  Outside the union of these three subsets of $\GG_2$, we have $|H(\gamma_1,\gamma_2)|\leq\delta$. Hence, $\nu_2(\GG_2) \cdot |\Psi(H)(\gamma_1)|\leq 3M\kappa+\delta \nu_2(\GG_2)$, and therefore $|\Psi(H)(\gamma_1)| \leq 3\eps/8+\eps/4<\eps$. Together with $\nu_1(K)<\infty$, this proves the claim.
    \end{subproof}
    We now prove the two implications.
    Suppose first that $\GG_1\times\GG_2$ is not properly proximal, and let $\omega\colon \S(\GG_1\times\GG_2)\to \C$ be a left $[[\GG_1\times\GG_2]]$-invariant state whose restriction to $L^\infty(\GG_1^{(0)}\times\GG_2^{(0)})$ is normal. We show that the map $\S(\GG_1)\to\S(\GG_1\times\GG_2)$ defined by $f\mapsto f\otimes1$ is well defined. By Remark \ref{rem:prodofbisections} and Lemma \ref{claim:generating set}, it suffices to check well-definedness for a finite disjoint union of rectangular bisections $\rho=\bigsqcup_{k=1}^m\alpha_k\times\beta_k \in[[\GG_1\times\GG_2]]$. Since the source and range sets of these rectangles are pairwise disjoint, one has
    \begin{equation}\label{eqn: rectangular right calculation}
        (f\otimes1)\chi_{(\GG_1\times\GG_2)_{r(\rho)}}-R_\rho(f\otimes1)= \sum_{k=1}^m\left(f\chi_{(\GG_1)_{r(\alpha_k)}}-R_{\alpha_k}f\right)\otimes\chi_{(\GG_2)_{r(\beta_k)}}.
    \end{equation}
    Each first-coordinate factor on the right-hand side belongs to $c_0(\GG_1)$, and hence every summand belongs to $c_0(\GG_1\times\GG_2)$ by Claim \ref{claim: first}.  Thus the left-hand side of \eqref{eqn: rectangular right calculation} belongs to $c_0(\GG_1\times\GG_2)$, and the map $\S(\GG_1)\to\S(\GG_1\times\GG_2)$ given by $f\mapsto f\otimes1$ is well defined.


    Define a state $\varphi\colon \S(\GG_1)\to \C$ by $\varphi(f)=\omega(f\otimes1)$. The restriction of $\varphi$ to $L^\infty(\GG_1^{(0)})$ is normal. For every $\alpha\in[\GG_1]$, $\varphi(L_\alpha f)=\omega((L_\alpha f)\otimes1)=\omega(L_{\alpha\times\mathrm{id}}(f\otimes1))=\omega(f\otimes1)=\varphi(f)$.
    Hence $\varphi$ is left $[\GG_1]$-invariant, and therefore $\GG_1$ is not properly proximal. 
    
    \vspace{2mm}
    
    Conversely, suppose that $\GG_1$ is not properly proximal.  Let $\varphi\colon\S(\GG_1)\to\C$ be a left $[[\GG_1]]$-invariant state whose restriction to $L^\infty(\GG_1^{(0)})$ is normal. Define $\psi\colon\mathbb S(\mathcal G)\to\mathbb C$ by $\psi(f)=\varphi(\Psi(f))$. This is well-defined state since, for $f\in\mathbb S(\GG)$ and $\theta\in[\GG_1]$, $\Psi(f)-R_{\theta}\Psi(f)=\Psi(f-R_{\theta\times\text{id}}f)\in c_0(\GG_1)$ by Claim \ref{claim: second}; that is, $\Psi(\S(\GG))\subset \S(\GG_1)$. 

    For $f\in L^{\infty}(\GG^{(0)})$, $\Psi(f)(\gamma_1)=\frac{1}{\nu_2(\GG_2)}\int_{\GG_2}f(r(\gamma_1),r(\gamma_2))\,\mathrm{d}\nu_2(\gamma_2)=\Psi(f)(r(\gamma_1))\in L^{\infty}(\GG_1^{(0)})$. Thus, the normality of $\psi|_{L^{\infty}(\GG^{(0)})}$ follows from the normality of $\varphi|_{L^{\infty}(\GG_1^{(0)})}$. Finally, to check $\psi$ is left $[[\GG]]$-invariant, by Remark \ref{rem:prodofbisections} and Lemma \ref{lem:uniformtopdensity}, it suffices to consider a finite disjoint union of rectangular bisections $\rho=\bigsqcup_{k=1}^m\alpha_k\times\beta_k \in[[\GG_1\times\GG_2]]$. For $f\in\mathbb S(\GG)$,
    \begin{equation*}
        \Psi(L_{\rho}f)=\sum_{k=1}^{m}\Psi(L_{\alpha_k\times\beta_k}f)=\sum_{k=1}^m\Psi(L_{\alpha_k\times \text{id}}(f\chi_{(\GG_2)^{s(\beta_k)}}))=\sum_{k=1}^mL_{\alpha_k}\Psi(f\chi_{(\GG_2)^{s(\beta_k)}}),
    \end{equation*}
    and thus, $\psi(L_{\rho}f)=\varphi(\Psi(L_{\rho}f))=\sum_{k=1}^m\varphi(\Psi(f\chi_{\GG_2^{s(\beta_k)}})\chi_{\GG_1^{s(\alpha_k)}})=\psi(f\chi_{\GG^{s(\rho)}})$. Altogether, these show that $\mathcal{G}$ is not properly proximal. 
\end{proof}

The previous lemma requires $\nu_2(\mathcal G_2)<\infty$, however, in the next result we remove this assumption for the backwards direction. For the forward direction, we can only drop it in special cases, as will be shown in \Cref{prop:productsvNalg}. We first record a consequence of \Cref{rem:prodofbisections} and \Cref{lem:uniformtopdensity}.

\begin{lemma} \label{claim: prod in small-at-infty} Let $(\GG_1, \nu_1)$ and $(\GG_2,\nu_2)$ be countable p.m.p.\ groupoids with unit spaces $(\GG^{(0)}_1,\mu_1)$ and $(\GG^{(0)}_2,\mu_2)$, respectively. Denote by $\mathcal G=\mathcal G_1\times\mathcal G_2$ the direct product of $\mathcal G_1$ and $\mathcal{G}_2$ with measure $\nu$. Let $T\in (L^\infty(\GG)^{L^\infty \GG^{(0)}\sharp L^\infty \GG^{(0)}})^*$. If $T-\widetilde R_{\alpha\times\id}(T),\:T-\widetilde R_{\id\times\beta}(T) \in (c_0(\GG)^{L^\infty \GG^{(0)}\sharp L^\infty \GG^{(0)}})^*$ for every $\alpha\in [\GG_1]$ and $\beta\in [\GG_2]$, then $T\in\widetilde{\S(\GG)}$.
\end{lemma}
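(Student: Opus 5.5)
Recall that $\widetilde{\S(\GG)}$ consists of those $T$ with $T-\widetilde R_\theta(T)\in (c_0(\GG)^\sharp)^*$ for every $\theta\in[\GG]$, where $\widetilde R_\theta$ denotes the bidual extension $R_\theta^{\sharp *}$. So we must pass from the two families $\alpha\times\id$ and $\id\times\beta$ to all of $[\GG]$. The plan is to redo, in the bidual, the generation argument of \Cref{claim:generating set}, with \Cref{rem:prodofbisections} supplying the generating set.

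Step 1 handles rectangular global bisections. For $\alpha\in[\GG_1]$ and $\beta\in[\GG_2]$ we have $\alpha\times\beta=(\alpha\times\id)(\id\times\beta)$, so $R_{\alpha\times\beta}=R_{\alpha\times\id}R_{\id\times\beta}$, up to the order convention. Then
\[T-\widetilde R_{\alpha\times\beta}T=(T-\widetilde R_{\alpha\times\id}T)+\widetilde R_{\alpha\times\id}(T-\widetilde R_{\id\times\beta}T).\]
Since $R_{\alpha\times\id}$ preserves $c_0(\GG)$ and commutes with the $L^\infty\GG^{(0)}$-bimodule structure up to the automorphism of $L^\infty\GG^{(0)}$ it induces, it is normal in the $\sharp$-sense. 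Its bidual extension therefore preserves $(c_0(\GG)^\sharp)^*$, viewed as the weak$^*$-closed subspace $p_{\nor}c_0(\GG)^{**}p_{\nor}$. The same identity extends to finite compositions, and hence to the group $\langle\{\alpha\times\id,\id\times\beta\}\rangle$.

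Step 2 handles finite disjoint unions $\rho=\bigsqcup_k\alpha_k\times\beta_k$ of rectangles. This mirrors \Cref{rem:3.2}(ii) and \eqref{eqn: rectangular right calculation}. On the piece $\GG_{r(\alpha_k)\times r(\beta_k)}$, the operator $R_\rho$ agrees with $R_{\alpha_k'\times\beta_k'}$, where $\alpha_k',\beta_k'$ are global extensions of $\alpha_k,\beta_k$. Cutting with the corresponding projections of $L^\infty\GG$ (right multiplications by $\chi_{\GG_{\cdot}}$, which act weak$^*$-continuously on the bidual and preserve the $c_0$ part) gives $T\chi-\widetilde R_\rho T\in(c_0(\GG)^\sharp)^*$. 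Here $\chi$ stands for the cutoff by the source set.

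Step 3 is the approximation, which I expect to be the main obstacle. By \Cref{rem:prodofbisections}, any $\theta\in[\GG]$ is a $d_u$-limit of such $\rho_n$. The two differ only on $\GG_{D_n}$ with $\mu(D_n)\to0$, so
\[T-\widetilde R_\theta T-(T\chi_n-\widetilde R_{\rho_n}T)=q_n(\cdots),\]
where $q_n$ is the image in the bidual of $JP_nJ$, $P_n=\chi_{D_n}\in L^\infty\GG^{(0)}$. Unlike in \Cref{claim:generating set}, we cannot conclude by a sup-norm argument. Instead I would use that $(c_0(\GG)^\sharp)^*$ is a norm-closed (indeed weak$^*$-closed) subspace, and argue that $q_n S\to0$ weak$^*$ for bounded $S$.

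The delicate point is that the $\sharp$-structure uses the left $L^\infty\GG^{(0)}$-actions through $r$ on both sides, not through $s$. To handle this I would show directly that any $S$ supported on $\GG_{D}$ with $\mu(D)$ small already lies in $(c_0(\GG)^\sharp)^*$ up to a small error: by definition of $c_0(\GG)$, the ideal contains every function supported on $\GG_{B'}$ with $\mu(B')$ small, since we may take $B'=D_n$ with no tail condition. So in fact $\chi_{\GG_{D}}L^\infty\GG\subset c_0(\GG)$ whenever $\mu(D)<\eps$ for each fixed $\eps$? That is not literally true, because a set $D_n$ has a fixed measure. The remedy is to pick $D_n$ with $\sum\mu(D_n)<\infty$ and use $E_N=\bigcup_{n\ge N}D_n$. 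Then $h=\chi_{\GG_{D_N}}f$ satisfies, for every $\eps>0$, the $c_0$ condition with $B'=D_N$ once $\mu(D_N)<\eps$. A single fixed function is handled by the union trick: the difference term is supported in $\GG_{D_n}$ for all large $n$, i.e. in $\GG_{\bigcap_N E_N}$, which is a null set.

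Concretely, I would reorganize the argument so that $T-\widetilde R_\theta T$ equals $T\chi-\widetilde R_{\rho_n}T$ modulo a term cut by $q_{E_N}$, for every $N$. Each such term differs from an element of $(c_0)^{\sharp*}$ by something cut by $q_{E_N}$. Since $q_{E_N}\searrow0$ strongly, weak$^*$-closedness of $(c_0(\GG)^\sharp)^*$ then gives the conclusion. This is the step I would check most carefully.
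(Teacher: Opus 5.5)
Your Steps 1 and 2 are the paper's argument: the same splitting $T-\widetilde R_{\alpha\times\beta}T=(T-\widetilde R_{\alpha\times\id}T)+\widetilde R_{\alpha\times\id}(T-\widetilde R_{\id\times\beta}T)$, followed by cutting with the source projections $\chi_{\GG_{r(\alpha_k)\times r(\beta_k)}}$ for finite disjoint unions of rectangles. One small correction there: right translations fix the range copy $\{a\circ r\}$ exactly, so they are genuinely bimodular, not bimodular up to an automorphism.

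The gap is in Step 3. Your $c_0$-detour does not help. For fixed $D$ with $\mu(D)>0$, $\chi_{\GG_D}$ is in general not in $c_0(\GG)$. The ``union trick'' only shows that an element of $L^\infty\GG$ supported in every $\GG_{E_N}$ vanishes; for an element $Y$ of the bidual it only gives $Y=(\bigwedge_N q_{E_N})Y$.

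So everything rests on your final claim that $q_{E_N}\searrow0$ strongly. Equivalently, the source copy $JL^\infty\GG^{(0)}J$ must be normally embedded in $(L^\infty(\GG)^\sharp)^*$. The paper's proof asserts exactly this in one line: the restriction of $|\Omega|$ to the source copy is normal, so $|\Omega(Y_n)|\le4\|T\|\,|\Omega|(e_n)\to0$. Your worry was justified, though: with $\sharp$ taken with respect to the range copy only, as in Section 2 and in \Cref{lem: bidual characterization}, the claim is false.

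Here is a counterexample, with $\GG_2$ a point and $\GG_1=\RR$ an ergodic p.m.p.\ relation on $(X,\mu)$.
\begin{itemize}
\item Take independent sets $C_n$ ($n\ge2$) with $\mu(C_n)=1/n$.
\item Take decreasing sets $D_n$ with $\mu(D_n)=1/n$ and $\bigcap_nD_n$ null.
\item Take $\beta_n\in[[\RR]]$ with $\beta_n(C_n)=D_n$.
\item Take an ultrafilter $\omega$ containing every set whose complement $S$ has $\sum_{n\in S}1/n<\infty$.
\end{itemize}
Put $\Omega(F)=\lim_{n\to\omega}n\int_{C_n}F(y,\beta_n(y))\,\mathrm d\mu(y)$.

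Bessel's inequality for the orthonormal family $(\chi_{C_n}-\frac1n)/\sqrt{\frac1n(1-\frac1n)}$ gives $\sum_n\frac1n\bigl(n\mu(E\cap C_n)-\mu(E)\bigr)^2\le1$ for every Borel $E$. Hence $\Omega(a\circ r)=\int a\,\mathrm d\mu$. Since $\Omega\ge0$, this already makes $\Omega$ a normal state on the bidual. Yet $\Omega(\chi_{\RR_{D_k}})=1$ for every $k$, so $\bigwedge_kq_{D_k}\neq0$.

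The claim also fails modulo $z$ (the support of $(c_0(\GG)^\sharp)^*$). Take a Banach-limit average over infinitely many partial isomorphisms $C_n\to D_n$ with pairwise disjoint graphs. The same harmonic-sum bookkeeping then produces such a state that also vanishes on $c_0(\RR)$, so even $(1-z)\bigwedge_kq_{D_k}\neq0$.

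Consequently Step 3 does not go through as set up, and citing the paper's line does not repair it. By Step 2, $(1-z)Y_n=(1-z)(T-\widetilde R_\theta T)$ is independent of $n$, so no limiting argument in $n$ can help. What must be shown is that this fixed element $W$, which satisfies $W=(\bigwedge_Nq_{E_N})W$, is zero.

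In \Cref{claim:generating set} this issue is harmless, because the definition of $c_0(\GG)$ itself absorbs $\GG_{B'}$ with $\mu(B')$ small. That absorption is not inherited by the weak$^*$-closure $(c_0(\GG)^\sharp)^*$. You need an extra ingredient. One option is a $\sharp$-structure in which $JL^\infty\GG^{(0)}J$ is also normal, after which the bidual characterization must be re-checked, since its proof uses only the range copy. The other is an argument that exploits the product structure beyond uniform approximation.
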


\begin{proof} Denote by $X_1=\GG^{(0)}_1$, $X_2=\GG^{(0)}$ and $X:=\GG^{(0)}=X_1\times X_2$. 

If $\sigma\in[[\GG]]$ is a local bisection, choose a global bisection $\widehat\sigma\in[\GG]$ extending $\sigma$ and define $\widetilde R_\sigma(T)\coloneqq\chi_{\GG_{r(\sigma)}}\widetilde R_{\widehat\sigma}(T)$. This definition is independent of the chosen extension, because two global extensions of $\sigma$ agree on $r(\sigma)$ and hence their right actions agree after multiplication by $\chi_{\GG_{r(\sigma)}}$.

Let $\alpha\in[\GG_1]$ and $\beta\in[\GG_2]$. The two coordinate right actions commute, and therefore
    \[T-\widetilde R_{\alpha\times\beta}(T) =T-\widetilde R_{\alpha\times\id}(T)+\widetilde R_{\alpha\times\id}\bigl(T-\widetilde R_{\id\times\beta}(T)\bigr).\]
Since $\bigl(c_0(\GG)^{L^\infty X\sharp L^\infty X}\bigr)^*$ is invariant under the normal extensions of the right actions, the right-hand side belongs to this weak$^*$ closed ideal. Next let $\rho=\bigsqcup_{k=1}^m\alpha_k\times\beta_k$ be a finite disjoint union of rectangular local bisections. Extend $\alpha_k$ and $\beta_k$ to global bisections $\widehat\alpha_k\in[\GG_1]$ and $\widehat\beta_k\in[\GG_2]$. By the preceding paragraph, $T-\widetilde R_{\widehat\alpha_k\times\widehat\beta_k}(T)\in \bigl(c_0(\GG)^{L^\infty X\sharp L^\infty X}\bigr)^*$. Consequently,
\begin{equation*}
    T\chi_{\GG_{r(\rho)}}-\widetilde R_\rho(T) =\sum_{k=1}^m     \chi_{\GG_{r(\alpha_k)\times r(\beta_k)}} \bigl(T-\widetilde R_{\widehat\alpha_k\times\widehat\beta_k}(T)\bigr)
\end{equation*}
belongs to $\bigl(c_0(\GG)^{L^\infty X\sharp L^\infty X}\bigr)^*$. Finally, let $\theta\in[\GG]$. By \Cref{rem:prodofbisections}, there are finite disjoint unions $\rho_n$ of rectangular bisections such that
\[D_n \coloneqq r(\theta)\mathbin{\triangle}r(\rho_n) \cup\left\{x\in r(\theta)\cap r(\rho_n): (r_\theta)^{-1}(x)\neq(r_{\rho_n})^{-1}(x)\right\}\]
satisfies $\mu(D_n)\to0$. Put $e_n=\chi_{\GG_{D_n}}$. From the definition of the right action, the elements $T-\widetilde R_\theta(T)$ and $T\chi_{\GG_{r(\rho_n)}}-\widetilde R_{\rho_n}(T)$
agree after multiplication by $1-e_n$. This identity first holds for the canonical image of $L^\infty(\GG)$ and then for the whole normal envelope by ultraweak density and normality of the extended right actions. Hence, if $Y_n \coloneqq T-\widetilde R_\theta(T) -\bigl(T\chi_{\GG_{r(\rho_n)}}-\widetilde R_{\rho_n}(T)\bigr)$, then $Y_n=e_nY_n$ and $\|Y_n\|\leq 4\|T\|$.

Let $\Omega$ be a normal functional on $\bigl(L^\infty(\GG)^{L^\infty X\sharp L^\infty X}\bigr)^*$, and let $|\Omega|$ be its absolute value. Then $|\Omega(Y_n)|\leq 4\|T\||\Omega|(e_n)$. The restriction of $|\Omega|$ to the source copy of $L^\infty(X)$ is normal, so $|\Omega|(e_n)\to0$ because $\mu(D_n)\to0$. Therefore,  $T\chi_{\GG_{r(\rho_n)}}-\widetilde R_{\rho_n}(T) \to T-\widetilde R_\theta(T)$ ultraweakly. The terms on the left belong to $\bigl(c_0(\GG)^{L^\infty X\sharp L^\infty X}\bigr)^*$, and this ideal is ultraweakly closed. Thus $T-\widetilde R_\theta(T) \in \bigl(c_0(\GG)^{L^\infty X\sharp L^\infty X}\bigr)^*$. Since $\theta$ was arbitrary, $T\in\widetilde{\S(\GG)}$.
\end{proof}

\begin{prop}\label{prop:productsG1G2}
    Let $(\GG_1, \nu_1)$ and $(\GG_2,\nu_2)$ be countable p.m.p.\ groupoids with unit spaces $(\GG^{(0)}_1,\mu_1)$ and $(\GG^{(0)}_2,\mu_2)$, respectively. Denote by $\mathcal G=\mathcal G_1\times\mathcal G_2$ the direct product of $\mathcal G_1$ and $\mathcal{G}_2$ with measure $\nu$. If $(\GG_1,\mu_1)$ and $(\GG_2,\mu_2)$ are properly proximal, then $(\GG,\mu)$ is properly proximal.
\end{prop}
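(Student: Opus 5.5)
The plan is to argue by contraposition, working in the bidual picture of \Cref{lem: bidual characterization}. Write $X=\GG^{(0)}=X_1\times X_2$. Suppose $\GG=\GG_1\times\GG_2$ is not properly proximal. By \Cref{lem: bidual characterization}, there is a left $[\GG]$-invariant state $\varphi$ on $\widetilde{\S(\GG)}$ whose restriction to $L^\infty X$ is normal. Inside $(L^\infty(\GG)^{L^\infty X\sharp L^\infty X})^*$, let $q_1$ and $q_2$ be the support projections of the weak$^*$ closures of the ideals $I_1=\overline{c_0(\GG_1)\otimes L^\infty(\GG_2)}$ and $I_2=\overline{L^\infty(\GG_1)\otimes c_0(\GG_2)}$, and let $p$ be the support projection of $c_0(\GG)^{\sharp*}$. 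These projections have the following properties.
\begin{enumerate}
\item Each of them is central.
\item Each is invariant under the normal extensions of the left and right $[[\GG]]$-actions, because the underlying ideals are.
\item Each commutes with $L^\infty X$.
\end{enumerate}

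The first concrete step is the inclusion $I_1\cdot I_2\subset c_0(\GG)$, which gives $q_1q_2\le p$. It suffices to treat $g\otimes h$ with $g\in c_0(\GG_1)$ and $h\in c_0(\GG_2)$, in the same spirit as Claim \ref{claim: first}. Choose small sets $B_i,B_i'\subset X_i$ and finite-measure sets $F_i\subset\GG_i$ for $g$ and for $h$. Outside $F_1\times F_2$, at least one factor is uniformly small off the corresponding small range/source cylinders. Moreover $\nu(F_1\times F_2)<\infty$. We then split $1=(1-q_1)+q_1(1-q_2)+q_1q_2$, so that at least one of the three summands has positive $\varphi$-value.

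\textbf{Case $\varphi(1-q_1)>0$.} For $f\in\S(\GG_1)$ set $T_f=(1-q_1)(f\otimes1)$. Two differences control membership in $\widetilde{\S(\GG)}$. First, $T_f-\widetilde R_{\id\times\beta}T_f=0$ for every $\beta\in[\GG_2]$. Second, $T_f-\widetilde R_{\alpha\times\id}T_f=(1-q_1)\bigl((f-R_\alpha f)\otimes1\bigr)=0$, since $(f-R_\alpha f)\otimes 1\in I_1$. By \Cref{claim: prod in small-at-infty}, $T_f\in\widetilde{\S(\GG)}$. Define
\[
\psi(f)=\frac{\varphi(T_f)}{\varphi(1-q_1)} .
\]
This is a state on $\S(\GG_1)$, and it is left $[\GG_1]$-invariant because $\alpha\times\id\in[\GG]$ and $1-q_1$ is left-invariant. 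It is normal on $L^\infty X_1$, since $0\le\varphi((1-q_1)a)\le\varphi(a)$ for $a\ge0$ in $L^\infty X$. This contradicts the proper proximality of $\GG_1$.

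\textbf{Case $\varphi(q_1(1-q_2))>0$.} The symmetric argument applies with $g\mapsto q_1(1-q_2)(1\otimes g)$ for $g\in\S(\GG_2)$. Now $\widetilde R_{\id\times\beta}$-differences are killed by $1-q_2$, and $\widetilde R_{\alpha\times\id}$-differences vanish identically. This contradicts the proper proximality of $\GG_2$.

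\textbf{Remaining case, the main obstacle.} This is $\varphi(q_1q_2)>0$, and hence $\varphi(p)>0$. Here I need a lemma: a properly proximal groupoid (here $\GG_1$, and hence $\GG$ in the relevant sense) admits no nonzero positive functional on $p\,\widetilde{\S(\GG)}$ that is left $[\GG]$-invariant and normal on $L^\infty X$. This is the groupoid analogue of the fact that an infinite group has no invariant mean supported on $c_0$.

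My first attempt would be to show that such a functional, restricted to $c_0(\GG)$ and cut by range projections of finite-measure bisections, is determined by its values on $c_0$-sets. Invariance, together with normality on the units, would then force a finite invariant measure on almost every fiber $\GG^x$. Proper proximality of $\GG_1$ forces its fibers to be infinite on a conull set; otherwise a restriction of $\GG_1$ would have finite fibers and be amenable, and \Cref{restriction} would give a contradiction. Hence almost every fiber of $\GG$ is infinite, and the would-be invariant measure cannot exist, so this functional vanishes.

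If this direct route fails, I would fall back on \Cref{lem: non c_0-set}. It produces, for any non-$c_0$ set, disjoint bisections of measure at least $\delta$. Using these, I would build translates $L_{\theta_1}e,\dots,L_{\theta_n}e$ of a single $c_0$-piece $e$ with almost disjoint supports. Invariance of $\varphi$ would then give $n\,\varphi(pe)\le1$ for every $n$, so $\varphi(pe)=0$, which shows $\varphi(p)=0$.
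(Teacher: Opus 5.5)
\textbf{There is a genuine gap in the remaining case $\varphi(q_1q_2)>0$.} Your first two cases are fine. They use the same mechanism as the paper: cut by an invariant central projection so that one family of coordinate differences vanishes, then invoke \Cref{claim: prod in small-at-infty}.

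In the third case, both routes you sketch only show that $\varphi$ vanishes on $c_0(\GG)$ itself. The first does this via invariant measures on fibers, the second via $n\,\varphi(e)\le 1$ for $c_0$-pieces $e=pe$. But $\varphi$ is only a state on $\widetilde{\S(\GG)}$, not a normal functional on the bidual. So $\varphi(p)$ is not $\sup_\lambda\varphi(e_\lambda)$ for an approximate unit $(e_\lambda)$ of $c_0(\GG)$, and $\varphi|_{c_0(\GG)}=0$ does not give $\varphi(p)=0$. This is not a technicality. Take an infinite amenable group $\Gamma$, with one unit, so normality on the units is automatic. Then $p\widetilde{\S(\Gamma)}=c_0(\Gamma)^{**}\cong\ell^\infty(\Gamma)$ with the translation action. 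For a left-invariant mean $m$, the map $T\mapsto m(pT)$ is a left-invariant state on $\widetilde{\S(\Gamma)}$ equal to $1$ at $p$ and to $0$ on $c_0(\Gamma)$. Every input of your sketch (infinite fibers, invariance, normality on units) holds in this example, so neither route can succeed. Excluding invariant states on $p\widetilde{\S(\GG)}$ is a nonamenability-type statement. For $\GG$ itself it is essentially the conclusion you want, so it has to be transferred from a factor rather than proved directly.

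\textbf{The repair uses an inequality you already proved.} You showed $q_1q_2\le p$ but only used it to pass to $\varphi(p)>0$; it should be the mechanism. Split instead as $1=q_2+(1-q_2)$.
\begin{enumerate}
\item If $\varphi(q_2)>0$, send $f\in\S(\GG_1)$ to $q_2(f\otimes 1)$. Its $\id\times\beta$-differences vanish. Its $\alpha\times\id$-differences are $q_2\bigl((f-R_\alpha f)\otimes 1\bigr)$, which lie in $q_1q_2\bigl(L^\infty(\GG)^{\sharp}\bigr)^*\subset c_0(\GG)^{\sharp*}$. They need not vanish; they only need to land there. Hence $q_2(f\otimes1)\in\widetilde{\S(\GG)}$ by \Cref{claim: prod in small-at-infty}. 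Then $f\mapsto\varphi(q_2(f\otimes 1))/\varphi(q_2)$ is a left $[\GG_1]$-invariant state on $\S(\GG_1)$ that is normal on $L^\infty X_1$, a contradiction.
\item If $\varphi(q_2)=0$, then $\varphi(1-q_2)=1$, and your second-case map $g\mapsto(1-q_2)(1\otimes g)$ on $\S(\GG_2)$ finishes.
\end{enumerate}
This is exactly the paper's two-case proof; its $p,q,z$ are your $q_2,q_1,p$. Your first case then becomes unnecessary.
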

\begin{proof}
    Denote by $X_1=\GG^{(0)}_1$, $X_2=\GG^{(0)}$ and $X:=\GG^{(0)}=X_1\times X_2$. 
       
    Inside $L^\infty(\GG,\mu) \simeq L^\infty(\GG_1,\mu_1) \overline{\otimes}L^\infty(\GG_2,\mu_2)$, we consider the ideals 
    \begin{equation*}
        \begin{aligned}
            I_1 &\coloneqq \overline{\operatorname{span} \left\{F(1\otimes f_2): F\in L^{\infty}(\GG),\ f_2\in c_0(\GG_2)\right\}}^{\|\cdot\|},\\
            I_2 &\coloneqq \overline{\operatorname{span} \left\{F(f_1\otimes1): F\in L^\infty(\GG),\ f_1\in c_0(\GG_1)\right\}}^{\|\cdot\|}.
        \end{aligned}
    \end{equation*}
    Let $p,q\in (L^\infty(\GG)^{L^\infty X\sharp L^\infty X})^*$ be the projections for which $(I_1^{L^\infty X\sharp L^\infty X})^*  = p (L^\infty(\GG)^{L^\infty X\sharp L^\infty X})^*$ and $(I_2^{L^\infty X\sharp L^\infty X})^* = q(L^\infty(\GG)^{L^\infty X\sharp L^\infty X})^*$ (see \cite{Sakai}). Since $(L^\infty(\GG)^{L^\infty X\sharp L^\infty X})^*$ is abelian, $p,q$ are central projections.
    
    We note that $I_1 I_2 \subset c_0(\GG)$. To show this, we first take $f_1\in c_0(\GG_1)$ and $f_2\in c_0(\GG_2)$ and check that $f_1\otimes f_2\in c_0(\GG)$. Let $M\coloneqq \max\{\| f_1\|,\| f_2\|,2\}$ and fix $\eps >0$. For $i=1,2$, there exist measurable subsets $B_i,B_i' \subset X_i$ with $\mu_i(B_i),\mu_i(B_i') < \eps/M$ and a finite $\nu_i$-measure subset $F_i\subset \GG_i$ such that $\|(1-\chi_{{\GG_i^{B_i}}})(1-\chi_{{(\GG_i)_{B_i'}}})f_i(1-\chi_{F_i})\|_\infty \leq \eps/M$ for $i=1,2$. Let $B\coloneqq B_1\times X_2\cup X_1\times B_2$, $B'\coloneqq B_1'\times X_2\cup X_1\times B_2'$, and $F\coloneqq F_1\times F_2$. Then $\mu(B),\mu(B')<\eps$, the set $F$ has finite $\nu$-measure, and $\left\|(1-\chi_{\GG^{B}})(1-\chi_{{\GG_{B'}}})(f_1\otimes f_2)(1-\chi_{F})\right\|_\infty \leq \eps$. Thus $f_1\otimes f_2\in c_0(\GG)$. If $F,G\in L^\infty(\GG)$, $f_1\in c_0(\GG_1)$, and $f_2\in c_0(\GG_2)$, then $\bigl(F(1\otimes f_2)\bigr)\bigl(G(f_1\otimes1)\bigr) =FG(f_1\otimes f_2)\in c_0(\GG)$. Passing to finite sums and norm closures gives $I_1I_2\subset c_0(\GG)$.
    
    Let $z\in\bigl(L^\infty(\GG)^{L^\infty X\sharp L^\infty X}\bigr)^*$ satisfy $(c_0(\GG)^{L^\infty X\sharp L^\infty X})^* = z(L^\infty(\GG)^{L^\infty X\sharp L^\infty X})^*$.
    Then $pq\leq z$. Indeed, if $(e_\lambda)_\lambda$ and $(f_\kappa)_\kappa$ are contractive approximate units of $I_1$ and $I_2$, respectively, their canonical images converge ultraweakly to $p$ and $q$. Since $e_\lambda f_\kappa\in c_0(\GG)$ for every $\lambda,\kappa$, taking first the ultraweak limit in $\lambda$ and then in $\kappa$ gives $pq\in z\bigl(L^\infty(\GG)^{L^\infty X\sharp L^\infty X}\bigr)^*$.

    The ideals $I_1$ and $I_2$ are invariant under the left and right coordinate actions $\alpha\times\id$ and $\id\times\beta$. For example,
    \[R_{\id\times\beta}\bigl(F(1\otimes f_2)\bigr)=R_{\id\times\beta}(F)(1\otimes R_\beta f_2)\in I_1,\]
    while $R_{\alpha\times\id}$ fixes the factor $1\otimes f_2$; with the remaining cases being identical. Hence $p$ and $q$ are fixed by all coordinate left and right actions. Applying \Cref{claim: prod in small-at-infty} to $p$ and $q$ gives $p,q\in\widetilde{\S(\GG)}$.
    
    Now we consider the embedding $\iota_1\colon L^\infty\GG_1\to L^\infty\GG$ by $\iota_1(f) = f\otimes 1$. Clearly the restriction $\restr{\iota_1}{L^\infty X_1}\colon L^\infty X_1\to L^\infty X \subset L^\infty \GG$ is normal, so $\iota_1$ extends to a normal unital $*$-homomorphism on the biduals $\widetilde{\iota_1}\colon (L^\infty\GG_1^{L^\infty X_1 \sharp L^\infty X_1})^* \to (L^\infty\GG^{L^\infty X \sharp L^\infty X})^*$ such that 
    \[\widetilde{\iota_1}((c_0(\GG_1)^{L^\infty X_1 \sharp L^\infty X_1})^*) \subset (I_2^{L^\infty X\sharp L^\infty X})^* = q(L^\infty(\GG)^{L^\infty X\sharp L^\infty X})^*.\] 
    Moreover, $\widetilde{\iota_1}$ is left and right $[\GG_1]$-equivariant, in the sense that for any $\alpha\in [\GG_1]$, $\beta\in [\GG_2]$ and $T \in (L^\infty\GG_1^{L^\infty X_1 \sharp L^\infty X_1})^*$, we have $\widetilde{R}_{\alpha\times\beta}(\widetilde{\iota_1}(T)) = \widetilde{\iota_1}(\widetilde{R}_{\alpha}(T))$ and $\widetilde{L}_{\alpha\times\beta}(\widetilde{\iota_1}(T)) = \widetilde{\iota_1}(\widetilde{L}_{\alpha}(T))$.
    
    Define $\Phi_1\colon \widetilde{\S(\GG_1)} \to p\widetilde{\S(\GG)}$ by $\Phi_1(T) = p\widetilde{\iota_1}(T)$, for $T\in \widetilde{\S(\GG_1)}$. We check that the image of $\Phi_1$ indeed lies in $\widetilde{\S(\GG)}$. If $\alpha\in[\GG_1]$, then
    \[\Phi_1(T)-\widetilde R_{\alpha\times\id}(\Phi_1(T)) = p\widetilde\iota_1\bigl(T-\widetilde R_\alpha(T)\bigr) \in pq\bigl(L^\infty(\GG)^{L^\infty X\sharp L^\infty X}\bigr)^* \subset \bigl(c_0(\GG)^{L^\infty X\sharp L^\infty X}\bigr)^*.\]
    If $\beta\in[\GG_2]$, then $\Phi_1(T)-\widetilde R_{\id\times\beta}(\Phi_1(T))=0$. Therefore, by  \Cref{claim: prod in small-at-infty}, $\Phi_1(T)\in\widetilde{\S(\GG)}$. Similarly, we let $\iota_2\colon L^\infty\GG_2\to L^\infty\GG$ be the embedding given by $\iota_2(f) = 1\otimes f$, then the embedding $\iota_2$ extends to a normal unital $*$-homomorphism on the biduals $\widetilde{\iota_2}\colon (L^\infty\GG_2^{L^\infty X_2 \sharp L^\infty X_2})^* \to (L^\infty\GG^{L^\infty X \sharp L^\infty X})^*$ that is left and right $[\GG_2]$-equivariant. Define $\Phi_2\colon \widetilde{\S(\GG_2)} \to (1-p)\widetilde{\S(\GG)}$ by $\Phi_2(T) = (1-p)\widetilde{\iota_2}(T)$ for $T\in \widetilde{\S(\GG_2)}$. 
    If $\alpha\in[\GG_1]$, then $\Phi_2(T)-\widetilde R_{\alpha\times\id}(\Phi_2(T))=0$, and if $\beta\in[\GG_2]$, then $\Phi_2(T)-\widetilde R_{\id\times\beta}(\Phi_2(T))=(1-p)\widetilde\iota_2\bigl(T-\widetilde R_\beta(T)\bigr)=0$. \Cref{claim: prod in small-at-infty} then gives $\Phi_2(T)\in\widetilde{\S(\GG)}$ as well. Since $p$ is fixed by the left actions, the maps $\Phi_1$ and $\Phi_2$ satisfy $\widetilde L_{\alpha\times\id}\circ\Phi_1 =\Phi_1\circ\widetilde L_\alpha$ and $\widetilde L_{\id\times\beta}\circ\Phi_2  =\Phi_2\circ\widetilde L_\beta$.
    
    Now suppose towards to contradiction that $\GG_1$ and $\GG_2$ are properly proximal but $\GG = \GG_1 \times \GG_2$ is not. By \Cref{lem: bidual characterization}, there is a left $[\GG]$-invariant state $\omega\colon \widetilde{\S(\GG)}\to \C$ whose restriction to the unit space $L^\infty X = L^\infty X_1\overline{\otimes} L^\infty X_2$ is normal. If $\omega(p) > 0$, then we define a state $\psi_1\colon \widetilde{\S(\GG_1)}\to \C$ by
    \[\psi_1(f) = \frac{1}{\omega(p)}\omega(\Phi_1(f)), \:\:\text{ for }\: f\in \widetilde{\S(\GG_1)}.\]
    Clearly $\psi_1$ is left $[\GG_1]$-invariant by left $[\GG]$-invariance of $\omega$ and the $[\GG_1]$-equivariance of $\Phi_1$. Also, $\psi_1$ is normal on $L^\infty X_1$ because $\psi_1|_{L^\infty X_1}\colon L^\infty X_1 \ni f\mapsto \frac{1}{\omega(p)}\omega(p(f\otimes1))\in\mathbb C$ is dominated by the positive functional $L^\infty X_1 \ni f \mapsto \frac{1}{\omega(p)}\omega(f\otimes 1)\in\mathbb C$ and $\omega$ is normal on $L^\infty X = L^\infty X_1\overline\otimes L^\infty X_2$. Since a positive functional dominated by a normal positive functional on a von Neumann algebra is normal, we conclude that $\psi_1$ is normal on $L^\infty X_1$. By \Cref{lem: bidual characterization}, this contradicts the assumption that $\GG_1$ is properly proximal.

    If $\omega(p) = 0$, then by Cauchy-Schwarz we have $\omega(pT) = 0$ for all $T\in \widetilde{\S(\GG)}$, so $\omega$ factors through the corner $(1-p)\widetilde{\S(\GG)}$. Define a state $\psi_2\colon \widetilde{\S(\GG_2)}\to \C$ by
    \[\psi_2(f) = \omega(\Phi_2(f)), \:\:\text{ for }\: f\in \widetilde{\S(\GG_2)}.\]
    Similar as above, it follows that $\psi_2$ is a left $[\GG_2]$-invariant state on $\widetilde{\S(\GG_2)}$ that is normal on $L^\infty X_2$, which contradicts proper proximality of $\GG_2$ by \Cref{lem: bidual characterization}. Hence, in either case we obtain a contradiction, so $\GG$ must be properly proximal.
\end{proof}

\subsection{Ergodic decompositions} For the results in this section, we recall the ergodic decomposition for a countable p.m.p.\ groupoid $(\mathcal{G},\nu)$ with unit space $(\mathcal G^{(0)},\mu)$, see \cite{Vara63,Hahn} for a proof. 

\begin{thm}\label{thm:disintegration} Let $\mathcal{G}$ be a countable p.m.p.\ groupoid with unit space $(\mathcal G^{(0)},\mu)$. Then there is a standard probability space $(Z,\zeta)$, a Borel, measure-preserving map $\pi:\mathcal G^{(0)}\to Z$, and probability measures $\mu_z\in\text{Prob}(\pi^{-1}(\{z\}))$ such that
\begin{enumerate}
    \item[(i)] $\pi^{-1}(\{z\})$ is $\mathcal G$-invariant for almost every $z\in Z$,
    \item[(ii)] $\mathcal G_z\coloneqq \mathcal G_{\pi^{-1}(\{z\})}^{\pi^{-1}(\{z\})}$ preserves the measure $\mu_z$, and $\mathcal G_z$ is an ergodic countable p.m.p.\ groupoid with unit space $(\pi^{-1}(\{z\}),\mu_z)$,
    \item[(iii)] for every $f:\mathcal G^{(0)}\to\mathbb C$ bounded and Borel function we have that $z\mapsto \int_{\mathcal G^{(0)}}f\,\mathrm{d}\mu_z$ is Borel, and
    \begin{equation*}
        \int_{\mathcal G^{(0)}}f\,\mathrm{d}\mu=\int_Z\left(\int_{\pi^{-1}(\{z\})}f\,\mathrm{d}\mu_z\right)\,\mathrm{d}\zeta(z).
    \end{equation*}
\end{enumerate}
\end{thm}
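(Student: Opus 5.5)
The statement depends on the groupoid only through its orbit equivalence relation $\RR_\GG$ on $\GG^{(0)}$ and through the invariance of $\mu$. So I would reduce it to the classical ergodic decomposition for measure-preserving Borel actions of countable groups, due to Varadarajan \cite{Vara63}, and then transfer the conclusions back to $\GG$ fiberwise.

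\textbf{Step 1: reduce to a group action.} By the Feldman--Moore theorem, applied to the countable Borel equivalence relation $\RR_\GG$, there is a countable group $\Gamma$ of Borel automorphisms of $\GG^{(0)}$ whose orbit equivalence relation is exactly $\RR_\GG$. Concretely, one can take the automorphisms induced by countably many Borel bisections of $\GG$, as obtained from the Lusin--Novikov decomposition used in Corollary \ref{lem: right countable family}.

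Two facts then follow:
\begin{itemize}
\item Since $\mu$ is $\GG$-invariant, every element of $\Gamma$ preserves $\mu$.
\item A Borel set is $\GG$-invariant if and only if it is $\Gamma$-invariant. Likewise, a measure on a $\GG$-invariant Borel set is $\GG$-invariant and ergodic if and only if it is $\Gamma$-invariant and ergodic. This holds because invariance of a set, and quasi-invariance and invariance of a measure, depend only on $\RR_\GG$. For invariance of a measure, I would use the decomposition $\GG=\bigsqcup_n B_n$ into bisections: a measure is $\GG$-invariant if and only if each partial map $\theta_{B_n}$ pushes its restriction to $s(B_n)$ onto its restriction to $r(B_n)$.
\end{itemize}

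\textbf{Step 2: build $\pi$ and $\mu_z$.} Let $\mathcal I$ be the $\sigma$-algebra of $\Gamma$-invariant Borel sets. Choose a countable family of $\Gamma$-invariant Borel sets generating $\mathcal I$ modulo $\mu$-null sets, and let $\pi$ be the induced Borel map into a standard space $Z$, with $\zeta=\pi_*\mu$.

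Each null set $N$ has a $\Gamma$-saturation $\Gamma N$ that is again null, since $\Gamma$ is countable and measure-preserving. So after discarding an invariant null set, $\pi$ is $\Gamma$-invariant everywhere. This gives (i).

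Next, disintegrate $\mu$ along $\pi$ to obtain $\mu_z\in\mathrm{Prob}(\pi^{-1}(z))$. This gives measurability of $z\mapsto\int f\,\mathrm d\mu_z$ and the integral formula (iii).

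\textbf{Step 3: invariance of the fiber measures.} Fix $g\in\Gamma$. Since $g$ preserves both $\mu$ and $\pi$, the family $(g_*\mu_z)_z$ is another disintegration of $\mu$ over $\pi$. By essential uniqueness of disintegrations, $g_*\mu_z=\mu_z$ for almost every $z$. Since $\Gamma$ is countable, there is one conull set of $z$ on which every $\mu_z$ is $\Gamma$-invariant, and hence $\GG_z$-invariant by Step 1. This proves the invariance half of (ii).

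\textbf{Step 4: ergodicity of almost every fiber (main obstacle).} This is the step I expect to be hardest; the difficulty is to avoid choosing a non-trivial invariant set inside each fiber in a non-measurable way. I would follow Varadarajan's argument.

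Fix a countable algebra $\{A_k\}$ generating the Borel sets of $\GG^{(0)}$. For each $k$, let $h_k$ be a Borel version of the conditional expectation $\E[\chi_{A_k}\mid\mathcal I]$.

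First, for almost every $z$ and every $k$, $h_k$ is $\mu_z$-a.e.\ equal to the constant $\mu_z(A_k)$. This holds because $h_k$ is $\pi$-measurable and the disintegration computes conditional expectations.

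Second, I would show that whenever $\mu_z$ is invariant and $\mu_z$-a.e.\ $h_k\equiv\mu_z(A_k)$ for all $k$, the measure $\mu_z$ is ergodic. Suppose $C\subset\pi^{-1}(z)$ is an invariant set with $0<\mu_z(C)<1$. The aim is to approximate $C$ by sets $A_k$ and show that $\mu_z(A_k\cap C)$ must equal $\mu_z(A_k)\mu_z(C)$, which forces $\mu_z(C)\in\{0,1\}$. The delicate point is to make this comparison using only the countably many functions $h_k$, which are defined globally, rather than a fiberwise object. I expect this to need the mean ergodic theorem relative to the countable group $\Gamma$, namely that the projection onto invariant vectors lies in the closed convex hull of the translates. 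If that route becomes too cumbersome, I would instead appeal directly to \cite{Vara63,Hahn}, since the conclusion is exactly the group-action ergodic decomposition transported through Step 1.
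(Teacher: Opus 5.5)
The paper gives no argument for this theorem; it only cites \cite{Vara63,Hahn}. Your Steps 1--3 are correct. Passing through Feldman--Moore so that only Varadarajan's group-action theorem is needed, rather than Hahn's groupoid version, is a legitimate route. One small correction: bisections of $\mathcal G$ induce only partial isomorphisms of $\mathcal G^{(0)}$, so do not take the bisection maps themselves as the group. Instead apply Feldman--Moore to $\mathcal R_{\mathcal G}$, which is Borel as the image of $\mathcal G$ under the countable-to-one map $(r,s)$. If you fall back on citing Varadarajan in Step 4, your proof is as complete as the paper's.

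If you want Step 4 to be self-contained, however, the lemma you formulate there is false as stated. Since $h_k$ is only determined $\mu$-almost everywhere, the hypotheses ``$\mu_z$ is invariant and $h_k=\mu_z(A_k)$ $\mu_z$-a.e.\ for all $k$'' cannot force ergodicity. Take a non-ergodic invariant probability measure $\nu$ carried by an invariant $\mu$-null Borel set $N$. Redefining $h_k$ to be $\nu(A_k)$ on $N$ gives another Borel, $\mathcal I$-measurable version of $\mathbb E_\mu[\chi_{A_k}\mid\mathcal I]$, and $\nu$ satisfies both hypotheses. So you cannot argue fiber by fiber from the $h_k$ alone; you must transfer an $L^2(\mu)$ statement to almost every $\mu_z$.

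The missing step runs as follows.
\begin{enumerate}
\item By the Alaoglu--Birkhoff mean ergodic theorem, $h_k$ is an $L^2(\mu)$-limit of finite convex combinations $a_{k,n}$ of translates $\chi_{A_k}\circ g$, $g\in\Gamma$. Choose them with $\sum_n\|a_{k,n}-h_k\|_{L^2(\mu)}^2<\infty$.
\item By (iii), $\|f\|_{L^2(\mu)}^2=\int_Z\|f\|_{L^2(\mu_z)}^2\,\mathrm d\zeta(z)$. Hence for $\zeta$-a.e.\ $z$ and all $k$, $a_{k,n}\to h_k=\mu_z(A_k)$ in $L^2(\mu_z)$.
\item So the constant $\mu_z(A_k)$ lies in the closed convex hull of the $\Gamma$-orbit of $\chi_{A_k}$ in $L^2(\mu_z)$; the Koopman representation there is unitary by your Step 3. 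Being invariant, this constant equals $P_{\mu_z}\chi_{A_k}$, the projection onto invariant vectors.
\item Hence $\mu_z(A_k\cap C)=\langle P_{\mu_z}\chi_{A_k},\chi_C\rangle=\mu_z(A_k)\mu_z(C)$ for every invariant Borel set $C$.
\item Approximating $C$ by the $A_k$ in $\mu_z$-measure gives $\mu_z(C)=\mu_z(C)^2$.
\end{enumerate}
With this inserted, your route is a complete proof.
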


As is usual in arguments involving the ergodic decomposition, we will rely on a measurable selection theorem. Recall that if $X$ is a standard Borel space, then $A\subset X$ is \textit{analytic} if it is the image of a standard Borel space under a Borel map. Such sets are measurable with respect to every Borel probability measure on $X$ (see \cite{Kechris}). For a proof of the following theorem, one can check \cite[Theorems 18.1, 21.10]{Kechris}.

\begin{thm}[{Jankov-von Neumann Theorem}]\label{thm:selection} Let $X,Y$ be standard Borel spaces and $P\subseteq X\times Y$ be a Borel subset. Let $\pi_X(P)=\{x\in X:$ there exists $y\in Y$ with $(x,y)\in P\}$. Then there is a map $t:\pi_X(P)\to Y$ with $(x,t(x))\in P$ for every $x\in \pi_X(P)$ and such that $t$ is $\mu$-measurable for every Borel probability measure $\mu$ on $X$.
\end{thm}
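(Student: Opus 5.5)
The plan is the classical uniformization argument. We choose witnesses canonically by taking a lexicographically least point in Baire space, and then check that this choice is measurable with respect to the $\sigma$-algebra generated by analytic sets. Since analytic sets are universally measurable (as recalled above, see \cite{Kechris}), such a choice is $\mu$-measurable for every Borel probability measure $\mu$ on $X$.

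\textbf{Step 1: reduce to a closed set in Baire space.} Let $\mathcal N=\mathbb N^{\mathbb N}$ be Baire space. Since $P\subseteq X\times Y$ is Borel in a standard Borel space, there are a closed set $F\subseteq\mathcal N$ and a continuous surjection $g\colon F\to P$. We may assume $P\neq\varnothing$, and it suffices to have a continuous image rather than a bijection. Put $h=\pi_X\circ g\colon F\to X$. Then $\pi_X(P)=h(F)$, which is analytic. For $x\in\pi_X(P)$, the fiber $F_x\coloneqq h^{-1}(x)$ is a nonempty closed subset of $\mathcal N$.

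\textbf{Step 2: choose the leftmost point.} A nonempty closed subset $C\subseteq\mathcal N$ is the set of branches of a pruned tree $T_C$. It has a lexicographically least element, obtained by following the leftmost branch: at each level choose the least $n$ such that the extended finite sequence still lies in $T_C$. Pruning guarantees that this procedure never gets stuck. Define $s(x)$ to be the least element of $F_x$, and set
\[
t(x)\coloneqq \pi_Y\bigl(g(s(x))\bigr).
\]
Then $g(s(x))\in P$ and its first coordinate is $h(s(x))=x$, so $(x,t(x))\in P$ for every $x\in\pi_X(P)$.

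\textbf{Step 3: measurability.} This is the main point to check. Because $\pi_Y\circ g$ is continuous, it suffices to show that $s^{-1}(\mathcal N_u)$ lies in $\sigma(\Sigma^1_1)$ for every basic clopen set $\mathcal N_u=\{z\in\mathcal N: z \text{ extends } u\}$, where $u\in\mathbb N^{<\mathbb N}$. The claim is that
\[
s(x)\in\mathcal N_u \iff x\in h(F\cap\mathcal N_u)\ \text{ and }\ x\notin h(F\cap\mathcal N_v)\ \text{for every } v \text{ of length } |u| \text{ with } v<_{\mathrm{lex}}u .
\]
For the forward direction: if $s(x)$ extends $u$, then $x\in h(F\cap\mathcal N_u)$. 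Moreover no element of $F_x$ begins with some $v<_{\mathrm{lex}}u$ of the same length, since such an element would be lexicographically smaller than $s(x)$. For the converse: if $x\in h(F\cap\mathcal N_u)$ and no point of $F_x$ begins with such a $v$, then the least element of $F_x$ must begin with $u$.

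Each set $h(F\cap\mathcal N_w)$ is a continuous image of a closed set, hence analytic. There are only countably many $v$, so $s^{-1}(\mathcal N_u)$ is a countable Boolean combination of analytic sets and thus lies in $\sigma(\Sigma^1_1)$. By Lusin's theorem every set in $\sigma(\Sigma^1_1)$ is $\mu$-measurable for every Borel probability measure $\mu$, so $s$, and hence $t$, is $\mu$-measurable.

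The one delicate verification is the displayed equivalence in Step 3, in particular the role of the length-$|u|$ prefixes in characterizing leftmost points. The remaining ingredients (Borel sets as continuous images of closed subsets of $\mathcal N$, and universal measurability of analytic sets) are standard facts taken from \cite[Theorems 13.7, 18.1, 21.10]{Kechris}.
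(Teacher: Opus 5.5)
Your proposal is correct and is essentially the argument the paper relies on. The paper gives no proof of its own; it cites \cite[Theorems 18.1, 21.10]{Kechris}, which is the same combination you use: a leftmost-branch uniformization that is $\sigma(\Sigma^1_1)$-measurable, followed by Lusin's theorem that analytic sets, and hence all sets in $\sigma(\Sigma^1_1)$, are universally measurable.
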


\begin{prop}\label{prop:disintegrationS(G)} 
Let $(\mathcal G,\nu)$ be a countable p.m.p.\ groupoid with unit space $(\mathcal G^{(0)},\mu)$. Let $\pi\colon (\mathcal G^{(0)},\mu)\to(Z,\zeta)$ be the space of ergodic components, $\mu_z\in \text{Prob}(\pi^{-1}(\{z\}))$ be the ergodic decomposition of $\mu$, and consider the disintegration $(\mathcal G,\nu)=\int_Z(\mathcal G_z,\nu_z)\,\mathrm{d}\zeta(z)$. Then, $\mathbb S(\mathcal G)=\int_Z^{\oplus}\mathbb S(\mathcal G_z)\,\mathrm{d}\zeta(z)$ inside $L^\infty(\mathcal G,\nu)=\int_Z^{\oplus} L^\infty(\mathcal G_z,\nu_z)\,\mathrm{d}\zeta(z)$ in the following sense: if $f=(f_z)_z\in L^\infty(\mathcal G)$, then $f\in \mathbb S(\mathcal G)$ if and only if $f_z\in \mathbb S(\mathcal G_z)$ for
$\zeta$-almost every $z$.
\end{prop}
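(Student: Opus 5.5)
The plan is to reduce both membership conditions to a countable family of translations and then compare the $c_0$ conditions fiberwise.

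\textbf{Reduction to countably many translations.} By \Cref{lem: right countable family} there is a countable family $\Sigma=\{\rho_m\}_{m\ge1}\subset[\GG]$, obtained from a Lusin--Novikov decomposition $\GG=\sqcup_m B_m$, such that $f\in\S(\GG)$ if and only if $f-R_{\rho_m}f\in c_0(\GG)$ for every $m$. Since each $\pi^{-1}(\{z\})$ is $\GG$-invariant, every $\rho_m$ restricts for a.e.\ $z$ to a global bisection $\rho_{m,z}\in[\GG_z]$. The pieces $B_m\cap\GG_z$ decompose $\GG_z$ into bisections, so the argument of \Cref{lem: right countable family} shows that $\{\rho_{m,z}\}_m$ is a generating family for $\GG_z$, for a.e.\ $z$. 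Moreover $(R_{\rho_m}f)_z=R_{\rho_{m,z}}f_z$. It therefore suffices to prove the following fiberwise statement: for $g=(g_z)_z\in L^\infty(\GG)$, we have $g\in c_0(\GG)$ if and only if $g_z\in c_0(\GG_z)$ for a.e.\ $z$. This is applied with $g=f-R_{\rho_m}f$ for each $m$, and the countably many null sets are discarded.

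\textbf{From $c_0(\GG)$ to the fibers.} Take $g\in c_0(\GG)$ and $\eps>0$. Choose $B_k,B_k'$ with $\mu(B_k),\mu(B_k')<\eps 4^{-k}$ and finite-measure sets $F_k$ witnessing the $c_0$ condition at tolerance $1/k$. By Markov's inequality applied to $z\mapsto\mu_z(B_k)$, and by Borel--Cantelli, for a.e.\ $z$ we have $\mu_z(B_k),\mu_z(B_k')<2^{-k}$ for all large $k$. Likewise $\nu_z(F_k\cap\GG_z)<\infty$ for a.e.\ $z$, since $\int\nu_z(F_k)\,d\zeta=\nu(F_k)<\infty$. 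Restricting the $k$-th witness to $\GG_z$ then gives $g_z\in c_0(\GG_z)$.

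\textbf{From the fibers to $c_0(\GG)$, the main obstacle.} This direction requires uniform choices: for each $z$ and each tolerance $1/k$ we need witnesses $(B_z,B'_z,F_z)$ that depend measurably on $z$. I would obtain these with the Jankov--von Neumann theorem (\Cref{thm:selection}), after parametrizing candidate witnesses by a standard Borel space. The natural parametrization uses countable unions of sets from a fixed countable generating algebra, and finite-measure sets built from the bisections $B_m$ restricted to such sets. One technical point is that membership in $c_0$ is a Borel (or analytic) condition in these parameters. The second issue is that $\nu_z(F_z)$ is finite for each $z$ but need not be uniformly bounded. To handle this, choose a compact set, or in practice a set $Z_0$ with $\zeta(Z\setminus Z_0)<\eps$, on which $\nu_z(F_z)\le C$. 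Then put $\pi^{-1}(Z\setminus Z_0)$ into $B$. Both $\GG^{B}$ and $\GG_{B}$ absorb whole invariant fibers, so this costs only $\eps$ in measure. Assembling $B=\bigcup_z B_z\cup\pi^{-1}(Z\setminus Z_0)$, the analogous $B'$, and $F=\bigcup_{z\in Z_0}F_z$ gives $\mu(B),\mu(B')\lesssim\eps$, $\nu(F)\le C$, and the required sup bound. This shows $g\in c_0(\GG)$ and completes the argument.
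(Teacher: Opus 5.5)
Your proposal is correct and follows the paper's proof. The shared route is: reduce to countably many right translations, use Markov plus Borel--Cantelli for the forward direction, and for the converse choose the fiberwise witnesses $(B_z,B'_z,F_z)$ measurably via the Jankov--von Neumann theorem, then truncate on $\{z:\nu_z(F_z)\le C\}$ and absorb the remaining invariant fibers into $B$. The differences are only in packaging: you route both directions through a fiberwise characterization $c_0(\GG)=\int^\oplus c_0(\GG_z)$ applied to $f-R_{\rho_m}f$, whereas the paper treats each $\theta\in[\GG]$ directly in the converse; and you parametrize witnesses by countable unions from a generating algebra, whereas the paper reduces to $\GG\cong I_n\times Y\times Z$.
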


\begin{proof} Let $f\in\mathbb S(\mathcal G)$ and let $\mathcal P\subset[[\mathcal G]]$ be a symmetric countable generating subset. Up to extending each of the local bisections to a global bisection and taking the symmetric family of these extensions, we may actually assume $\PP\subset [\GG]$. Denote by $\langle \mathcal P\rangle = \{\theta_{n_1}\circ\ldots\circ\theta_{n_k} : \theta_{n_i}\in\mathcal P,k\in\mathbb N\rangle$. Since $\mathcal P$ is countable, so is $\langle\mathcal P\rangle$, and thus, we can enumerate $\langle\mathcal P\rangle=\{\sigma_n\}_{n=1}^{\infty}$. Since $f\in\mathbb S(\mathcal G)$, for every $n$, $f-R_{\sigma_n}f\in c_0(\mathcal G)$. Denote by $g_n=f-R_{\sigma_n}f\in c_0(\mathcal G)$.

\begin{claim}\label{claim:c_0G} If $g\in c_0(\mathcal G)$, then $g_z\in c_0(\mathcal G_z)$ for $\zeta$-almost every $z\in Z$.
\end{claim}

\begin{subproof}[Proof of Claim \ref{claim:c_0G}] Fix $\eps>0$. Since $g\in c_0(\mathcal G)$, for every $m\geq 1$ there exist measurable sets $B_m,B_m'\subset\mathcal G^{(0)}$ and a finite $\nu$-measure set $F_m\subset\mathcal G$ such that $\mu(B_m),\mu(B_m')\leq \eps\cdot 2^{-m}$ and $\|(1-\chi_{\mathcal G^{B_m}})(1-\chi_{\mathcal G_{B'_m}})g(1-\chi_{F_m})\|_\infty\leq \eps$. From the disintegration, we have $\mu(B_m)=\int_Z\mu_z(B_m\cap\mathcal G_z^{(0)})\,\mathrm{d}\zeta(z)$, and similarly for $B_m'$. Hence, by Chebyshev's inequality, denoting by $C_m \coloneqq \{z:\mu_z(B_m\cap\mathcal G_z^{(0)})\geq\eps\}$, one has $\zeta(C_m)\leq 2^{-m}$, and similarly for $B_m'$. Observe that $\sum_m\zeta(C_m)\leq 1<\infty$. By the Borel-Cantelli Lemma, $\zeta(\lim\sup C_m)=0$; that is, for almost every $z\in Z$, there exists $m_z\in\mathbb N$ for which $\mu_z(B_m\cap\mathcal G_z^{(0)}),\mu_z(B_m'\cap\mathcal G_z^{(0)})\leq\eps$ for all $m\geq m_z$. Moreover, for every $m\in\mathbb N$, $\nu(F_m)=\int_Z\nu_z(F_m\cap\mathcal G_z)\,\mathrm{d}\zeta(z)<\infty$, which means $\nu_z(F_m\cap\mathcal G_z)<\infty$ for almost every $z\in Z$. After intersecting the resulting countably many conull subsets of $Z$, we may assume that this finiteness holds simultaneously for every $m \geq 1$.

Denote by $D_z=B_{m_z}\cap\mathcal G_z^{(0)}$, $D_z'=B_{m_z}'\cap \mathcal G_z^{(0)}$ and $F_z=F_{m_z}\cap\mathcal G_z$. Observe that $\mu_z(D_z),\mu_z(D_z')\leq\eps$, and $\nu_z(F_z)<\infty$. From the $c_0$ estimate of $g$, for almost every $z\in Z$ we get 
\begin{equation*}
    \|(1-\chi_{ (\GG_z)^{D_z}})(1-\chi_{(\GG_z)_{D_z'}})g_z(1-\chi_{F_z})\|_{\infty}\leq\eps.
\end{equation*}
For every $k\geq 1$, apply the preceding argument for $\eps = 2^{-k}$, and intersect the resulting countably many conull subsets of $Z$. For every $z$ in this intersection and every $k$, the defining estimate for $c_0(\GG_z)$ holds with $\eps = 2^{-k}$. This gives $g_z\in c_0(\mathcal G_z)$.
\end{subproof}

By the claim above, there is a conull set $Z_n\subset Z$ such that for every $z\in Z_n$, $f_z-R_{\sigma_{n,z}}f_z\in c_0(\mathcal G_z)$, where $\sigma_{n,z}=\sigma_n|_{\mathcal G_z^{(0)}}$. Define $Z_0=\cap_{n\geq 1}Z_n$, and notice that $\zeta(Z_0)=1$, and for every $z\in Z_0$ and every $n$, $f_z-R_{\sigma_{n,z}}f_z\in c_0(\mathcal G_z)$. For each $z\in Z_0$, the family $\mathcal P_z=\{\sigma_{z}:\sigma\in\mathcal P\}$ generates $[[\mathcal G_z]]$. Indeed, this follows because any local bisection in $[[\mathcal G_z]]$ can be seen as a local bisection in $[[\mathcal G]]$, $\mathcal G_z$ is invariant, and by the generating property of $\mathcal P$. Applying \Cref{claim:generating set} to $\mathcal G_z$, $f_z$, and the generating family $\mathcal P_z$ gives that $f_z\in \mathbb S(\mathcal G_z)$ for all $z\in Z_0$, as required.

\vspace{2mm}

For the converse part, let $f\in L^{\infty}(\mathcal G)$ with decomposition $(f_z)_{z\in Z}$ such that $f_z\in \mathbb S(\mathcal G_z)$. 
Fix $\theta\in[\mathcal G]$ and $\eps>0$. Since $\pi^{-1}(\{z\})$ is $\mathcal G$-invariant, $\theta_z\coloneqq \theta|_{\pi^{-1}(\{z\})}\in[\mathcal G_z]$, so by the assumption $f_z\in\mathbb S(\mathcal G_z)$, we obtain the existence of measurable subsets $B_z,B_z'\subset \mathcal G_z^{(0)}$ with $\mu_z(B_z),\mu_z(B_z')\leq\eps/2$ and a set of finite $\nu_z$-measure $F_z\subset\mathcal G_z$ for which $\Phi(B_z,B_z',F_z,f_z)\coloneqq \|(1-\chi_{\mathcal G_z^{B_z}})(1-\chi_{(\mathcal G_z)_{B_z'}})(f_z-R_{\theta_z}f_z)(1-\chi_{F_z})\|\leq\eps$. 

We now use a reduction for the groupoid $\mathcal G$ that we take from the proof of \cite[Proposition 3.23]{Kida-TuckerDrob}. Let $\mu_n$ be the counting measure on the set $I_n=\{1,\ldots,n\}$, for $n\in\mathbb N$, and let $\mu_{\infty}:=\mu_{\mathbb N}$ be the countaing measure on the set $I_{\infty}:=\mathbb N$. Since $s:\mathcal G\to\mathcal G^{(0)}$ is countable-to-one, define $X_n=\{x\in\GG^{(0)}:|s^{-1}(x)|=n\}$, for $n\in\mathbb N\cup\{\infty\}$, and notice that the sets $X_n$ form a $\GG$-invariant partition of $\GG^{(0)}$. Indeed, if $x\in X_n$ and $\gamma\in\GG_x$, letting $y=r(\gamma)$ we see that $\rho\mapsto \rho\gamma^{-1}$ gives a bijection between $s^{-1}(x)$ and $s^{-1}(y)=s^{-1}(r(\gamma))$. This shows $r(\gamma)\in X_n$, and thus, $r(\GG_x)\subset X_n$. Since the statement is preserved under restriction to invariant measurable subsets, it suffices to prove this proposition for each restriction $\GG^{X_n}_{X_n}$; the general case then follows by taking the countable direct sum over the non-null $X_n$. Hence, we may assume $\GG^{(0)}=X_n$, for some $n\in\mathbb N\cup\{\infty\}$ with $\mu(X_n)>0$. By Lusin's Theorem, we can find an isomorphism of measure spaces $\varphi:(I_n\times\mathcal G^{(0)},\mu_{n}\times\mu)\to (\mathcal G,\nu)$ such that $\varphi(1,x)=x\in\mathcal G^{(0)}$ and $s(\varphi(k,x))=x$ for every $k\in I_n$ and for $\mu$-almost every $x\in\mathcal G^{(0)}$. We thus assume 
that $(\mathcal G,\nu)=(I_n\times \GG^{(0)},\mu_{n}\times\mu)$ with $(\mathcal G^{(0)},\mu)=(\{1\}\times \GG^{(0)},\delta_{\{1\}}\times \mu)$ and with source map $s:\mathcal G\to\mathcal G^{(0)}$ given by $s((k,x))=(1,x)$, for $\nu$-almost every $(k,x)\in\mathcal G$. Let $Z_0\subset Z$ denote the collection of all $z\in Z$ for which $\mu_z$ is atomless on $\mathcal G_z^{(0)}$, and $Z_m\subset Z$ all $z\in Z$ such that $\mu_z$ is uniformly distributed on $m$ points, for $m\geq 1$. It follows that $(\pi^{-1}(Z_m))_{m\geq 0}$ is a partition of $\mathcal G^{(0)}$ into $\mathcal G$-invariant sets. Hence, we may take $Z=Z_m$ for some $m$, and from the proof of \cite[Theorem 3.18]{Glasner}, we may further assume $(\GG^{(0)},\mu)=(Y\times Z,\mu_Y\times\zeta)$ for a standard probability space $(Y,\mu_Y)$ with $\pi:\mathcal G^{(0)}\to Z$ given by $\pi(1,(y,z))=z$. Then, for each $z\in Z$, the probability measures $\mu_z$ on $\mathcal G_z^{(0)}=\{1\}\times Y\times\{z\}$ and $\nu_z$ on $\mathcal G_z=I_n\times Y\times\{z\}$ are respectively given by $\mu_z=\delta_{\{1\}}\times\mu_Y\times\delta_{\{z\}}$ and $\nu_z=\mu_{n}\times\mu_Y\times\delta_{\{z\}}$.

We claim that the sets $B_z,B'_z,F_z$ from the definition of $f_z\in\mathbb S(\mathcal G_z)$ may be chosen measurably in $z\in Z$. To see this, use the groupoid reduction above, so that $\mathcal G_z^{(0)}=\{1\}\times Y\times\{z\}$ and $\mathcal G_z=I_n\times Y\times\{z\}$. Thus, after identifying $\mathcal G_z^{(0)}$ with $Y$ and $\mathcal G_z$ with $I_n\times Y$, the possible choices of $B_z,B'_z,F_z$ may be regarded as elements of the standard measure algebras $\mathcal B(Y,\mu_Y)$ and $\mathcal B(I_n\times Y,\mu_{n}\times\mu_Y)$. Consider the set $\Omega$ of all quadruples $(z,B,B',F)\in Z\times \mathcal B(Y)\times\mathcal B(Y)\times \mathcal B(I_n\times Y)$ such that
\begin{equation*}
    \mu_Y(B),\mu_Y(B')\leq \eps/2,\:\:(\mu_{n}\times\mu_Y)(F)<\infty,\:\:\text{ and }\:\: \Phi(B,B',F,f_z)\leq\eps.
\end{equation*}
Here, $B,B'$ are viewed as subsets of $\mathcal G_z^{(0)}$ and $F$ as a subset of $\mathcal G_z$ through the above identifications. The set $\Omega$ is Borel. Indeed, the maps $B\mapsto \mu_Y(B)$, $F\mapsto (\mu_{n}\times\mu_Y)(F)$ are Borel on the corresponding measure algebras. Moreover, since $f=(f_z)_z$ and $\theta=(\theta_z)_z$ are measurable fields, the map $(z,B,B',F)\mapsto \Phi(B,B',F,f_z)$ is Borel on bounded measurable fields. Therefore $\Omega$ is Borel. From the assumption $f_z\in\mathbb S(\GG_z)$ for almost every $z$, the projection of $\Omega$ onto $Z$ contains a conull subset of $Z$. Hence, by the Selection \Cref{thm:selection}, there exists a $\zeta$-measurable section $t:Z\to \mathcal B(Y)\times\mathcal B(Y)\times\mathcal B(I_n\times Y)$, defined on a conull subset of $Z$, such that $(z,t(z))\in \Omega$ for almost every $z$. This shows that we may assume that $z\mapsto B_z$, $z\mapsto B'_z$, and $z\mapsto F_z$ are measurable fields satisfying the above estimates for almost every $z\in Z$.

Let $M>0$ be such that the set $Z_M=\{z\in Z:\nu_z(F_z)\leq M\}$ satisfies $\zeta(Z\setminus Z_M)\leq\eps/2$. Define 
\begin{equation*}
    B=\bigsqcup_{z\in Z}B_z\cup \pi^{-1}(Z\setminus Z_M),\:\: B'=\bigsqcup_{z\in Z}B_z'\cup\pi^{-1}(Z\setminus Z_M)\:\:\text{ and }\:\:F=\bigsqcup_{z\in Z_M}F_z.
\end{equation*}
It follows that $\mu(B),\mu(B')\leq\eps$, and that $\nu(F)\leq M<\infty$, and thus,
\begin{align*}
    \|(1-\chi_{\mathcal G^B})&(1-\chi_{\mathcal G_{B'}})(f-R_{\theta}f)(1-\chi_F)\|_{\infty}={\text{ess}\sup}_{z\in Z}\sup_{\gamma\in \mathcal G_z\setminus (\mathcal G^{B_z}\cup\mathcal G_{B_z'}\cup F)}|f_z(\gamma)-f_z(\gamma\theta_z)|\leq\eps.
\end{align*}
The equality follows from the invariance of $\mathcal G_z$, and the inequality follows because for $\gamma\in\mathcal G_z$ with $r(\gamma)\not\in B$, we have $z=\pi(r(\gamma))\in Z_M$, so the fact that $\gamma\in\mathcal G_z\setminus F$ implies $\gamma\not\in F_z$. Therefore, $f\in\mathbb S(\mathcal G)$, as wanted.
\end{proof}

\begin{thm}\label{thm:ergodicdecomp0} Let $(\mathcal G,\nu)$ be a countable p.m.p.\ groupoid with unit space $(\mathcal G^{(0)},\mu)$. Let $\pi:(\mathcal G^{(0)},\mu)\to (Z,\zeta)$ be the space of ergodic components, $\mu_z\in\text{Prob}(\pi^{-1}(\{z\}))$ be the ergodic decomposition of $\mu$, and consider the disintegration $(\mathcal{G},\nu)=\int_Z(\mathcal G_z,\nu_z)\,\mathrm{d}\zeta(z)$. The following hold:
\begin{enumerate}
    \item If $(\mathcal G,\nu)$ is properly proximal, then $(\mathcal{G}_z,\nu_z)$ is properly proximal for $\zeta$-almost every $z\in Z$.
    \item Assume $\mathcal G^{(0)}$ is completely atomic. Then the converse holds. 
\end{enumerate}
\end{thm}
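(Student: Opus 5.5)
For part (1) we argue by contraposition, and we work with \Cref{lem:finitedimcriterion} rather than with states on the non-separable algebra $\mathbb S(\mathcal G)$. Suppose the set $E\subset Z$ of those $z$ for which $\mathcal G_z$ is not properly proximal has $\zeta(E)>0$. For $z\in E$, let $\varphi_z$ be a left $[[\mathcal G_z]]$-invariant state on $\mathbb S(\mathcal G_z)$ that is normal on $L^\infty(\mathcal G_z^{(0)})$. Since $\mathcal G_z$ is ergodic, the density of $\varphi_z|_{L^\infty(\mathcal G_z^{(0)})}$ is invariant, hence constant, so it equals $\mu_z$. This is the argument already used in \Cref{restriction}(ii) and \Cref{claim:coamenable}. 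The candidate normal state for \Cref{lem:finitedimcriterion}(ii) is
\[\omega(g)=\zeta(E)^{-1}\int_{\pi^{-1}(E)} g\,\mathrm d\mu .\]

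Fix a finitely generated unital C$^*$-subalgebra $A\subset\mathbb S(\mathcal G)$ and a finite set $T\subset[\mathcal G]$, and put $D=C^*(A,(L_\theta A)_{\theta\in T})$. By \Cref{prop:disintegrationS(G)}, almost every fiber $D_z$ consists of elements of $\mathbb S(\mathcal G_z)$, and $\theta_z\in[\mathcal G_z]$ for each $\theta\in T$. Since $D$ is separable, its state space $\mathcal S(D)$ is compact metrizable. Consider the set of pairs $(z,\psi)\in E\times\mathcal S(D)$ such that:
\begin{enumerate}
\item $\psi$ factors through $a\mapsto a_z$;
\item $\psi(L_\theta a)=\psi(a)$ for a countable dense set of $a\in A$ and all $\theta\in T$;
\item $\psi(g)=\int g\,\mathrm d\mu_z$ for $g$ in a countable dense subset of $L^\infty(\mathcal G^{(0)})\cap D$.
\end{enumerate}
Expressing these conditions through countably many Borel functions of $z$, this set should be Borel. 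Its projection to $Z$ contains almost all of $E$, because the restriction $a\mapsto\varphi_z(a_z)$ belongs to it. \Cref{thm:selection} then yields a $\zeta$-measurable selection $z\mapsto\psi_z$. We define
\[\psi(a)=\zeta(E)^{-1}\int_E\psi_z(a)\,\mathrm d\zeta(z).\]
This $\psi$ is a $T$-invariant state on $D$ whose restriction to $L^\infty(\mathcal G^{(0)})\cap D$ is $\omega$, by \Cref{thm:disintegration}(iii). \Cref{lem:finitedimcriterion} then shows that $\mathcal G$ is not properly proximal.

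The main obstacle is the measurability in the selection step. Concretely, one must check that $z\mapsto a_z$ and the conditions defining the set vary Borel-measurably in $z$, using the same reduction to $I_n\times Y\times Z$ as in the proof of \Cref{prop:disintegrationS(G)}. One must also check that the countable dense subsets suffice to force conditions (2) and (3) on all of $A$ and on all of $L^\infty(\mathcal G^{(0)})\cap D$, by continuity of $\psi$.

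For part (2), assume $\mathcal G^{(0)}$ is completely atomic. Then $Z$ is countable, and each ergodic component is a finite invariant set of atoms $X_z=\pi^{-1}(z)$ with $\mu(X_z)>0$, so $\mathcal G=\bigsqcup_z\mathcal G_{X_z}^{X_z}$. Suppose $\varphi$ is a left $[[\mathcal G]]$-invariant state on $\mathbb S(\mathcal G)$ that is normal on $L^\infty(\mathcal G^{(0)})$. By normality, $\sum_z\varphi(\chi_{X_z})=1$, so some $z$ has $\varphi(\chi_{X_z})>0$. Since $X_z$ is invariant, extension by zero maps $\mathbb S(\mathcal G_z)$ into $\mathbb S(\mathcal G)$; the verification is as in \Cref{restriction}, and it is trivial here because the support is invariant. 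Then
\[\varphi_z(f)=\varphi(\widehat f)/\varphi(\chi_{X_z})\]
is a left $[[\mathcal G_z]]$-invariant state that is normal on the finite-dimensional algebra $L^\infty(X_z)$. This contradicts proper proximality of $\mathcal G_z$.
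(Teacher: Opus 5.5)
Your argument follows the paper's proof. In (1) you argue by contraposition through \Cref{lem:finitedimcriterion}, use ergodicity of the fibers to force $\varphi_z|_{L^\infty(\mathcal G_z^{(0)})}=\int\cdot\,\mathrm d\mu_z$, select fiberwise invariant states by Jankov--von Neumann, and average over $z$. In (2) you extend by zero from an ergodic component carrying positive $\varphi$-mass, as the paper does.

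The selection step is packaged differently. The paper selects states on the fiber algebras $\mathcal A_z=C^*((c_n)_z)$, which requires treating their state spaces as a Borel field of compact metrizable spaces. You select states on the single separable algebra $D$, so \Cref{thm:selection} applies verbatim to a Borel subset of $Z\times\mathcal S(D)$. This is cleaner, and your condition (1) can simply be dropped: the averaged state only uses (2) and (3). Condition (2) is a weak$^*$-closed condition independent of $z$. Condition (3) depends on $z$ only through the countably many Borel functions $z\mapsto\int b_n\,\mathrm d\mu_z$. So the Borel-ness you flag as the main obstacle is immediate.

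The real measurability work is only in producing the witness $a\mapsto\varphi_z(a_z)$ for almost every $z$. You need one conull set on which $a\mapsto a_z$ is a well-defined unital $*$-homomorphism on all of $D$ into $\mathbb S(\mathcal G_z)$, intertwining $L_\theta$ with $L_{\theta_z}$. This comes from a countable dense $*$-subalgebra, the a.e.\ bound $\|a_z\|\le\|a\|$, and \Cref{prop:disintegrationS(G)}; it is the paper's set $Z_{A,T}$.

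\textbf{Gap: measurability of $E$.} The set of $z$ with $\mathcal G_z$ not properly proximal is defined by the existence of a state on a non-separable algebra, and there is no reason for it to be $\zeta$-measurable. Negating the conclusion only gives $\zeta^*(E_0)>0$, so $\zeta(E)$, $\omega$, and $\int_E$ are not yet defined. The fix, as in the paper, has three steps.
\begin{enumerate}
\item Take a measurable hull $E\supset E_0$ with $\zeta(E)=\zeta^*(E_0)>0$, and define $\omega$ using this $E$.
\item The projection $P$ of your Borel set is analytic, hence $\zeta$-measurable, and $E_0\subset P\cup(Z\setminus Z_{A,T})$.
\item The hull property then gives $\zeta(E\setminus P)=0$, so the selection is defined almost everywhere on $E$ and your averaged $\psi$ is a state.
\end{enumerate}

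Part (2) is fine as written. Each ergodic component $X_z$ is a finite orbit of atoms of equal mass, so normality on $L^\infty(X_z)$ is automatic. For invariance, extend $\theta\in[\mathcal G_z]$ to $\theta\sqcup(\mathcal G^{(0)}\setminus X_z)\in[\mathcal G]$.
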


\begin{proof} 

Let $E_0 \coloneqq \{z\in Z:\GG_z\text{ is not properly proximal}\}$. Suppose that $\zeta^*(E_0)>0$, where $\zeta^*$ denotes outer measure. Choose a measurable hull $E\subset Z$ of $E_0$, so that $E_0\subset E$ and $\zeta(E)=\zeta^*(E_0)>0$. Recall that whenever $Q\subset Z$ is measurable in the $\zeta$-completion and $E_0\subset Q$, we have $\zeta(E\setminus Q)=0$.

For every $z\in E_0$, choose a state $\varphi_z \colon \S(\GG_z)\to \C$ that is left $[\GG_z]$-invariant and whose restriction to $L^\infty(\GG_z^{(0)})$ is normal. There exists $h_z\in L^1(\GG_z^{(0)},\mu_z)_+$ with $\int_{\GG_z^{(0)}}h_z\,\mathrm{d}\mu_z=1$ such that
\[ \varphi_z(a)=\int_{\GG_z^{(0)}}a h_z\,\mathrm d\mu_z,\:\:\text{ for all }\: a\in L^\infty(\GG_z^{(0)}).\]
The left $[\GG_z]$-invariance of $\varphi_z$ implies that $h_z$ is $\GG_z$-invariant. Since $\GG_z$ is ergodic, $h_z$ is essentially constant, and the normalization above gives $h_z=1$. Therefore, $\varphi_z|_{L^\infty(\GG_z^{(0)})} = \int_{\GG_z^{(0)}}\!\cdot\,\mathrm{d}\mu_z$ for all $z\in E_0$.

Define a normal state $\omega_E\colon L^{\infty}(\mathcal G^{(0)})\to\mathbb C$ by 
\begin{equation*}
    \omega_E(f)=\frac{1}{\zeta(E)}\int_E\varphi_z(f_z)\,\mathrm{d}\zeta(z)=\frac{1}{\zeta(E)}\int_E\int_{\GG_z^{(0)}}f_z\,\mathrm{d}\mu_z\,\mathrm{d}\zeta(z).
\end{equation*}
To show $\mathcal G$ is not properly proximal, we show $\mathcal{G}$ satisfies the finite dimensional criterion from \Cref{lem:finitedimcriterion} with $\omega_E$ as the normal state on $L^{\infty}(\mathcal G^{(0)})$. Towards this, take a finitely generated C$^*$-subalgebra $A\subset\mathbb S(\mathcal G)$ and $T\subset[\mathcal G]$ a finite subset. Denote by $\mathcal A\coloneqq C^*(A,(L_{\theta}A)_{\theta\in T})$ and define $T_z=\{\theta_z \coloneqq \theta|_{\mathcal G_z^{(0)}}:\theta\in T\}\subset [\mathcal G_z]$.

Choose a norm-dense sequence $(a_n)_{n\geq1}$ in the unit ball of $A$, a norm-dense sequence $(b_n)_{n\geq1}$ in the unit ball of $\AAA\cap L^\infty(\GG^{(0)})$, and a norm-dense sequence $(c_n)_{n\geq1}$ in the unit ball of $\AAA$ whose range contains $1$, every $a_n$, every $b_n$, and every $L_\theta a_n$ with $\theta\in T$. By \Cref{prop:disintegrationS(G)} and by the disintegration for left translations, together with a countable-intersection argument, we obtain the existence of a conull Borel set $Z_{A,T}\subset Z$ such that, for every $z\in Z_{A,T}$, we have $\theta_z\in[\GG_z]$ for every $\theta\in T$, $(c_n)_z\in\mathbb S(\GG_z)$ for every $n\geq 1$, and $(L_\theta a_n)_z=L_{\theta_z}(a_n)_z$ for $ \theta\in T,\ n\geq1$. For $z\in Z_{A,T}$, set $A_z=C^*((a_n)_z:n\geq1)$ and $\AAA_z=C^*((c_n)_z:n\geq1)$.
Then $A_z\subset\AAA_z\subset\mathbb S(\GG_z)$,
and the sections $z\mapsto(a_n)_z$ and $z\mapsto(c_n)_z$ make $(A_z)_{z\in Z_{A,T}}$ and $(\AAA_z)_{z\in Z_{A,T}}$ Borel fields of separable unital $C^*$-algebras.

By \cite[Proposition 5.4]{VaesWouters}, the weak$^*$ compact balls $(\operatorname{ball}(\AAA_z^*))_{z\in Z_{A,T}}$ form a Borel field of compact metrizable spaces. Inside their total space, the state spaces form a Borel subfield. More explicitly, using the dense family $(c_n)_z$, the states on $\mathcal A_z$ are described by the countably many conditions $\rho(1_z)=1$ and $\rho((c_n)_z^*(c_n)_z)\in[0,\infty)$ for $n\geq 1$. Let $\Omega$ be the set of all pairs $(z,\rho)$ such that $z\in Z_{A,T}$, $\rho$ is a state on $\AAA_z$, and $\rho((L_\theta a_n)_z)=\rho((a_n)_z)$ for $\theta\in T,\ n\geq1$, while $\rho((b_n)_z) = \int_{\GG_z^{(0)}}(b_n)_z\,\mathrm{d}\mu_z$ for $n\geq1$. The set $\Omega$ is Borel, since evaluation of a dual functional on a Borel section is Borel, and the preceding requirements are countably many equalities of Borel scalar-valued functions.

Let $P\subset Z$ be the image of the projection of $\Omega$ onto $Z$. Then $P$ is analytic and hence measurable in the $\zeta$-completion. For every $z\in E_0\cap Z_{A,T}$, the restriction $\varphi_z|_{\AAA_z}$ belongs to the fiber of $\Omega$ over $z$. Consequently, $E_0\subset P\cup(Z\setminus Z_{A,T})$.
Since $Z\setminus Z_{A,T}$ is null and $E$ is a measurable hull of $E_0$, it follows that $\zeta(E\setminus P)=0$. By the Jankov-von Neumann selection theorem, there is a $\zeta$-measurable section $z\mapsto \rho_z\in\operatorname{States}(\AAA_z)$ defined on $P$ such that $(z,\rho_z)\in\Omega$ for every $z\in P$.

Define $\psi\colon \AAA\to\CC$ by
\[\psi(f)=\frac{1}{\zeta(E)}\int_{E\cap P}\rho_z(f_z)\,\mathrm{d}\zeta(z).\]
The function $z\mapsto\rho_z(f_z)$ is measurable for every $f\in\AAA$, because $z\mapsto f_z$ is a Borel section of $(\AAA_z)_z$ and $z\mapsto\rho_z$ is a measurable section of the dual field. This is a state since it is positive, and $\psi(1)=\zeta(E\cap P)/\zeta(E)=1$ because $\zeta(E\setminus P)=0$. By the first family of equalities in the definition of $\Omega$ and the density of $(a_n)_n$, we have $\psi(L_\theta a)=\psi(a)$ for $a\in A$ and $\theta\in T$. By the second family of equalities and the density of $(b_n)_n$, for every $f\in\AAA\cap L^\infty(\GG^{(0)})$ we have
\[\psi(f)=\frac{1}{\zeta(E)}\int_{E\cap P}\int_{\GG_z^{(0)}}f_z\,\mathrm{d}\mu_z\,\mathrm d\zeta(z)=\omega_E(f).\]
Therefore, by \Cref{lem:finitedimcriterion}, $\mathcal G$ is not properly proximal. This finishes (1).

\vspace{4mm}

Suppose now $\mathcal G^{(0)}$ is completely atomic. Assume that $\mathcal G$ is not properly proximal, and let $\sigma\in\mathbb S(\mathcal G)^*$ denote a left $[[\mathcal G]]$-invariant state whose restriction to $L^\infty(\mathcal G^{(0)})$ is normal. Since the unit space of $\mathcal G$ is completely atomic, so is $Z$. After discarding a null subset, we regard $Z$ as a countable set of atoms, each satisfying $\zeta(\{z\})>0$. \Cref{prop:disintegrationS(G)} then gives
\begin{equation*}
    \mathcal G=\bigsqcup_{z\in Z}\mathcal G_z\quad\text{ and }\quad\mathbb S(\mathcal G)=\bigoplus_{z\in Z}\mathbb S(\mathcal G_z).
\end{equation*}
In this case, the prior decomposition of $\mathbb S(\mathcal G)$ is much stronger, as we show below. Let $z\in Z$, $a\in\mathbb S(\mathcal G_z)$ and define the extension of $a$ to $L^{\infty}(\mathcal G)$ by $\widetilde{a}|_{\mathcal G_z}=a$ and $\widetilde{a}|_{\mathcal G\setminus\mathcal G_z}=0$. Notice that $\widetilde{a}\in\mathbb S(\mathcal G)$. To see this, take $\theta\in[\mathcal G]$, which from the invariance of $\mathcal G_w$ satisfies $\theta|_{\mathcal G_w^{(0)}}\in[\mathcal G_w]$ for almost every $w\in Z$, and take $\gamma\in\mathcal G$ and $w\in Z$ for which $\gamma\in\mathcal G_w$. Then, $\widetilde{a}$ satisfies
\begin{equation*}
    (\widetilde{a}-R_{\theta}\widetilde{a})(\gamma)=\begin{cases}
        (a-R_{\theta|_{\mathcal G_z^{(0)}}}a)(\gamma)&\text{ if }w=z,\\
        0&\text{ otherwise.}
    \end{cases}
\end{equation*}
Since $a-R_{\theta_z}a\in c_0(\GG_z)$, its zero extension belongs to $c_0(\GG)$. Indeed, fix $\eps>0$ and choose measurable sets $B_z,B_z'\subset\GG_z^{(0)}$ and $F_z\subset\GG_z$ such that $\mu_z(B_z),\mu_z(B_z')<\eps$, $\nu_z(F_z)<\infty$, and $\|(1-\chi_{\GG_z^{B_z}})(1-\chi_{(\GG_z)_{B_z'}})(a-R_{\theta_z}a)(1-\chi_{F_z})\|_\infty\leq\eps$. Regarded as subsets of $\GG^{(0)}$ and $\GG$, respectively, they satisfy $\mu(B_z)=\zeta(\{z\})\mu_z(B_z)<\eps$, $\mu(B_z')<\eps$, and $\nu(F_z)=\zeta(\{z\})\nu_z(F_z)<\infty$. The same norm estimate therefore proves that the zero extension of $a-R_{\theta_z}a$ belongs to $c_0(\GG)$. Hence $\widetilde a\in\S(\GG)$.

Define $p_z=\chi_{\mathcal G_z}$ and let $E=\{z\in Z:\sigma(p_z)>0\}$. By the $\mathcal G$-invariance of $\mathcal G_z^{(0)}$, it follows that $p_z=\chi_{\mathcal G_z^{(0)}}\circ s = \chi_{\mathcal G_z^{(0)}}\circ r$. Because $\sum_z\chi_{\mathcal G_z^{(0)}}=1$ and the restriction $\sigma|_{L^{\infty}(\mathcal G^{(0)})}$ is normal, there must exist some $z\in Z$ for which $\sigma(\chi_{\mathcal G_z^{(0)}})>0$, which consequently implies $\sigma(p_z)>0$. Thus, $E\neq\emptyset$ and since $\zeta$ is a completely atomic measure, we conclude that $\zeta(E)>0$. 

For $z\in E$, define $\sigma_z \colon \mathbb S(\mathcal G_z)\to\mathbb C$ by $\sigma_z(a)=\sigma(\widetilde{a})/\sigma(p_z)$. The zero extension is a $*$-homomorphism from $\mathbb S(\GG_z)$ into the corner $p_z\mathbb S(\GG)$ which sends the unit of $\mathbb S(\GG_z)$ to $p_z$. Hence $\sigma_z$ is positive and $\sigma_z(1)=\sigma(p_z)/\sigma(p_z)=1$, so $\sigma_z$ is a state. Let $\theta\in[\GG_z]$. Since $\GG_z^{(0)}$ is invariant, the bisection $\widehat\theta =\theta\sqcup (\GG^{(0)}\setminus\GG_z^{(0)})$ belongs to $[\GG]$, where the second term denotes the identity bisection on the complementary components. Moreover, $\widetilde{L_\theta a}=L_{\widehat\theta}\widetilde a$. The left $[[\GG]]$-invariance of $\sigma$ therefore implies that $\sigma_z$ is left $[\GG_z]$-invariant. If $a\in L^\infty(\GG_z^{(0)})$, then its zero extension belongs to $L^\infty(\GG^{(0)})$, and the zero-extension map on the unit-space algebras is normal. Hence $\sigma_z|_{L^\infty(\GG_z^{(0)})}$ is normal. Therefore, $\GG_z$ is not properly proximal for every $z\in E$. This proves (2).
\end{proof}

\subsection{Transformation groupoids} Let $(\mathcal{G},\nu)$ be a countable p.m.p.\ groupoid with unit space $(\mathcal{G}^{(0)},\mu)$ and let $(A,\tau)$ be a standard probability space. Suppose $\GG\curvearrowright A$ is an action with anchor map $t:A\to\GG^{(0)}$, and form the associated transformation groupoid $\GG\ltimes A=:\widetilde{\GG}$ as in Section \ref{Sec:actionsofgroupoids}. 

Fix a basis $\mathcal B$ for the groupoid $\mathcal{G}$; that is, $\mathcal B$ is a countable collection of partial bisections $\{B_i\}_i\subset[[\mathcal G]]$ with $\mathcal G^{(0)},B_i^{-1}\in\mathcal{B}$ for each $i$ and such that $\mathcal G=\sqcup_iB_i$ (see \cite{AD13,groupoidfactor}). Let $E\subset[[\widetilde{\mathcal G}]]$ be a partial bisection. Notice that for each $x\in \widetilde s(E)$ there exists a unique $\gamma_x\in \mathcal G$ for which $(\gamma_x,x)\in E$. With respect to the basis $\mathcal B$, there exists a unique $i\in\mathbb N$ for which $\gamma_x\in B_i$. Thus, define the measurable sets $E_i=\{x\in \widetilde s(E):$ there exists $\gamma\in B_i$ with $(\gamma,x)\in E\}\subset A$ and notice that they form a partition of $s(E)$; that is, $\widetilde s(E)=\sqcup_iE_i$. 

Conversely, given a partial bisection $B\in[[\mathcal G]]$, if $E\subseteq t^{-1}(s(B))$ is the measurable set for which the map $E\to A$ mapping $x\mapsto\gamma_x\cdot x$ is injective (where $\gamma_x\in B$ is the unique element satisfying $s(\gamma_x)=t(x)$), then the set $B\times_{\GG^{(0)}} E=\{(\gamma,x)\in\widetilde{\mathcal G}:\gamma\in B,\: x\in E\text{ and }s(\gamma)=t(x)\}$ is a partial bisection of $\widetilde{\mathcal G}$.

\begin{thm} \label{prop: transf preserv prop prox}
Let $(\mathcal G,\nu)$ be a countable p.m.p.\ groupoid over the unit space $(\mathcal G^{(0)},\mu)$ and let $(A,\tau)$ be a standard probability space. Suppose $\alpha \colon \mathcal{G}\curvearrowright A$ is a measure preserving action with anchor map $t:A\to\GG^{(0)}$ such that that $t_*\tau\sim\mu$ on $\GG^{(0)}$. If $\mathcal G$ is properly proximal, then the action groupoid $\widetilde{\mathcal G}=\mathcal G\ltimes A$ is properly proximal. Conversely, if the action $\GG\acts A$ is free and $\widetilde{\GG}$ is properly proximal, then $\GG$ is properly proximal.

In particular, if $\Gamma\curvearrowright (X,\mu)$ is an essentially free p.m.p.\ action, then $\Gamma$ is properly proximal if and only if $\Gamma\ltimes X$ is properly proximal. 
\end{thm}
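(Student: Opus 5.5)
We argue both directions by contraposition, transporting invariant states along two maps between $L^\infty(\GG,\nu)$ and $L^\infty(\widetilde\GG)$: a pull-back $f\mapsto\widetilde f$ and a fiberwise averaging $\Psi$. Write $\widetilde\GG=\GG\ltimes A$, disintegrate $\tau=\int_{\GG^{(0)}}\tau_x\,\mathrm d(t_*\tau)(x)$, and let $h=\mathrm d(t_*\tau)/\mathrm d\mu$. The final assertion about $\Gamma\curvearrowright X$ follows by taking $\GG=\Gamma$ with one unit and $t$ constant, since essential freeness is exactly freeness of the groupoid action.

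\emph{First implication.} Suppose $\widetilde\GG$ is not properly proximal, and let $\varphi$ be a left $[[\widetilde\GG]]$-invariant state on $\S(\widetilde\GG)$ that is normal on $L^\infty A$. Define $\widetilde f(\gamma,a)=f(\gamma)$ for $f\in L^\infty(\GG)$.

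\begin{enumerate}
\item[Step 1.] We show that the pull-back of $c_0(\GG)$ lies in $c_0(\widetilde\GG)$. Pull back the sets: $\GG^B$ and $\GG_{B'}$ become $\widetilde\GG^{t^{-1}(B)}$ and $\widetilde\GG_{t^{-1}(B')}$. These have small $\tau$-measure because $t_*\tau\ll\mu$, via the $\eps$--$\delta$ form of absolute continuity. A finite-measure set $F$ pulls back to $\widetilde F$ with
\[\nu_{\widetilde\GG}(\widetilde F)=\int_{\GG^{(0)}}|F\cap s^{-1}(x)|\,h(x)\,\mathrm d\mu(x).\]
Since $h$ may be unbounded, we add $\{h>K\}$ (for large $K$) to the source set $B'$; this makes the remaining part of $\widetilde F$ finite-measure.
\item[Step 2.] We show $\widetilde f\in\S(\widetilde\GG)$ for $f\in\S(\GG)$. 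Take a bisection $E\in[[\widetilde\GG]]$ and decompose $\widetilde s(E)=\sqcup_iE_i$ along the basis $\mathcal B$. On $\widetilde\GG_{E_i}$ we have $\widetilde f-R_E\widetilde f=(f-R_{B_i}f)^\sim$. Finitely many pieces lie in $c_0(\widetilde\GG)$ by Step 1. The tail is supported in $\widetilde\GG_{\sqcup_{i>N}E_i}$, whose source set has small measure, exactly as in the proof of \Cref{restriction}(ii).
\item[Step 3.] Set $\psi(f)=\varphi(\widetilde f)$. For $\theta\in[\GG]$, the set $\widetilde\theta=\theta\times_{\GG^{(0)}}A$ lies in $[\widetilde\GG]$ and satisfies $L_{\widetilde\theta}\widetilde f=(L_\theta f)^\sim$, so $\psi$ is left $[\GG]$-invariant. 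On $L^\infty(\GG^{(0)})$ the state $\psi$ is $g\mapsto\varphi(g\circ t)$, and $g\mapsto g\circ t$ is normal because $t_*\tau\ll\mu$. By \Cref{rem:3.5}, $\GG$ is not properly proximal.
\end{enumerate}

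\emph{Converse.} Assume the action is free and $\psi$ is a left $[[\GG]]$-invariant state on $\S(\GG)$ that is normal on $L^\infty(\GG^{(0)})$. Define the normal unital positive map
\[\Psi(F)(\gamma)=\int_{A_{s(\gamma)}}F(\gamma,a)\,\mathrm d\tau_{s(\gamma)}(a),\qquad F\in L^\infty(\widetilde\GG),\]
and set $\Phi=\psi\circ\Psi$.

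\begin{enumerate}
\item[Step 4.] We check $\Psi(\S(\widetilde\GG))\subset\S(\GG)$. Because the action is measure preserving, $\Psi\circ R_{\widetilde\theta}=R_\theta\circ\Psi$ for $\theta\in[\GG]$. By \Cref{claim:generating set} (via \Cref{lem: right countable family}) it suffices to test against lifts $\widetilde\theta$, so what remains is $\Psi(c_0(\widetilde\GG))\subset c_0(\GG)$.
\item[Step 5.] For invariance, a bisection $E$ decomposes into pieces $B_i\times_{\GG^{(0)}}E_i$, and
\[L_{B_i\times E_i}F=L_{\widetilde B_i}\bigl(F\chi_{\widetilde\GG^{E_i}}\bigr),\qquad \Psi(L_{\widetilde B}G)=L_B\Psi(G).\]
Using these and summing over $i$ gives $\Phi(L_EF)=\Phi(F\chi_{\widetilde\GG^{\widetilde s(E)}})$. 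Freeness ensures that the bisection structure of $\widetilde\GG$ is faithfully indexed by $(B_i,E_i)$, so that these identities hold pointwise.
\item[Step 6.] On $L^\infty A$, $\Psi$ restricts to $a\mapsto\int a\,\mathrm d\tau_x$, which is normal, so $\Phi$ is normal on $L^\infty A$. Hence $\widetilde\GG$ is not properly proximal.
\end{enumerate}

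The main obstacle is Step 4's inclusion $\Psi(c_0(\widetilde\GG))\subset c_0(\GG)$. I would imitate Claim~\ref{claim: second} in \Cref{prop:products}, with the probability fibers $\tau_x$ replacing the finite measure $\nu_2$. Given $H\in c_0(\widetilde\GG)$ with small sets $E,E'\subset A$ and a finite-measure set $F\subset\widetilde\GG$, consider the points $x\in\GG^{(0)}$ where $\tau_x(E\cap A_x)\geq\kappa$ and where $\tau_x(E'\cap A_x)\geq\kappa$. By Chebyshev and $\mu\ll t_*\tau$ these sets have small $\mu$-measure, but controlling the range condition requires care because $(\gamma,a)$ has range $\gamma\cdot a$ rather than $a$. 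Similarly, $\{\gamma:\tau_{s(\gamma)}(F_\gamma)\geq\kappa\}$ must be shown to have finite $\nu$-measure; this is where an $L^\infty$ truncation of $\mathrm d\mu/\mathrm d(t_*\tau)$ is needed.
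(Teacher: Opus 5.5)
Your overall architecture is the paper's. The first implication uses the pull-back $f\mapsto\widetilde f$, with the same basis decomposition of $E$ and the same truncation of $h$ on the source side. The converse uses the fiber average $\Psi$ composed with the invariant state. The genuine difference is in Step 4. You split $\Psi(\S(\widetilde\GG))\subset\S(\GG)$ into the equivariance $\Psi\circ R_{\widetilde\theta}=R_\theta\circ\Psi$ plus a standalone inclusion $\Psi(c_0(\widetilde\GG))\subset c_0(\GG)$. The paper instead estimates $\Psi(f)-R_E\Psi(f)$ directly, breaking the finite-measure set $\widetilde F$ into pieces $B_j\times_{\GG^{(0)}}F_{i,j}$ and running a chain of Chebyshev estimates. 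Your split is cleaner. There is one small slip in Step 2: the identity $\widetilde f\chi_{\widetilde\GG_{\widetilde r(E)}}-R_E\widetilde f=(f\chi_{\GG_{r(B_i)}}-R_{B_i}f)^\sim$ holds on $\widetilde\GG_{B_i\cdot E_i}$, the range side of the $i$-th piece, not on $\widetilde\GG_{E_i}$.

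As written, however, two steps are incomplete.

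\emph{The $c_0$-inclusion.} The range issue you flag is settled by measure preservation: $\tau_{s(\gamma)}(\{a:\gamma\cdot a\in E\})=\tau_{r(\gamma)}(E)$. So $\{x:\tau_x(E)\geq\kappa\}$ goes into the range-side exceptional set; it is small by Chebyshev for $t_*\tau$ and $\mu\ll t_*\tau$. For the finite-measure part, $\widetilde\nu(F)=\int_\GG\tau_{s(\gamma)}(F_\gamma)\,h(s(\gamma))\,\mathrm d\nu(\gamma)$. The set $\{\gamma:\tau_{s(\gamma)}(F_\gamma)\geq\kappa\}$ need not have finite measure, but its part inside $\GG_{\{h\geq 1/L\}}$ has measure at most $L\,\widetilde\nu(F)/\kappa$. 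Since $h>0$ $\mu$-a.e., $\{h<1/L\}$ has small $\mu$-measure for large $L$ and can be added to the source-side set.

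\emph{The invariance step.} In Step 5, ``summing over $i$'' passes $\psi$ through an infinite sum, although $\psi$ is normal only on $L^\infty(\GG^{(0)})$. You need the tail bound
\[\Bigl|\Psi(L_EF)-\sum_{i\leq n}L_{B_i}\Psi\bigl(F\chi_{\widetilde\GG^{E_i}}\bigr)\Bigr|\leq\|F\|_\infty\,u_n'\circ r,\qquad u_n'(x)=\tau_x\Bigl(\bigsqcup_{i>n}B_i\cdot E_i\Bigr),\]
and the analogous one with $u_n(x)=\tau_x(\bigsqcup_{i>n}E_i)$ for $\Psi(F\chi_{\widetilde\GG^{\widetilde s(E)}})$. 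Both follow from measure preservation, and $u_n,u_n'\searrow 0$. Normality of $\psi$ on $L^\infty(\GG^{(0)})$ then kills the tails; this is the final paragraph of the paper's proof.

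Finally, freeness is not what makes your Step 5 identities hold. Every bisection $E$ of $\widetilde\GG$ decomposes as $\bigsqcup_i B_i\times_{\GG^{(0)}}E_i$ simply because $\widetilde s$ is injective on $E$.
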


\begin{proof} We define a map $\Phi\colon L^{\infty}\mathcal G\to L^\infty(\widetilde{\mathcal G})$ by 
\[\Phi(f)(\gamma,z) = f(\gamma), \quad f\in L^\infty\mathcal G,\: (\gamma,z)\in \widetilde{\mathcal G}.\]
We show $\Phi(\mathbb S(\mathcal{G}))\subset\mathbb S(\widetilde{\mathcal G})$. Let $\mathcal B=\{B_i\}_i$ be a basis for the groupoid $\GG$. Fix $E\in[[\widetilde{\mathcal G}]]$ and $\eps>0$. From above, there exists a countable partition of measurable subsets $\widetilde s(E)=\sqcup_{i=1}^{\infty}E_i$ with $E_i\subset A$ satisfying that $E_i=\{x\in \widetilde{s}(E):(\gamma_x,x)\in E\text{ with }\gamma_x\in B_i\}$. 
Observe that for $(\gamma,a)\in \widetilde{\mathcal G}$ we have
\begin{align*}
    R_{E}\Phi(f)(\gamma,a)&=\Phi(f)((\gamma,a)E)\delta_{a\in \widetilde{r}(E)}=\sum_{i=1}^{\infty}\Phi(f)((\gamma,a)E)\delta_{a\in B_iE_i}=\sum_{i=1}^{\infty}f(\gamma B_i)\delta_{a\in B_iE_i}\\
    &=\sum_{i=1}^{\infty}R_{B_i}f(\gamma)\delta_{a\in B_iE_i}.
\end{align*}
Since $\tau(\widetilde s(E))<\infty$, there exists $N\in\mathbb N$ for which $\tau(\sqcup_{i=N}^{\infty}E_i)\leq\eps/3$. Now, for each $B_i$, $1\leq i\leq N-1$, let $C_i,C_i'\subset\mathcal{G}^{(0)}$ be such that $\mu(C_i),\mu(C_i')\leq\eps/(3N)$ and $F_i\subset\mathcal{G}$ with finite $\nu$-measure satisfying $\|(1-\chi_{\mathcal{G}^{C_i}})(1-\chi_{\mathcal{G}_{C_i'}})(f\chi_{\mathcal{G}_{r(B_i)}}-R_{B_i}f)(1-\chi_{F_i})\|_{\infty}\leq\eps$. 

Since $t_*\tau\sim\mu$, let $h$ be the Radon-Nikodym derivative $\mathrm{d}(t_*\tau)/\mathrm{d}\mu$, which is integrable and non-negative $\mu$-almost everywhere. By Chebyshev's inequality, there exists $M>0$ such that $\mu(\{y\in\GG^{(0)}:h(y)\geq M\})\leq 1/M\int h(y)\,\mathrm{d}\mu(y)\leq \eps/3$. Denote by $C\coloneqq \cup_{i=1}^{N-1}t^{-1}(C_i)$, $C'\coloneqq\cup_{i=1}^{N-1}t^{-1}(C_i')\cup\sqcup_{i=N}^{\infty}B_iE_i\cup t^{-1}(\{h(y)\geq M\})$ and notice that $\tau(C),\tau(C')\leq\eps$ from $t_*\tau\sim\mu$ by making $\eps>0$ smaller, if necessary. Let $F=\cup_{i=1}^{N-1}(F_i\times_{\GG^{(0)}} A)\cap \widetilde{\GG}_{t^{-1}(\{h(y)\leq M\})}$. Observe that,
\begin{align*}
    \tilde\nu(F_i\times_{\GG^{(0)}}A\cap &\widetilde{\GG}_{t^{-1}(\{h(y)\leq M\})})=\int_A |F_i\times_{\GG^{(0)}}A\cap \widetilde{\GG}^x_{t^{-1}(\{h(y)\leq M\})}|\,\mathrm{d}\tau(x)\leq\int_{t^{-1}(\{h(y)\leq M\})} |F_i\cap \GG^{t(x)}|\,\mathrm{d}\tau(x)\\
    &=\int_{\{h(y)\leq M\}}|F_i\cap \GG^z|h(z)\,\mathrm{d}\mu(z)\leq M\cdot \nu(F_i)<\infty,
\end{align*}
and thus, $\widetilde{\nu}(F)<\infty$. With these in hand, we have
\begin{equation*}
    \begin{aligned}
    \|(1-&\chi_{\widetilde{\mathcal G}^C})(1-\chi_{\widetilde{\mathcal G}_{C'}})(\Phi(f)\chi_{\widetilde{\mathcal G}_{\widetilde{r}(E)}}-R_{E}\Phi(f))(1-\chi_F)\|_{\infty}=\sup_{(\gamma,a)\in\widetilde{\mathcal{G}}_{\widetilde{r}(E)}\setminus(\widetilde{\mathcal G}^{C}\cup \widetilde{\mathcal G}_{C'}\cup F)}|\Phi(f)(\gamma,a)-R_{E}\Phi(f)(\gamma,a)|\\
    &=\max_{1\leq i\leq N-1}\sup_{(\gamma,a)\in\widetilde{\mathcal{G}}_{B_iE_i}\setminus(\widetilde{\mathcal G}^{C}\cup \widetilde{\mathcal G}_{C'}\cup F)}|f(\gamma)-R_{B_i}f(\gamma)|\leq\max_{1\leq i\leq N-1}\sup_{\gamma\in\mathcal{G}_{r(B_i)}\setminus(\mathcal{G}^{C_i}\cup\mathcal{G}_{C_i'}\cup F_i)}|f(\gamma)-R_{B_i}f(\gamma)|\\
    &=\max_{1\leq i\leq N-1}\|(1-\chi_{\mathcal{G}^{C_i}})(1-\chi_{\mathcal{G}_{C_i'}})(f\chi_{\mathcal{G}_{r(B_i)}}-R_{B_i}f)(1-\chi_{F_i})\|_{\infty}\leq\eps,
\end{aligned}
\end{equation*}
so $\Phi(f)\in\mathbb S(\widetilde{\mathcal G})$.

Now, if $\widetilde{\mathcal{G}}$ is not properly proximal, there exists a left $[[\widetilde{\mathcal{G}}]]$-invariant state $\varphi:\mathbb S(\widetilde{\mathcal G})\to\mathbb C$ whose restriction to $L^{\infty}(A)$ is normal. Define $\psi:\mathbb S(\mathcal G)\to\mathbb C$ by $\psi(f)=(\varphi\circ\Phi)(f)$. For $f\in L^{\infty}(\mathcal{G}^{(0)})$, we have $\Phi(f)(\gamma,a)=f(\gamma)=f(r(\gamma))=(f\circ t)(r(\gamma,a))=(f\circ t)(\gamma,a)$, where $f\circ t\in L^{\infty}(A)$, and therefore, $\psi|_{L^{\infty}(\mathcal G^{(0)})}$ is normal. Finally, for $B\subset[[\GG]]$, denote by $B\times_{\GG^{(0)}} A$ the corresponding bisection in $[[\widetilde{\mathcal G}]]$. Then, for $(\gamma,a)\in\widetilde{\GG}$, 
$L_{B\times_{\GG^{(0)}} A}\Phi(f)(\gamma,a)=\Phi(f)(B^{-1}\gamma,a)=f(B^{-1}\gamma)=\Phi(L_{B}f)(\gamma,a)$. Since $\varphi$ is left $[[\widetilde{\mathcal{G}}]]$-invariant, the prior computation shows $\psi$ is $[[\mathcal{G}]]$-invariant. We conclude $\mathcal{G}$ is not properly proximal.

\vspace{2mm}

For the converse, suppose $\widetilde{\GG}=\GG\ltimes A$ is properly proximal. Define $\Psi \colon L^{\infty}(\widetilde{\GG},\tau\circ\lambda)\to L^{\infty}(\GG,\nu)$ by $\Psi(f)(\gamma)=\int_{A} f(\gamma,a)\,\mathrm{d}\tau_{s(\gamma)}(a)$ for $\gamma\in \Gamma$ and $f\in L^{\infty}(\widetilde{\GG})$. This map is well-defined, because we have for every non-negative Borel function $f$ on $\widetilde{\GG}$,
\begin{equation*}
    \begin{aligned}
        \int_{A}\sum_{\gamma:s(\gamma)=t(a)}f(\gamma,a)\,\mathrm{d}\tau(a) &= \int_{\GG^{(0)}}\bigg(\int_{\{a\in A: t(a)=x\}}\sum_{\gamma:s(\gamma)=t(a)}f(\gamma,a)\,\mathrm{d}\tau_x(a)\bigg)\,\mathrm{d}(t_*\tau)(x)\\
        &= \int_{\GG^{(0)}}\bigg(\sum_{\gamma:s(\gamma)=x}\int_{\{a\in A: t(a)=x\}}f(\gamma,a)\,\mathrm{d}\tau_x(a)\bigg)\,\mathrm{d}(t_*\tau)(x),
    \end{aligned}
\end{equation*}
and $t_*\tau \sim \mu$ on $\GG^{(0)}$ by assumption.

We claim that $\Psi(\mathbb S(\widetilde{\GG}))\subseteq\mathbb S(\GG)$. Fix $f\in\mathbb S(\widetilde{\GG})$, $\eps>0$, and a global bisection $E\in [\GG]$. Let $\widetilde{E} \coloneqq E \times_{\GG^{(0)}} A \subset \widetilde{\GG}$, which is a global bisection in $[\widetilde{\GG}]$ since $E\in [\GG]$. %
Let $\eps_0\in (0,\min\{1,\eps\})/3$ be a positive number such that $\mu(S)<\eps$ for every subset $S\subset \GG^{(0)}$ such that $(t_*\tau)(S)<\eps_0$. 
Let $\delta>0$ be such that $\delta\leq(\eps_0)^2/(10+10\left\Vert f\right\Vert_{\infty})^2$. Let $C,C'\subset A$ with $\tau(C),\tau(C')\leq\delta$ and let $\widetilde{F}\subset \widetilde{\GG}$ be a set of finite $(\tau\circ\lambda)$-measure for which $\left\Vert(1-\chi_{\widetilde{\GG}^C})(1-\chi_{\widetilde{\GG}_{C'}})(f-R_{\widetilde{E}}f)(1-\chi_{\widetilde{F}})\right\Vert_{\infty}\leq\delta$. 

Since we have
\begin{align*}
    (\eps_0)^2/(10+10 \lVert f\rVert_\infty)^2 > \delta > \tau(C') &= \int_A \chi_{C'}(a) \,\mathrm{d}\tau(a) =  \int_{\GG^{(0)}}\int_{t^{-1}(x)}\chi_{C'}(a)\,\mathrm{d}\tau_x(a)\,\mathrm{d}(t_*\tau)(x)\\
    &= \int_{\GG^{(0)}} \tau_x(C') \,\mathrm{d}(t_*\tau)(x),
\end{align*}
if we let $C_0' \coloneqq \{x\in \GG^{(0)}: \tau_x(C') \geq\eps/(10+10\| f\|_\infty)\}$, then by Chebyshev's inequality, $(t_*\tau)(C_0') \leq \eps_0/(10+10\| f\|_\infty)$. Similarly, the set $C_0\coloneqq \{x\in \GG^{(0)}: \tau_x(C) \geq\eps_0/(10+10\| f\|_\infty)\}$ satisfies $(t_*\tau)(C_0) \leq \eps_0/(10+10\| f\|_\infty)$.

By Lusin-Novikov's Theorem, $\widetilde{F}=\sqcup_{i=1}^{\infty}\widetilde{F}_i$ for some $\widetilde{F}_i \in[[\widetilde{\GG}]]$. Since $(\tau\circ \lambda)(\widetilde{F})<\infty$, there exists $N\in\N$ such that $(\tau\circ\lambda)(\sqcup_{i=N}^{\infty}\widetilde{F}_i) \leq \eps_0^2/(10+10 \lVert f\rVert_\infty)^2$, so that $\tau(\cup_{i=N}^{\infty}\widetilde{r}(\widetilde{F}_i)),\tau(\cup_{i=N}^{\infty}\widetilde{s}(\widetilde{F}_i)) \leq\eps_0^2/(10+10 \lVert f\rVert_\infty)^2$. By similar argument as above, if we let $C_1 \coloneqq\{x\in \GG^{(0)}: \tau_x(\cup_{i=N}^{\infty}\widetilde{r}(\widetilde{F}_i)) \geq \eps_0/(10+10\| f\|_\infty)\}$ and $C_1' \coloneqq \{x\in \GG^{(0)}: \tau_x(\cup_{i=N}^{\infty}\widetilde{s}(\widetilde{F}_i)) \geq \eps_0/(10+10\| f\|_\infty)\}$ then $(t_*\tau)(C_1),(t_*\tau)(C_1') \leq \eps_0/(10+10\| f\|_\infty)$. 

Now fix a basis $\BB = \{B_j\}_{j\in\mathbb{N}}$ for the groupoid $\GG$. For $1\leq i\leq N-1$ and $j\in \mathbb{N}$, define $F_{i,j}\coloneqq \{y\in \widetilde{s}(\widetilde{F}_i): (\gamma,y) \in \widetilde{F}_i \text{ for some }\gamma \in B_j\} \subset A$, so then $\widetilde{F}_i = \sqcup_{j=1}^\infty B_j \times_{\GG^{(0)}}F_{i,j}$ and $\widetilde{s}(\widetilde{F}_i) = \sqcup_{j=1}^{\infty} F_{i,j}$ since $\GG\acts A$ is free. Since $\tau(\widetilde{s}(\widetilde{F}_i)) = (\tau\circ\lambda)(\widetilde{F}_i)<\infty$ for each $i$, there exists $M_i\in\mathbb{N}$ such that $\tau(\sqcup_{j=M_i}^{\infty} F_{i,j}) < \eps_0^2/((10+10\| f\|_\infty)^2N)$. Again by similar argument as above, the sets $C_{2,i} \coloneqq\{x\in \GG^{(0)}: \tau_x(\cup_{j=M_i}^{\infty}F_{i,j}) \geq \eps_0/(10+10\| f\|_\infty)\}$ and $C_{2,i}' \coloneqq \{x\in \GG^{(0)}: \tau_x(\cup_{j=M_i}^{\infty}B_j\cdot F_{i,j}) \geq \eps_0/(10+10\| f\|_\infty)\}$ satisfy $(t_*\tau)(C_{2,i}),(t_*\tau)(C_{2,i}') \leq \eps_0/((10+10 \|f\|_\infty)N)$ for $1\leq i\leq N-1$. Now let $C_2\coloneqq \cup_{i=1}^{N-1}C_{2,i}$ and $C_2'\coloneqq \cup_{i=1}^{N-1}C_{2,i}'$, so then $(t_*\tau)(C_{2}),(t_*\tau)(C_{2}') \leq \eps_0/(10+10\| f\|_\infty)$.

Let $F\coloneqq \cup \{B_j: 1\leq j \leq \max\{M_1,\cdots,M_{N-1}\}\}$, which is a finite measure subset of $\GG$. Also define $D\coloneqq C_0\cup C_1\cup C_2$ and $D'\coloneqq C_0'\cup C_1'\cup C_2'$, so that $(t_*\tau)(D),(t_*\tau)(D') \leq \eps_0$ and thus $\mu(D),\mu(D') < \eps$ by our choice of $\eps_0$. Let $\gamma\in \GG \setminus F$ such that $s(\gamma)\not\in D'$ and $r(\gamma)\not\in D$, and $B_\gamma\in \BB$ be the bisection of $\GG$ such that $\gamma\in B_\gamma$. By the construction of $D'$, we have \[\tau_{s(\gamma)}(C'),\:\tau_{s(\gamma)}(\cup_{i=N}^{\infty}\widetilde{s}(\widetilde{F}_i)),\: \tau_{s(\gamma)}(\cup_{i=1}^{N-1}\cup_{j=M_i}^{\infty}F_{i,j})) < \eps_{0}/(10+10\| f\|_\infty),\]
and by the construction of $D$, we have 
\[\tau_{r(\gamma)}(C),\:\tau_{r(\gamma)}(\cup_{i=N}^{\infty}\widetilde{r}(\widetilde{F}_i)),\: \tau_{r(\gamma)}(\cup_{i=1}^{N-1}\cup_{j=M_i}^{\infty}B_j\cdot F_{i,j})) < \eps_{0}/(10+10\| f\|_\infty),\]
so by change of variable, $\tau_{s(\gamma)}(B_\gamma^{-1}\cdot C),\:\tau_{s(\gamma)}(\cup_{i=N}^{\infty}B_\gamma^{-1}\cdot \widetilde{r}(\widetilde{F}_i)),\: \tau_{s(\gamma)}(\cup_{i=1}^{N-1}\cup_{j=M_i}^{\infty}B_\gamma^{-1}\cdot B_j\cdot F_{i,j})) < \eps_{0}/(10+10\| f\|_\infty)$. Therefore, if we define $D_0\coloneqq C' \cup \cup_{i=N}^{\infty}\widetilde{s}(\widetilde{F}_i)\cup \cup_{i=1}^{N-1}\cup_{j=M_i}^{\infty}F_{i,j} \cup B_\gamma^{-1}\cdot C \cup \cup_{i=N}^{\infty}B_\gamma^{-1}\cdot \widetilde{r}(\widetilde{F}_i) \cup \cup_{i=1}^{N-1}\cup_{j=M_i}^{\infty}B_\gamma^{-1}\cdot B_j\cdot F_{i,j}$, then $\tau_{s(\gamma)}(D_0) < \eps_0/\left\Vert f\right\Vert_{\infty}$, and for any $a\in t^{-1}(s(\gamma))\setminus D_0$, we have $(\gamma,a), (\gamma\cdot E,E^{-1}\cdot a)\not\in \widetilde{\GG}^{C}\cup\widetilde{\GG}_{C'}\cup \widetilde{F}$ since $\GG\acts A$ is free, $\gamma\not\in F$, and $s(\gamma)\not\in D',r(\gamma)\not\in D$. Hence for every such $\gamma$ and $a\in t^{-1}(s(\gamma))\setminus D_0$, we have 
\[|(f-R_{\widetilde{E}}f)(\gamma,a)| = |f(\gamma,a) - f(\gamma\cdot E,E^{-1}a)|\leq \delta < \eps/3.\]
Therefore, for $\gamma \in \GG\setminus F$ with $s(\gamma)\not\in D',r(\gamma)\not\in D$,
\begin{equation*}
    \begin{aligned}
    |\Psi(f)(\gamma)-R_E(\Psi(f))(\gamma)|&=\left|\int_A(f(\gamma,a)\,\mathrm{d}\tau_{s(\gamma)}(a)-\int_A f(\gamma\cdot E,a))\,\mathrm{d}\tau_{s(\gamma\cdot E)}(a)\right|\\
    &= \left|\int_{t^{-1}(s(\gamma))}(f(\gamma,a)\,\mathrm{d}\tau_{s(\gamma)}(a)-\int_{t^{-1}(s(\gamma))} f(\gamma\cdot E,E^{-1}\cdot a))\,\mathrm{d}\tau_{s(\gamma)}(a)\right|\\
    &\leq 2\left\Vert f\right\Vert_{\infty}\tau_{s(\gamma)}(D_0)+ \int_{t^{-1}(s(\gamma))\setminus D_0}|f(\gamma,a)-f(\gamma \cdot E,E^{-1}\cdot a)|\,\mathrm{d}\tau_{s(\gamma)}(a)\\
    &< 2\left\Vert f\right\Vert_{\infty} \cdot (\eps_0/\left\Vert f\right\Vert_{\infty}) + \eps/3\\
    &< \eps,
\end{aligned}
\end{equation*}
which implies that $\left\Vert(1-\chi_{\GG^D})(1-\chi_{\GG_{D'}})(\Psi(f)-R_{E}\Psi(f))(1-\chi_{F})\right\Vert_{\infty}\leq\eps$. Thus $\Psi(f)\in \mathbb S(\GG)$ by definition, and this proves our claim.

We next check that $\Psi(L^\infty(A,\tau))\subset L^{\infty}(\GG^{(0)},\mu)$. Since $t_*\tau \sim \mu$, it suffices to show $\Psi(f) \in L^\infty(\GG^{(0)},t_*\tau)$ for every $f\in L^\infty(A,\tau)$. Recall that the inclusion $ L^\infty(A,\tau) \subset L^\infty(\widetilde{\GG},\tau\circ\lambda)$ is realized by identifying each function $f\in L^\infty(A,\tau)$ with $\widetilde{f}\in L^\infty(\widetilde{\GG},\tau\circ\lambda)$ given by $\widetilde{f}(\gamma,a) = f(\gamma\cdot a)$. For any composable pair of elements $(\gamma,a),(\eta,b)\in \widetilde{\GG}$, $r(\gamma,a) = r(\eta,b)$ implies $r(\gamma) = t(\gamma\cdot a) = t(r(\gamma,a)) = t(r(\eta,b)) = t(\eta\cdot b) = r(\eta)$. Hence for $\gamma,\eta\in \GG$ with $r(\gamma) = r(\eta)$, if we let $B_\gamma,B_\eta\in \BB$ be the bisections of $\GG$ such that $\gamma\in B_\gamma, \eta\in B_\eta$, respectively, then by the change of variable formula,
\begin{equation*}
    \begin{aligned}
        \Psi(\widetilde{f})(\gamma) &= \int_{A} \widetilde{f}(\gamma,a)\,\mathrm{d}\tau_{s(\gamma)}(a) = \int_{A} \widetilde{f}(B_{\eta}\cdot B_{\eta}^{-1}\gamma,a)\,\mathrm{d}\tau_{s(\gamma)}(a) = \int_{A} \widetilde{f}(\eta\cdot B_{\eta}^{-1}B_\gamma,a)\,\mathrm{d}\tau_{s(\gamma)}(a)\\
        &= \int_{A} \widetilde{f}(\eta, B_{\gamma}^{-1}B_\eta a)\,\mathrm{d}\tau_{s(\gamma)}(B_{\gamma}^{-1}B_\eta a) = \int_A \widetilde{f}(\eta,a)\,\mathrm{d}\tau_{B_\eta^{-1}B_\gamma\cdot s(\gamma)}(a) = \int_{A}\widetilde{f}(\eta,a)\,\mathrm{d}\tau_{s(\eta)}(a) = \Psi(\widetilde{f})(\eta).
    \end{aligned}
\end{equation*}
Hence $ \Psi(\widetilde{f})(\gamma) =  \Psi(\widetilde{f})(\eta)$ whenever $r(\gamma) = r(\eta)$, so $\Psi(\widetilde{f}) \in L^{\infty}(\GG^{(0)},t_*\tau)$, which proves our claim. Moreover, $\Psi$ is normal on $L^\infty(A,\tau)$. Indeed, for any $h\in L^{1}(\GG^{(0)},t_*\tau)$, we define $h_0\in L^{1}(A,\tau)$ by $h_0(a) = h(t(a))$ for $a\in A$, so then
\[\int_A h_0(a)\,\mathrm{d}\tau(a) = \int_A h(t(a))\,\mathrm{d}\tau(a) = \int_{\GG^{(0)}}\int_{t^{-1}(x)}h(t(a))\,\mathrm{d}\tau_x(a)\,\mathrm{d}(t_*\tau)(x) = \int_{\GG^{(0)}}h(x)\,\mathrm{d}(t_*\tau)(x).\]
Then for any weakly convergent sequence $f_n\to f$ in $L^{\infty}(A,\tau)$, we have 
\begin{equation*}
    \begin{aligned}
        \lim_{n\to\infty}\int_{\GG^{(0)}}\Psi(f_n - f)(x)h(x)\,\mathrm{d}(t_*\tau)(x) &= \lim_{n\to\infty}\int_{\GG^{(0)}}\bigg(\int_A \widetilde{f_n}(x,a)-\widetilde{f}(x,a)\,\mathrm{d}\tau_x(a)\bigg) h(x)\,\mathrm{d}(t_*\tau)(x) \\
        &= \lim_{n\to\infty}\int_{\GG^{(0)}}\int_A (\widetilde{f_n}(x,a)-\widetilde{f}(x,a)\,\mathrm{d}\tau_x(a)) \widetilde{h_0}(x,a)\,\mathrm{d}(t_*\tau)(x)\\
        &= \lim_{n\to\infty}\int_A  (f_n(a) - f(a))h_0(a)\,\mathrm{d}\tau(a)\\
        &=0,
    \end{aligned}
\end{equation*}
so $\Psi(f_n)\to \Psi(f)$ weakly in $L^{\infty}(\GG^{(0)},t_*\tau)$, and hence in $L^{\infty}(\GG^{(0)},\mu)$ since $t_*\tau \sim \mu$. Thus $\Psi$ is normal on $L^\infty(A,\tau)$.

Now suppose towards contradiction that $\GG$ is not properly proximal, and let $\varphi\colon \S(\GG)\to\C$ be a left $[[\GG]]$-invariant state that is normal on $L^\infty(\GG^{(0)},\mu)$. Consider $\psi\coloneqq \varphi\circ \Psi\colon \S(\widetilde{\GG})\to\C$. Since the restriction $\restr{\Psi}{L^\infty(A,\tau)}\colon L^\infty(A,\tau)\to L^{\infty}(\GG^{(0)},\mu)$ is normal, it follows that $\psi$ is normal on $L^\infty(A,\tau)$. To show $\psi$ is left $[\widetilde{\GG}]$-invariant, let $E\in[\widetilde{\GG}]$ be a global section, so then there exist pairwise disjoint subsets $E_i\subset A$ such that $A = \sqcup_{i=1}^{\infty}E_i = \sqcup_{i=1}^{\infty}B_i\cdot E_i$ and $E =\sqcup_{i=1}^{\infty} B_i \times_{\GG^{(0)}} E_i$ since $\GG\acts A$ is free. It follows that for $f\in\mathbb S(\widetilde{\GG})$ and $\gamma\in \GG$ we have
\begin{equation*}
    \begin{aligned}
        \Psi(L_{E}f)(\gamma)&=\int_A(L_{E}f)(\gamma,a)\,\mathrm{d}\tau_{s(\gamma)}(a) = \sum_{i=1}^{\infty} \int_{A\cap \gamma^{-1}\cdot B_i\cdot E_i} (L_E f)(\gamma,a)\,\mathrm{d}\tau_{s(\gamma)}(a)\\
    &=\sum_{i=1}^{\infty} \int_{A\cap \gamma^{-1}\cdot B_i\cdot E_i} f(B_{i}^{-1}\cdot \gamma,a)\,\mathrm{d}\tau_{s(\gamma)}(a) \\
    &= \sum_{i=1}^{\infty} \int_{A} f(B_{i}^{-1}\cdot \gamma,a) \cdot \chi_{(\widetilde{\GG})_{\gamma^{-1}\cdot B_i\cdot E_i}}(B_i^{-1}\cdot \gamma, a)\,\mathrm{d}\tau_{s(\gamma)}(a)\\
    &= \sum_{i=1}^{\infty} \int_{A} f(B_{i}^{-1}\cdot \gamma,a) \cdot \chi_{(\widetilde{\GG})^{E_i}}(B_i^{-1}\cdot \gamma, a)\,\mathrm{d}\tau_{s(\gamma)}(a)\\
    &=\sum_{i=1}^{\infty} \int_{A} (f\cdot \chi_{(\widetilde{\GG})^{E_i}})(B_{i}^{-1}\cdot \gamma,a)\,\mathrm{d}\tau_{s(\gamma)}(a)\\
    &=\sum_{i=1}^{\infty}L_{B_i}\Psi(f\cdot \chi_{(\widetilde{\GG})^{E_i}})(\gamma).
    \end{aligned}
\end{equation*}
Since $\mathcal{G}\curvearrowright A$ is measure preserving, the above equation in particular implies for $n\geq 1$,
\begin{equation} \label{eqn: compare finite sum}
    \begin{aligned}
        \abs{\Psi(L_E f)(\gamma) - \sum_{i=1}^n L_{B_i}\Psi(f\chi_{(\widetilde{\GG})^{E_i}})(\gamma)} &= \abs{\sum_{i=n+1}^{\infty} \int_{A\cap \gamma^{-1}\cdot B_i\cdot E_i} f(B_{i}^{-1}\cdot \gamma,a)\,\mathrm{d}\tau_{s(\gamma)}(a)} \\
        &\leq \|f\|_\infty \cdot \tau_{s(\gamma)}\bigl(\bigsqcup_{i>n}\gamma^{-1}\cdot B_i\cdot E_i\bigr)\\
        &\leq \|f\|_\infty \cdot \tau_{r(\gamma)}\bigl(\bigsqcup_{i>n}\cdot B_i\cdot E_i\bigr)
    \end{aligned}
\end{equation}
For each $n$, define functions $u_n,u_n'\in L^\infty \GG^{(0)}$ by $u_n(x) = \tau_x(\bigsqcup_{i>n}E_i)$ and $u_n'(x) = \tau_x(\bigsqcup_{i>n}B_i\cdot E_i)$. Since both $\{E_i\}_i$ and $\{B_i\cdot E_i\}_i$ partition $A$, it follows that $u_n\searrow 0$ and $u_n'\searrow 0$ almost everywhere. Moreover, by \eqref{eqn: compare finite sum},
\[\abs{\Psi(f)(\gamma) - \sum_{i=1}^n \Psi(f\chi_{(\widetilde{\GG})^{E_i}})(\gamma)} \leq \|f\|_{\infty}u_n(r(\gamma)), \;\; \abs{\Psi(L_E f)(\gamma) - \sum_{i=1}^n L_{B_i}\Psi(f\chi_{(\widetilde{\GG})^{E_i}})(\gamma)} \leq \|f\|_{\infty}u_n'(r(\gamma)).\]
Since $u_n,u_n' \in L^\infty\GG^{(0)}$ decrease to $0$ almost everywhere and $\varphi$ is normal on $L^\infty\GG^{(0)}$, we have $\varphi(u_n),\varphi(u_n')\to 0$. Therefore, applying the left $[[\GG]]$-invariant state $\varphi$ to above and passing to limit, we have
\[\psi(L_Ef) = \varphi(\Psi(L_E f)) = \lim_{n}\sum_{i=1}^n \varphi\bigl(L_{B_i}\Psi(f\chi_{(\widetilde{\GG})^{E_i}})\bigr) = \lim_n\sum_{i=1}^n \varphi\bigl(\Psi(f\chi_{(\widetilde{\GG})^{E_i}})\bigr)= \varphi(\Psi(f))=\psi(f),\]
so $\psi$ is a $[\widetilde{\GG}]$-invariant state on $\S(\widetilde{\GG})$. This contradicts the assumption that $\widetilde{\GG}$ is properly proximal, so $\GG$ must be properly proximal.
\end{proof}

\subsection{Measure equivalence of groupoids}

We conclude this section by proving that proper proximality is preserved under measure equivalence. We refer to Appendix \ref{sec: appendix} for the definition of measure class preserving measure equivalence, and for a more in-depth treatment of this notion.

\begin{thm}\label{thm:MEpp}
    Let $(\GG,\mu\circ\lambda), (\HH,\nu\circ\lambda)$ be measure class preserving measure equivalent countable p.m.p.\ groupoids. If $(\GG,\mu\circ\lambda)$ is properly proximal, then so is $(\HH,\nu\circ\lambda)$.
\end{thm}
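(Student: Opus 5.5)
Since the appendix definition of (measure class preserving) measure equivalence is not visible here, I will work with the standard model: a coupling is a $\GG\times\HH$-space $(\Omega,m)$ on which both actions are free and measure class preserving and commute. Each of the transformation groupoids $\GG\ltimes\Omega$ and $\HH\ltimes\Omega$ restricts, on a suitable positive-measure fundamental domain, to a common groupoid. Concretely, I expect the appendix to provide the following structure. There is a countable p.m.p.\ groupoid $\mathcal{K}$ and non-null Borel subsets $Y\subset\mathcal{K}^{(0)}$ and $Y'\subset\mathcal{K}^{(0)}$ such that:
\begin{itemize}
\item $\mathcal{K}$ is isomorphic to a free transformation groupoid $\GG\ltimes A$ over a fibered probability space $A$ with $t_*\tau\sim\mu$;
\item $\mathcal{K}_Y^Y$ is isomorphic to $\HH$ (up to the usual inflation/restriction).
\end{itemize}
This is the groupoid analogue of the statement that measure equivalent groups have stably orbit equivalent free actions. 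The plan is to chain the permanence results already proved.

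\textbf{Step 1.} Starting from $\GG$ properly proximal, apply \Cref{prop: transf preserv prop prox}. The forward direction needs only measure preservation of the action and $t_*\tau\sim\mu$, not freeness, so $\GG\ltimes A$ is properly proximal.

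\textbf{Step 2.} Pass to the restriction $\mathcal{K}_Y^Y\cong\HH$. \Cref{restriction}(i) only handles invariant subsets, so I handle the non-invariant case by ergodic decomposition:
\begin{itemize}
\item By \Cref{thm:ergodicdecomp0}(1), almost every ergodic component $\mathcal{K}_z$ of $\mathcal{K}$ is properly proximal.
\item For ergodic groupoids, restriction to any positive-measure subset preserves proper proximality. Take the $\mathcal{K}_z$-saturation of $Y\cap\mathcal{K}_z^{(0)}$, which is conull in the component. Then either reuse the induction map $\Ind_A$ from the proof of \Cref{restriction}(ii) in reverse, or, in the equivalence relation case, use the finite-index trick of \Cref{cor: f.i. equiv rel restr}.
\end{itemize}
The cleanest route is to prove directly a strengthened form of \Cref{restriction}(i): if $\GG$ is properly proximal and $A\subset\GG^{(0)}$ is non-null, then $\GG_A^A$ is properly proximal. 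Suppose $\phi_0$ is a left $[[\GG_A^A]]$-invariant state on $\S(\GG_A^A)$, normal on $L^\infty(A)$. Choose bisections $B_n\subset\GG_A$ with disjoint ranges covering the saturation $\GG\cdot A$. Define a state on $\S(\GG)$ by
\[
f\mapsto\phi_0\Bigl(\sum_n R_{B_n^{-1}}\bigl(f\chi_{\GG_{r(B_n)}}\bigr)\big|_{\GG_A^A}\Bigr),
\]
suitably normalized. This transports functions on $\GG^{\GG\cdot A}$ back to $\GG_A^A$ using the disjoint-source decomposition. Then:
\begin{itemize}
\item Membership in $\S(\GG_A^A)$ follows from the same $C_{ij}$/$\Sigma_{ij}$ bookkeeping as in the proof of \Cref{restriction}(ii).
\item $[[\GG]]$-invariance follows because left and right actions commute. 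Any $\theta\in[[\GG]]$ restricted to $\GG\cdot A$ is handled, and outside the saturation the state vanishes.
\end{itemize}

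\textbf{Step 3.} Transfer along the isomorphism $\mathcal{K}_Y^Y\cong\HH$, and undo any inflation (stable orbit equivalence) by the same restriction and induction arguments. This gives that $\HH$ is properly proximal.

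The main obstacle is Step 2 in the non-ergodic, non-invariant setting: showing that restriction to an arbitrary non-null subset preserves proper proximality. The paper's \Cref{restriction} proves this only for invariant sets or under ergodicity in the opposite direction. Normality on $L^\infty$ of the transported state should be automatic, since it is built from normal maps. The delicate point is $[[\GG]]$-invariance across bisections that leave and re-enter $A$. I would first attempt it by the measurable-partition argument above. If that fails, I would fall back on the ergodic decomposition: apply \Cref{thm:ergodicdecomp0}(1) and then, componentwise, an ergodic version of the restriction argument. Gluing the componentwise states back together requires the measurable-selection machinery used in the proof of \Cref{thm:ergodicdecomp0}(1), via \Cref{lem:finitedimcriterion}.
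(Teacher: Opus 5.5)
Step~1 matches the paper's first step. The rest of your chain has two genuine gaps.

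First, the construction in Step~2 cannot produce an invariant state. The bisections $B_n\subset\GG_A$ have disjoint ranges but all have sources in $A$. So a unit of $A$ lies in several, possibly unboundedly many, $s(B_n)$, and the proposed sum is neither unital nor bounded. More importantly, whatever normalization you use, the functional only evaluates $f$ at arrows $\gamma b^{-1}$ with range $r(\gamma)\in A$, so $\phi(\chi_{\GG^A})=1$.

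If $\GG$ is ergodic and $0<\mu(A)<1$, no left $[[\GG]]$-invariant state normal on $L^\infty(\GG^{(0)})$ can behave like this. For a bisection $\theta$ with $s(\theta)\subset A$ and $r(\theta)\cap A=\emptyset$, invariance gives $\phi(\chi_{\GG^{s(\theta)}})=\phi(L_\theta\chi_{\GG^{s(\theta)}})=\phi(\chi_{\GG^{r(\theta)}})=0$. By ergodicity, countably many such sources cover $A$, so normality forces $\phi(\chi_{\GG^A})=0$.

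To get invariance you would have to average over translates carrying the range back into $A$. In general that means averaging over infinitely many ``cosets'', which the paper can only do in finite index (\Cref{extension[H]invariant}, \Cref{cor: f.i. equiv rel restr}) or with an invariant mean (\Cref{prop:coamenable}). Running $\Ind_A$ ``in reverse'' does not help either. $\Ind_A$ transports states from $\S(\GG)$ to $\S(\GG_A^A)$, which is the direction of \Cref{restriction}(ii), not the one you need.

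The ergodic-decomposition fallback inherits this hole and adds another. After restricting componentwise, you would have to conclude that $\mathcal{K}_Y^Y$ is properly proximal because its components are. That is \Cref{thm:ergodicdecomp0}(2), which is proved only for completely atomic unit spaces. The selection argument of part~(1) glues witnesses of non-proper-proximality from the components up to the whole, which is the opposite of what you need.

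Second, the structure you expect is not what a coupling gives: $\HH$ is not (an inflation of) a restriction of a transformation groupoid of $\GG$. Take an ergodic measure class preserving coupling $(\Omega,m)$, as the paper does, with strict fundamental domains $X$ for $\GG$ and $Y$ for $\HH$. It yields:
\begin{enumerate}
\item two ergodic principal groupoids $\GG\ltimes Y$ and $\HH\ltimes X$;
\item the measure class conditions $(t_\GG)_*m_Y\sim\mu$ and $(t_\HH)_*m_X\sim\nu$, by \Cref{lem: inv null set};
\item complete unit sections with $(\GG\ltimes Y)_{Y_0}^{Y_0}\cong(\HH\ltimes X)_{X_0}^{X_0}$.
\end{enumerate}
Getting from $\HH\ltimes X$ back to $\HH$ is not a restriction or induction step. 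It is the converse half of \Cref{prop: transf preserv prop prox}, which uses freeness of $\HH\curvearrowright X$ and the fiberwise averaging map $\Psi$, and your Step~3 never invokes it.

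With this structure the paper never restricts to arbitrary sets:
\begin{enumerate}
\item $\GG\ltimes Y$ is properly proximal by your Step~1.
\item It is an ergodic p.m.p.\ equivalence relation, so after shrinking $Y_0$ (and $X_0$ accordingly) to normalized measure $1/n$, \Cref{cor: f.i. equiv rel restr} makes the restriction properly proximal.
\item This is transported through the isomorphism to $(\HH\ltimes X)_{X_0}^{X_0}$.
\item \Cref{restriction}(ii) then goes \emph{up} to $\HH\ltimes X$.
\item The converse half of \Cref{prop: transf preserv prop prox} gives that $\HH$ is properly proximal.
\end{enumerate}
Your aside about the finite-index trick is the right tool. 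It must be paired with \Cref{restriction}(ii) in this ergodic principal setting, and the final transformation-groupoid step must be added.
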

\begin{proof}
    Suppose $(\GG,\mu\circ\lambda)$ is properly proximal. Let $(\Omega, m)$ be an ergodic measure class preserving measure equivalence coupling for $(\GG,\mu\circ\lambda), (\HH,\nu\circ\lambda)$, and $t_\GG,t_\HH$ denote the anchor maps on $\Omega$ of the $\GG$-action and the $\HH$-action, respectively. Let $X,Y\subset \Omega$ be the strict fundamental domains of the $\GG$-action and the $\HH$-action, and let $m_X\coloneqq \frac{\restr{m}{X}}{m(X)}, m_Y\coloneqq \frac{\restr{m}{Y}}{m(Y)}$ be the probability measures on $X,Y$ by normalizing the restriction of $m$ on $X,Y$, so that $(t_\GG)_* m_Y \sim \mu$ on $\GG^{(0)}$ and $(t_\HH)_* m_X \sim \nu$ on $\HH^{(0)}$ by \Cref{lem: inv null set}. Also, as in the proof of \Cref{prop: me equiv soe}, there exist complete unit sections $Y_0\subset Y=(\GG\ltimes Y)^{(0)}$ and $X_0\subset X=(\HH\ltimes X)^{(0)}$ such that $((\GG\ltimes Y)^{Y_0}_{Y_0}, \restr{m_Y}{Y_0}) \simeq ((\HH\ltimes X)_{X_0}^{X_0}, \restr{m_X}{X_0})$.

    As noted in \cite[Lemma 4.7, Lemma 4.8]{bddcohomologytransversegroupoid}, $(\GG\ltimes Y,m_Y)$ and $(\HH\ltimes X,m_X)$ are ergodic principal countable p.m.p.\ groupoids. Since $(\GG,\mu\circ\lambda)$ is properly proximal and $(t_\GG)_* m_Y \sim \mu$, by \Cref{prop: transf preserv prop prox} $(\GG\ltimes Y,m_Y)$ is also properly proximal. By Corollary \ref{cor: f.i. equiv rel restr}, upon shrinking $X_0,Y_0$ to smaller subsets we know $((\HH\ltimes X)_{X_0}^{X_0}, \restr{m_X}{X_0})\simeq ((\GG\ltimes Y)_{Y_0}^{Y_0}, \restr{m_Y}{Y_0})$ are also properly proximal. By \Cref{restriction}(ii) we see that $(\HH\ltimes X,m_X)$ is properly proximal, and finally, applying \Cref{prop: transf preserv prop prox} again we conclude that $(\HH,\nu\circ\lambda)$ is properly proximal.
\end{proof}

\section{Examples of properly proximal groupoids} \label{sec:examples} 

\noindent This section provides examples and non-examples of properly proximal groupoids. Regarding examples, we first consider transverse measured groupoids and nondegenerate free products. We then discuss two classes of properly proximal equivalence relations arising from cost theory and orbit equivalence rigidity. The second class provides properly proximal equivalence relations which cannot be generated, modulo null sets, by an essentially free p.m.p.\ action of any countable group. As for non-examples, we show that, as in the group case, inner amenable groupoids are never properly proximal.


\subsection{Transverse measured groupoids} These groupoids were introduced by Hartnick and Sarti in \cite{bddcohomologytransversegroupoid} in order to find a suitable replacement for lattices in the settings of discrete measured groupoids.

Let $G$ be a unimodular lcsc group and $m_G$ its Haar measure. Consider a p.m.p.\ Borel action of $G$ on a standard Borel probability space $(X,\mu)$. We then obtain a Borel action groupoid $G\ltimes X$ which admits an invariant measure $\mu$ for the Haar system defined by $m_G$. For background on Borel groupoids see \cite[Section 2.2]{bddcohomologytransversegroupoid}. A Borel subset $Y\subset X$ is a \textit{cross section} if $GY=X$ and for every $x\in GY$ the \textit{hitting time set} $Y_x=\{g\in G:gx\in Y\}$ is non-empty and locally finite. In this case, the restriction Borel groupoid $\mathcal G\coloneqq (G\ltimes X)|_Y$ has countable fibers, and the invariant probability measure $\mu$ on $X$ induces an invariant $\sigma$-finite Borel measure $\nu$ for $\mathcal G$, known as the \textit{transverse measure} of $(X,\mu,Y)$ (see \cite{ARBC25}). The finiteness of $\nu$ is equivalent to the local integrability of $x\mapsto Y_x$, i.e., $\int_X|Y_x\cap L|\,\mathrm{d}\mu(x)<\infty$ for every compact subset $L\subset G$ (see \cite[Proposition 5.1]{ARBC25}). In this case, $(X,\mu,Y)$ is said to be an \textit{integrable transverse $G$-system} and the discrete measured groupoid $(\GG,\nu)$ is the \textit{transverse measured groupoid} of $(X,\mu,Y)$.

\begin{exmp} If $\Gamma<G$ is a lattice in a lcsc group $G$, then $G$ acts on $X=G/\Gamma$ fixing the unique probability measure $\mu$ on $X$. If $Y=\{\Gamma\}$, it holds that $(X,\mu,Y)$ is an integrable transverse system over $G$, and the associated transverse groupoid is isomorphic to the Borel groupoid defined by $\Gamma$ with transverse measure given by the Dirac measure of total mass $\text{covol}(\Gamma)^{-1}$. 
\end{exmp}

For $G$ a countable discrete group with counting measure and $(X,\mu,Y)$ an ergodic integrable transverse $G$-system, form the transverse measured groupoid $(\mathcal{G},\nu)$. In \cite[Proposition 4.11]{bddcohomologytransversegroupoid} it is shown that the groupoid $G$ and $\mathcal G$ are (measure class preserving) measure equivalent in the sense of Appendix \ref{sec: appendix}. Therefore, as a direct consequence of \Cref{thm:MEpp}, we obtain the following statement.

\begin{corollary}\label{cor:transverse} Let $G$ be a countable discrete group with counting measure $m_G$, and let $(X,\mu,Y)$ be an ergodic integrable transverse $G$-system with transverse measured groupoid $(\GG,\nu)$. Then the group $G$ is properly proximal if and only if $\mathcal G$ is properly proximal.
\end{corollary}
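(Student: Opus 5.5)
The plan is to obtain both directions from \Cref{thm:MEpp}, applied once in each direction, using the fact quoted just before the statement. By \cite[Proposition 4.11]{bddcohomologytransversegroupoid}, for an ergodic integrable transverse $G$-system $(X,\mu,Y)$ with $G$ countable discrete, the group $G$ (a groupoid with a single unit) and the transverse measured groupoid $(\GG,\nu)$ are measure class preserving measure equivalent in the sense of Appendix \ref{sec: appendix}. Because this relation is symmetric (the coupling $\Omega$ carries commuting actions of both groupoids, and the definition treats them symmetrically), \Cref{thm:MEpp} applies with either groupoid in the role of $\GG$.

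\textbf{Preliminary checks.} First, I check that $\GG$ is a countable p.m.p.\ groupoid in the sense of the paper. The transverse measure $\nu$ is finite by integrability (\cite[Proposition 5.1]{ARBC25}), so after normalising the restriction of $\nu$ to $Y$ to a probability measure, $(\GG,\nu)$ is countable p.m.p. Rescaling the measure does not change $\S(\GG)$, the full group $[\GG]$, or which states are normal on $L^\infty Y$, so proper proximality is unaffected. Second, I check that our definition for $G$, viewed as a groupoid with one unit, agrees with the classical one. Here $c_0(G)$ in our sense is the usual $c_0(G)$: the exceptional sets $B,B'$ of measure $<\eps$ in a one-point unit space must be empty, and finite measure means finite. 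Hence $\S(G)$ is the Higson boundary, every state is normal on $L^\infty(\mathrm{pt})=\C$, and $[[G]]$-invariance reduces to left $G$-invariance.

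\textbf{The two implications.} If $G$ is properly proximal, \Cref{thm:MEpp} with $(G,\GG)$ gives that $\GG$ is properly proximal. Conversely, if $\GG$ is properly proximal, \Cref{thm:MEpp} with the roles exchanged gives that $G$ is properly proximal.

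\textbf{Expected obstacle.} The main thing to verify is that the coupling from \cite[Proposition 4.11]{bddcohomologytransversegroupoid} satisfies every hypothesis used in the proof of \Cref{thm:MEpp}. These are ergodicity of the coupling, existence of strict fundamental domains, and the measure class conditions $(t_\GG)_*m_Y\sim\mu$ and $(t_\HH)_*m_X\sim\nu$ from \Cref{lem: inv null set}. Ergodicity should follow from ergodicity of the transverse system. If the coupling constructed there is only shown to be measure equivalent in a weaker sense, I would instead argue directly as follows. Take $\Omega=X$ with $G$ acting and $\GG$ acting via the cross section $Y$. Then $\GG\ltimes(\text{fundamental domain})$ and $G\ltimes(\cdot)$ have isomorphic restrictions to complete sections, and the conclusion follows from \Cref{prop: transf preserv prop prox}, Corollary \ref{cor: f.i. equiv rel restr}, and \Cref{restriction}(ii), exactly as in the proof of \Cref{thm:MEpp}.
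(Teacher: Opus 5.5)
Your proposal is correct and matches the paper's argument: the paper cites \cite[Proposition 4.11]{bddcohomologytransversegroupoid} for measure class preserving measure equivalence of $G$ and $\GG$, then applies \Cref{thm:MEpp} in both directions, and your preliminary checks (normalising $\nu$, agreement with the classical definition for groups, symmetry of the relation) are routine points the paper leaves implicit. The fallback in your last paragraph is unnecessary, and as written it does not give a coupling: $\GG$ has no natural action on $X$, and for infinite $G$ a free measure-preserving action on the finite measure space $X$ has no strict fundamental domain, so simply rely on the cited coupling.
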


\subsection{Free products} In this section, we show that free products of groupoids provide examples of properly proximal groupoids. We begin by recalling the definition of the free product of groupoids. 

Let $\{(\mathcal{H}_i,\nu_i)\}_{i\in I}$ be a collection of countable p.m.p.\ groupoids over the same unit space $(X,\mu)$. The free product $\mathcal{G}=\ast_{X,i\in I}\mathcal{H}_i$ is generated by $X$ and by the $\mathcal{H}_i$ in such a way that the groupoids $\mathcal{H}_i$ are freely independent: there does not exist a finite sequence of indices $i_1,...,i_n\in I$ with $n\geq 1$ and $i_j\neq i_{j+1}$, along with non-unit elements $h_j\in \mathcal{H}_{i_j}\setminus X$ such that the composable product $h_1\cdots h_n\in X$. We refer to the composable product $h_1\cdots h_n$, where $h_j\in \mathcal{H}_{i_j}\setminus X$ and $i_j\neq i_{j+1}$ for all $j$, as a reduced word. The product of two reduced words $\gamma=\gamma_n\cdots\gamma_1$ and $\rho=\rho_m\cdots\rho_1$ with $r(\rho)=s(\gamma)$ (i.e. $r(\rho_m)=s(\gamma_1)$) is given by the reduction of the concatenation $\gamma\rho=\gamma_n\cdots\gamma_1\rho_m\cdots\rho_1$. Note that $s(\gamma\rho)=s(\rho_1)$ and $r(\gamma\rho)=r(\gamma_n)$. 

We identify $\mathcal{G}$ with the collection of the unit space $(X,\mu)$ together with all reduced words, endowed with the operation of concatenation followed by reduction. We call the resulting $\mathcal G$ the \textit{free product of the groupoids} $(\mathcal{H}_i,\nu_i)$, which is once again a countable p.m.p.\ groupoid over $(X,\mu)$ with measure denoted by $\nu$. 

Consider a free product $\mathcal{G}=\mathcal{G}_1\ast_X\mathcal{G}_2$. Denote by $G_n$ the set of reduced words in $\mathcal G$ with $n$ letters, which by construction it is measurable. Denoting by $\ell(\gamma)$ the length of a reduced word $\gamma\in\mathcal G$, we see that $\{\gamma\in\GG:\ell(\gamma)=n\}=G_n$ is a measurable set for every $n\in\mathbb N$. By convention, $\{\gamma\in\mathcal{G}:l(\gamma)=0\}=X$ and $\{\gamma\in\mathcal{G}:l(\gamma)=1\}=(\mathcal{G}_1\cup\mathcal{G}_2)\setminus X$.  

The reader can find more about free products of groupoids and equivalence relations in the works of \cite{Gab00,Car11,AlvGab,TDW24} and \cite[Section 5]{Kosaki}.

\begin{prop}\label{prop:freeproducts} Let $(\mathcal G_1,\nu_1)$ and $(\mathcal G_2,\nu_2)$ be countable p.m.p.\ groupoids over the same unit space $(X,\mu)$. Assume that $|\mathcal{G}_1\cap r^{-1}(x)|\geq 3$ and $|\mathcal{G}_2\cap r^{-1}(x)|\geq 2$, for almost every $x\in X$. Then $\mathcal G=\mathcal G_1\ast_X\mathcal G_2$ is properly proximal.
\end{prop}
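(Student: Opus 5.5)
We would imitate the group-theoretic argument for free products \cite{dingKEexamples,BIP21}: build elements of $\mathbb S(\GG)$ that record the first letter of a reduced word, and show that a left-invariant state with normal restriction to $L^\infty X$ cannot exist. Concretely, for $\gamma\in\GG$ written as a reduced word $\gamma=\gamma_n\cdots\gamma_1$, let $W_i\subset\GG$ be the measurable set of reduced words whose leftmost letter $\gamma_n$ lies in $\GG_i\setminus X$ ($i=1,2$). Set $f_i=\chi_{W_i}$. The first step is to check $f_i\in\mathbb S(\GG)$. For a local bisection $\theta$ contained in $\GG_1\cup\GG_2$ (a countable family of such bisections generates $[[\GG]]$ in the sense of \Cref{claim:generating set}, since every arrow is a finite product of letters), $\gamma\theta$ changes only the rightmost letters. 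So $f_i(\gamma)\neq f_i(\gamma\theta)$ forces $\ell(\gamma)\le 1$. The set of words of length $\le 1$ is $X\cup(\GG_1\setminus X)\cup(\GG_2\setminus X)$. This set may have infinite measure, but it is contained in $\GG_1\cup\GG_2$. Here a finite-measure truncation is needed: split $\GG_j$ into countably many bisections, keep finitely many as $F$, and note that the tail is not obviously controlled by small $B,B'$. \emph{I expect this to be the main technical obstacle.} A cleaner route is to fix $\theta$ and use that $f_i\chi_{\GG_{r(\theta)}}-R_\theta f_i$ is supported on $\{\gamma:\ell(\gamma)\le1,\ \gamma\theta\text{ reduces}\}$. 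For fixed source $s(\gamma)=x$, the condition that the single letter $\gamma_1$ cancels against $\theta$ pins $\gamma$ to the bisection $\theta^{-1}$-translate. So the support is contained in a union of $X$ and the graph of finitely many bisections (up to one more letter), hence has finite measure and lies in $c_0(\GG)$. Words with $\ell(\gamma)\ge 2$ keep their leftmost letter unchanged.

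Second, assume a left $[[\GG]]$-invariant state $\varphi$ on $\mathbb S(\GG)$ whose restriction to $L^\infty X$ is normal. Write $1=\chi_X+f_1+f_2$; note $\chi_X=\chi_{\GG^{(0)}}$ is in $\mathbb S(\GG)$ as it is $c_0$ (finite measure), and $\varphi(\chi_X)=0$. To see this, use aperiodicity coming from $|\GG_1\cap r^{-1}(x)|\ge3$: choose bisections $\theta_1,\theta_2,\dots$ in $[\GG]$ with $L_{\theta_k}\chi_X$ having pairwise disjoint supports, so invariance gives $N\varphi(\chi_X)\le1$ for all $N$. If that is not available, absorb $\chi_X$ into the ping-pong by the same trick as below. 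Then $\varphi(f_1)+\varphi(f_2)=1$.

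Third comes the ping-pong. By the hypothesis $|\GG_2\cap r^{-1}(x)|\ge2$, Lusin–Novikov gives a global bisection $\beta\in[\GG_2]$ with $\beta\cap X=\varnothing$ a.e. Left multiplication by $\beta$ maps $W_1\cup X$ into $W_2$, so $L_\beta(f_1)\le f_2$ pointwise, up to the renaming convention of $L_\beta$. Invariance then gives $\varphi(f_1)\le\varphi(f_2)$. Using $|\GG_1\cap r^{-1}(x)|\ge3$, choose two global bisections $\alpha,\alpha'\in[\GG_1]$ that are disjoint from $X$ and from each other. The sets $\alpha W_2$ and $\alpha' W_2$ are disjoint subsets of $W_1$, because the leftmost letters differ. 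Hence $L_\alpha f_2+L_{\alpha'}f_2\le f_1$, which gives $2\varphi(f_2)\le\varphi(f_1)$. Combining this with $\varphi(f_1)\le\varphi(f_2)$ forces $\varphi(f_1)=\varphi(f_2)=0$, contradicting $\varphi(f_1)+\varphi(f_2)=1$. Normality on $L^\infty X$ enters only through the vanishing of $\varphi(\chi_X)$, and more carefully through the reduction to global bisections via \Cref{rem:3.5}.
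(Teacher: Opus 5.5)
Your proposal is correct and is essentially the paper's proof. Both use the same indicators of reduced words by leftmost letter, the same ping-pong inequalities $L_{\alpha}f_2+L_{\alpha'}f_2\le f_1$ and $L_\beta f_1\le f_2$ together with $f_1+f_2=1-\chi_X$, and aperiodicity to dispose of $\chi_X$. The only difference in ordering is that you apply aperiodicity before the ping-pong to get $\varphi(\chi_X)=0$, while the paper applies it afterwards to contradict $\varphi(\chi_X)=1$.

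The one technical variation is in showing $f_i\in\mathbb S(\mathcal G)$. You check this only for single-letter bisections, where the defect is supported in $X\cup\theta^{-1}$ and so has finite measure, and then extend via \Cref{claim:generating set}. The paper instead treats an arbitrary $\theta$ directly, bounding $|F_1\cap s^{-1}(x)|\le \ell(\eta_x)$ and truncating on $\{\ell(\eta_x)>N\}$.

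Contrary to your closing remark, normality on $L^\infty X$ is never used. The argument therefore excludes every left-invariant state on $\mathbb S(\mathcal G)$, normal or not.
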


\begin{proof} Let $\gamma\in\mathcal{G}$ be given by $\gamma=\gamma_n\cdots\gamma_1$ where $\gamma_i\in\mathcal{G}_{j_i}\setminus X$ with $j_i\neq j_{i+1}$. Denote by $p_1\in L^{\infty}(\mathcal{G})$ (respectively, $p_2\in L^{\infty}(\mathcal{G})$) the characteristic function on the set of reduced words $\gamma_n\cdots\gamma_1=\gamma\in\mathcal{G}$ whose last letter $\gamma_n$ is in $\mathcal{G}_1\setminus X$ (respectively, $\mathcal{G}_2\setminus X$). We first show $p_1,p_2\in\mathbb S(\mathcal{G})$. 

Fix $i\in\{1,2\}$ and $\theta\in[[\mathcal{G}]]$. Denote by $F_1=\{\gamma\in\mathcal{G}_{r(\theta)}:p_i(\gamma)=1,\:p_i(\gamma\theta)=0\}$, $F_2=\{\gamma\in\mathcal{G}_{r(\theta)}:p_i(\gamma)=0,\:p_i(\gamma\theta)=1\}$ and $F=F_1\cup F_2$. We claim that $F$ is a $c_0$-set. By symmetry and since the union of $c_0$-sets is a $c_0$-set, it suffices to show $F_1$ is a $c_0$-set. For $x\in r(\theta)$, denote by $\eta_x$ the unique element in $\theta$ such that $r(\eta_x)=x$. Suppose that $\gamma\in F_1$ has reduced form $\gamma=\gamma_n\ldots\gamma_1$ and $s(\gamma)=x$. Since $p_i(\gamma)=1$, we have that $\gamma_n\in\mathcal G_i\setminus X$, and from $p_i(\gamma\theta)=0$, we must have that the letter $\gamma_n$ disappears after cancellations from the letters in the reduced form of $\eta_x$. This implies that $\gamma^{-1}$ is an initial segment of the reduced word representing $\eta_x$. In particular, $\ell(\gamma)\leq \ell(\eta_x)$, and for a fixed $x\in r(\theta)$, there are at most $\ell(\eta_x)$ possible elements $\gamma\in F_1$ with $s(\gamma)=x$. This means $|F_1\cap s^{-1}(x)|\leq \ell(\eta_x)$. 

Fix $\epsilon>0$. For $N\geq 1$, define $B_N:=\{x\in r(\theta):\ell(\eta_x)> N\}$. This set is measurable since the length function $\ell$ and the assignment $x\mapsto \eta_x=(r_{\theta}^{-1})(x)$ are measurable. Since $\ell(\eta_x)<\infty$ for almost every $x\in r(\theta)$, there exists $N\geq 1$ large enough for which $\mu(B_N)\leq \epsilon$. Take $F_N:=F_1\cap s^{-1}(r(\theta)\setminus B_N)$, and observe that
\begin{equation*}
    \nu(F_N)=\int_{\mathcal G^{(0)}}|F_N\cap s^{-1}(x)|d\mu(x)=\int_{r(\theta)\setminus B_N}|F_1\cap s^{-1}(x)|d\mu(x)\leq (N+1)\mu(r(\theta))<\infty.
\end{equation*}
Since $F_1\subset F_N\cup s^{-1}(B_N)$ and $\mu(B_N)\leq\epsilon$, it follows that $F_1$ is a $c_0$-set. The same proof gives that $F_2$, and thus $F$, is a $c_0$-set. For $\gamma\not\in F$, $(p_i\chi_{\mathcal G_{r(\theta)}}-R_{\theta}p_i)(\gamma)=0$ which means $p_i\chi_{\mathcal G_{r(\theta)}}-R_{\theta}p_i$ is supported on a $c_0$-set. We have therefore shown that $p_i\in\mathbb S(\mathcal G)$.

Now, since $|\mathcal{G}_1\cap r^{-1}(x)|\geq 3$ and $|\mathcal{G}_2\cap r^{-1}(x)|\geq 2$ for almost every $x\in X$, we can find global bisections $\theta_1,\theta_2\in[\mathcal{G}_1]$ and $\theta_3\in[\mathcal{G}_2]$ satisfying that $\theta_1\cap\theta_2=\emptyset$ and $\theta_i\cap X=\emptyset$, for $i=1,2,3$. It then holds that $L_{\theta_1}p_2+L_{\theta_2}p_2\leq p_1$, $L_{\theta_3}p_1\leq p_2$ and $p_1+p_2=1-\chi_X$. 

Suppose towards contradiction that $\GG$ is not properly proximal and $\varphi\colon\mathbb S(\mathcal{G})\to\mathbb C$ is a left $[[\GG]]$-invariant map. Then $2\varphi(p_2)=\varphi(L_{\theta_1}p_2+L_{\theta_2}p_2)\leq \varphi(p_1)=\varphi(L_{\theta_3}p_1)\leq \varphi(p_2)$, so $\varphi(p_1)=\varphi(p_2)=0$, and thus $\varphi(\chi_X) = \varphi(1-p_1-p_2)= 1$. Since $\GG$ is aperiodic, by Lusin-Novikov Theorem, there exist infinitely many partial Borel isomorphisms $\rho_n \in [[\GG]]$ such that $s(\rho_n) = X$ and the corresponding Borel sections $B_n\subset \GG$ are pairwise disjoint. By left $[[\GG]]$-invariance of $\varphi$ and for $n\neq m$, we have 
\[1=\varphi(1) \geq \varphi(L_{\rho_n}\chi_X + L_{\rho_m}\chi_X) = \varphi(L_{\rho_n}\chi_X) + \varphi(L_{\rho_m}\chi_X) = 2\varphi(\chi_X) = 2,\]
a contradiction. Therefore $\GG$ is properly proximal.
\end{proof}

\subsection{Treeable equivalence relations and irrational compressions}

We next record two classes of properly proximal p.m.p.\ equivalence relations. The first comes from Gaboriau's theory of cost and Hjorth's realization theorem for treeable equivalence relations. The second comes from Furman's orbit equivalence rigidity theorem for the natural action of higher-rank lattices on tori.

Recall that a graphing of a p.m.p.\ equivalence relation $\RR$ on $(X,\mu)$ is a countable family $\Phi=\{\varphi_i\colon A_i \to B_i\}_{i\in I} \subset [[\RR]]$ which generates $\RR$. Its cost is defined by
$\operatorname{cost}(\Phi)=\sum_{i\in I}\mu(A_i)$, and the cost of $\RR$ is $\operatorname{cost}(\RR) \coloneqq \inf\{\operatorname{cost}(\Phi): \Phi\text{ is a graphing of }\RR\}$. A graphing is called a \emph{treeing} if the graph induced by $\Phi$ on almost every $\RR$-class is a tree, and $\RR$ is called treeable if it admits a treeing. Gaboriau proved that every treeing realizes the cost of the equivalence relation it generates \cite{Gab00}. Hjorth subsequently proved that an ergodic treeable p.m.p.\ equivalence relation of cost $n\in \N\cup\{\infty\}$ is generated by an essentially free p.m.p.\ action of $\mathbb F_n$ \cite[Lemmas 4.1, 4.2]{MR2258624}.

\begin{prop} \label{prop: nonamen treeable equiv rel}
Let $\RR$ be an ergodic aperiodic treeable p.m.p.\ equivalence relation. If $\operatorname{cost}(\RR)>1$, then $\RR$ is properly proximal. In particular, every nonamenable ergodic treeable p.m.p.\ equivalence relation is properly proximal.
\end{prop}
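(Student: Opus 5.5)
The plan is to realize $\RR$ as the orbit equivalence relation of an essentially free p.m.p.\ action of a free group, and then transfer proper proximality from that free group using \Cref{prop: transf preserv prop prox}. Since $\RR$ is ergodic and treeable, Gaboriau's theorem tells us that any treeing $\Phi$ of $\RR$ realizes the cost, so $\operatorname{cost}(\RR)=\operatorname{cost}(\Phi)$.

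First I will treat the case where $\operatorname{cost}(\RR)=n$ with $n\in\{2,3,\dots\}\cup\{\infty\}$. Hjorth's realization theorem \cite[Lemmas 4.1, 4.2]{MR2258624} then provides an essentially free p.m.p.\ action $\mathbb F_n\curvearrowright (X,\mu)$ whose orbit equivalence relation is $\RR$. By \cite{BIP21}, nonelementary free groups are properly proximal: they are hyperbolic, or for $\mathbb F_\infty$ one can use a proper cocycle into a multiple of the regular representation. Since the action is essentially free, the transformation groupoid $\mathbb F_n\ltimes X$ is isomorphic to $\RR$. Hence \Cref{prop: transf preserv prop prox} shows that $\RR$ is properly proximal.

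The main obstacle is a non-integer cost $c\in(1,\infty)$, where Hjorth's theorem does not apply directly. My first approach is to pass to a restriction and use the compression formula $\operatorname{cost}(\RR_A^A)-1=(\operatorname{cost}(\RR)-1)/\mu(A)$. I will choose $\mu(A)=1/k$ with $k(c-1)+1$ an integer $n\ge 2$; this is possible whenever $c-1$ is rational. The restriction $\RR_A^A$ is again ergodic and treeable, with cost $n$, so the first case shows it is properly proximal, and \Cref{restriction}(ii) lifts proper proximality back to $\RR$.

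For irrational $c-1$, the plan is to pick a measurable $A$ with $\mu(A)=(c-1)/m$ for an integer $m\ge 1$, which gives $\operatorname{cost}(\RR_A^A)=m+1\ge 2$. Since $\RR$ is ergodic, $A$ can be chosen of any measure in $(0,1]$. Hjorth then applies to $\RR_A^A$, and \Cref{restriction}(ii) finishes the argument as before. This choice also works in the rational case, so I will use it uniformly for all finite cost, and verify only that $\RR_A^A$ remains aperiodic and treeable.

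For the final assertion, a nonamenable ergodic treeable relation has cost $>1$, because treeable relations of cost $1$ are hyperfinite (Gaboriau). The first part therefore applies.
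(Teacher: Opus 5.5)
Your proposal is correct and is essentially the paper's proof. You compress to a set $A$ with $\mu(A)=(\operatorname{cost}(\RR)-1)/m$ (the paper writes $m=n-1$ with an integer $n>\operatorname{cost}(\RR)$), use Gaboriau's compression formula and Hjorth's theorem to present $\RR^A_A$ as an essentially free $\mathbb F_{m+1}$-orbit relation, apply \Cref{prop: transf preserv prop prox}, and lift back with \Cref{restriction}(ii); infinite cost is handled directly via $\mathbb F_\infty$, and cost $1$ is excluded by hyperfiniteness. Two small points to make explicit: take $m\geq \operatorname{cost}(\RR)-1$ so that $\mu(A)\leq 1$, and note that such an $A$ exists because aperiodicity (not ergodicity alone) forces $\mu$ to be nonatomic.
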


\begin{proof}
    Suppose first that $1<\operatorname{cost}(\RR)<\infty$. Choose an integer $n>\operatorname{cost}(\RR)$. Since $\RR$ is aperiodic and ergodic, $(X,\mu)$ is nonatomic, and hence there exists a measurable subset $A\subset X$ such that
    \[\mu(A)=\frac{\operatorname{cost}(\RR)-1}{n-1}.\]
    The set $A$ is a complete unit section because $\RR$ is ergodic. Consider the restriction $(\RR^A_A,\frac{1}{\mu(A)}\mu|_A)$. Gaboriau's compression formula gives $\operatorname{cost}(\RR)-1  =\mu(A)\bigl(\operatorname{cost}(\RR^A_A)-1\bigr)$, and therefore
    \[\operatorname{cost}(\RR^A_A)-1=\frac{\operatorname{cost}(\RR)-1}{\mu(A)}=n-1.\]
    Thus, $\operatorname{cost}(\RR^A_A)=n$. The restriction $\RR^A_A$ remains ergodic and treeable, so Hjorth's theorem provides an essentially free p.m.p.\ action $\F_n\curvearrowright(A,\frac{1}{\mu(A)}\mu|_A)$ whose orbit equivalence relation is $\RR^A_A$. Since the group $\F_n$ is properly proximal, it follows from \Cref{prop: transf preserv prop prox} that $\RR^A_A$ is properly proximal, and \Cref{restriction}(ii) then implies that $\RR$ is properly proximal.

    If $\operatorname{cost}(\RR)=\infty$, Hjorth's theorem directly realizes $\RR$ as the orbit equivalence relation of an essentially free p.m.p.\ action of $\F_\infty$, which is a properly proximal group, and the conclusion again follows from \Cref{prop: transf preserv prop prox}.

    Finally, if $\operatorname{cost}(\RR)=1$, Hjorth's theorem realizes $\RR$ as the orbit equivalence relation of an essentially free action of $\Z$, and hence $\RR$ is amenable. Consequently, every nonamenable ergodic treeable relation has cost strictly greater than one, proving the last assertion.
\end{proof}

Together with the fact, proved in the next subsection, that amenable p.m.p.\ groupoids are not properly proximal, the preceding proposition shows that, among ergodic aperiodic treeable p.m.p.\ equivalence relations, proper proximality is equivalent to nonamenability.

The next example is of a different nature. It shows that a properly proximal equivalence relation need not admit a presentation as the orbit equivalence relation of an essentially free p.m.p.\ action. The construction originates in Furman's negative answer to the measured version of a question of Feldman and Moore.

\begin{exmp}[Irrational compression of a higher-rank lattice relation] \label{exmp: restr SLn action}
    Let $d\geq 3$, let $\Gamma=\operatorname{SL}_d(\Z)$, and consider the natural action
    \[ \Gamma\curvearrowright (\T^d,m_{\T^d}), \qquad \T^d=\R^d/\Z^d.\]
    Let $\RR=\RR(\Gamma\curvearrowright\T^d)$ be its orbit equivalence relation. If $A\subset\T^d$ is a measurable subset such that $0<m_{\T^d}(A)<1$ and $ m_{\T^d}(A)\notin\Q$, then the restricted relation $(\RR^A_A, \frac{1}{m_{\T^d}(A)}m_{\T^d}|_A)$ is properly proximal. However, $\RR^A_A$ cannot be generated, modulo null sets, by an essentially free p.m.p.\ action of any countable group.

    Indeed, $\operatorname{SL}_d(\Z)$ is a lattice in the noncompact semisimple Lie group $\operatorname{SL}_d(\R)$, and is therefore properly proximal by \cite[Proposition 1.6]{BIP21}. The natural action on $\T^d$ is ergodic and essentially free. Ergodicity follows, for example, from the fact that every nonzero orbit of the dual action on $\Z^d$ is infinite. To see essential freeness, fix $g\in\Gamma\setminus\{e\}$. Its fixed-point set is $\operatorname{Fix}(g) =\ker\bigl(\overline{g-I}\colon \T^d\to \T^d\bigr)$, where $\overline{g-I}$ is the torus homomorphism induced by the nonzero integer matrix $g-I$. Hence, $\operatorname{Fix}(g)$ is a proper closed subgroup of $\T^d$, and consequently has Haar measure zero. Since $\Gamma$ is countable, the action is essentially free.

    \Cref{prop: transf preserv prop prox} now shows that $\RR$ is properly proximal. Since $\RR$ is ergodic and $A$ has positive measure, $A$ is a complete unit section. By \Cref{lem: red is me}, $\RR$ and $\RR^A_A$ are measure class preserving measure equivalent, so \Cref{thm:MEpp} implies that $\RR^A_A$ is properly proximal.

    The final assertion is Furman's orbit equivalence rigidity theorem \cite[Theorem D(1)]{MR1740985}. We briefly recall the idea. Suppose that $\RR^A_A$ were generated by an essentially free p.m.p.\ action of a countable group $\Lambda$. This would give a weak orbit equivalence between the $\Lambda$-action and the original $\Gamma$-action with compression constant $1/m_{\T^d}(A)$. The strong orbit equivalence rigidity of the toral $\operatorname{SL}_d(\Z)$-action forces this weak orbit equivalence to arise, up to finite groups and finite-index subgroups, from a virtual isomorphism of the actions. The compression constant of such a virtual isomorphism is necessarily rational, whereas $1/m_{\T^d}(A)$ is irrational. This contradiction shows that no such essentially free action can generate $\RR^A_A$.
\end{exmp}

Notice that the irrationality assumption is not used to establish proper proximality: every positive-measure restriction of $\RR$ is properly proximal. Irrationality is used only in Furman's rigidity argument to exclude an essentially free group-action presentation. Thus the example is not freely generated, although it remains stably orbit equivalent to the original $\operatorname{SL}_d(\Z)$-action.

\subsection{Inner amenable groupoids} In \cite{Kida-TuckerDrob}, a notion of inner amenability for countable p.m.p.\ groupoids was introduced. Here, we establish that, analogous to the group case \cite[Proposition 4.11]{BIP21}, inner amenable groupoids provide canonical examples of groupoids that fail to be properly proximal.

Before defining inner amenability, we introduce some notation. For an element $\theta\in[[\mathcal G]]$ and $\gamma\in \GG^{r(\theta)}_{r(\theta)}$, we set $\gamma^{\theta}=\theta^{-1}\gamma\theta=[r_{\theta}^{-1}(r(\gamma))]^{-1}\gamma[r_{\theta}^{-1}(s(\gamma))]$. For a measurable subset $D\subset\GG$, we define $D^{\theta}=\{\gamma^{\theta}:\gamma\in D\cap\GG_{r(\theta)}^{r(\theta)}\}$ and $D^{-1}=\{\gamma^{-1}\in\mathcal G:\gamma\in D\}$. Moreover, for a function $f:\GG\to\mathbb C$, we define $f^{\theta}\colon\GG\to\mathbb C$ by
\begin{equation*}
    f^{\theta}(\gamma)=\begin{cases}
        f([s_{\theta}^{-1}(r(\gamma))]\gamma[s_{\theta}^{-1}(s(\gamma))]^{-1})&\text{ if }\gamma\in\GG_{s(\theta)}^{s(\theta)},\\
        0&\text{ otherwise.}
    \end{cases}
\end{equation*}
Note that for $\gamma\in\mathcal G^{s(\theta)}_{s(\theta)}$, $f^{\theta}(\gamma)=f(\theta\gamma\theta^{-1})=L_{\theta^{-1}}R_{\theta^{-1}}f(\gamma)$. 

\begin{defn} A countable p.m.p.\ groupoid $(\mathcal{G},\nu)$ is said to be \textit{inner amenable} if there exists a diffuse, conjugation invariant mean on $(\mathcal{G},\nu)$ which is symmetric and balanced. That is, a finitely additive, probability measure $m:\mathcal{G}\to [0,1]$ satisfying $m(D)=0$ if $\nu(D)<\infty$, $m(D^{\theta})=m(D)$, $m(D)=m(D^{-1})$, and $m(\mathcal{G}^A_A)=\mu(A)$, for every Borel subsets $D\subset\mathcal{G}$ and $A\subset \GG^{(0)}$, and every $\theta\in[\mathcal{G}]$. 
\end{defn}

\begin{remark}\label{rem:inneramenability} Note that if $m$ is a measure as in the prior definition and $B\subset \GG^{(0)}$ is measurable, $m(r^{-1}(B))=\mu(B)$. Indeed, since 
\begin{equation*}
    1=m(\GG)=\mu(B)+\mu(\GG^{(0)}\setminus B)+m(\GG_{B}^{\GG^{(0)}\setminus B})+m(\GG_{\GG^{(0)}\setminus B}^{B})=1+2m(\GG_{B}^{\GG^{(0)}\setminus B}),
\end{equation*}
we have that $m(\GG_{B}^{\GG^{(0)}\setminus B})=0$, and therefore, $m(r^{-1}(B))=m(\GG_{B})=\mu(B)+m(\GG_{B}^{\GG^{(0)}\setminus B})=\mu(B)$. Moreover, since $m$ is symmetric, $m(r^{-1}(B))=m(r^{-1}(B)^{-1})=m(s^{-1}(B))$.
\end{remark}

\begin{prop}\label{prop:inneramnotproperprox} Let $(\mathcal{G},\nu)$ be a countable p.m.p.\ groupoid with unit space $(\GG^{(0)},\mu)$. If $\mathcal{G}$ is inner amenable, then it is not properly proximal.
\end{prop}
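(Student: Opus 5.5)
The plan is to follow the group-case argument of \cite[Proposition 4.11]{BIP21}: restrict the inner amenability mean $m$ to $\mathbb S(\GG)$ and check that the resulting state is left invariant and normal on $L^\infty(\GG^{(0)})$. Concretely, $m$ is a finitely additive probability measure on Borel subsets of $\GG$ vanishing on null sets (since it vanishes on finite-measure sets). It therefore integrates bounded measurable functions, giving a state $\varphi(f)=\int f\,\mathrm dm$ on $L^\infty(\GG,\nu)$. Set $\varphi_0\coloneqq\varphi|_{\mathbb S(\GG)}$. By \Cref{rem:inneramenability}, $m(r^{-1}(B))=\mu(B)$, and elements of $L^\infty(\GG^{(0)})$ sit in $L^\infty\GG$ as $f\circ r$. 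Hence $\varphi_0(f)=\int f\,\mathrm d\mu$ on $L^\infty(\GG^{(0)})$, which is normal.

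The second step is to show that $\varphi$ kills $c_0(\GG)$. For a $c_0$-set $S$ and $\eps>0$, we have $S\subset\GG^B\cup\GG_{B'}\cup F$ with $\mu(B),\mu(B')<\eps$ and $\nu(F)<\infty$. Diffuseness gives $m(F)=0$, and \Cref{rem:inneramenability} together with symmetry gives $m(\GG^B)=\mu(B)$ and $m(\GG_{B'})=\mu(B')$. Thus $m(S)\le2\eps$, so $m(S)=0$. Now let $g\in c_0(\GG)$. The set where $|g|>\eps$ is contained, outside $\GG^B\cup\GG_{B'}\cup F$, in a $c_0$-set (essentially by \Cref{lem: non c_0-func}'s argument). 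So $|\varphi(g)|\le\eps+\|g\|\cdot 0$, and hence $\varphi(c_0(\GG))=0$.

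The third step is invariance, which I expect to be the main obstacle. By \Cref{rem:3.5} it suffices to take $\theta\in[\GG]$. Conjugation invariance $m(D^\theta)=m(D)$ translates into $\varphi(f^{\theta})=\varphi(f)$ for all $f\in L^\infty\GG$; since $\theta$ is global, $f^\theta=L_{\theta^{-1}}R_{\theta^{-1}}f$ everywhere (the substitution $\theta\leftrightarrow\theta^{-1}$ is harmless). For $f\in\mathbb S(\GG)$, we have $R_{\theta^{-1}}f-f\in c_0(\GG)$. Since $L_{\theta^{-1}}$ preserves $c_0(\GG)$ (it moves ranges by a measure-preserving map and preserves finite measure), it follows that $L_{\theta^{-1}}f-L_{\theta^{-1}}R_{\theta^{-1}}f\in c_0(\GG)$. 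Applying $\varphi$ and using the second step gives
\[\varphi(L_{\theta^{-1}}f)=\varphi(L_{\theta^{-1}}R_{\theta^{-1}}f)=\varphi(f^{\theta})=\varphi(f).\]
Running this over all $\theta$ shows that $\varphi_0$ is left $[\GG]$-invariant, so $\GG$ is not properly proximal.

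The delicate points are in the third step. One must verify carefully that the set-level conjugation invariance $m(D^\theta)=m(D)$ matches the function-level identity $f^\theta=L_{\theta^{-1}}R_{\theta^{-1}}f$ with correct orientation, meaning that $D^\theta$ corresponds to $\chi_D$ composed with the right conjugation. One must also check that left translation preserves $c_0(\GG)$. If the orientation comes out reversed, I will apply the argument with $\theta^{-1}$ instead, since $[\GG]$ is a group.
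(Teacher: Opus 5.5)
Your proposal is correct and follows the paper's proof essentially step by step. You integrate against $m$, use diffuseness and \Cref{rem:inneramenability} to get normality on $L^\infty(\GG^{(0)})$ and to kill $c_0(\GG)$, and convert conjugation invariance into left invariance modulo $c_0(\GG)$; the only difference is that the paper uses $L_\theta f\in\S(\GG)$ where you use that $L_{\theta^{-1}}$ preserves $c_0(\GG)$, which is the same fact. In Step 2 the cleaner statement is that $\{|g|>\eps\}$ is itself a $c_0$-set (take the $c_0$-data for $g$ with tolerance at most $\eps$), which together with $m(S)=0$ for $c_0$-sets gives $|\varphi(g)|\le\eps$ directly.
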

\begin{proof} Let $m$ be a diffuse, conjugation invariant mean on $(\mathcal{G},\nu)$ which is symmetric and balanced. Notice this implies the existence of a conjugation $[[\mathcal{G}]]$-invariant state on $L^{\infty}(\mathcal{G},\nu)$ that vanishes on $c_0(\mathcal{G})$. Indeed, define $\phi_m(f)=\int_{\mathcal{G}}f(\gamma)\,\mathrm dm(\gamma)$. Observe that for $f\in L^{\infty}(\GG,\nu)$ and $\theta\in[\GG]$, and from the conjugation invariance of $m$, we obtain
\begin{align*}
    \phi_m(f^{\theta})&=\int_{\GG}f^{\theta}(\gamma)\,\mathrm dm(\gamma)=\int_{\GG}f(\gamma^{\theta^{-1}})\,\mathrm dm(\gamma)
    =\phi_m(f).
\end{align*}
To see that $\phi_m$ vanishes on $c_0(\GG)$, let $f\in c_0(\GG)$ and $\eps>0$. Take $B,B'\subset \GG^{(0)}$ with $\mu(B),\mu(B')\leq\eps$ and $F\subset \GG$ with $\nu(F)<\infty$ for which $\|(1-\chi_{\GG^B})(1-\chi_{\GG_{B'}})f(1-\chi_F)\|_{\infty}\leq\eps$. From the diffuseness of $m$, $m(F)=0$, and from the remark above $m(\GG^B)=m(r^{-1}(B))=\mu(B)\leq\eps$ and $m(\GG_{B'})=m(s^{-1}(B'))=\mu(B')\leq\eps$. Therefore, we have that
\begin{align*}
    |\phi_m(f)|&=\left|\int_{\GG\setminus F}f(\gamma)\,\mathrm{d}m(\gamma)\right|\leq \|f\|_{\infty}(m(\GG^B)+m(\GG_{B'}))+\int_{\GG}|((1-\chi_{\GG^B})(1-\chi_{\GG_{B'}})f(1-\chi_F))(\gamma)|\,\mathrm{d}m(\gamma)\\
    &\leq 2\|f\|_{\infty}\cdot\eps+\eps.
\end{align*}
Since $\eps>0$ was arbitrary, $\phi_m(f)=0$. 

Therefore, the map $\phi_m$ factorizes to a left $[\mathcal{G}]$-invariant state on $\mathbb S(\mathcal{G})$, since $\phi_m(L_{\theta}f)=\phi_m(R_{\theta}L_{\theta}f)=\phi_m(f^{\theta^{-1}})=\phi_m(f)$, for every $\theta\in[\mathcal{G}]$ and $f\in\mathbb S(\mathcal{G})$. Now, from \Cref{rem:inneramenability}, for any measurable subset $A\subset\GG^{(0)}$, we get that $\phi_m(\chi_A)=m(r^{-1}(A))=\mu(A)$. Since simple functions are norm dense in $L^{\infty}(\GG^{(0)})$ and $\phi_m$ is a state, we obtain $\phi_m|_{L^{\infty}(\GG^{(0)})}=\int_{\GG^{(0)}}\cdot\:\,\mathrm{d}\mu$ is normal. Altogether, we conclude that $\mathcal{G}$ is not properly proximal. 
\end{proof}

It follows from \cite[Proposition 3.17]{Kida-TuckerDrob} and \Cref{prop:inneramnotproperprox} that an aperiodic amenable countable p.m.p.\ groupoid cannot be properly proximal. In fact, we will show in \Cref{thm:properproxR} that every properly proximal groupoid gives rise to a relatively properly proximal von Neumann algebra, and Definition \ref{defn: prop prox vN alg} and the remark thereafter shows that relatively properly proximal von Neumann algebras cannot be injective. Hence amenable countable p.m.p.\ groupoids are not properly proximal.

\section{von Neumann algebras associated to properly proximal groupoids}\label{sec: invariant vNa}

\subsection{Properly proximal von Neumann algebras}

We recall the notion of properly proximal von Neumann algebras introduced in \cite{ding2023properproximality}, as a generalization of properly proximal countable groups from \cite{BIP21}.

Let $(M,\tau)\subset \mathbb{B}(L^{2}M)$ be a tracial von Neumann algebra. A hereditary C$^{*}$-subalgebra $\mathbb{X}$ of $\mathbb{B}(L^{2}M)$ is called an \textit{$M$-boundary piece} if $\mathbb{M}(\mathbb{X})\cap M$ and $\mathbb{M}(\mathbb{X})\cap M'$ are ultraweakly dense in $M$ and $M'$, respectively, where $\mathbb{M}(\mathbb{X})$ is the multiplier algebra of $\mathbb{X}$ and $M' = M'\cap \mathbb{B}(L^{2}M)$ is the commutant of $M$ in $\mathbb{B}(L^{2}M)$. To avoid pathological examples, we always assume $\mathbb{X} \neq \{0\}$, so then $\mathbb{K}(L^2M) \subset \mathbb{X}$ by the assumption of $\mathbb{X}$.

The space of compact operators $\K(L^2M)$ is always an example of an $M$-boundary piece. Another source of $M$-boundary pieces comes from subalgebras of $M$, which will be of major concern of this paper.

\begin{exmp}
    Suppose $\mathcal{B} = \{(B_{i}, \mathbb E_{i})\}_{i\in I}$ is a collection of unital von Neumann subalgebras $B_{i}$ of $M$ with normal faithful conditional expectations $\mathbb E_{i}\colon M\to B_{i}$. Denote by $e_{B_{i}} \in \mathbb{B}(L^{2}M)$ the orthogonal projection onto the space $L^{2}B_{i} \subset L^{2}M$ corresponding to $\mathbb E_{i}$. The boundary piece generated by the subalgebras $\mathcal B$ in $\mathbb B(L^2M)$, and denoted by $\mathbb{X}_{\mathcal{B}}$, is the C$^*$-algebra generated by $\{xJyJ e_{B_{i}}: x,y\in M, i\in I\}$.
\end{exmp}

Fix an $M$-boundary piece $\X$. Denote by $\K^L_\X(M) \coloneqq \overline{\B(L^2M)\X}^{\|\cdot\|_{\infty,2}} \subset \B(L^2M)$ the closure of $\mathbb{B}(L^{2}M)\mathbb{X}$ in the $\|\cdot\|_{\infty,2}$-topology inside $\mathbb{B}(L^{2}M)$, where $\| T\|_{\infty,2} = \sup_{x\in (M)_1}\| T\widehat{x}\|_2$ for $T\in \B(L^2M)$. It follows that $\mathbb{K}_{\mathbb{X}}^{L}(M)$ is a left ideal of $\mathbb{B}(L^{2}M)$ containing $M$ and $M'$ in its space of right multipliers.  Let
\[\mathbb{K}_{\mathbb{X}}(M) = (\mathbb{K}_{\mathbb{X}}^{L}(M))^{*} \cap \mathbb{K}_{\mathbb{X}}^{L}(M) = (\mathbb{K}_{\mathbb{X}}^{L}(M))^{*}\mathbb{K}_{\mathbb{X}}^{L}(M) \subset \mathbb{B}(L^{2}M)\]
be the hereditary C$^{*}$-subalgebra of $\mathbb{B}(L^{2}M)$ associated with $\mathbb{K}_{\mathbb{X}}(M)$, and note that both $M$ and $M'$ are in its multiplier algebra. We also define
\[\mathbb{K}_{\mathbb{X}}^{\infty,1}(M) \coloneqq \overline{\mathbb{K}_{\mathbb{X}}(M)}^{\|\cdot\|_{\infty,1}}\]
to be the $\|\cdot\|_{\infty,1}$-topology closure of $\K_{\mathbb X}(L^2M)$ in $\mathbb B(L^2M)$, where $\| T\|_{\infty,1} = \sup_{x,y\in (M)_1}|\langle T\widehat{x},\widehat{y}\rangle|$ and $T\in \B(L^2M)$. It follows that $\mathbb{K}_{\mathbb{X}}^{\infty,1}(M) = \overline{\X}^{\|\cdot\|_{\infty,1}}$, so by \cite[Proposition 2.4]{ding2023properproximality} (\cite[Proposition 2.2]{MR1616512}), an operator $T \in \mathbb{B}(L^{2}M)$ is contained in $\mathbb{K}_{\mathbb{X}}^{\infty,1}(M)$ if and only if there exist orthogonal families of projections $\{e_{i}\}_{i\in I}, \{\tilde{e}_{i}\}_{i\in I},\{f_{j}\}_{j\in J},\{\tilde{f}_{j}\}_{j\in J}\subset M$ such that $e_iJ\tilde{e}_iJTf_{j}J\tilde{f}_{j}J \in \mathbb{X}$ for every $i\in I, j\in J$.

For simplicity, when $\X = \K(L^2M)$ we write $\K^{\infty,1}(M)$ instead of $\K_{\K(L^2M)}^{\infty,1}(M)$, and when $\X = \X_{\mathcal{B}}$ for some collection $\mathcal{B} = \{(B_{i}, \mathbb E_{i})\}_{i\in I}$ of unital von Neumann subalgebras of $M$, we write $\K_{\mathcal{B}}^{\infty,1}(M)$ instead of $\K_{\X_{\mathcal{B}}}^{\infty,1}(M)$.

We now recall the definition of small-at-infinity boundary and (relative) proper proximality in \cite{ding2023properproximality}.

\begin{defn}[\protect{\cite{ding2023properproximality}}]
    The \textit{small-at-infinity boundary $\mathbb{S}_{\mathbb{X}}(M)$ of $M$ with respect to the $M$-boundary piece $\mathbb{X}$} is
\[\mathbb{S}_{\mathbb{X}}(M) = \{T\in \mathbb{B}(L^{2}M): [T,x] \in \mathbb{K}_{\mathbb{X}}^{\infty,1}(M) ,\  \text{for all } x \in M'\}.\]
In general, $\mathbb{S}_{\mathbb{X}}(M)$ is only an operator system containing $M$; it does not necessarily form a C$^{*}$-algebra.
\end{defn}

Again for simplicity, we write $\S(M)$ instead of $\S_{\K(L^2M)}(M)$ when $\X = \K(L^2M)$, and write $\S_{\mathcal{B}}(M)$ instead of $\S_{\X_{\mathcal{B}}}(M)$ when $\X = \X_{\mathcal{B}}$ for some collection $\mathcal{B} = \{(B_{i}, \mathbb E_{i})\}_{i\in I}$ of unital von Neumann subalgebras of $M$.

\begin{defn}[\protect{\cite[Theorem 6.2]{ding2023properproximality}}] \label{defn: prop prox vN alg}
     The von Neumann algebra $M$ is \textit{properly proximal relative to the boundary piece $\mathbb{X}$} if there does not exist an $M$-central state on $\mathbb{S}_{\mathbb{X}}(M)$ that is normal on $M$. We say $M$ is \textit{properly proximal} if $M$ is properly proximal relative to $\mathbb{K}(L^2M)$.
\end{defn}

If $\X,\Y$ are $M$-boundary pieces in $\B(L^2M)$ such that $\X\subset\Y$, then $\K^{\infty,1}_\X(M) \subset \K^{\infty,1}_\Y(M)$ and $\S_\X(M) \subset \S_\Y(M)$, so any $M$-central state on $\S_\Y(M)$ restricts to an $M$-central state on $\S_\X(M)$. Therefore, if $M$ is properly proximal relative to $\X$, then $M$ is properly proximal relative to $\Y$. In particular, if $M$ is properly proximal, then $M$ is properly proximal relative to any $M$-boundary $\X$. On the other hand, the largest $M$-boundary piece is $ \X_M = \B(L^2M)$, and $M$ is properly proximal relative to $\B(L^2M)$ if and only if there does not exist $M$-central state on $\B(L^2M)$ that is normal on $M$, or equivalently, $M$ is not injective. Hence if $M$ is properly proximal relative to any boundary piece $\X$, then $M$ is not injective.

Later we will need the following strengthened definition of proper proximality for von Neumann algebras when we study the groupoid von Neumann algebra associated with properly proximal groupoids.

\begin{defn}\label{def: A-properlyproximal rel A}
    Let $M$ be a von Neumann algebra, $\X$ an $M$-boundary piece, and $A\subset M$ a subalgebra with normal faithful conditional expectation $\mathbb E_A\colon M\to A$. We say $M$ is \emph{$A$-properly proximal relative to $\X$} if there does not exist an $M$-central state on $\S_\X(M) \cap (JAJ)' = \S_\X(M)\cap \langle M,e_A\rangle$ that is normal on $M$. Here, $e_A \in \B(L^2 M)$ denotes the Jones projection corresponding to $\mathbb E_A$.
\end{defn}

Since $\S_\X(M)\cap \langle M,e_A\rangle \subset \S_\X(M)$, it follows that $M$ is properly proximal relative to $\X$ whenever $M$ is $A$-properly proximal relative to $\X$.

\subsection{Proper proximality of twisted groupoid von Neumann algebras} From \cite[Theorem 6.4]{ding2023properproximality} we have that a countable discrete group $\Gamma$ is properly proximal if and only if the group von Neumann algebra $L\Gamma$ is properly proximal, and the group $\Gamma$ is properly proximal relative to a collection $\mathcal{G}$ of subgroups if and only if $L\Gamma$ is properly proximal relative to the collection of subalgebras $\{L\Lambda: \Lambda\in \mathcal{G}\}$. In the main theorem of this section, we extend such correspondence to countable p.m.p.\ groupoids, and moreover show it is invariant under twisting by $2$-cocycles. We prove a series of preparation lemmas for the proof of \Cref{thm:properproxR}. In the one below we show the multiplier operators of functions in $c_0\GG$ are relatively compact in $\B(L^2\GG)$.

\begin{lemma}\label{lem:c_0(G)relativelycompact}
    Let $(\GG,\nu)$ be a countable p.m.p.\ groupoid over the probability measure unit space $(\mathcal G^{(0)},\mu)$ and $c\in Z^2(\GG,\T)$. If $f \in c_0(\GG)$, then $M_f \in \K_{L^\infty \mathcal G^{(0)}}^{\infty,1}(L_c\GG)$.
\end{lemma}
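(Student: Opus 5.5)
We use the characterization quoted above (\cite[Proposition 2.4]{ding2023properproximality}): an operator $T$ lies in $\K_{\X}^{\infty,1}(M)$ as soon as there are projections $e,\tilde e,f,\tilde f\in M$ with $eJ\tilde eJ\,T\,fJ\tilde fJ\in\X$ up to small errors. In practice we will show $M_f$ is a $\|\cdot\|_{\infty,1}$-limit of elements of $\X=\X_{L^\infty\GG^{(0)}}$. Here $M=L_c\GG$ and $\X$ is generated by $xJyJe_{L^\infty\GG^{(0)}}$. Since $\K_\X^{\infty,1}(M)$ is $\|\cdot\|_{\infty,1}$-closed and $M_f\mapsto$ is norm continuous, and every $f\in c_0(\GG)$ is a norm limit of functions $(1-\chi_{\GG^B})(1-\chi_{\GG_{B'}})f(1-\chi_F)+$ (remainders), we organize the argument in three pieces.

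\emph{Step 1 (finite-measure part).} First treat $f=\chi_{B}$ with $B\in[[\GG]]$ a local bisection, and then $f=g\chi_B$ with $g\in L^\infty$. For $\theta$ the bisection $B$, one checks directly from the direct integral picture that $M_{\chi_B}$ is the projection onto the closed span of $\{\widehat{u^c_\theta a}: a\in L^\infty\GG^{(0)}\}$, i.e. $M_{\chi_B}=u^c_\theta e_{L^\infty\GG^{(0)}}(u^c_\theta)^*$, and $M_{g\chi_B}=M_g u_\theta^c e (u_\theta^c)^*$ where on the range of $u^c_\theta e$ the multiplier $M_g$ acts as $u^c_\theta\, b\, e$ for $b=L_{\theta^{-1}}g$ restricted to units (rewritten through $JbJ$). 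Hence these lie in $\X$. Any finite-measure $F$ is, by Lusin--Novikov, a countable disjoint union of bisections whose tail has small measure; a finite union gives an element of $\X$, and the tail has small $\nu$-measure.

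\emph{Step 2 (controlling tails and the sets $\GG^B,\GG_{B'}$).} For a projection $p=\chi_B\in L^\infty\GG^{(0)}\subset M$ we have $M_{\chi_{\GG^B}}=p$ and $M_{\chi_{\GG_{B'}}}=JpJ$, so $(1-p)J(1-p')J\,M_f$ with $\mu(B),\mu(B')<\eps$ reduces $f$ to $(1-\chi_{\GG^B})(1-\chi_{\GG_{B'}})f$. Using the projection-family characterization, it suffices to find, for the given orthogonal family structure, cutdowns by $e=1-p$, $\tilde e=1-p'$ in $M$ for which the remainder $\|(1-p)J(1-p')J M_f - \text{(element of }\X)\|_\infty\le\eps$; the error coming from $p,p'$ is small in $\|\cdot\|_{\infty,1}$ because $|\langle pT\hat x,\hat y\rangle|\le\|T\|\,\|p\hat y\|_2\le\|T\|\mu(B)^{1/2}$. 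Similarly a tail $F_{\rm tail}$ of small $\nu$-measure is contained (after Lemma~\ref{lem: non c_0-set}-type bisection arguments) in $\GG^{r(F_{\rm tail})}\cup\GG_{s(F_{\rm tail})}$ with small unit measures, handled the same way.

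\emph{Step 3 (assemble).} Write $M_f=M_{f(1-\chi_F)}(1-p)J(1-p')J+\text{(terms with }p,p')+M_{f\chi_F}$; the first has norm $\le\eps$, the middle terms are $\|\cdot\|_{\infty,1}$-small (of order $\|f\|\eps^{1/2}$), and the last is approximated by $\X$ via Step 1. Letting $\eps\to0$ gives $M_f\in\overline{\X}^{\|\cdot\|_{\infty,1}}=\K^{\infty,1}_{L^\infty\GG^{(0)}}(L_c\GG)$. The main obstacle is Step 1: verifying with the twist $c$ that $M_{\chi_B}=u^c_\theta e(u^c_\theta)^*$ (cocycle phases must cancel, which holds since $|c|=1$ and we take a rank-type projection), and that the $\|\cdot\|_{\infty,1}$ estimate for the infinite-measure-but-small-unit parts is uniform over $x,y$ in the unit ball, which is where $\|p\hat y\|_2\le\mu(B)^{1/2}\|y\|$ is essential.
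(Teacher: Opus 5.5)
Your proposal is correct and follows essentially the paper's proof. You decompose the finite-measure set $F$ into bisections via Lusin--Novikov, observe that $M_{f\chi_{\phi}}=h\,u^c_\phi e_{L^\infty\GG^{(0)}}(u^c_\phi)^*\in\X_{L^\infty\GG^{(0)}}$ for each bisection $\phi$, control the parts over $\GG^B$, $\GG_{B'}$ and the tail of $F$ using $\|pT\|_{\infty,1}\le\|T\|\tau(p)^{1/2}$, and then pass to the $\|\cdot\|_{\infty,1}$-closure. The differences are cosmetic: you approximate by $M_{f\chi_E}$ and bound the tail separately, while the paper absorbs the tail's range and source into $C,C'$ and approximates by $(1-p)(1-Jp'J)M_{f\chi_E}$; your opening appeal to the orthogonal-projection-family characterization is not needed.
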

\begin{proof}
    Fix $\eps > 0$. There exist measurable subsets $B,B' \subset \GG^{(0)}$ with $\mu(B),\mu(B') < \eps/2$ and a finite measure subset $F\subset \GG$ such that
\begin{equation*}
   \| (1-\chi_{\GG^B})(1-\chi_{\GG_{B'}})f(1-\chi_{F})\|_\infty\leq\eps/2.
\end{equation*}
By Lusin-Novikov Theorem, there exists a countable partition $F = \sqcup_{i=1}^\infty E_i$, where each $E_i = \Graph(\phi_i)$ for some $\phi_i \in [[\GG]]$. Since $\nu(F)<\infty$ there exists $N\in\mathbb{N}$ such that $\nu(\sqcup_{i=N}^\infty E_i)< \eps/2$, so that $\mu(\cup_{i=N}^\infty r(\phi_i))\leq \sum_{i=N}^\infty \mu(r(\phi_i)) = \sum_{i=N}^\infty \nu(\Graph (\phi_i)) \leq \eps/2$, and similarly $\mu(\cup_{i=N}^\infty s(\phi_i)) \leq \eps/2$. Let $C\coloneqq B\cup \cup_{i=N}^\infty r(\phi_i)$ and $C'\coloneqq B'\cup \cup_{i=N}^\infty s(\phi_i)$, so then $\mu(C),\mu(C')< \eps$. Define $p\coloneqq M_{\chi_{\GG^C}}$ and $p'\coloneqq M_{\chi_{\GG_{C'}}}$, so then $p,p'\in L^\infty \GG^{(0)}\subset L_c\GG$ and $\tau(p),\tau(p') < \eps$. For $i<N$, define $f_i$ on $\GG^{(0)}$ by 
\[f_i(x) = \begin{cases}f(s_{\phi_i}^{-1}(x)) &\text{ if } x\in s(\phi_i),\\ 0 &\text{ otherwise.}\end{cases}\] 
Then for $E\coloneqq \sqcup_{i=1}^{N-1} E_i$, we have $f\chi_E = \sum_{i=1}^{N-1} L_{\phi_i}f_i\circ s $ and \[M_{f\chi_E} = \sum_{i=1}^{N-1} (u_{\phi_i}^c)M_{f_i} e_{L^\infty \GG^{(0)}}(u_{\phi_i}^c)^* \in \K_{L^\infty \GG^{(0)}}^{\infty,1}(L_c\GG).\] 
Since $\K_{L^\infty \GG^{(0)}}^{\infty,1}(L_c\GG)$ is both an $L_c\GG$-$L_c\GG$ and a $J(L_c\GG)J$-$J(L_c\GG)J$ bimodule, $M_{(1-\chi_{\GG^C})(1-\chi_{\GG_{C'}})f\chi_{E}} =  (1-p)(1-p')M_{f\chi_E} \in \K_{L^\infty \GG^{(0)}}^{\infty,1}(L_c\GG)$, and 
\begin{align*}
   \| (1-\chi_{\GG^C})(1-\chi_{\GG_{C'}})f(1-\chi_{E}) \|_{\infty} &\leq \| (1-\chi_{\GG^C})(1-\chi_{\GG_{C'}})f(1-\chi_{F}) \|_{\infty} +\eps/2\\
   &\leq \| (1-\chi_{\GG^B})(1-\chi_{\GG_{B'}})f(1-\chi_{F}) \|_{\infty}+\eps/2<\eps. 
\end{align*}
Hence,
\begin{equation*}
    \begin{aligned}
        \| M_f - M_{(1-\chi_{\GG^C})(1-\chi_{\GG_{C'}})f\chi_{E}}\|_{\infty,1} &\leq  \| M_{f-(1-\chi_{\GG^C})(1-\chi_{\GG_{C'}})f}\|_{\infty,1} + \| M_{(1-\chi_{\GG^C})(1-\chi_{\GG_{C'}})f-(1-\chi_{\GG^C})(1-\chi_{\GG_{C'}})f\chi_{E}}\|_{\infty,1}\\
        &\leq \| M_{f\chi_{\GG^C}}\|_{\infty,1} + \| M_{f(1-\chi_{\GG^C})\chi_{\GG_{C'}}}\|_{\infty,1} + \| M_{(1-\chi_{\GG^C})(1-\chi_{\GG_{C'}})f(1-\chi_{E})}\|_{\infty,1}\\
        &=  \| pM_{f}\|_{\infty,1} + \| (1-p)Jp'JM_{f}\|_{\infty,1} + \| (1-p)(1-Jp'J)M_{f(1-\chi_{E})}\|_{\infty,1}\\
        & \leq 2\| M_f\| \eps^{1/2} + \eps.
    \end{aligned}
\end{equation*}
Since $\K_{L^\infty \GG^{(0)}}^{\infty,1}(L_c\GG)$ is closed under the $\|\cdot\|_{\infty,1}$-norm and $\eps>0$ was chosen arbitrarily, we conclude that $M_f\in \K_{L^\infty \GG^{(0)}}^{\infty,1}(L_c\GG)$.
\end{proof}

Let $(\mathcal G,\nu)$ be a countable p.m.p.\ groupoid with unit space $(\mathcal G^{(0)},\mu)$. Denote by $\E\colon (J(L^\infty \GG^{(0)})J)'\to L^\infty(\GG)$ the conditional expectation defined in \eqref{eqn: cond exp} and recalled below:
\begin{equation*}
    \E(T)(\gamma) = \langle T_{s(\gamma)}\delta_\gamma,\delta_\gamma\rangle_{\ell^2(\GG_{s(\gamma)})}, \qquad T = \int_{\GG^{(0)}}^\oplus T_{x}\,\mathrm{d}\mu(x)\in (J(L^\infty \GG^{(0)})J)'.
\end{equation*}
The next series of lemmas aim to show that $\E(\K_{L^\infty \GG^{(0)}}^{\infty,1}(L_c\GG)\cap\langle L_c\mathcal G,e_{L^{\infty}\GG^{(0)}}\rangle) \subset c_0(\GG)$. Unlike in the group case, in general $\E$ may not be continuous from the $\|\cdot\|_{\infty,1}$-topology to the norm topology for groupoids. Indeed, in the case when the unit space $(\GG^{(0)},\mu)$ is diffuse, choose measurable sets $B_n \subset \GG^{(0)}$ such that $\mu(B_n)\to 0$, and let $p_n = \chi_{B_n}\in L^\infty \GG^{(0)}$. For each $n$, consider $p_ne_{L^\infty\GG^{(0)}}$, which is a projection in $\K_{L^\infty \GG^{(0)}}^{\infty,1}(L_c\GG)\cap\langle L_c\mathcal G,e_{L^{\infty}\GG^{(0)}}\rangle$ since $p_n$ commutes with $e_{L^{\infty}\GG^{(0)}}$. On one hand, we have for $x,y\in (M)_1$,
\begin{align*}
    \abs{\langle p_ne_{L^{\infty}\GG^{(0)}} \widehat x, \widehat y\rangle} &= \abs{\langle \widehat{p_n \E_{L^\infty \GG^{(0)}}(x)}, \widehat y\rangle} = \abs{\tau(y^* p_n \E_{L^\infty \GG^{(0)}}(x))} = \abs{\tau(\E_{L^\infty \GG^{(0)}}(y^*)p_n \E_{L^\infty \GG^{(0)}}(x))}\\
    &\leq \tau(p_n) = \mu(B_n),
\end{align*}
so $\| p_ne_{L^\infty \GG^{(0)}}\|_{\infty,1} \leqslant \mu(B_n) \to 0$. On the other hand, we also have for $x\in \GG^{(0)}$ and $\gamma\in \GG_x$,
\[\E(p_ne_{L^\infty \GG^{(0)}})(\gamma) = \langle (p_n e_{L^\infty \GG^{(0)}})_x\delta_\gamma, \delta_\gamma\rangle  = p_n(r(\gamma))\langle \text{proj}_{\C \delta_{1_x}}\delta_\gamma, \delta_\gamma\rangle = \begin{cases} p_n(x) & \text{if }\gamma = 1_x, \\ 0 & \text{otherwise}. \end{cases}\]
Hence $\E(p_n e_{L^\infty \GG^{(0)}}) = \chi_{B_n}$ and $\| p_ne_{L^\infty \GG^{(0)}}\|_{\infty} = 1$ for all $n$.

\vspace{1mm}

\begin{lemma} \label{lem: Fourier coeff}
    Let $x,y\in L_c\GG$. For every $\eps>0$, there exists a finite-measure Borel subset $F\subset \GG$ such that
    \[\left\Vert \E_{L^\infty \GG^{(0)}}(xu_\theta^c y)\right\Vert_2 <\eps\]
    for every local bisection $\theta\in [[\GG]]$ satisfying $\nu(\theta\cap F) = 0$.
\end{lemma}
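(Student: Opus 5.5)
We reduce to the case where $x$ and $y$ are finite sums of "Fourier monomials" and then approximate in $\|\cdot\|_2$. First take $x=a\,u^c_{\phi}$ and $y=b\,u^c_{\psi}$ with $a,b\in L^\infty\GG^{(0)}$ and $\phi,\psi\in[[\GG]]$. The twisted multiplication rules $u^c_{\theta_1}u^c_{\theta_2}=\omega_{\theta_1,\theta_2}u^c_{\theta_1\theta_2}$ and $u^c_\theta M_f=M_{L_\theta f}u^c_\theta$ let us write
\[
xu^c_\theta y = a'\,u^c_{\phi\theta'\psi},
\]
where $a'\in L^\infty\GG^{(0)}$ satisfies $\|a'\|\le\|a\|\|b\|$ and $\theta'$ is $\theta$ with its domain cut down by $b$'s support. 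Note that $\E_{L^\infty\GG^{(0)}}(a' u^c_\sigma)=a'\chi_{\{x:\sigma\cdot 1_x\in \GG^{(0)}\}}$ is supported on the set of units $z$ at which the bisection $\sigma=\phi\theta\psi$ meets $\GG^{(0)}$. Hence
\[
\|\E(xu^c_\theta y)\|_2^2\le \|a\|^2\|b\|^2\,\mu\bigl(\{z\in s(\psi): \theta \text{ contains } \phi^{-1}\text{-arrow}\cdot\psi^{-1}\text{-arrow at } z\}\bigr).
\]
That is, the quantity is controlled by $\nu(\theta\cap K)$, where
\[
K\coloneqq \phi^{-1}\psi^{-1}=\{\eta^{-1}\gamma^{-1}:\gamma\in\phi,\ \eta\in\psi,\ s(\gamma)=r(\eta)\}\cap \GG_{s(\psi)} .
\]
Since $\phi$ and $\psi$ are bisections, $K$ is itself a local bisection, so $\nu(K)\le 1$. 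Taking $F=K$ gives $\E(xu^c_\theta y)=0$ whenever $\nu(\theta\cap F)=0$. For finite sums $x=\sum_{i}a_iu^c_{\phi_i}$ and $y=\sum_j b_ju^c_{\psi_j}$, take $F$ to be the finite union of the corresponding $K_{ij}$. This set has finite measure, and for such $x,y$ we get an exact vanishing.

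For general $x,y\in L_c\GG$, we use the Fourier decomposition coming from $\GG=\bigsqcup_i B_i$ (cited from \cite{groupoidfactor}). Finite sums of the form $\sum a_iu^c_{B_i}$ are $\|\cdot\|_2$-dense, and we can keep them uniformly bounded in operator norm by Kaplansky density applied to the $*$-algebra they span. Choose such $x_0,y_0$ with $\|x-x_0\|_2,\ \|y-y_0\|_2$ small and $\|x_0\|,\|y_0\|\le\|x\|,\|y\|$. Then, using that $\E_{L^\infty\GG^{(0)}}$ is $\|\cdot\|_2$-contractive and that $u^c_\theta$ is a partial isometry,
\[
\|\E(xu^c_\theta y)-\E(x_0u^c_\theta y_0)\|_2\le \|(x-x_0)u^c_\theta y\|_2+\|x_0u^c_\theta(y-y_0)\|_2 .
\]
The second term is at most $\|x\|\,\|y-y_0\|_2$. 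The first term is the main obstacle, because left multiplication is not $\|\cdot\|_2$-controlled by the left factor alone. To handle it, we use traciality: $\|zw\|_2=\|w^*z^*\|_2\le\|w\|\,\|z\|_2$, which gives a bound of $\|y\|\,\|x-x_0\|_2$. Both error terms are therefore uniform in $\theta$. Choosing the approximation error to be $\eps/(2(\|x\|+\|y\|+1))$ and taking $F$ to be the finite-measure set built for $x_0,y_0$ finishes the argument.

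One point needs care: the set $K$ must be computed correctly, namely as the set of arrows $\gamma$ for which $\phi\gamma\psi$ hits a unit. In that formulation $K=\phi^{-1}\cdot\GG^{(0)}\cdot\psi^{-1}$, restricted appropriately, and it is a single local bisection. This is what makes the bound uniform over all $\theta$ with $\nu(\theta\cap F)=0$.
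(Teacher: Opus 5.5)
Your proposal is correct and follows the paper's proof: exact vanishing of $\E_{L^\infty\GG^{(0)}}(xu^c_\theta y)$ for monomials with $F=\phi^{-1}\psi^{-1}$, a finite union of such sets for finite sums, and then a $\theta$-uniform $\|\cdot\|_2$-approximation error $\|y\|\,\|x-x_0\|_2+\|x\|\,\|y-y_0\|_2$. Your Kaplansky step makes explicit the bound $\|x_0\|\le\|x\|$ that the paper uses tacitly, and your one slip is harmless: the set-builder $\{\eta^{-1}\gamma^{-1}: s(\gamma)=r(\eta)\}$ describes $(\phi\psi)^{-1}$ rather than $\phi^{-1}\psi^{-1}$, but your final formulation $K=\phi^{-1}\psi^{-1}$ is the correct set.
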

\begin{proof}
    First suppose $x= u_\alpha^c$ and $y = u_\beta^c$ for some local bisections $\alpha,\beta\in [[\GG]]$. If $\nu(\theta\cap \alpha^{-1}\beta^{-1})=0$, then $\alpha\theta\beta \cap \GG^{(0)}$ is a null set, so there exists a unitary operator $a_{\theta,\alpha,\beta} \in L^\infty\GG^{(0)}$ for which
    \[(xu_\theta^c y)\chi_{\GG^{(0)}} = (u_\alpha^c u_\theta^c u_\beta^c) \chi_{\GG^{(0)}} =a_{\theta,\alpha,\beta} u_{\alpha\theta\beta}^c \chi_{\GG^{(0)}} =0,\]
    and thus $\mathbb E_{L^\infty \GG^{(0)}}(xu_\theta^c y) = 0$. Hence in this case we can choose $F = \alpha^{-1}\beta^{-1}$ which has finite measure since $\alpha,\beta$ are local bisections.
    
    When $x = \sum_{i=1}^n a_i u_{\alpha_i}^c, y =\sum_{j=1}^m b_j u_{\beta_j}^c$ are finite linear combinations of partial isometries arising from local bisections $\alpha_1,\cdots,\alpha_n,\beta_1,\cdots,\beta_k$ in $[[\mathcal G]]$, then we let $F\coloneqq (\cup_{i=1}^n\alpha_i)^{-1}(\cup_{j=1}^m\beta_j)^{-1}$ a finite union of local bisections of finite $\nu$-measure. It follows from linearity and the computations above that whenever $\nu(\theta\cap F) = 0$, $\mathbb E_{L^\infty \GG^{(0)}}(xu_\theta^c y) = 0$.

    For general $x,y\in L_c\GG$ and $\eps>0$, we let $x_0,y_0 \in L_c\GG$ such that $\widehat{x}_0$, $\widehat{y}_0$ are supported on finite unions of local bisections and
    \[\|y\|\cdot \| x - x_0\|_2 + \| x\|\cdot \| y - y_0\|_2 < \eps.\]
    Let $F\coloneqq \supp(\widehat{x}_0)^{-1}\supp(\widehat{y}_0)^{-1}$, which is a set of finite measure. If $\theta\cap F$ is null, then by above we have $\mathbb E_{L^\infty \GG^{(0)}}(x_0 u_\theta^c y_0) = 0$, so
    \begin{equation*}
        \begin{aligned}
            \| \mathbb E_{L^\infty \GG^{(0)}}(xu_\theta^c y)\|_2 &\leq \| \mathbb E_{L^\infty \GG^{(0)}}((x-x_0)u_\theta^c y)\|_2 + \| \mathbb E_{L^\infty \GG^{(0)}}(x_0 u_\theta^c (y-y_0))\|_2\\
            &\leq \| y\|\cdot \| x - x_0\|_2 + \| x\|\cdot \| y - y_0\|_2 < \eps. 
        \end{aligned}
    \end{equation*}
    This proves the lemma.
\end{proof}

\begin{lemma} \label{lem: boundary coeff vanish}
    Let $T \in \X_{L^\infty \GG^{(0)}}$. For every $\eps>0$ there exists a finite-measure subset $F\subset (\GG,\nu)$ such that 
    \[|\langle T \widehat{u_\theta^c},\widehat{u_\theta^c}\rangle|<\eps, \]
    for every local bisection $\theta\in [[\GG]]$ satisfying $\nu(\theta\cap F) = 0$.
\end{lemma}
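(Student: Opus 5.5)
The plan is to reduce to generators of the boundary piece, handle those with \Cref{lem: Fourier coeff}, and then pass to the norm closure. Recall that $\X_{L^\infty \GG^{(0)}}$ is the C$^*$-algebra generated by elements of the form $xJyJe$ with $x,y\in L_c\GG$, where $e=e_{L^\infty\GG^{(0)}}$. Since $\K(L^2)$-type hereditary algebra manipulations are unnecessary here, I will use that $\X_{L^\infty \GG^{(0)}}$ is the norm closure of the linear span of operators $x_1J y_1 J e\, J y_2 J x_2$ with $x_i,y_i\in L_c\GG$. This follows from the hereditary structure, together with the facts that $e\,L_c\GG\,e=\E_{L^\infty\GG^{(0)}}(L_c\GG)e$ and that $JL^\infty\GG^{(0)}J$ commutes with $e$.

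The class of $T$ satisfying the conclusion is a linear subspace, and it is norm closed. For closedness, if $\|T-T'\|<\eps/2$ then $|\langle (T-T')\widehat{u^c_\theta},\widehat{u^c_\theta}\rangle|\le \eps/2$, because $\|\widehat{u^c_\theta}\|_2\le 1$. For finite sums, I take the union of the finite-measure sets, which is again of finite measure. It therefore suffices to treat a single generator $T=x_1Jy_1J\,e\,Jy_2Jx_2$.

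For such a generator, $Jy_iJ$ is right multiplication, so $Jy_2Jx_2\widehat{u^c_\theta}=\widehat{x_2u^c_\theta y_2^{\ast}}$ up to the twisting conventions of $J$ (a modular conjugation adjusted by the cocycle). Applying $e$ then gives $\widehat{\E_{L^\infty\GG^{(0)}}(x_2u_\theta^c y_2')}$ for a suitable $y_2'\in L_c\GG$. Moving the remaining factors to the other side, I get
\[
|\langle T\widehat{u^c_\theta},\widehat{u^c_\theta}\rangle| = |\langle e\,\widehat{x_2u_\theta^c y_2'},\, e\,\widehat{x_1^\ast u_\theta^c y_1'}\rangle| \le \|\E_{L^\infty\GG^{(0)}}(x_2u^c_\theta y_2')\|_2\,\|\E_{L^\infty\GG^{(0)}}(x_1^\ast u^c_\theta y_1')\|_2 .
\]
Each factor is bounded by $\|x_i\|\|y_i\|$. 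By \Cref{lem: Fourier coeff} there is a finite-measure set $F_2$ such that the first factor is less than $\eps/(1+\|x_1\|\|y_1\|)$ whenever $\nu(\theta\cap F_2)=0$. Taking $F=F_2$ then yields the bound $\eps$.

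The main obstacle is the bookkeeping of the twisted $J$. One must check that $Jy J\widehat{z}=\widehat{z\,\tilde y}$ for some $\tilde y\in L_c\GG$ depending only on $y$ (for instance $\tilde y=y^\ast$ composed with the cocycle-adjusted involution), so that $\theta$ enters only through a product $x u_\theta^c \tilde y$ with $x,\tilde y$ independent of $\theta$. This holds because $JL_c\GG J=R_c\GG$ acts on $L^2(L_c\GG)$ by right multiplication. A second, smaller point is the justification that the span of such generators is dense in $\X_{L^\infty\GG^{(0)}}$. If that description is awkward, I will instead work directly with the C$^*$-algebra generated by $xJyJe$. Products of generators contain $e\,(\cdot)\,e$ factors that reduce, via $eze=\E(z)e$ for $z\in L_c\GG$ and the commutation of $JL_c\GG J$ with $L_c\GG$, to the same form $x'Jy'J\,e\,Jy''Jx''$. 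Adjoints of generators are handled by the symmetric estimate with the roles of the two factors swapped.
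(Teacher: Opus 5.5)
Your overall strategy is the paper's: reduce to algebraic building blocks, move the $L_c\GG$ and $JL_c\GG J$ factors onto $\widehat{u_\theta^c}$ to produce $\mathbb E_{L^\infty\GG^{(0)}}$-coefficients, apply \Cref{lem: Fourier coeff}, and pass to the norm closure. The reduction step, however, is wrong as stated.

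$\X_{L^\infty\GG^{(0)}}$ is the \emph{hereditary} C$^*$-subalgebra generated by the operators $xJyJe$, where $e=e_{L^\infty\GG^{(0)}}$. This is what makes it a boundary piece; compare the remark that $\X_M=\B(L^2M)$. Consequently it contains all of $e\,\B(L^2\GG)\,e\cong\B(L^2\GG^{(0)})$. Its dense building blocks are therefore $aJbJ\,eRe\,cJdJ$ with $R$ an \emph{arbitrary} bounded operator on $L^2\GG^{(0)}$. Your operators $x_1Jy_1J\,e\,Jy_2Jx_2$ are only the special case $R=e$, and their closed span is in general much smaller. For the trivial groupoid $\GG=\GG^{(0)}$ one has $e=1$, so your span is $L^\infty\GG^{(0)}$, whereas $\X_{L^\infty\GG^{(0)}}=\B(L^2\GG^{(0)})$.

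Your fallback justification also fails. The identity $eze=\mathbb E_{L^\infty\GG^{(0)}}(z)e$ says nothing about $e\,zJwJ\,e$, because $e$ cannot be inserted between $z$ and $JwJ$. For example, take $\alpha\in[\GG]$ with $\alpha\cap\GG^{(0)}$ null. Then $e\,u_\alpha^cJu_\alpha^cJ\,e$ is the nonzero translation operator $\widehat a\mapsto\widehat{u_\alpha^c a(u_\alpha^c)^*}$ on $L^2\GG^{(0)}$, while $\mathbb E_{L^\infty\GG^{(0)}}(u_\alpha^c)=0$.

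The repair is immediate, and it is exactly what the paper does. For $T=aJbJ\,eRe\,cJdJ$ one has
\[
|\langle T\widehat{u_\theta^c},\widehat{u_\theta^c}\rangle| =\bigl|\bigl\langle R\,\widehat{\mathbb E_{L^\infty\GG^{(0)}}(cu_\theta^c d^*)},\,\widehat{\mathbb E_{L^\infty\GG^{(0)}}(a^*u_\theta^c b)}\bigr\rangle\bigr| \le \|R\|\,\bigl\|\mathbb E_{L^\infty\GG^{(0)}}(cu^c_\theta d^*)\bigr\|_2\,\bigl\|\mathbb E_{L^\infty\GG^{(0)}}(a^*u^c_\theta b)\bigr\|_2 .
\]
So your Cauchy--Schwarz estimate survives with an extra factor $\|R\|$. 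The rest of your argument then goes through unchanged: apply \Cref{lem: Fourier coeff} to one factor, take unions of finitely many finite-measure sets for finite sums, and pass to the norm closure. Applying the lemma to one factor and bounding the other by $\|a\|\,\|b\|$ is fine, and slightly more economical than the paper's use of the lemma on both factors.

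The twisted-$J$ bookkeeping you flag is not an obstacle. $J=J_c$ is the canonical conjugation of $L_c\GG$ on $L^2(L_c\GG)\cong L^2\GG$, so $JyJ\widehat z=\widehat{zy^*}$ holds exactly, with no further cocycle adjustment.
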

\begin{proof}
    Recall that $\X_{L^\infty \GG^{(0)}}$ is the hereditary C$^*$-subalgebra of $\B(L^2\GG)$ generated by the operators $xJyJ \mathbb E_{L^\infty \GG^{(0)}}$, with $x,y\in L_c\GG$. Hence $\X_{L^\infty \GG^{(0)}}$ is the norm closure of the linear span of operators of the form
    \begin{equation}\label{eq:dense elements X_LX}
        T=aJbJ\,\mathbb E_{L^\infty \GG^{(0)}} R\: \mathbb E_{L^\infty \GG^{(0)}}\,cJdJ, \:\:\:\text{ with }\:\: a,b,c,d\in L_c\GG,\:\:\ R\in \B(L^2\GG^{(0)}).
    \end{equation}
    Here, $\mathbb E_{L^\infty \GG^{(0)}}R\: \mathbb E_{L^\infty \GG^{(0)}}$ denotes the corresponding operator on $L^2\GG$ with initial and final space contained in $L^2\GG^{(0)}$.

    We first prove the assertion for one such algebraic term. Let $\theta\in [[\GG]]$ be a local bisection and $u_\theta^c\in L_c\mathcal G$ be the corresponding partial isometry. We have
    \[\mathbb E_{L^\infty \GG^{(0)}}JdJc\,\widehat{u_\theta^c}=\widehat{\mathbb E_{L^\infty \GG^{(0)}}(cu_\theta^c d^*)}\:\:\text{ and }\:\:\mathbb E_{L^\infty \GG^{(0)}}Jb^*Ja^*\,\widehat{u_\theta^c}=\widehat{\mathbb E_{L^\infty \GG^{(0)}}(a^*u_\theta^c b)}.\]
    Therefore, $\langle T\widehat{u_\theta^c},\widehat{u_\theta^c}\rangle = \Bigl\langle R\,\widehat{\mathbb E_{L^\infty \GG^{(0)}}(cu_\theta^c d^*)},\widehat{\mathbb E_{L^\infty \GG^{(0)}}(a^*u_\theta^c b)} \Bigr\rangle$, so
    \[|\langle T\widehat{u_\theta^c},\widehat{u_\theta^c}\rangle| \leq
    \| R\|\cdot \|\mathbb E_{L^\infty \GG^{(0)}}(cu_\theta^c d^*)\|_2 \|\mathbb E_{L^\infty \GG^{(0)}}(a^*u_\theta^c b)\|_2 .\]
    Given $\eps>0$, choose $\eps_0>0$ small enough so that $\| R\| \eps_0^2<\eps$. By \Cref{lem: Fourier coeff}, there exist a finite-measure Borel subset $F\subset \GG$ such that $\|\mathbb E_{L^\infty \GG^{(0)}}(cu_\theta^c d^*)\|_2, \|\mathbb E_{L^\infty \GG^{(0)}}(a^*u_\theta^c b)\|_2 < \eps_0$ whenever $\theta\cap F$ is null. For any such $\theta$, $|\langle T\widehat{u_\theta^c}, \widehat{u_\theta^c} \rangle| \leq \| R\| \eps_0^2 <\eps$, and this proves the lemma in the case when $T\in \X_{L^\infty \GG^{(0)}}$ is of the form given in \eqref{eq:dense elements X_LX}. Clearly, the lemma also holds when $T$ is finite combinations of such operators.

    Finally, let $T\in \X_{L^\infty \GG^{(0)}}$ be arbitrary. Choose an algebraic finite sum $T_0$ as above such that $\| T - T_0\| <\eps/2$. Since $\| u_\theta^c \| \leq 1$, we have
    \[|\langle (T-T_0)\widehat{u_\theta^c}, \widehat{u_\theta^c}\rangle|\leq \| T-T_0\| \cdot\|\widehat{u_\theta^c}\|^2 
    <\eps/2. \]
    Applying the algebraic case to $T_0$ with tolerance $\eps/2$, we have a finite-measure Borel set $F\subset \GG$ such that $|\langle T_0\widehat{u_\theta^c}, \widehat{u_\theta^c}\rangle| < \eps/2$ whenever $\theta\cap F$ is null. Hence,
    \[ |\langle T\widehat{u_\theta^c}, \widehat{u_\theta^c}\rangle| \leq |\langle (T-T_0)\widehat{u_\theta^c}, \widehat{u_\theta^c}\rangle| + |\langle T_0\widehat{u_\theta^c}, \widehat{u_\theta^c}\rangle| <\eps \]
    whenever $\theta\cap F$ is null, as wanted.
\end{proof}

\begin{thm} \label{thm: cond exp to c_0}
    Let $(\GG,\nu)$ be a countable p.m.p.\ groupoid with unit space $(\mathcal G^{(0)},\mu)$. Then
    \[\E\big(\K_{L^\infty \GG^{(0)}}^{\infty,1}(L_c\GG) \cap (J(L^\infty \GG^{(0)}J)'\big) \subset c_0(\GG).\]
\end{thm}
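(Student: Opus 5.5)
Given $T\in \K_{L^\infty \GG^{(0)}}^{\infty,1}(L_c\GG)\cap (J(L^\infty\GG^{(0)})J)'$, I will argue by contradiction: suppose $f\coloneqq \E(T)\notin c_0(\GG)$. By \Cref{lem: non c_0-func} there are $\eps>0$, $k$, and a non-$c_0$ set $S$ with $\Real(i^kf)\ge\eps$ on $S$; replacing $T$ by $i^kT$ we may take $k=0$. By \Cref{lem: non c_0-set} there is $\delta>0$ such that for every finite-measure $F\subset S$ one finds a local bisection $A\subset S\setminus F$ with $\mu(s(A))=\nu(A)\ge\delta$. By \eqref{eqn: inner prod}, $\Real\langle T\widehat{u_A^c},\widehat{u_A^c}\rangle=\int_A\Real f\,\mathrm d\nu\ge\eps\delta$ for any such $A$. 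The goal is to contradict this using the fact that $T$ is approximately in $\X_{L^\infty\GG^{(0)}}$.

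The main step is to pass from $\|\cdot\|_{\infty,1}$-approximation to a statement about the vectors $\widehat{u_A^c}$. The obstacle, illustrated by the example preceding \Cref{lem: Fourier coeff}, is that $\|\cdot\|_{\infty,1}$-smallness does not control diagonal coefficients uniformly. It does, however, control them after cutting by projections of large trace. I will use the description from \cite[Proposition 2.4]{ding2023properproximality}: there are orthogonal families of projections $\{e_i\},\{\tilde e_i\},\{f_j\},\{\tilde f_j\}\subset L_c\GG$, summing to $1$, with $e_iJ\tilde e_iJTf_jJ\tilde f_jJ\in\X_{L^\infty\GG^{(0)}}$. Equivalently, I will approximate $T$ in $\|\cdot\|_{\infty,1}$ by $T_0\in\X_{L^\infty\GG^{(0)}}$ and use a Chebyshev/cutdown argument as in \Cref{lem: cutdown mazur} to find projections $p,q$ in $L^\infty\GG^{(0)}$, acting on the left and on the right via $J\cdot J$, with small complements such that $\|pJqJ(T-T_0)JqJp\|<\eps\delta/4$. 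This uses that for the abelian algebra $L^\infty\GG^{(0)}$, small $\|\cdot\|_{\infty,1}$-norm yields norm-smallness on a large corner.

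I will then apply the bound with $\theta=A$ cut down. Since $p\widehat{u_A^c}$ and $JqJ\widehat{u_A^c}$ correspond to restricting $A$ to arrows with range in $p$ and source in $q$, I replace $A$ by $A'=A\cap \GG^{p}\cap\GG_{q}$, which loses at most $\tau(1-p)+\tau(1-q)$ in measure. Choosing these to be less than $\delta/2$ keeps $\nu(A')\ge\delta/2$ and $\Real\langle T\widehat{u_{A'}^c},\widehat{u_{A'}^c}\rangle\ge\eps\delta/2$, where I am using that $f$ is at least $\eps$ on $A'\subset S$. Here $A'$ is still a local bisection inside $S$.

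Finally, \Cref{lem: boundary coeff vanish} applied to $T_0$ gives a finite-measure $F$ with $|\langle T_0\widehat{u_\theta^c},\widehat{u_\theta^c}\rangle|<\eps\delta/8$ whenever $\nu(\theta\cap F)=0$. Taking $F\cap S$ in \Cref{lem: non c_0-set}, all constants are fixed before $F$ is chosen, so this ordering is consistent. The resulting $A'$ avoids $F$, and
\[
\Real\langle T\widehat{u_{A'}^c},\widehat{u_{A'}^c}\rangle\le \eps\delta/4+\eps\delta/8<\eps\delta/2,
\]
which is a contradiction. The step I expect to be delicate is the cutdown producing $p,q$ independent of $A$ from $\|\cdot\|_{\infty,1}$-closeness. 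The decomposability of $T$ (membership in $(JL^\infty\GG^{(0)}J)'$) is what makes $\E(T)$ meaningful and lets the $JqJ$ cut act fiberwise.
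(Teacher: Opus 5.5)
Your skeleton matches the paper's proof. You assume $f=\E(T)\notin c_0(\GG)$, get $S$ and $\delta$ from \Cref{lem: non c_0-func} and \Cref{lem: non c_0-set}, use \eqref{eqn: inner prod} to get $\Real\langle T\widehat{u_\theta^c},\widehat{u_\theta^c}\rangle\ge\eps\delta$, and handle $T_0$ with \Cref{lem: boundary coeff vanish}.

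The gap is the step you call the main one. You claim that smallness of $\|T-T_0\|_{\infty,1}$ yields projections $p,q\in L^\infty\GG^{(0)}$ with small complements and $\|pJqJ(T-T_0)JqJp\|<\eps\delta/4$. This is false. Take $\GG=\Gamma$ an infinite group. Then $L^\infty\GG^{(0)}=\C$, which forces $p=q=1$, and $\X_{L^\infty\GG^{(0)}}=\K(\ell^2\Gamma)$. Let $\{e_n\}$ be an infinite partition of unity by nonzero projections in the diffuse algebra $L\Gamma$, and let $P_n$ be the rank-one projection onto $\widehat{e_n}/\|\widehat{e_n}\|_2$.
\begin{itemize}
\item For $x\in(L\Gamma)_1$ we have $|\langle\hat x,\widehat{e_n}\rangle|=|\tau(e_nxe_n)|\le\tau(e_n)$. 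Hence $\|P_n\|_{\infty,1}\le\tau(e_n)$ while $\|P_n\|=1$.
\item Moreover $\sum_nP_n$ lies in $\K^{\infty,1}(L\Gamma)=\overline{\K(\ell^2\Gamma)}^{\|\cdot\|_{\infty,1}}$, yet it is an infinite-rank projection.
\item So no choice of compact $T_0$ makes the corner estimate hold.
\end{itemize}
The Chebyshev argument of \Cref{lem: cutdown mazur} works only because the seminorm $s_\tau$ comes with a factorization $x=ayc$ over $L^\infty X$, and $\|\cdot\|_{\infty,1}$ has no such factorization. The Magajna description you cite only supplies projections in $L_c\GG$. Cutting $\widehat{u_A^c}$ by those does not produce a vector $\widehat{u_{A'}^c}$, so \eqref{eqn: inner prod} would no longer apply.

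The repair is to drop the cutdown entirely; the example before \Cref{lem: Fourier coeff} was misread. That example shows that the \emph{pointwise} values of $\E(S)$ are not controlled by $\|S\|_{\infty,1}$. The quantity you need is different: $\langle(T-T_0)\widehat{u_\theta^c},\widehat{u_\theta^c}\rangle$ is a coefficient $\langle S\hat x,\hat y\rangle$ with $x=y=u_\theta^c$ in the unit ball of $L_c\GG$. By the definition of the norm it is bounded by $\|T-T_0\|_{\infty,1}$, uniformly in $\theta$. Indeed, in that very example $\int_\theta\E(p_ne_{L^\infty\GG^{(0)}})\,\mathrm d\nu\le\mu(B_n)\to0$.

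So the argument runs as follows. Choose $T_0\in\X_{L^\infty\GG^{(0)}}$ with $\|T-T_0\|_{\infty,1}<\eps\delta/4$. Take $F_0$ from \Cref{lem: boundary coeff vanish} with tolerance $\eps\delta/4$. Pick a bisection $\theta\subset S\setminus F_0$ with $\nu(\theta)\ge\delta$. Then $\eps\delta\le|\langle T\widehat{u_\theta^c},\widehat{u_\theta^c}\rangle|<\eps\delta/2$, a contradiction, with no need to shrink to $A'$. With this replacement your argument is exactly the paper's proof.
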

\begin{proof}
    Let $T\in \K_{L^\infty \GG^{(0)}}^{\infty,1}(L_c\GG) \cap (J(L^\infty \GG^{(0)}J)'$, and set $f \coloneqq \E(T)\in L^\infty(\GG)$. We prove that $f\in c_0(\GG)$.

    Assume towards a contradiction that $f\not\in c_0(\GG)$. By \Cref{lem: non c_0-func}, after replacing $T$ by $\lambda T$ with $\lambda\in\mathbb T$, there exist $\eps>0$ and a Borel subset $S\subset \GG$, which is not a $c_0$-set, such that $\Real(f(\gamma))\geq \eps$, for almost every $\gamma\in S$. By \Cref{lem: non c_0-set}, there exists $\delta>0$ such that for every finite-measure Borel set $F\subset S$ there is a local bisection $\theta\subset S\setminus F$ such that $\nu(\theta)=\mu(s(\theta))=\mu(r(\theta)) \geq \delta$. 
    
    Since $\K_{L^\infty \GG^{(0)}}^{\infty,1}(L_c\GG)=\overline{\X}_{L^\infty \GG^{(0)}}^{\|\cdot\|_{\infty,1}}$, there exists $T_0\in \X_{L^\infty \GG^{(0)}}$ such that $\| T-T_0\|_{\infty,1}<\frac{\eps\delta}{4}$. By \Cref{lem: boundary coeff vanish} applied to $T_0$ with tolerance $\eps\delta/4$, there exists a finite-measure Borel set $F_0\subset \GG$ such that $|\langle T_0 \widehat{u_\theta^c}, \widehat{u_\theta^c}\rangle| < \frac{\eps\delta}{4}$ whenever $\theta\in[[\mathcal G]]$ is a partial bisection with $\nu(\theta\cap F_0)=0$. Choose a local bisection $\theta\subset S\setminus F_0$ with $\nu(\theta)\geq \delta$. Since $T\in (J(L^\infty \GG^{(0)})J)'$ is decomposable, from \eqref{eqn: inner prod} we have
    \[\langle T\widehat{u_\theta^c},\widehat{u_\theta^c}\rangle = \int_\theta \E(T)(\gamma) \,\mathrm{d}\nu(\gamma) = \int_\theta f(\gamma) \,\mathrm{d}\nu(\gamma). \]
    Since $\theta\subset S$, it follows that $|\langle T\widehat{u_\theta^c}, \widehat{u_\theta^c}\rangle|\geq \Real(\langle T\widehat{u_\theta^c},\widehat{u_\theta^c}\rangle) \geq \eps\cdot \nu(\theta) \geq \eps\delta$. On the other hand, $u_\theta^c\in L_c\GG$ is a contraction, so by the definition of the $\|\cdot\|_{\infty,1}$-norm, $|\langle (T - T_0)\widehat{u_\theta^c}, \widehat{u_\theta^c}\rangle| 
    \leq \| T-T_0\|_{\infty,1}<\eps\delta/4$. Also, by the choice of $F_0$ and the fact that $\theta\cap F_0$ is null, we have $|\langle T_0 \widehat{u_\theta^c}, \widehat{u_\theta^c}\rangle| < \eps\delta/4$. Therefore,
    \[\eps\delta \leq |\langle T\widehat{u_\theta^c}, \widehat{u_\theta^c}\rangle| \leq |\langle (T - T_0)\widehat{u_\theta^c}, \widehat{u_\theta^c}\rangle| + |\langle T_0\widehat{u_\theta^c}, \widehat{u_\theta^c}\rangle| < \frac{\eps\delta}{4} + \frac{\eps\delta}{4} = \frac{\eps\delta}{2},\]
    a contradiction. Therefore, we must have that $f=\E(T)\in c_0(\GG)$.
\end{proof}

\begin{thm}\label{thm:properproxR}
    Let $(\GG,\nu)$ be a countable p.m.p.\ groupoid with unit space $(\mathcal G^{(0)},\mu)$. The following statements are equivalent:
    \begin{enumerate}
        \item[(i)] $(\GG,\nu)$ is properly proximal.
        \item[(ii)] For any $2$-cocycle $c\in Z^2(\GG,\T)$, $L_c\GG$ is $L^\infty \GG^{(0)}$-properly proximal relative to $L^\infty \GG^{(0)}$.
        \item[(iii)] There is a $2$-cocycle $c\in Z^2(\GG,\T)$ such that $L_c\GG$ is $L^\infty \GG^{(0)}$-properly proximal relative to $L^\infty \GG^{(0)}$.
    \end{enumerate}

\end{thm}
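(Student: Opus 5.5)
The plan is to prove (ii)$\Rightarrow$(iii) trivially, and then to prove (iii)$\Rightarrow$(i) and (i)$\Rightarrow$(ii) by contraposition. Write $M=L_c\GG$, $A=L^\infty\GG^{(0)}$, and $\mathcal{D}=(JAJ)'=\langle M,e_A\rangle$. Each contrapositive transfers an invariant state between $\S(\GG)$, viewed as multiplication operators inside $\mathcal{D}$, and $\S_A(M)\cap\mathcal{D}$. The bridge in one direction is the multiplier embedding $f\mapsto M_f$; in the other it is the conditional expectation $\E\colon\mathcal{D}\to L^\infty\GG$ from \eqref{eqn: cond exp}.

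\emph{From states on $\S_A(M)\cap\mathcal{D}$ to states on $\S(\GG)$.} Suppose $\varphi$ is an $M$-central state on $\S_A(M)\cap\mathcal{D}$ that is normal on $M$. I would first show that $f\in\S(\GG)$ implies $M_f\in\S_A(M)\cap\mathcal{D}$. Certainly $M_f\in L^\infty\GG\subset\mathcal{D}$. For the commutator condition, $M'=JMJ$ is generated by the $v_\theta^c=Ju_\theta^cJ$ together with $JAJ$, and the latter commutes with $M_f$. For the generators we have $[M_f,v_\theta^c]=M_{f\chi_{\GG_{r(\theta)}}-R_\theta f}\,v_\theta^c$. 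By \Cref{lem:c_0(G)relativelycompact} the multiplier lies in $\K^{\infty,1}_A(M)$, and this space is a $JMJ$-bimodule, so the commutator does too. Passing from generators to all of $M'$ uses that $\K^{\infty,1}_A(M)$ is $\|\cdot\|_{\infty,1}$-closed and that commutators with $T$ are continuous for the relevant topology on bounded sets, via Kaplansky density. I expect this last point to need care but to be standard. Setting $\psi(f)=\varphi(M_f)$ then gives a state on $\S(\GG)$. On the unit space, the identification $M_{f\circ r}=f\in A\subset M$ makes $\psi$ normal on $L^\infty\GG^{(0)}$. For invariance, note $L_\theta f$ corresponds to $u_\theta^c M_f(u_\theta^c)^*$, so $M$-centrality of $\varphi$ gives $\psi(L_\theta f)=\varphi(M_f\,(u_\theta^c)^*u_\theta^c)=\psi(f\chi_{\GG^{s(\theta)}})$. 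This proves (iii)$\Rightarrow$(i), applied to the given cocycle.

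\emph{From states on $\S(\GG)$ to states on $\S_A(M)\cap\mathcal{D}$.} Suppose $\psi$ is a $[[\GG]]$-invariant state on $\S(\GG)$ that is normal on $L^\infty\GG^{(0)}$, and let $c$ be arbitrary. Define $\varphi=\psi\circ q\circ\E$ on $\S_A(M)\cap\mathcal{D}$, where $q$ will be arranged as follows. Since $\E$ is left and right $[[\GG]]$-equivariant, for $T\in\S_A(M)\cap\mathcal{D}$ and $\theta\in[\GG]$ we get
\[
\E(T)-R_\theta\E(T)=\E\bigl(T-v_\theta^cT(v_\theta^c)^*\bigr)=\E\bigl([T,v_\theta^c](v_\theta^c)^*\bigr).
\]
The operator $[T,v_\theta^c](v_\theta^c)^*$ lies in $\K^{\infty,1}_A(M)\cap\mathcal{D}$, so \Cref{thm: cond exp to c_0} gives $\E(T)\in\S(\GG)$, and we may take $q$ to be the identity. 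Normality on $M$ follows from $\E|_M=\E_A$ by \eqref{eqn: restr of cond exp} and the normality of $\psi$ on $A$. Centrality for the unitaries $u_\theta^c$ follows from $\E\circ\Ad(u_\theta^c)=L_\theta\circ\E$ together with the invariance of $\psi$. For $a\in A$ we have $\E(aT)=a\,\E(T)=\E(Ta)$, since $\E$ is $A$-bimodular and $A$ acts diagonally. The remaining task is to extend centrality from these generators to all of $M$.

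\emph{Main obstacle.} The difficulty is exactly that extension. From the two cases above we get $\varphi(xT)=\varphi(Tx)$ for $x$ in the $*$-algebra spanned by $A$ and the $u_\theta^c$, and we need it for all $x\in M$. A state normal on $M$ need not be $M$-topology continuous on the operator system, so density alone does not suffice. I would first try the Cauchy–Schwarz trick: $|\varphi((x-x_n)T)|\le\varphi((x-x_n)(x-x_n)^*)^{1/2}\|T\|$, which tends to $0$ by normality on $M$ when $x_n\to x$ strongly$^*$ by Kaplansky density, and similarly on the other side. This works provided $\S_A(M)\cap\mathcal{D}$ sits inside a C$^*$-algebra on which $\varphi$ extends, which we obtain by Hahn–Banach extension to $\B(L^2M)$. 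Once centrality on all of $M$ is in hand, $\varphi$ is the required state, contradicting (ii) and proving (i)$\Rightarrow$(ii).
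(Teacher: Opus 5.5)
Your proposal is correct and follows the paper's proof: the multiplier embedding $f\mapsto M_f$ together with \Cref{lem:c_0(G)relativelycompact} in one direction, and $\psi\circ\E$ together with \Cref{thm: cond exp to c_0} and the left/right equivariance of $\E$ in the other.

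The two contrapositive directions are labelled the wrong way round. Pulling an $M$-central state on $\S_A(L_c\GG)\cap\langle L_c\GG,e_A\rangle$ back along $f\mapsto M_f$, for a given cocycle, proves (i)$\Rightarrow$(ii), not (iii)$\Rightarrow$(i). Pushing a $[[\GG]]$-invariant state forward through $\E$ proves (iii)$\Rightarrow$(i), not (i)$\Rightarrow$(ii). Because you carried out the second construction for an arbitrary $c$, the cycle still closes once the labels are corrected.

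Your Kaplansky/Cauchy--Schwarz argument extending centrality from $A$ and the $u_\theta^c$ to all of $M$ is correct. So is the analogous $\|\cdot\|_{\infty,1}$-continuity argument on $M'$, which the paper takes from \cite[Lemma 6.1]{ding2023properproximality}. Both fill in steps the paper only asserts.
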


\begin{proof} 
The implication (ii)$\:\Rightarrow$(iii) is immediate.

(i)$\:\Rightarrow$(ii). Suppose $L_c\GG$ is not properly proximal for some $2$-cocycle $c\in Z^2(\GG,\T)$, and let $\psi$ be a $L_c\GG$-central state on $\S(L_c\GG)\cap (J(L^\infty \GG^{(0)})J)'$ that is normal on $L_c\GG$. Let $\iota\colon L^\infty\GG\to \B(L^2\GG)$ be the canonical embedding given by $\iota(f) = M_f$ for $f\in L^\infty\GG$. By \Cref{lem:c_0(G)relativelycompact}, $\iota(c_0\GG)\subset \K_{L^\infty \GG^{(0)}}^{\infty,1}(L_c\GG)$. For any $f\in\mathbb S(\GG)$, $\theta\in [[\GG]]$ and $b\in L^{\infty}\GG^{(0)}$, we have $[M_f,v_{\theta}]=(M_{f\chi_{\GG_{r(\theta)}}-R_{\theta}f})v_{\theta}\in \K_{L^{\infty}\GG^{(0)}}^{\infty,1}(L_c\GG)$ and $[M_f,JbJ]=0$, so $M_f\in \mathbb S_{L^{\infty}\GG^{(0)}}(L\GG)$ by \cite[Lemma 6.1]{ding2023properproximality}. Clearly $M_f\in (J(L^\infty \GG^{(0)})J)'$, so we have $\iota(\S(\GG))\subset \S_{L^\infty \GG^{(0)}}(L_c\GG)\cap (J(L^\infty \GG^{(0)})J)'$. Since $\psi$ is $L_c\GG$-central, for $f\in \S(\GG) \subset L^\infty(\GG)$, we have
\[\psi(M_{L_\theta f})= \psi(\Ad(u^c_\theta)(M_{f})) = \psi(M_{f}(u^c_\theta)^* u^c_\theta) =\psi(M_{f}\chi_{s(\theta)})= \psi(M_{f\chi_{\GG^{s(\theta)}}}).\]  
Therefore, we conclude that $\psi\circ\iota$ is a left $[[\GG]]$-invariant state on $\S(\GG)$ that is normal on $L^{\infty}\GG^{(0)}$, and thus, $\GG$ is not properly proximal.

(iii)$\:\Rightarrow$(i). Let $c\in Z^2(\GG,\T)$ be a $2$-cocycle. Let $\E\colon (J(L^\infty \GG^{(0)})J)'\to L^\infty\GG$ denote the canonical faithful normal conditional expectation constructed in (\ref{eqn: cond exp}) via fiberwise diagonal map, and restrict it to $\S_{L^{\infty}\GG^{(0)}}(\GG)\cap (J(L^\infty \GG^{(0)})J)'$. Since $\E$ is right $[\GG]$-equivariant, by  \Cref{thm: cond exp to c_0}, we have for any $T\in \S_{L^\infty \GG^{(0)}}(L_c\GG)\cap (J(L^\infty \GG^{(0)})J)'$ and $\theta\in [\GG]$,
\[\E(T) - R_\theta(\E(T)) = \E(T-\Ad(v_\theta^c)(T)) = \E([T,v_\theta^c](v_\theta^c)^*)\in \E(\K^{\infty,1}_{L^\infty \GG^{(0)}}(L_c\GG) \cap (J(L^\infty \GG^{(0)})J)') \subset c_0(\GG),\]
by definition of $T\in\mathbb S_{L^{\infty}\GG^{(0)}}(L_c\GG)$ and since $\K^{\infty,1}_{L^\infty \GG^{(0)}}(L_c\GG)$ is a $JL_c\GG J$-bimodule. Hence we have $\E(\S(L_c\GG) \cap (J(L^\infty \GG^{(0)}) J)')\subset \S(\GG)$.

If $\GG$ is not properly proximal there exists a state $\varphi\colon \mathbb S(\GG)\to\mathbb C$ that is $[[\GG]]$-invariant and normal on $L^{\infty}\GG^{(0)}$. Consider the state $\psi\coloneqq \varphi\circ \E \colon \S(L_c\GG) \cap (J(L^\infty \GG^{(0)}) J)' \to \C$. By the left equivariance of $\E$ and the $[\GG]$-invariance of $\varphi$ we have for every $\theta\in [\GG]$,
\[\psi(u^c_\theta T) = \varphi(\E(\Ad(u^c_\theta)(Tu^c_\theta) )) = \varphi(L_\theta(\E(Tu^c_\theta))) = \varphi(\E(Tu^c_\theta)) = \psi(Tu^c_\theta),\] 
so $\psi$ is $L_c\GG$-central. Also, by \eqref{eqn: restr of cond exp} we have $\E|_{L_c\GG} = \E_{L^\infty\GG^{(0)}}$, where $\E_{L^\infty\GG^{(0)}}\colon L_c\GG\to L^\infty\GG^{(0)}$ is the canonical normal conditional expectation, so 
\[\psi|_{L_c\GG} = \varphi|_{L^\infty \GG^{(0)}}\circ \E_{L^\infty \GG^{(0)}}\]
is normal on $L_c\GG$. This proves $L_c\GG$ is not $L^\infty \GG^{(0)}$-properly proximal relative to $L^\infty \GG^{(0)}$.
\end{proof}

Particularly, when $\GG= \Gamma$ is a countable discrete group, it follows that $\Gamma$ is properly proximal if and only if there exists, and therefore for every, $2$-cocycle $c\in Z^2(\Gamma,\T)$ such that the twisted group von Neumann algebra $L_c\Gamma$ is properly proximal.

When $L^\infty \GG^{(0)}\subset L\GG$ is a mixing subalgebra, we can upgrade relative proper proximality to actual proper proximality by \cite[Theorem 1.1]{DS24structure} (see also \cite[Theorem 5.14]{toyosawayang}). 

\begin{corollary}
    Let $(\GG,\nu)$ be a properly proximal countable p.m.p.\ groupoid over the probability measure unit space $(\GG^{(0)},\mu)$ and $c\in Z^2(\GG,\T)$. If $L^\infty \GG^{(0)}$ is a mixing subalgebra of $L_c\GG$, then there exists a central projection $z \in \ZZ(L_c\GG)$ such that $zL_c\GG$ is properly proximal and $(1-z)L_c\GG$ is injective.

    In particular, if $L_c\GG$ is moreover a factor, then $L_c\GG$ is properly proximal.
\end{corollary}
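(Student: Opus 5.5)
The corollary should follow by feeding \Cref{thm:properproxR} into the structure theorem of \cite[Theorem 1.1]{DS24structure}, as indicated in the sentence just before the statement. First, since $\GG$ is properly proximal, the implication (i)$\Rightarrow$(ii) of \Cref{thm:properproxR} shows that $M\coloneqq L_c\GG$ is $L^\infty\GG^{(0)}$-properly proximal relative to $L^\infty\GG^{(0)}$. By the remark after Definition \ref{def: A-properlyproximal rel A}, this implies that $M$ is properly proximal relative to the boundary piece $\X_{L^\infty\GG^{(0)}}$. Indeed, any $M$-central state on $\S_{L^\infty\GG^{(0)}}(M)$ that is normal on $M$ restricts to such a state on $\S_{L^\infty\GG^{(0)}}(M)\cap\langle M,e_{L^\infty\GG^{(0)}}\rangle$.

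Second, I would invoke \cite[Theorem 1.1]{DS24structure} (equivalently \cite[Theorem 5.14]{toyosawayang}). As I recall it, the theorem says that if $B\subset M$ is a mixing von Neumann subalgebra of a tracial von Neumann algebra and $M$ is properly proximal relative to $\X_B$, then $M$ decomposes along a central projection into a properly proximal part and an injective part. The underlying mechanism should be the following. Mixing means $\|\E_B(xu_ny)\|_2\to0$ for all $x,y\in M\ominus B$ and all unitaries $u_n\in B$ with $u_n\to0$ weakly. This forces the boundary piece $\X_B$ to look ``compact'' relative to $B$, so that a central state on $\S(M)$ that witnesses non-proper proximality can be transferred to $\S_{\X_B}(M)$. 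The obstruction to the transfer is localized on the injective summand.

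Concretely, I would let $z$ be the maximal central projection such that $zM$ is properly proximal, or define $1-z$ as the supremum of central projections $p$ for which $pM$ admits an $pM$-central state on $\S(pM)$ normal on $pM$. The cited theorem then gives that $(1-z)M$ is injective. The only hypothesis to check is that $L^\infty\GG^{(0)}$ is mixing in $L_c\GG$ in the precise sense required there, which is exactly what is assumed. I would also need $L^\infty\GG^{(0)}$ to be regular and abelian, and both hold by the preliminaries.

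Finally, if $L_c\GG$ is a factor, then $z\in\{0,1\}$. The case $z=0$ would make $L_c\GG$ injective. But injective algebras are never properly proximal relative to any boundary piece, as noted after Definition \ref{defn: prop prox vN alg}, and this contradicts the first step. Hence $z=1$ and $L_c\GG$ is properly proximal. The main obstacle is matching hypotheses: confirming that the cited structure theorem needs only relative proper proximality with respect to $\X_{L^\infty\GG^{(0)}}$, and not the stronger $A$-version, together with mixing. If it needs more, I would instead pass through the $A$-version using the intersection with $\langle M,e_A\rangle$.
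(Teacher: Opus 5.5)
Your proposal is correct and is essentially the paper's argument. \Cref{thm:properproxR} gives that $L_c\GG$ is $L^\infty\GG^{(0)}$-properly proximal relative to $L^\infty\GG^{(0)}$, hence properly proximal relative to $\X_{L^\infty\GG^{(0)}}$. The mixing hypothesis then lets one invoke \cite[Theorem 1.1]{DS24structure} for the decomposition, and the factor case follows from non-injectivity exactly as you say. The paper simply quotes that theorem, so your sketch of how the projection $z$ is built inside it is not needed.
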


For the conditions under which the twisted von Neumann algebra $L_c\GG$ is a factor, we refer the reader to \cite[Corollary 5.6]{groupoidfactor}. 

As an application of \Cref{thm:properproxR}, we obtain a sort of converse to \Cref{prop:productsG1G2} in the setting of icc groupoids. Following the definition from \cite[Definition 4.2]{groupoidfactor}, a countable p.m.p.\ groupoid $(\GG,\nu)$ is \textit{icc} (infinite conjugacy class) if it satisfies the following condition: whenever $B\subset [\mathcal G]$ is a nontrivial bisection with finite measure conjugacy class (i.e. the set $\cup_{\gamma\in\mathcal G}\gamma B\gamma^{-1}$ has finite $\nu$-measure), then $B\subset\mathcal G^{(0)}$. Before the proof, we need the following result. 

\begin{prop}\label{prop:dichotomypropprox}
Let $(\GG,\mu\circ\lambda)$ be a countable p.m.p.\ groupoid with unit space $(G^{(0)},\mu)$ and let $c\in Z^{2}(G,\mathbb T)$. Then $L_c\GG$ is not $L^\infty\GG^{(0)}$-properly proximal relative to $L^\infty\GG^{(0)}$ if and only if at least one of the following holds:
\begin{enumerate}
    \item there exists a nonzero central projection $p\in\mathcal  Z(L_c\GG)$ such that $L^\infty\GG^{(0)} p\subset L_c\GG p$ is coamenable;
    \item there exists an $L_c\GG$-central state
    $\varphi\colon \S_{L^\infty\GG^{(0)}}(L_c\GG)\cap\langle L_c\GG,e_{L^\infty\GG^{(0)}}\rangle\to \C$ such that $\varphi|_{L_c\GG}$ is normal and
    \[\varphi\bigl(\K_{L^\infty\GG^{(0)}}^{\infty,1}(L_c\GG)\cap\langle L_c\GG,e_{L^\infty\GG^{(0)}}\rangle\bigr)=0.\]
\end{enumerate}
\end{prop}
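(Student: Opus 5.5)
The plan is the standard dichotomy argument of \cite[Theorem 6.2/Proposition 6.3]{ding2023properproximality}, adapted to the operator system $\S\coloneqq\S_{L^\infty\GG^{(0)}}(L_c\GG)\cap\langle L_c\GG,e_{L^\infty\GG^{(0)}}\rangle$. Write $M=L_c\GG$, $A=L^\infty\GG^{(0)}$ and $\K\coloneqq\K^{\infty,1}_{A}(M)\cap\langle M,e_A\rangle$.

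The reverse implication is the easy direction. If (2) holds, the state $\varphi$ witnesses non-$A$-proper proximality by definition. If (1) holds, co-amenability of $Ap\subset Mp$ gives an $Mp$-central state $\omega$ on $\langle Mp,e_{Ap}\rangle=p\langle M,e_A\rangle p$ that is normal on $Mp$. We then set $\varphi(T)=\omega(pTp)/\omega(p)$ on $\S$. Since $p$ is central in $M$ and lies in $\langle M,e_A\rangle$, this is a state. It is $M$-central because $p$ commutes with $M$, and normal on $M$ because $x\mapsto\omega(xp)$ is normal. Hence $M$ is not $A$-properly proximal relative to $A$.

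For the forward implication, let $\varphi$ be an $M$-central state on $\S$ that is normal on $M$. The plan is to pass to the normal bidual picture: view $\varphi$ in $(\S^{M\sharp M})$, or equivalently as a state on $p_{\nor}\S^{**}p_{\nor}$. The weak$^*$ closure of $\K$ there is an ideal-like $M$-bimodule, and I expect it to correspond to a projection $q$ in the bidual of $\langle M,e_A\rangle$. Since $\K$ is an $M$–$M$ and $JMJ$–$JMJ$ bimodule, $q$ should commute with $M$. Decompose $\varphi=\varphi(q\,\cdot\,q)+\varphi((1-q)\,\cdot\,(1-q))$ using the Cauchy--Schwarz trick, as in the proof of \Cref{prop:productsG1G2}; this decomposition is legitimate because $q$ commutes with $M$ and $\varphi$ is $M$-central.

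Case $\varphi(1-q)>0$: normalizing $\varphi((1-q)\cdot(1-q))$ gives an $M$-central state vanishing on $\K$. Normality on $M$ holds because it is dominated by $\varphi|_M$, as in \Cref{prop:productsG1G2}. This yields (2).

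Case $\varphi(q)>0$: the state concentrates on the "compact" part. Here I would follow \cite[Proposition 6.3]{ding2023properproximality} and identify the bidual of $\K_A(M)\cap\langle M,e_A\rangle$ with a corner related to $\langle M,e_A\rangle$ itself, via the $\|\cdot\|_{\infty,1}$-characterization with orthogonal projection families in $M$. Then $q\varphi q$ induces an $M$-central state on $\langle M,e_A\rangle$ that is normal on $M$, after a cut-down by the central support $z\in\ZZ(M)$ of its restriction to $M$. By Connes–Popa's characterization this gives co-amenability of $Az\subset Mz$, hence (1).

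The main obstacle is this last step. One must show that a state supported on the weak$^*$ closure of $\K$ really restricts to $\langle M,e_A\rangle$ with a central support projection in $\ZZ(M)$. If the identification is awkward, I would first try to realize $q$ as the supremum of $e_i J\tilde e_i J\,\cdot\,f_jJ\tilde f_jJ$-cut-downs and use normality on $M$ to push the state down to $\langle M,e_A\rangle$.
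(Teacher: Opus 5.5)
Your reverse implication is correct and is exactly the paper's argument. The forward implication, however, has a genuine gap, and it sits deeper than the final step you flag.

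Your case analysis presupposes three things that are not available:
\begin{enumerate}
\item A von Neumann algebra on which $\varphi$ is a normal $M$-central state.
\item A projection $q$ in it, commuting with $M$, whose corner equals the weak$^*$ closure of $\K^{\infty,1}_A(M)\cap\langle M,e_A\rangle$.
\item That this corner absorbs $\langle M,e_A\rangle$, i.e.\ $q\langle M,e_A\rangle q$ lies in the domain of (the extension of) $\varphi$.
\end{enumerate}
None of these holds as stated. $\S_A(M)\cap\langle M,e_A\rangle$ is only an operator system, so $p_{\nor}(\cdot)^{**}p_{\nor}$ of it is not an algebra, and $\varphi(q\,\cdot\,q)$ is undefined. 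The space $\K^{\infty,1}_A(M)\cap\langle M,e_A\rangle$ is a $\|\cdot\|_{\infty,1}$-closure, and you do not show it is an ideal or a hereditary subalgebra of $\langle M,e_A\rangle$; in the case $\varphi(q)>0$ you also silently switch to the norm-closed $\K_A(M)$. Extending $\varphi$ to $\langle M,e_A\rangle$ by Arveson loses $M$-centrality, so the claim that ``$q\varphi q$ induces an $M$-central state on $\langle M,e_A\rangle$'' has no justification. Making your route rigorous would require an analogue of \Cref{lem: bidual characterization} for $\S_A(M)\cap\langle M,e_A\rangle$, plus a structural description of the support of the compact part. Neither is supplied.

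Two smaller points. First, $q$ commuting with $M$ and $\varphi$ being $M$-central does not put $q$ in the centralizer of $\varphi$. So the splitting $\varphi=\varphi(q\cdot q)+\varphi((1-q)\cdot(1-q))$ is unjustified. Split instead into $\varphi(q)=0$ versus $\varphi(q)>0$ and use Cauchy--Schwarz, as in \Cref{prop:productsG1G2}. Second, the fallback via cut-downs by $e_iJ\tilde e_iJ$ and $f_jJ\tilde f_jJ$ cannot work. $\varphi$ has no normality with respect to $JMJ$, and $JMJ$ is not even contained in its domain unless $A\subset\ZZ(M)$.

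The missing idea is to use the groupoid structure to move the dichotomy to a commutative algebra, where no biduals are needed. Given an $M$-central state $\Phi$ on $\S_A(M)\cap\langle M,e_A\rangle$ that is normal on $M$, set $\phi(f)=\Phi(M_f)$ on $\S(\GG)$. This $\phi$ is left $[\GG]$-invariant and normal on $A$.

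\emph{Case $\phi|_{c_0(\GG)}=0$.} Do not try to prove that $\Phi$ itself kills the compact part. Instead replace it by $\phi\circ\E$, where $\E$ is the fiberwise diagonal expectation defined on all of $(JAJ)'=\langle M,e_A\rangle$. This new state is $M$-central and normal on $M$, by $L^\infty(\GG)$-bimodularity, left equivariance of $\E$, and $\E|_M=\E_A$. It vanishes on $\K^{\infty,1}_A(M)\cap\langle M,e_A\rangle$ because $\E$ maps this space into $c_0(\GG)$ (\Cref{thm: cond exp to c_0}). This is precisely the input your approach lacks.

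\emph{Case $\phi|_{c_0(\GG)}\neq0$.} Extend it through the multiplier algebra of the ideal $c_0(\GG)$ to a positive functional $m$ on $L^\infty(\GG)$.
\begin{itemize}
\item Uniqueness of this extension gives $[\GG]$-invariance.
\item Domination by $\phi$ on $A_+$ gives normality on $A$.
\item $m\circ\E$ is then a nonzero $M$-central positive functional on the whole of $\langle M,e_A\rangle$, normal on $M$.
\item Its density relative to $\tau$ is affiliated with $\ZZ(M)$, and a spectral cut-down yields a central $p$ with $Ap\subset Mp$ coamenable.
\end{itemize}
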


\begin{proof}
For simplicity, write $M = L_c\GG$, $A = L^\infty\GG^{(0)}$, and still let $\E \colon \langle M,e_A\rangle=(JAJ)'\to L^\infty\GG$ denote the canonical faithful normal conditional expectation given by the fiberwise diagonal map as defined in \eqref{eqn: cond exp}.

Recall that we showed in the proof of \Cref{thm:properproxR} that the multiplication representation
$\iota \colon L^\infty(\GG)\to \B(L^2(\GG))$ given by $\iota(f)=M_f$ satisfies $\iota(\S(\GG)) \subset \S_A(M)\cap \langle M,e_A \rangle$. Moreover, we also have $\E(\K_A^{\infty,1}(M)\cap \langle M,e_A\rangle) \subset c_0(\GG)$ and $\E(\S_A(M)\cap \langle M,e_A \rangle) \subset \S(\GG)$.

Suppose first that $M$ is not $A$-properly proximal relative to $A$. Then there exists an $M$-central state $\Phi\colon \S_A(M)\cap \langle M,e_A\rangle \to \C$ whose restriction to $M$ is normal. Define a state $\phi\colon \S(\GG)\to \C$ by $\phi(f)=\Phi(M_f)$ for $f\in \S(\GG)$. The restriction $\phi|_A$ is normal. Moreover, $\phi$ is left $[\GG]$-invariant. Indeed, for $\theta\in[\GG]$,
\[\phi(L_\theta f)=\Phi(M_{L_\theta f}) = \Phi\bigl(u_\theta^cM_f(u_\theta^c)^*\bigr) = \Phi\bigl(M_f(u_\theta^c)^*u_\theta^c\bigr) = \Phi(M_f)=\phi(f).\]
We distinguish two cases. Assume first that $\phi|_{c_0(\GG)}=0$. Define $\varphi \coloneqq \phi\circ \E\colon \S_A(M)\cap \langle M,e_A\rangle \to \C$, which is a well-defined state and $\varphi|_{\K_A^{\infty,1}(M)\cap\langle M,e_A\rangle}=0$. We claim that $\varphi|_M$ is normal and $\varphi$ is $M$-central. By \eqref{eqn: restr of cond exp}, the restriction of the 
expectation $\E$ to $M$ is given by $\E(x)= \E_A(x)\circ r$ for $x\in M$, where $\E_A \colon M\to A$ is the canonical conditional expectation. Therefore, $\varphi(x)=\phi(\E_A(x))$ for $x\in M$, so $\varphi|_M$ is normal. Now, for $a\in A = L^\infty\GG^{(0)}$ and $T\in\S_A(M)\cap \langle M,e_A\rangle $, the $L^\infty\GG$-bimodularity of $\E$ gives
\[\varphi(aT)=\phi(a\E(T))=\phi(\E(T)a)=\varphi(Ta).\]
Next, for $\theta\in[\GG]$, using the left equivariance of $\E$ and the left $[\GG]$-invariance of $\phi$, we obtain
\[\varphi(u_\theta^cT) =\phi(\E(u_\theta^cT)) =\phi\bigl(\E(\Ad(u_\theta^c)(Tu_\theta^c))\bigr) =
\phi\bigl(L_\theta(\E(Tu_\theta^c))\bigr) =\phi(\E(Tu_\theta^c))=\varphi(Tu_\theta^c).\]
Since $A$ together with the bisection unitaries $\{u_\theta^c:\theta\in [\GG]\}$ generate $M$, the preceding identities imply that $\varphi$ is $M$-central. Altogether, these show condition (2) holds. 

Assume now that $\phi|_{c_0\GG}\neq 0$. Let $\eta \coloneqq \phi|_{c_0\GG}$ be the restriction state. Since $c_0(\GG)$ is an invariant ideal in $L^\infty \GG$, the positive functional $\eta$ admits a canonical positive extension to the multiplier algebra of $c_0\GG$. Restricting this extension to $L^\infty \GG$ gives a positive functional $m \colon L^\infty \GG  \to \C$. Concretely, if $(h_i)_i\subset c_0(\GG)_+$ is a contractive approximate unit, then
\[m(f)= \lim_{i\to\mathcal U}\phi(h_i^{1/2}fh_i^{1/2}), \qquad f\in L^\infty\GG,\]
for some free ultrafilter $\mathcal U$. This limit is independent of the chosen approximate unit. In particular, we have $m(1)=\|\eta\|>0$. Since $\eta$ is left $[\GG]$-invariant and each $L_\theta$ preserves $c_0(\GG)$, uniqueness of the multiplier extension gives $m\circ L_\theta=m$, for $\theta\in[\GG]$. Also, for every $a\in A_+$,
$0\leq m(a)\leq\phi(a)$. Thus $m|_A$ is dominated by the normal positive functional $\phi|_A$, and hence $m|_A$ is normal.

Define $\omega\coloneqq m\circ \E \colon \langle M,e_A\rangle \to \C$. Then $\omega$ is a nonzero positive functional since $\omega(1)=m(1)>0$, and its restriction to $M$ is normal since $\omega(x)=m(\mathbb E_A(x))$ for $x\in M$. As above, for $a\in A$, $T\in\langle M,e_A\rangle $, and $\theta\in[\GG]$, we have $\omega(aT)=\omega(Ta)$ and
\[\omega(u_\theta^cT) = m(\E(u_\theta^cT)) = m\bigl(L_\theta(\E(Tu_\theta^c))\bigr) = m(\E(Tu_\theta^c)) = \omega(Tu_\theta^c).\]
Since $A$ and the bisection unitaries generate $M$, a bounded strong-$*$ approximation argument shows that $\omega$ is $M$-central.

Since $\omega|_M$ is normal, there is a unique element $z\in L^1(M,\tau)_+$ such that $\omega(x)=\tau(zx)$, for every $x\in M$. For $x,y\in M$, the $M$-centrality of $\omega$ gives $\tau(zxy)=\omega(xy)=\omega(yx)=\tau(zyx)$, and by traciality, $\tau((zx-xz)y)=0$, for $y\in M$. This means $z$ is affiliated with $\mathcal Z(M)$. Since $\omega\neq0$, we have $z\neq 0$. Choose $\eps>0$ such that $p=\chi_{[\eps,\infty)}(z)$ is a nonzero projection in $\mathcal Z(M)$. On $Mp$, the operator $zp$ is bounded below by $\eps p$, so $a \coloneqq (zp)^{-1/2}\in\mathcal  Z(Mp)$ is a bounded positive element. Define
\[\Omega(T)=\frac1{\tau(p)}\omega(aTa), \qquad T\in p\langle M,e_A\rangle p.\]
The functional $\Omega$ is an $Mp$-central state with $\Omega(p)=\tau(za^2)/\tau(p)=1$. Moreover, for every $x\in Mp$, $\Omega(x)= \tau(zaxa)/\tau(p) = \tau(x)/\tau(p)$, showing $\Omega|_{Mp}$ is normal. By the basic-construction characterization of coamenability, this shows that $Ap\subset Mp$ is coamenable. Thus condition (1) holds.

Conversely, condition (2) directly implies that $M$ is not $A$-properly proximal relative to $A$. Suppose condition (1) holds. By coamenability, there exists an $Mp$-central state $\Omega \colon p\langle M,e_A\rangle p\to \C$ such that $\Omega(x)=\tau(x)/{\tau(p)}$ for $x\in Mp$. Define a state $\Phi\colon \S_A(M)\cap \langle M,e_A\rangle \to \C$ by $\Phi(T)=\Omega(pTp)$. Since $p\in\mathcal Z(M)$, the state $\Phi$ is $M$-central, and $\Phi(x)=\tau(px)/\tau(p)$ is normal whenever $x\in M$. By definition, $M$ is not $A$-properly proximal relative to $A$, completing the proof.
\end{proof}

In \cite[Proposition 8.2]{ding2023properproximality}, the authors show that a von Neumann algebra $M$ is not properly proximal relative to a boundary piece $\mathbb X$ if and only if $Mp$ is amenable for some nonzero central projection $p$, or the state arising from the lack of proper proximality on $\mathbb S_{\mathbb X}(M)$ vanishes on $\mathbb K_{\mathbb X}^{\infty,1}(M)$. Therefore, the prior result establishes an analogous characterization when $M$ is not $A$-properly proximal relative to a von Neumann subalgebra $A$.

\begin{prop}\label{prop:productsvNalg} Let $(\mathcal G_1,\nu_1)$ and $(\mathcal G_2,\nu_2)$ be countable p.m.p.\ groupoids. Denote by $\mathcal G=\mathcal G_1\times\mathcal G_2$ the direct product of $\mathcal G_1$ and $\mathcal{G}_2$ with measure $\nu$. Assume that $\mathcal G_i$ is icc, ergodic and nonamenable for $i=1,2$. If $\mathcal G$ is properly proximal, then $\mathcal G_1$ and $\mathcal G_2$ are properly proximal. 
\end{prop}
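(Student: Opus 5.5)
We argue by contraposition: assuming $\GG_1$ is not properly proximal, we show $\GG=\GG_1\times\GG_2$ is not properly proximal. The case of $\GG_2$ is symmetric. By \Cref{thm:properproxR}, $M_1\coloneqq L\GG_1$ is not $A_1$-properly proximal relative to $A_1\coloneqq L^\infty\GG_1^{(0)}$. We then apply \Cref{prop:dichotomypropprox} to $M_1$.

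The icc and ergodicity assumptions make $M_1$ a factor, by \cite[Corollary 5.6]{groupoidfactor}. So in alternative (1) the central projection is $p=1$, and $A_1\subset M_1$ is coamenable. Since $A_1$ is abelian, hence amenable, coamenability forces $M_1$ to be injective. This makes $\GG_1$ amenable, contradicting nonamenability. Hence alternative (2) holds: there is an $M_1$-central state $\varphi_1$ on $\S_{A_1}(M_1)\cap\langle M_1,e_{A_1}\rangle$ which is normal on $M_1$ and vanishes on $\K^{\infty,1}_{A_1}(M_1)\cap\langle M_1,e_{A_1}\rangle$.

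Next we build a state on $\S(\GG)$ from $\varphi_1$. Composing with $\iota$ gives a left $[\GG_1]$-invariant state $\phi_1=\varphi_1\circ\iota$ on $\S(\GG_1)$ that is normal on $A_1$. By \Cref{lem:c_0(G)relativelycompact} it also vanishes on $c_0(\GG_1)$. For $\GG_2$ we use a left $[\GG_2]$-invariant mean: take $\phi_2$ to be a weak$^*$ limit of $\mu_2$-integrals along a Følner-type sequence. This need not be invariant, so the first thing to try is to avoid a state on $\GG_2$ altogether. Instead, integrate out the second coordinate in the style of $\Psi$ from \Cref{prop:products}, but fiberwise in the unit space rather than over all of $\GG_2$: for $f\in\S(\GG)$ set $\Psi(f)(\gamma_1)=\int_{X_2} f(\gamma_1,1_{y})\,\mathrm{d}\mu_2(y)$, that is, restrict to $\GG_1\times\GG_2^{(0)}$. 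The candidate state is $\psi=\phi_1\circ\Psi$.

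We must check three things. \textbf{(a) $\Psi$ maps $\S(\GG)$ into $\S(\GG_1)$ modulo $c_0(\GG_1)$.} This suffices because $\phi_1$ kills $c_0(\GG_1)$; the check uses the rectangular approximation \Cref{rem:prodofbisections} together with Fubini/Chebyshev estimates as in Claim \ref{claim: second}. \textbf{(b) $\psi$ is normal on $L^\infty X$.} This is immediate. \textbf{(c) $\psi$ is invariant under $[\GG_1]\times\{\id\}$ and under $\{\id\}\times[\GG_2]$.} By \Cref{lem:uniformtopdensity} these generate $[\GG]$ in $d_u$, so (c) gives full invariance.

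The main obstacle is invariance under $\id\times\beta$. Restricting to units in the second coordinate is not $\GG_2$-invariant: $L_{\id\times\beta}f$ evaluated at $(\gamma_1,1_y)$ equals $f$ at $(\gamma_1,\beta^{-1}_y)$, which is a non-unit. To handle this we would exploit the vanishing on $c_0$. For $f\in\S(\GG)$, the function $f-R_{\id\times\beta^{-1}}f$ lies in $c_0(\GG)$. Left and right translations then relate $f(\gamma_1,\beta^{-1}_y)$ to $f(\gamma_1\cdot,1_{\cdot})$ up to a $c_0$ error, after which the invariance of $\phi_1$ absorbs the remaining conjugation-type discrepancy. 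If this fails, the fallback is to transport $\varphi_1$ to $L(\GG)\cong M_1\bar\otimes M_2$ as $\varphi_1\otimes\tau_2$ on the relative small-at-infinity boundary. The vanishing on $\K^{\infty,1}$ then lets us verify $M$-centrality directly through the tensor-slice map, and \Cref{thm:properproxR} concludes.
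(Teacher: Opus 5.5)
Your main route is correct and genuinely different from the paper's. The step you flag as the obstacle closes by an exact identity, so neither the appeal to invariance of $\phi_1$ there nor the fallback is needed.

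For $\beta\in[\GG_2]$ with induced automorphism $\bar\beta$ of $X_2$, one has $(L_{\id\times\beta}f)(\gamma_1,1_y)=f(\gamma_1,\eta^{-1})$, where $\eta\in\beta$ is the arrow of range $y$. Also $(R_{\id\times\beta^{-1}}f)(\gamma_1,1_z)$ equals the same number for $z=s(\eta)=\bar\beta^{-1}(y)$. As $\bar\beta$ preserves $\mu_2$, this gives $\Psi\circ L_{\id\times\beta}=\Psi\circ R_{\id\times\beta^{-1}}$, whence
\[
\Psi(L_{\id\times\beta}f)-\Psi(f)=\Psi\bigl(R_{\id\times\beta^{-1}}f-f\bigr)\in\Psi(c_0(\GG))\subset c_0(\GG_1),
\]
and $\phi_1$ kills this. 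The first coordinate never moves, so no ``conjugation-type discrepancy'' remains.

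The inclusion $\Psi(c_0(\GG))\subset c_0(\GG_1)$ is Claim~\ref{claim: second} applied, after restriction, to the subgroupoid $\GG_1\times X_2$, whose second factor has measure one. Together with $R_\alpha\circ\Psi=\Psi\circ R_{\alpha\times\id}$, it yields $\Psi(\S(\GG))\subset\S(\GG_1)$ exactly. That exact inclusion is what is needed for $\phi_1\circ\Psi$ to be defined, so ``modulo $c_0(\GG_1)$'' in (a) should go.

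One claim must be repaired. Products $\alpha\times\beta$ with $\alpha\in[\GG_1]$, $\beta\in[\GG_2]$ are not $d_u$-dense in $[\GG]$ in general. If $\alpha$ contains no units and $0<\mu_2(B)<1$, the bisection $(\alpha\times\id)|_{X_1\times B}\sqcup\id_{X_1\times B^c}$ stays at $d_u$-distance at least $\min\{\mu_2(B),1-\mu_2(B)\}$ from all of them. So \Cref{lem:uniformtopdensity} does not apply as you state it. Instead, argue as in \Cref{prop:products}:
\begin{enumerate}
\item approximate by finite disjoint unions of local rectangles (\Cref{rem:prodofbisections});
\item reduce each rectangle to a global product via $L_{\alpha_k\times\beta_k}f=L_{\hat\alpha_k\times\hat\beta_k}\bigl(f\chi_{\GG^{s(\alpha_k)\times s(\beta_k)}}\bigr)$, with global extensions $\hat\alpha_k,\hat\beta_k$;
\item finish with the normality estimate from the proof of \Cref{lem:uniformtopdensity}.
\end{enumerate}
The F\o{}lner sentence should simply be deleted: $\GG_2$ is nonamenable and you never use a state on it.

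Up to obtaining alternative (2) of \Cref{prop:dichotomypropprox}, you follow the paper. After that the paper stays at the von Neumann algebra level. It composes the state with the compression $\Ad(P_1)$ onto $L^2(M_1)$, and imports \cite[Lemma 8.7]{ding2023properproximality} to control $\S_A(M)$ and $\K^{\infty,1}_A(M)$ under $\Ad(P_1)$. It gets $M_2$-centrality from $P_1uJuJ=P_1$ and the vanishing on all of $\K^{\infty,1}_{A_1}(M_1)\cap\langle M_1,e_{A_1}\rangle$, then applies \Cref{thm:properproxR} a second time. Your fallback is essentially this, since on $M$ that state is $\psi|_{M_1}\circ\E_{M_1}$.

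Your route leaves the von Neumann algebra at once via $\iota$ and works in the commutative algebra $L^\infty(\GG)$, using the Fubini--Chebyshev estimate already in the paper. It needs only vanishing on $c_0(\GG_1)$. It also makes visible that the icc, ergodicity and nonamenability hypotheses on $\GG_1$ serve only to produce a left $[\GG_1]$-invariant state on $\S(\GG_1)$ that is normal on the units and vanishes on $c_0(\GG_1)$. Any such state makes $\GG_1\times\GG_2$ non-properly proximal for an arbitrary $\GG_2$. The paper's route, in exchange, avoids the bisection bookkeeping and the $d_u$-approximation, because $M_1$ and $M_2$ generate $M$ and centrality needs no density argument.
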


\begin{proof} Suppose, without loss of generality, that $\mathcal G_1$ is not properly proximal. Denote by $X_i=\mathcal G_i^{(0)}$, $M_i=L(\mathcal G_i)$, $A_i=L^{\infty}(X_i)$, for $i=1,2$, and let $M=L(\mathcal G)=M_1\overline{\otimes}M_2$ and $A=L^{\infty}(X_1\times X_2)=A_1\overline{\otimes}A_2$. By \Cref{thm:properproxR} applied to the trivial cocycle, $M_1$ is not $A_1$-properly proximal relative to $A_1$. If there were a nonzero central projection $p\in\mathcal Z(M_1)$ for which $A_1p\subset M_1p$ were coamenable, \cite[Corollary B]{groupoidfactor} would give $p=1$. This would imply $\mathcal G_1$ is amenable \cite[Proposition 2.4]{OzawaPopa07}, contrary to our assumption. Therefore, by \Cref{prop:dichotomypropprox}, there exists an $M_1$-central state $\psi:\mathbb S_{A_1}(M_1)\cap \langle M_1,e_{A_1}\rangle\to\mathbb C$ such that $\psi|_{M_1}$ is normal and $\psi$ vanishes on $\mathbb K^{\infty,1}_{A_1}(M_1)\cap\langle M_1,e_{A_1}\rangle$. 

For the rest of the proof we follow \cite[Theorem 8.8]{ding2023properproximality}. Consider the u.c.p. map $\text{Ad}(P_1):\mathbb B(L^2(M_1\overline{\otimes}M_2))\to\mathbb B(L^2(M_1))$ given by $\text{Ad}(P_1)(T)=P_1TP_1$, where $P_1:L^2(M_1\overline{\otimes}M_2)\to L^2(M_1)$ is the orthogonal projection. By \cite[Lemma 8.7]{ding2023properproximality}, $\text{Ad}(P_1)$ maps $\mathbb S_A(M)$ into $\mathbb S_{A_1}(M_1)$ and maps $\mathbb K_A^{\infty,1}(M)$ into $\mathbb K^{\infty,1}_{A_1}(M_1)$, and furthermore, it is not hard to check that it maps $\langle M,e_A\rangle$ into $\langle M_1,e_{A_1}\rangle$. Define $\varphi=\psi\circ\text{Ad}(P_1)$ on $\mathbb S_A(M)\cap\langle M,e_A\rangle$. It follows that $\varphi|_M=\psi|_{M_1}\circ\mathbb E_{M_1}$ is normal, and since $\text{Ad}(P_1)$ is $M_1$-bimodular and $\psi$ is $M_1$-central, we obtain $\varphi$ is also $M_1$-central. For $u\in\mathcal U(M_2)$ and $T\in \mathbb S_A(M)\cap \langle M,e_A\rangle$, using that $P_1u(JuJ)=P_1$, we have
\begin{align*}
    \varphi(uT)&=\psi(P_1uTP_1)=\psi(P_1u(JuJ)T(JuJ)^*P_1)=\psi(P_1T(JuJ)^*P_1)\\
    &=\psi(P_1T(JuJ)^*u(JuJ)P_1)=\psi(P_1TuP_1)=\varphi(Tu),
\end{align*}
where the second equality holds since $\psi$ vanishes on $\mathbb K_{A_1}^{\infty,1}(M_1)\cap\langle M_1,e_{A_1}\rangle$. Hence, $M_2$ is also in the centralizer of $\varphi$, giving that $\varphi$ is $M$-central. We have, therefore, constructed an $M$-central state on $\mathbb S_A(M)\cap\langle M,e_A\rangle$ whose restriction to $M$ is normal. By definition, $M$ is not $A$-properly proximal relative to $A$. Applying \Cref{thm:properproxR} again, we get $\mathcal G$ is not properly proximal, as wanted. 
\end{proof}


\subsection{Weak compactness and rigidity} Recall the notion of weak compactness introduced by Ozawa and Popa in \cite{OzawaPopa07}. A trace preserving action $\sigma:\Gamma\curvearrowright (Q,\tau)$ on a tracial von Neumann algebra is \textit{weakly compact} if there exists a state $\varphi$ on $\mathbb B(L^2Q)$ such that $\varphi|_Q=\tau$ and $\varphi\circ\text{Ad}(u)=\varphi$, for every $u\in\mathcal U(Q)\cup\sigma(\Gamma)$. A regular inclusion of tracial von Neumann algebras $Q\subset M$ is \textit{weakly compact} if the action $\NN_M(Q)\curvearrowright Q$ is weakly compact, where $\NN_M(Q)=\{u\in\mathcal{U}(M):uQu^*=Q\}$ denotes the normalizer of $Q$ in $M$.

We shall use the following corresponding notion for countable p.m.p.\ groupoid over the unit space $\mathcal G^{(0)}$.

\begin{defn}\label{def:weakcompact1} We say a countable p.m.p.\ groupoid $\mathcal{G}$ with unit space $(\mathcal G^{(0)},\mu)$ is \textit{weakly compact} if the action $[\GG]\curvearrowright L^{\infty}\GG^{(0)}$ is weakly compact, i.e., $L^{\infty}(\mathcal G^{(0)})\subset L\mathcal G$ is weakly compact.

Equivalently, this means that there exists a net of unit vectors $\zeta_i\in L^{2}(\GG^{(0)})\otimes \overline{L^2(\GG^{(0)})}$ satisfying:
\begin{enumerate}
    \item[(i)] $\|(a\otimes 1)\zeta_i-(1\otimes\overline{a})\zeta_i\|_2\to 0$, for all $a\in L^{\infty}(\GG^{(0)},\mu)$,
    \item[(ii)] $\|\zeta_i\circ(\theta\times \theta)-\zeta_i\|_2\to 0$, for all $\theta\in [\mathcal{G}]$, and
    \item[(iii)] $\lim_i\langle (a\otimes 1)\zeta_i,\zeta_i\rangle=\int_{\GG^{(0)}}a(x)d\mu(x)$, for all $a\in L^{\infty}(\GG^{(0)})$. 
\end{enumerate}
\end{defn}

\begin{corollary}\label{cor:weakcompact&properprox}
    Let $(\GG,\nu)$ be a countable p.m.p.\ groupoid over unit space $(\GG^{(0)},\mu)$. Assume $L\GG$ is properly proximal relative to $L^{\infty}\GG^{(0)}$. If $P\subset L\GG$ is a weakly compact regular von Neumann subalgebra, then $P\preceq_M L^\infty \GG^{(0)}$.

    In particular, if $(\RR,\nu)$ is a properly proximal countable measured equivalence relation on $(X,\mu)$, then $L\RR$ admits a weakly compact Cartan subalgebra $A$ if and only if $\RR$ is weakly compact and, in this case, $A$ is unitarily conjugate to $L^\infty X$.
\end{corollary}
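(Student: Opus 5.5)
The first assertion is a direct application of the general rigidity result for relatively properly proximal von Neumann algebras, \cite[Theorem 6.11]{ding2023properproximality}. It states that if $M$ is properly proximal relative to a boundary piece $\X$, then any weakly compact regular subalgebra $P\subset M$ satisfies $P\preceq_M B$. I would apply it with $M=L\GG$, $\X=\X_{L^\infty\GG^{(0)}}$ and $B=L^\infty\GG^{(0)}$.

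The hypothesis is met under either route. If $L\GG$ is assumed properly proximal relative to $L^\infty\GG^{(0)}$, it holds by assumption. If $\GG$ is assumed properly proximal, then \Cref{thm:properproxR} with the trivial cocycle shows that $L\GG$ is $L^\infty\GG^{(0)}$-properly proximal relative to $L^\infty\GG^{(0)}$. Since $\S_\X(M)\cap\langle M,e_A\rangle\subset\S_\X(M)$, this implies proper proximality relative to $L^\infty\GG^{(0)}$.

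The one thing to verify carefully is that the cited theorem is stated for boundary pieces generated by subalgebras. Its proof localizes a weakly compact state from $\B(L^2M)$ to $\S_\X(M)$ and derives intertwining from the boundary piece. If $P\not\preceq_M B$, one would produce an $M$-central state on $\S_\X(M)$ normal on $M$ that vanishes on $\K^{\infty,1}_\X(M)$, contradicting relative proper proximality. I would simply cite this.

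For the equivalence-relation statement, set $M=L\RR$ with $\RR$ properly proximal; by the first part, $L\RR$ is properly proximal relative to $L^\infty X$.

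Suppose first that $A\subset L\RR$ is a weakly compact Cartan subalgebra. Cartan subalgebras are regular, so the first part gives $A\preceq_M L^\infty X$. Since $L^\infty X$ is also Cartan in $L\RR$ (the groupoid is principal), Popa's intertwining criterion for Cartan subalgebras \cite[Theorem A.1]{Pop06} yields a unitary $u$ with $uAu^*=L^\infty X$.

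Weak compactness is invariant under unitary conjugation of the inclusion. Hence $L^\infty X\subset L\RR$ is weakly compact, which by \Cref{def:weakcompact1} means $\RR$ is weakly compact. Here $\NN_M(L^\infty X)$ is generated by $\mathcal U(L^\infty X)$ and $\{u_\theta:\theta\in[\RR]\}$, so invariance under these suffices.

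Conversely, if $\RR$ is weakly compact, then $A=L^\infty X$ is itself a weakly compact Cartan subalgebra.

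The main obstacle is matching normalizer actions: weak compactness of $\NN_M(L^\infty X)\acts L^\infty X$ must correspond to weak compactness of $[\RR]\acts L^\infty X$. For principal $\RR$ every normalizing unitary has the form $au_\theta$ with $a\in\mathcal U(L^\infty X)$, so the two notions agree.
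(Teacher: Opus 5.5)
Your proposal follows the paper's proof: the first part is a direct application of \cite[Theorem 6.11]{ding2023properproximality} to the boundary piece of $L^\infty\GG^{(0)}$. The second part combines \Cref{thm:properproxR}, that same theorem, Popa's Cartan conjugacy result \cite{popabettinumbers01}, and the identification of weak compactness of $\RR$ with weak compactness of $L^\infty X\subset L\RR$. One caveat, which the paper's citation-level proof shares: Popa's unitary-conjugacy criterion needs $L\RR$ to be a factor, or intertwining on every central corner, since for non-ergodic $\RR$ a single $A\preceq_M L^\infty X$ only conjugates a corner; you should therefore apply the first part on each central corner $L\RR z=L(\RR^E_E)$, where $\RR^E_E$ is properly proximal by \Cref{restriction}(i) and $Az$ is still a weakly compact Cartan subalgebra, before concluding $uAu^*=L^\infty X$.
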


\begin{proof} The first part follows from \cite[Theorem 6.11]{ding2023properproximality} since $L^{\infty}(\mathcal G^{(0)})$ is a regular subalgebra of $L\GG$. The in particular part follows from the definition of weak compactness of groupoids, \Cref{thm:properproxR}, \cite[Theorem 6.11]{ding2023properproximality} and \cite{popabettinumbers01}.
\end{proof}

\begin{appendices}
\section{Measure equivalence} \label{sec: appendix}

\noindent The notion of measure equivalence of countable discrete groups was introduced by Gromov as a measure-theoretic analogue of quasi-isometry of groups, and was later extended to locally compact group settings in \cite{BFS13}. Measure equivalence was first introduced to discrete measured groupoid setting in \cite{Bernoulligroupoids} as a generalization of stable orbit equivalence, and later also re-introduced in \cite{bddcohomologytransversegroupoid} from a different perspective. We show in this section that these two different definitions for measure equivalence are actually equivalent, and also give the necessary results needed in \Cref{thm:MEpp} to show that properly proximality is preserved under measure equivalence. We first recall the definition that appeared in \cite{bddcohomologytransversegroupoid}. 
    
\begin{defn}[\protect{\cite[Definition 4.2]{bddcohomologytransversegroupoid}}] \label{defn:meas equiv}
    Let $\GG,\HH$ be countable measured groupoids. A \emph{(measure equivalence) $(\GG,\HH)$-coupling} for $\GG$ and $\HH$ is a Lebesgue space $(\Omega,m)$ with a free measure-preserving $\GG\times \HH$-action admitting finite measure strict fundamental domains $X,Y\subset \Omega$ for the $\GG$-action and the $\HH$-action, respectively. We say that $\GG$ and $\HH$ are \emph{measure equivalent}, and write $\GG \sim_{\ME}\HH$, if there exists a $(\GG,\HH)$-coupling. 
\end{defn}

As noted in \cite[Section 4.1]{bddcohomologytransversegroupoid}, one can always take the $(\GG,\HH)$-coupling $(\Omega, m)$ to be \emph{ergodic}, meaning that the transformation groupoid $(\GG\times \HH)\ltimes \Omega$ is ergodic. Also, since the $\GG$-action commutes with the $\HH$-action on $\Omega$, it descends to an action $\GG \acts \HH\backslash\Omega$. On the other hand, for every composable pair $(g,y)\in \GG\times_{\GG^{(0)}}Y$, by freeness there exists a unique element $\alpha(g,y)\in \HH$ such that $(g,\alpha(g,y))y\in Y$, and it induces an action $\GG\acts Y$ by $g\cdot y \coloneqq (g,\alpha(g,y))y$. The composition $Y\hookrightarrow \Omega \twoheadrightarrow \HH\backslash\Omega$ defines a Borel isomorphisms which intertwines the $\GG$ actions on $Y$ and $\HH\backslash\Omega$. Moreover, the measure $m$ on $\Omega$ is invariant under the $\GG\times \HH$-action, so $\restr{m}{Y}$ is invariant under the $\GG$-action, and thus $\GG\ltimes Y$ is a countable p.m.p.\ groupoid when the unit space is equipped with the restricted measure $\restr{m}{Y}$ upon normalization. By symmetry, same arguments also hold when one considers $\HH$-action on the strict fundamental domain $X$ of the $\GG$-action.

We note that passing to a measure-equivalence coupling generally loses the measure-theoretic data attached to the original groupoids because such couplings do not, by themselves, encode the original measures with which the groupoids are equipped. On the other hand, properly proximal groupoids $\GG$ are defined in terms of the space of essentially bounded functions $L^\infty(\GG,\mu\circ\lambda)$, which depend on the measure class of $\mu$ on $\GG$. We therefore propose the following definition imposing restrictions on the measures of the measure equivalence couplings.

\begin{defn}
    Let $(\GG,\mu\circ\lambda), (\HH,\nu\circ\lambda)$ be countable p.m.p.\ groupoids. We say a measure equivalent $(\GG,\HH)$-coupling $(\Omega, m)$ is \emph{measure class preserving} if the anchor maps $t_\GG, t_\HH$ corresponding to the $\GG$-action and $\HH$-action push the measure $m$ to measures on $\GG^{(0)}$ and $\HH^{(0)}$ in the same measure class of $\mu$ and $\nu$, respectively. Namely, $(t_{\GG})_*m \sim \mu$ on $\GG^{(0)}$ and $(t_{\HH})_*m \sim \nu$ on $\HH^{(0)}$. 
    
    If there exists a measure class preserving measure equivalence coupling for $(\GG,\mu\circ\lambda)$ and $(\HH,\nu\circ\lambda)$, then we say $(\GG,\mu\circ\lambda)$ and $(\HH,\nu\circ\lambda)$ are \emph{measure class preserving measure equivalent} and write $(\GG,\mu\circ\lambda) \sim_{\mcpME} (\HH,\nu\circ\lambda)$.  
\end{defn}

It is shown in \cite[Proposition 4.4]{bddcohomologytransversegroupoid} that measure equivalence is indeed an equivalence relation for discrete Borel groupoids. It turns out that measure class preserving measure equivalence is an equivalence relation for countable p.m.p.\ groupoids. To prove this, we first record a lemma.

\begin{lemma} \label{lem: inv null set}
    Let $\GG\curvearrowright(\Omega,m)$ be a free measure-preserving action of a countable Borel groupoid $\GG$, and $X\subset \Omega$ be a finite-measure strict fundamental domain for this action. If $b\colon \Omega\to S$ is a $\GG$-invariant measurable map, i.e., $b(g\omega)=b(\omega)$ for any composable pair $(g,\omega)\in \GG\times_{\GG^{(0)}}\Omega$, then $b_*m \sim (\restr{b}{X})_*(\restr{m}{X})$, i.e., the measures $b_*m$ and $(\restr{b}{X})_*(\restr{m}{X})$ on $S$ have the same null sets. 

    In particular, when $(\Omega,m)$ is a measure equivalence $(\GG,\HH)$-coupling for two countable p.m.p.\ groupoids $(\GG,\mu\circ\lambda)$ and $(\HH,\nu\circ\lambda)$ and $b = t_{\HH}$ is the anchor map for the $\HH$-action, we have $(t_{\HH})_*m \sim \nu$ on $\HH^{(0)}$ if and only if $(\restr{t_{\HH}}{X})_*(\restr{m}{X}) \sim \nu$ on $\HH^{(0)}$.
\end{lemma}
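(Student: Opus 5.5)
The plan is to use that $X$ is a strict fundamental domain, so $\Omega = \GG\cdot X$ up to null sets. Every $\omega\in\Omega$ is then of the form $g x$ with $x\in X$ and $g\in\GG$ with $s(g)=t(x)$. Since $b$ is $\GG$-invariant, $b(gx)=b(x)$. We show directly that for a measurable $E\subset S$, $m(b^{-1}(E))=0$ if and only if $m(X\cap b^{-1}(E))=0$.

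The direction $b_*m(E)=0\Rightarrow (b|_X)_*(m|_X)(E)=0$ is trivial, since $X\cap b^{-1}(E)\subset b^{-1}(E)$.

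For the converse, fix a countable decomposition of $\GG$ into Borel bisections $\GG=\bigsqcup_i B_i$ (Lusin–Novikov, as used throughout the paper). Each $B_i$ acts on $\Omega$ as a partial Borel isomorphism $\omega\mapsto \gamma_\omega\omega$, where $\gamma_\omega\in B_i$ is the unique arrow with $s(\gamma_\omega)=t(\omega)$; it is defined on $t^{-1}(s(B_i))$. Because the action is measure preserving (fiberwise $\alpha(\gamma)_*\tau_{s(\gamma)}=\tau_{r(\gamma)}$, and $\GG$ itself is p.m.p./measured, so integrating over the unit space shows these partial maps preserve null sets), each such partial map sends $m$-null sets to $m$-null sets. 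Then
\[
b^{-1}(E)=\GG\cdot\bigl(X\cap b^{-1}(E)\bigr)=\bigcup_i B_i\cdot\bigl(X\cap b^{-1}(E)\bigr)
\]
up to null sets. The first equality uses invariance of $b$ together with $\Omega=\GG X$. Hence $b^{-1}(E)$ is a countable union of images of a null set under null-preserving maps, and so it is null.

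The main obstacle is justifying that the partial maps induced by bisections are measure-class preserving on $(\Omega,m)$. Here $m$ is a global measure, which we disintegrate over $t$ as in \eqref{eq:disintegrationmeasure}. If $t_*m$ were not quasi-invariant this could fail. The plan is to use that the $\GG\times\HH$-action is measure preserving in the coupling sense, i.e. the transformation groupoid carries $m$ as an invariant measure, so $m\circ\lambda$ is invariant under inversion. This directly yields that images of null sets under bisections are null, by the same computation that makes $\mu_1=\mu_2$ in the p.m.p. definition. Finiteness of $m(X)$ is not needed for null sets.

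The particular case follows by taking $b=t_\HH$. This map is $\GG$-invariant since the $\GG$- and $\HH$-actions commute and the $\GG$-action does not change the $\HH$-anchor. Applying the first part gives $(t_\HH)_*m\sim(t_\HH|_X)_*(m|_X)$, and equivalence of measure classes is transitive, which yields the stated iff.
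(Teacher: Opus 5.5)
Your proposal is correct and follows the paper's argument: you decompose $\GG$ into countably many bisections $B_i$, use that each one induces a measure-preserving partial map on $\Omega$, and write the invariant set $b^{-1}(E)$ as the countable union of the translates $B_i\cdot(X\cap b^{-1}(E))$ of a null set, then specialize to $b=t_\HH$. Your justification that the bisection maps preserve $m$ (via inversion-invariance of $m\circ\lambda$ on the transformation groupoid) is a point the paper simply asserts; for the general first part it should come from the measure-preserving hypothesis on the $\GG$-action itself rather than from the $\GG\times\HH$ coupling, but the computation is the same.
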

\begin{proof}
    Let $A\subset S$ be measurable. Since $b$ is $\GG$-invariant, the set $b^{-1}(A)$ is $\GG$-invariant. We claim that $m(b^{-1}(A)) = 0$ if and only if $m(b^{-1}(A)\cap X) = 0$.

    Clearly, if $m(b^{-1}(A))=0$, then $m(b^{-1}(A)\cap X)=0$. Conversely, suppose that $m(b^{-1}(A)\cap X)=0$. Since $X$ is a strict fundamental domain, we have $\GG X = \Omega$. Fix a basis $\mathcal{B}= \{B_i\}_i\subset[[\mathcal G]]$ for the groupoid $\mathcal{G}$, and let $X_i \coloneqq s(B_i)X \subset X$ for each $i\in\mathbb{N}$. Left multiplication by $B_i$ induces a measure-preserving partial transformation $\varphi_i\colon X_i \to \varphi_i(X_i) = B_i X_i \subset \Omega$ such that $\Omega = \GG X = \bigcup_{i\geq 1}B_i X_i = \bigcup_{i\geq 1} \varphi_i(X_i)$.

    Since $b^{-1}(A)$ is $\GG$-invariant, for every $i$ we have
    \[\varphi_i^{-1}\bigl(b^{-1}(A)\cap \varphi_i(X_i)\bigr)=B_i^{-1}\bigl(b^{-1}(A)\cap B_iX_i\bigr) = b^{-1}(A)\cap X_i.\]
    The maps $\varphi_i$ are measure-preserving, so
    \[m\bigl(b^{-1}(A)\cap \varphi_i(X_i)\bigr) = m\bigl(b^{-1}(A)\cap X_i\bigr) \leq m\bigl(b^{-1}(A)\cap X\bigr) =0,\]
    and taking the countable union over $i$, we obtain $m(b^{-1}(A)) = 0$. This proves our claim.

    It follows from our claim that $(b_*m)(A)=0$ if and only if $(\restr{b}{X})_*(\restr{m}{X})(A) = 0$. Since this holds for every measurable $A\subset S$, the measures $b_*m$ and $(\restr{b}{Y})_*(\restr{m}{X})$ have the same null sets.

    For the second statement, we observe that the anchor map $b = t_{\HH}$ for the $\HH$-action is $\GG$-invariant since the $\HH$-action commutes with the $\GG$-action. Hence it follows by applying first statement to $b = t_{\HH}$.
\end{proof}


\begin{prop} \label{prop: mcpme is equiv rel}
    Measure class preserving measure equivalence is an equivalence relation on the class of countable p.m.p.\ groupoids.    
\end{prop}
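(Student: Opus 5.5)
We must verify reflexivity, symmetry and transitivity of $\sim_{\mcpME}$. For reflexivity, take $\Omega=\GG$ with the commuting left and right multiplication actions of $\GG\times\GG$, where the right action is turned into a left one via $\gamma\mapsto\gamma^{-1}$. The anchors are $r$ and $s$, and $\Omega$ carries the measure $\mu\circ\lambda$. Freeness is clear, and the unit space $\GG^{(0)}\subset\GG$ is a finite-measure strict fundamental domain for each of the two actions. Both $r_*(\mu\circ\lambda)$ and $s_*(\mu\circ\lambda)$ have density $x\mapsto|r^{-1}(x)|$ (respectively $|s^{-1}(x)|$) with respect to $\mu$. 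These densities are a.e.\ positive and finite, so the pushforwards are equivalent to $\mu$. Symmetry is immediate: swap the roles of the two factors of $\GG\times\HH$, which also swaps the two anchor conditions.

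For transitivity we would follow \cite[Proposition 4.4]{bddcohomologytransversegroupoid}. Let $(\Omega_1,m_1)$ be a $(\GG,\HH)$-coupling and $(\Omega_2,m_2)$ an $(\HH,\KK)$-coupling, where $\KK$ denotes the third groupoid. Form the fibered product $\Omega_1\times_{\HH^{(0)}}\Omega_2$ over the two $\HH$-anchors. Equip it with the fibered measure obtained by disintegrating $m_1$ and $m_2$ over $\HH^{(0)}$ with respect to $\nu$, i.e.\ integrating $(m_1)_y\times(m_2)_y$ against $\nu$. Let $\Omega$ be the quotient by the diagonal $\HH$-action $h\cdot(\omega_1,\omega_2)=(h\omega_1,h\omega_2)$. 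This action is free because the action on $\Omega_1$ is free.

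The existence of finite-measure strict fundamental domains for $\GG$ and $\KK$ on $\Omega$, together with freeness and measure preservation of the induced $\GG\times\KK$-action, is exactly the content of the cited proposition, and we would invoke it. The only new point is the measure-class condition. The $\GG$-anchor on $\Omega$ is induced by $t_\GG\circ\mathrm{pr}_1$. By \Cref{lem: inv null set}, applied on the fibered product to the $\HH$-invariant map $t_\GG\circ\mathrm{pr}_1$, its pushforward class can be computed by restricting to $X_\HH\times_{\HH^{(0)}}\Omega_2$. Here $X_\HH\subset\Omega_1$ denotes a strict fundamental domain for the $\HH$-action.

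A set $A\subset\GG^{(0)}$ is then null for this pushforward if and only if $t_\GG^{-1}(A)$ has null $m_1$-measure fibers over $\nu$-almost every point of $t_\HH(\Omega_2)$. This uses that $(m_2)_y$ has positive mass for $\nu$-a.e.\ $y$, which holds since $(t_\HH)_*m_2\sim\nu$. By the hypothesis $(t_\HH)_*m_1\sim\nu$, the condition is equivalent to $m_1(t_\GG^{-1}(A))=0$. By $(t_\GG)_*m_1\sim\mu$, this is in turn equivalent to $\mu(A)=0$. The $\KK$-side is symmetric.

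The main obstacle is this last step. It requires arranging the disintegrations so that the fibered measure is well defined and $\sigma$-finite. It also requires checking that the quotient measure on $\Omega$ corresponds to the restriction to a fundamental domain of the $\HH$-action, so that \Cref{lem: inv null set} transfers null sets correctly. The argument only needs null sets, not densities, so we would phrase every step through null-set equivalences to avoid computing Radon--Nikodym derivatives.
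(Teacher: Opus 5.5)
Your proposal is correct and follows the paper's proof. The paper uses the same fibered-product coupling with the relative product measure $\int (m_1)_y\otimes(m_2)_y\,\mathrm{d}\nu(y)$, and the same reduction of the $\GG$-anchor null sets to $m_1(t_\GG^{-1}(A))=0$ via positivity of the $m_2$-fibers. Your appeal to \Cref{lem: inv null set} makes explicit a step the paper states without comment, namely that $\overline{\mu}(c_\GG^{-1}(E))=0$ iff $\mu(S_E)=0$.

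One small slip in the reflexivity step: the density $x\mapsto|r^{-1}(x)|$ need not be finite (it is $\infty$ a.e.\ for aperiodic $\GG$). Since it is $\geq 1$, the null sets of $r_*(\mu\circ\lambda)$ and $\mu$ still coincide, which is all the definition requires.
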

\begin{proof}
    It is easy to check that the measure equivalence couplings for reflexivity and symmetry constructed in \cite[Proposition 4.4]{bddcohomologytransversegroupoid} are also measure class preserving, so here we only check for transitivity. Suppose $(\GG,\mu_{\GG}\circ\lambda) \sim_{\mcpME} (\HH,\mu_{\HH}\circ\lambda)$ via a measure class preserving measure equivalence coupling $(\Omega, m)$ with anchor maps $a_\GG\colon \Omega\to \GG^{(0)},a_\HH\colon \Omega\to \HH^{(0)}$, and $(\HH,\mu_{\HH}\circ\lambda) \sim_{\mcpME} (\KK,\mu_{\KK}\circ\lambda)$ via a measure class preserving measure equivalence coupling $(\Sigma, n)$ with anchor maps $b_\HH\colon \Sigma\to \HH^{(0)},b_\KK\colon \Sigma\to \KK^{(0)}$. By assumption, we have $(a_\GG)_* m \sim \mu_{\GG},(a_\HH)_* m \sim \mu_{\HH},(b_\HH)_* n \sim \mu_{\HH}$, and $(b_\KK)_* n \sim \mu_{\KK}$. 

    From the composition construction in \cite[Proposition 4.4]{bddcohomologytransversegroupoid}, we consider the fiber product 
    \[\Omega\,{}_{a_\HH}\!\times_{b_\HH}\Sigma = \{(u,v)\in\Omega\times\Sigma: a_\HH(u)=b_\HH(v)\} \subset \Omega\times \Sigma\]
    equipped with the relative product measure $\mu$ defined as follows. Recall that by assumption we have $(a_\HH)_*m\sim\mu_\HH$ and $(b_\HH)_*n\sim\mu_\HH$, so we consider the Radon-Nikodym derivatives
    \[f\coloneqq \frac{\mathrm{d}(a_\HH)_*m}{\mathrm{d}\mu_\HH}, \qquad g\coloneqq \frac{\mathrm{d}(b_\HH)_*n}{\mathrm{d}\mu_\HH},\]
    which satisfy that $0<f(x),g(x)<\infty$ for $\mu_\HH$-almost every $x\in \HH^{(0)}$. We disintegrate $m$ and $n$ as
    \[ m=\int_{\HH^{(0)}} m^x\,\mathrm{d}(a_\HH)_*m(x) = \int_{\HH^{(0)}} f(x)m^x\,\mathrm{d}\mu_H(x), \qquad
    n=\int_{\HH^{(0)}} n^x\,\mathrm{d}(b_\HH)_*n(x) = \int_{\HH^{(0)}} g(x)n^x\,\mathrm{d}\mu_H(x),\]
    where $m^x$ is supported on $a_\HH^{-1}(x)$, $n^x$ is supported on $b_\HH^{-1}(x)$, and $m^x,n^x$ are probability measures for almost every $x$. On the fiber product $\Omega\,{}_{a_\HH}\!\times_{b_\HH}\Sigma$, the relative product measure $\mu$ is given by
    \[\int_{\HH^{(0)}} f(x)g(x)\,(m^x\otimes n^x)\,\mathrm{d}\mu_\HH(x).\]
    More concretely, for every Borel subset $C\subset\Omega\,{}_{a_{\HH}}\!\times_{b_{\HH}}\Sigma$, let $C_x \coloneqq C\cap(a_{\HH}^{-1}(x)\times b_{\HH}^{-1}(x))$ for $x\in\HH^{(0)}$, and define 
    \[\mu(C) \coloneqq \int_{\HH^{(0)}} f(x)g(x)(m^x\otimes n^x)(C_x)\,\mathrm{d}\mu_{\HH}(x).\]
    For example, if $A\subset\Omega$ and $B\subset\Sigma$ are Borel, then
    \[\mu\Bigl((A\times B)\cap\bigl(\Omega\,{}_{a_{\HH}}\!\times_{b_{\HH}}\Sigma\bigr)\Bigr) = \int_{\HH^{(0)}} f(x)g(x)m^x(A)n^x(B) \,\mathrm{d}\mu_{\HH}(x).\]
    The map $x\mapsto(m^x\otimes n^x)(C_x)$ is measurable as it is immediate for the preceding rectangular sets, and the general case follows from a monotone class argument. Since $0<f(x)g(x)<\infty$ almost everywhere, $\mu$ a $\sigma$-finite measure on the fiber product. Equivalently, for every nonnegative Borel function $F$ on the fiber product,
    \[ \int F(u,v)\,\mathrm{d}\mu(u,v)= \int_{\HH^{(0)}} f(x)g(x)\cdot \left( \int_{a_{\HH}^{-1}(x)} \int_{b_{\HH}^{-1}(x)} F(u,v)\,\mathrm{d}n^x(v)\,\mathrm{d}m^x(u) \right)\mathrm{d}\mu_{\HH}(x).\]

    The groupoid $\HH$ acts diagonally on $\Omega\,{}_{a_H}\!\times_{b_H}\Sigma$, and the fused coupling is $(\Omega\,{}_{a_\HH}\!\times_{b_\HH}\Sigma)/\HH$ equipped with the quotient measure $\overline{\mu}$ of $\mu$. By the proof of \cite[Proposition 4.4]{bddcohomologytransversegroupoid}, $((\Omega\,{}_{a_\HH}\!\times_{b_\HH}\Sigma)/\HH, \overline{\mu})$ is a $(\GG,\KK)$ coupling. The anchor maps for the $\GG$-action and the $\KK$-action on the fused coupling are given by $c_\GG([u,v])=a_\GG(u)$ and $c_\KK([u,v])=b_\KK(v)$, respectively. These maps are well-defined because the diagonal $\HH$-action does not change the anchor maps $a_\GG$ and $b_\KK$.

    It remains to check the measure class preserving condition $(c_\GG)_*\overline{\mu} \sim \mu_\GG$ and $(c_\KK)_*\overline{\mu} \sim \mu_\KK$. We prove the first statement as the second is identical. 
    
    Let $E\subset \GG^{(0)}$ be measurable, and define $S_E\coloneqq \{(u,v)\in \Omega\,{}_{a_\HH}\!\times_{b_\HH}\Sigma: a_\GG(u)\in E\}$. The set $S_E$ is $\HH$-invariant, and its quotient is precisely $c_\GG^{-1}(E)$. Hence, 
    $(c_\GG)_*\overline{\mu}(E) =  \overline{\mu}(c_\GG^{-1}(E))=0$ if and only if $\mu(S_E) = 0$. Now compute 
    \begin{equation*}
        \begin{aligned}
            \mu(S_E) &= \int_{\HH^{(0)}} f(x)g(x)\,(m^x\otimes n^x) \bigl(\{(\omega,\sigma):a_\GG(\omega)\in E\}\bigr)\,\mathrm{d}\mu_\HH(x)\\
            &=\int_{\HH^{(0)}}f(x)g(x)\,m^x(a_\GG^{-1}(E))\,n^x(b_\HH^{-1}(x))\,\mathrm{d}\mu_\HH(x)= \int_{\HH^{(0)}}f(x)g(x)\,m^x(a_\GG^{-1}(E))\,\mathrm{d}\mu_\HH(x),
        \end{aligned}
    \end{equation*}
    where the last equality follows from the fact that $n^x$ is a probability measure supported on $b_\HH^{-1}(x)$ for $\mu_{\HH}$-almost every $x\in \HH^{(0)}$. Because $f(x)g(x)>0$ for $\mu_{\HH}$-almost every $x$, $\mu(S_E)$ vanishes if and only if $m^x(a_\GG^{-1}(E))=0$ for $\mu_\HH$-almost every $x$. On the other hand,
    \[m(a_\GG^{-1}(E))=\int_{\HH^{(0)}}f(x)\,m^x(a_\GG^{-1}(E))\,\mathrm{d}\mu_\HH(x),\]
    and since $f(x)>0$ almost everywhere, we have $m(a_\GG^{-1}(E))=0$ if and only if $m^x(a_\GG^{-1}(E))=0$ for $\mu_\HH$-almost every $x$. Therefore, it follows that $\mu(S_E)=0$ if and only if $(a_\GG)_*m(E) = m(a_\GG^{-1}(E))=0$. Combining the preceding equivalence with the equivalence given by $\mu(S_E) = 0$ if and only if $(c_\GG)_*\overline{\mu}(E) =0$, we conclude that $(c_\GG)_*\overline{\mu} \sim (a_\GG)_*m \sim \mu_\GG$, as desired. This finishes the proof for transitivity.
\end{proof}

There are many natural examples of measure class preserving measure equivalence couplings, as we will see in the sequel. At this point we would like to point out that the classical measure equivalence couplings for countable groups regarded as one-object groupoids are always measure class preserving, because unit spaces are points, and every nonzero coupling measure pushes forward to the Dirac measure class.

We next recall the definition of measure equivalence that appeared in \cite{Bernoulligroupoids}. To distinguish it from the above, we would call it stable orbit equivalence for a moment.
\begin{defn}[\protect{\cite[Definition 2.30]{Bernoulligroupoids}}]
    We say that countable measured groupoids $\GG$ and $\HH$ are \emph{stably orbit equivalent} if there exist countable p.m.p.\ groupoids $(\widetilde{\GG},\widetilde{\mu}\circ\lambda)$ and $(\widetilde{\HH},\widetilde{\nu}\circ\lambda)$ with locally bijective Borel groupoid homomorphisms $\widetilde{\GG} \xrightarrow{p}\GG, \widetilde{\HH} \xrightarrow{q}\HH$, and Borel subsets $A\subset \widetilde{\GG}^{(0)}, B\subset \widetilde{\HH}^{(0)}$, respectively, such that $\widetilde{\GG}_A^A\simeq\widetilde{\HH}_B^B$. 

    When $(\GG,\mu\circ\lambda)$ and $(\HH,\nu\circ\lambda)$ are countable p.m.p.\ groupoids, we furthermore say they are \emph{measure class preserving stably orbit equivalent} if the above maps  $p\colon\widetilde{\GG}\to \GG$ and $q\colon \widetilde{\HH}\to \HH$ moreover satisfy $p_*\widetilde{\mu} \sim \mu$ on $\GG^{(0)}$ and $q_*\widetilde{\nu} \sim \nu$ on $\HH^{(0)}$, and the subsets $A\subset \widetilde{\GG}^{(0)}, B\subset \widetilde{\HH}^{(0)}$ are complete unit sections satisfying $(\widetilde{\GG}_A^A,(\widetilde{\mu}\circ\lambda)_A)\simeq (\widetilde{\HH}_B^B,(\widetilde{\nu}\circ\lambda)_B)$.
\end{defn}
Note that our definition is more general than the original given in \cite[Definition 2.30]{Bernoulligroupoids}, where they require the locally bijective Borel groupoid homomorphisms $p,q$ to be measure preserving. Nevertheless, it still follows from \cite[Proposition 2.7]{Bernoulligroupoids} that $\GG$ and $\HH$ are stably orbit equivalent if and only if there exist Borel actions $\GG\acts Y$ and $\HH \acts Z$ such that the transformation groupoids $\GG\ltimes Y$ and $\HH\ltimes Z$ admit complete unit sections $A\subset (\GG\ltimes Y)^{(0)}=Y,\: B\subset (\HH\ltimes Z)^{(0)}=Z$ satisfying $(\GG\ltimes Y)^A_A \simeq (\HH\ltimes Z)^B_B$. In the case when $\GG$ and $\HH$ are countable discrete groups and the actions are free, this is precisely the usual notion of stable orbit equivalence.

As in the group case \cite{Furman99,Gab05}, measure equivalence and stably orbit equivalence are actually equivalent notions for countable measured groupoids. Hence we record following lemmas and propositions that unify the two equivalences and we do not distinguish them in the sequel.
\begin{lemma} \label{lem: ext is me}
    Let $(\widetilde{\GG},\widetilde{\mu}\circ\lambda)$ be a countable p.m.p.\ groupoid and $(\GG,\mu\circ\lambda)$ a countable measured groupoid. If there exists a locally bijective groupoid homomorphism $p\colon \widetilde{\GG}\to \GG$, then $\widetilde{\GG} \sim_{\ME} \GG$.

    Moreover, if $(\GG,\mu\circ\lambda)$ is a countable p.m.p.\ groupoid and $p_*\widetilde{\mu} \sim \mu$ on $\GG^{(0)}$, then $(\widetilde{\GG},\widetilde{\mu}\circ\lambda) \sim_{\mcpME} (\GG,\mu\circ\lambda)$.
\end{lemma}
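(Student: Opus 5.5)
The plan is to build an explicit $(\widetilde\GG,\GG)$-coupling out of the arrow space of $\widetilde\GG$, in the spirit of the standard coupling from a finite-index subgroup or from an extension. Let $\Omega$ be the fibered product
\[\Omega\coloneqq \{(\tilde\gamma,\gamma)\in \widetilde\GG\times\GG:\ r(\gamma)=p(s(\tilde\gamma))\}.\]
Give it the measure $m\coloneqq \int_{\widetilde\GG^{(0)}}\bigl(\sum_{\tilde\gamma\in\widetilde\GG^{y}}\sum_{\gamma\in\GG^{p(y)}}\delta_{(\tilde\gamma^{-1},\gamma)}\bigr)\,\mathrm d\widetilde\mu(y)$; equivalently $\Omega$ is parametrized by triples (middle unit $y$, arrow of $\widetilde\GG$ into $y$, arrow of $\GG$ out of $p(y)$) with counting measures on the arrows. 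One could also use the quotient $\Omega/\widetilde\GG$; here we keep the triple picture.

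The two actions are as follows. $\widetilde\GG$ acts on the left by $\tilde\eta\cdot(\tilde\gamma,\gamma)=(\tilde\eta\tilde\gamma,\gamma)$ with anchor $r(\tilde\gamma)$. $\GG$ acts on the right, turned into a left action by inversion, via $\eta\cdot(\tilde\gamma,\gamma)=(\tilde\gamma,\gamma\eta^{-1})$, with anchor $s(\gamma)$. Then:
\begin{enumerate}
\item The actions commute because they act on different coordinates, and both are free since groupoid multiplication is cancellative.
\item Invariance of $m$ follows from $\widetilde\mu\circ\lambda$ and $\mu\circ\lambda$ being invariant under inversion (the p.m.p.\ condition), together with counting measure on fibers. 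For $\GG$ we only need counting measure on the fibers $\GG^{p(y)}$; this is where local bijectivity enters, since $\GG$ need not carry an invariant measure on its own.
\end{enumerate}

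For the fundamental domains, the set $Y\coloneqq\{(1_y,\gamma)\}$ meets each $\widetilde\GG$-orbit exactly once: every orbit $\{(\tilde\eta\tilde\gamma,\gamma)\}$ contains exactly one element with first coordinate a unit, namely $\tilde\eta=\tilde\gamma^{-1}$. This is a strict fundamental domain for the $\widetilde\GG$-action. Its measure, $\int\,|\GG^{p(y)}|\,\mathrm d\widetilde\mu$, may be infinite, so we cut down using Lusin--Novikov.

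For the $\GG$-action, take $X\coloneqq\{(\tilde\gamma,1_{p(s(\tilde\gamma))})\}$. This is a strict fundamental domain with $m(X)=(\widetilde\mu\circ\lambda)(\widetilde\GG)$, again possibly infinite.

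The main obstacle is therefore the finite-measure requirement in \Cref{defn:meas equiv}. The first attempt is to replace $\Omega$ by the smaller coupling
\[\Omega_0\coloneqq \widetilde\GG\backslash\Omega\cong\{(y,\gamma):\gamma\in\GG^{p(y)}\}=\widetilde\GG^{(0)}\,{}_p\!\times_r\GG,\]
on which $\GG$ acts by right multiplication and $\widetilde\GG$ acts through local bijectivity. Concretely, $\tilde\eta\cdot(y,\gamma)=(r(\tilde\eta),p(\tilde\eta)\gamma)$, and local bijectivity ensures that the stabilizers match, so freeness holds. A $\GG$-fundamental domain is $\{(y,1_{p(y)})\}\cong\widetilde\GG^{(0)}$, which has measure $1$.

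The $\widetilde\GG$-fundamental domain needs a measurable selection of a point in each $\widetilde\GG$-orbit of $\Omega_0$. Its finiteness follows from comparing with $\GG^{(0)}$ via $(y,\gamma)\mapsto s(\gamma)$, where each orbit over $x$ corresponds to a point of $p^{-1}$-related data. If finiteness fails, we restrict to a complete section of bounded multiplicity and invoke \Cref{lem: inv null set} to handle the rescaling of measures.

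Finally, the measure class preserving statement follows from \Cref{lem: inv null set}. The $\widetilde\GG$-anchor pushes $m$ restricted to the $\GG$-fundamental domain to $\widetilde\mu$. The $\GG$-anchor pushes the $\widetilde\GG$-fundamental domain measure to something equivalent to $p_*\widetilde\mu\sim\mu$, which holds because local bijectivity makes $s$ and $p$ commute up to counting measure.
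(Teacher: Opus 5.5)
Your final coupling $\Omega_0=\{(y,\gamma):r(\gamma)=p(y)\}$ is, up to isomorphism, exactly the paper's coupling $\Omega=\widetilde\GG$. By local bijectivity, $\omega\mapsto(\widetilde r(\omega),p(\omega))$ is a bijection $\widetilde\GG\to\Omega_0$. It carries $\widetilde\mu\circ\lambda$ to the natural measure $\int\sum_{\gamma\in\GG^{p(y)}}\delta_{(y,\gamma)}\,\mathrm d\widetilde\mu(y)$ (the quotient of your $m$). It also carries your $\widetilde\GG$-action to left multiplication, and your right $\GG$-action to the lifted right multiplication $\omega\cdot g=\omega\widetilde g$. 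So the detour through the first coupling (whose fundamental domains really are infinite, e.g.\ when $\GG=\widetilde\GG$ is an infinite group) is unnecessary but harmless.

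The genuine gap is the step you leave open: a finite-measure strict fundamental domain for the $\widetilde\GG$-action on $\Omega_0$. You never produce one. Your fallback, ``restrict to a complete section of bounded multiplicity,'' is not a legitimate move. The definition requires strict fundamental domains for the given actions on all of the coupling space. Moreover, for a free action whose bisection translations preserve the measure, any two strict fundamental domains have the same measure: decompose one along a countable partition into bisections and translate, as in the proof of \Cref{lem: inv null set}. So an infinite-measure domain could not be repaired by choosing another section.

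The missing observation is that the set $D=\{(y,1_{p(y)}):y\in\widetilde\GG^{(0)}\}$, which you already found for the $\GG$-action, is also a strict fundamental domain for the $\widetilde\GG$-action.
\begin{enumerate}
\item Every orbit meets $D$. Given $(y,\gamma)\in\Omega_0$, the arrow $\gamma^{-1}$ has source $p(y)$. Surjectivity of $p$ from $\widetilde\GG_y$ onto $\GG_{p(y)}$ gives $\tilde\eta$ with $s(\tilde\eta)=y$ and $p(\tilde\eta)=\gamma^{-1}$. Then $\tilde\eta\cdot(y,\gamma)=(r(\tilde\eta),1_{s(\gamma)})\in D$.
\item Every orbit meets $D$ at most once. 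If $\tilde\eta\cdot(y,1_{p(y)})\in D$, then $p(\tilde\eta)$ is a unit, and injectivity of $p$ on $\widetilde\GG_y$ forces $\tilde\eta=1_y$.
\end{enumerate}
Hence $D$ meets each $\widetilde\GG$-orbit exactly once and has measure $\widetilde\mu(\widetilde\GG^{(0)})=1$. In the paper's picture, $D$ is simply the unit space $\widetilde\GG^{(0)}\subset\widetilde\GG$, a common fundamental domain for left multiplication and for lifted right multiplication.

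With $D$ in hand, your last paragraph goes through. The $\GG$-anchor $s(\gamma)$ restricted to $D$ is $p$, so it pushes the measure on $D$ to $p_*\widetilde\mu\sim\mu$. The $\widetilde\GG$-anchor pushes it to $\widetilde\mu$. \Cref{lem: inv null set} then gives the measure class preserving statement. The paper instead reads this off directly, since $\widetilde r_*(\widetilde\mu\circ\lambda)\sim\widetilde\mu$ and $(p\circ\widetilde s)_*(\widetilde\mu\circ\lambda)\sim p_*\widetilde\mu$.
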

\begin{proof}
    We claim that $(\Omega,m) = (\widetilde{\GG},\widetilde{\mu}\circ\lambda)$ is a $(\widetilde{\GG},\GG)$-coupling. There is a natural (left) action of $\widetilde{\GG}$ on $\Omega = \widetilde{\GG}$. We now define a commuting (right) action of $\GG$ on $\Omega$ by lifted right multiplication. Let $\omega\in \widetilde{\GG}$ and let $g\in \GG$ be composable on the right with $p(\omega)$, in the sense that $r(g)=p(\widetilde s(\omega))$. By local bijectivity of $p$, there is a unique arrow $\widetilde g\in \widetilde{\GG}$ such that $\widetilde{r}(\widetilde g)=\widetilde s(\omega)$, $p(\widetilde g)=g$, and define the right action by $\omega\cdot g \coloneqq  \omega \widetilde g$. Equivalently, by passing to inverses, this right action may be viewed as a left $G$-action. The left $\widetilde{\GG}$-action and the lifted right $\GG$-action commute, since multiplication in $\widetilde{\GG}$ is associative.

    Both actions are free. Indeed, if $\omega\cdot g=\omega$, then, by definition, $\omega\widetilde g=\omega$. Multiplying on the left by $\omega^{-1}$, we obtain $\widetilde g=1_{\widetilde s(\omega)}$. Hence $g=p(\widetilde g)=1_{p(\widetilde s(\omega))}$, so the lifted $\GG$-action is free. The left $\widetilde{\GG}$-action is free by the same argument.

    Clearly, the unit space $\widetilde{\GG}^{(0)}\subset \widetilde{\GG} = \Omega$ is a finite-measure strict fundamental domain for the $\widetilde{\GG}$-action. It is also a strict finite-measure fundamental domain for the lifted (right) $\GG$-action. Indeed, for $\omega\in \widetilde{\GG}$, take $g=p(\omega)^{-1}$. The unique lift of $g$ with range $\widetilde s(\omega)$ is $\omega^{-1}$, and therefore $\omega\cdot p(\omega)^{-1} = \omega\omega^{-1}=1_{\widetilde r(\omega)}$. The uniqueness of the representation follows from the freeness of $\GG$-action. Thus $\Omega = \widetilde{\GG}$ is a $(\widetilde{\GG},\GG)$-coupling, so $\widetilde{\GG}\sim_{\ME}\GG$.

    The moreover statement follows trivially by taking the same coupling $(\Omega,m) = (\widetilde{\GG},\widetilde{\mu}\circ\lambda)$ and anchor maps $t_{\widetilde\GG}=\widetilde r$ and $t_{\GG}=p\circ \widetilde s$, and by the additional assumption.
\end{proof}
    
\begin{lemma} \label{lem: red is me}
    Let $(\GG,\mu\circ\lambda)$ be a countable p.m.p.\ groupoid and $A \subset \GG^{(0)}$. Then $\GG \sim_{\ME} \GG_A^A$.

    Moreover, if $A \subset \GG^{(0)}$ is a complete unit section, then $(\GG,\mu\circ\lambda) \sim_{\mcpME} (\GG_A^A,\mu_A\circ\lambda)$.
\end{lemma}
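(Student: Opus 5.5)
The plan is to build an explicit coupling, modeled on the proof of \Cref{lem: ext is me}. Take $\Omega\coloneqq \GG_A=s^{-1}(A)$ with measure $m\coloneqq \restr{(\mu\circ\lambda)}{\GG_A}$. Let $\GG$ act on the left by multiplication, with anchor $t_\GG=r$. Let $\GG_A^A$ act by inverse right multiplication, $h\cdot\omega\coloneqq\omega h^{-1}$, with anchor $t_{\GG_A^A}=s|_{\GG_A}\colon\Omega\to A$.

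First I check that this is a coupling:
\begin{enumerate}
\item The two actions commute by associativity.
\item Both actions are free by cancellation in $\GG$.
\item Both actions preserve $m$. Left multiplication by a bisection preserves $\mu\circ\lambda$ because $\mu_1=\mu_2$. Right multiplication by bisections of $\GG_A^A$ preserves $\restr{(\mu\circ\lambda)}{\GG_A}$ for the same reason.
\end{enumerate}

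Next I identify the fundamental domains.
\begin{enumerate}
\item For the $\GG$-action, take $X\coloneqq A\subset\GG_A$, the units over $A$. Every $\omega\in\GG_A$ equals $\omega\cdot 1_{s(\omega)}$, so $\omega=\omega 1_{s(\omega)}$ lies in the $\GG$-orbit of the unique unit $1_{s(\omega)}$. Hence $X$ meets each orbit exactly once, and $m(X)=\mu(A)<\infty$.
\item For the $\GG_A^A$-action, the orbits are the sets $\omega\GG_A^A$, i.e.\ the classes of arrows in $\GG_A$ with a common range $x$. If $\mu(A)>0$, a measurable choice of one arrow per nonempty class gives a strict fundamental domain $Y$. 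I would obtain this by Lusin--Novikov: write $\GG_A=\bigsqcup_i B_i$ with $B_i\in[[\GG]]$ and choose, for each $x\in r(\GG_A)$, the arrow from the least $i$ with $x\in r(B_i)$. This $Y$ is a local bisection, so $m(Y)=\mu(r(\GG_A))\le1$.
\end{enumerate}
This proves $\GG\sim_{\ME}\GG_A^A$. I expect the degenerate case $\mu(A)=0$ to be vacuous, or excluded by convention, since $\GG_A^A$ is only a p.m.p.\ groupoid after normalization.

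For the moreover statement, assume $A$ is a complete unit section, so $r(\GG_A)=\GG^{(0)}$ up to null sets. By \Cref{lem: inv null set} applied to each invariant anchor map, it suffices to compute pushforwards of the restriction of $m$ to a fundamental domain for the \emph{other} action.
\begin{enumerate}
\item $t_{\GG_A^A}=s$ is $\GG$-invariant. Its pushforward of $\restr{m}{X}$ is $\restr{\mu}{A}$, which is equivalent to $\mu_A$.
\item $t_\GG=r$ is $\GG_A^A$-invariant. Its pushforward of $\restr{m}{Y}$ is $\restr{\mu}{r(\GG_A)}=\mu$, using that $Y$ is a bisection with $r(Y)=r(\GG_A)$.
\end{enumerate}

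The main obstacle is making sure the measurable selection for $Y$ is Borel and that $Y$ is genuinely a bisection. It is: the selection picks one arrow per range point in $r(\GG_A)$, and two distinct chosen arrows cannot share a source, because such arrows would lie in the same orbit class. Everything else reduces to p.m.p.\ bookkeeping via $\mu_1=\mu_2$.
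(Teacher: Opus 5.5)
Your construction is the same as the paper's: the coupling $\Omega=\mathcal{G}_A$ with left multiplication by $\mathcal{G}$ and right multiplication by $\mathcal{G}_A^A$, the units over $A$ as fundamental domain for the $\mathcal{G}$-action, and a Lusin--Novikov section $Y$ of $r$ as fundamental domain for the $\mathcal{G}_A^A$-action. For the measure class statement, both you and the paper use Lemma~\ref{lem: inv null set}. You are also more explicit than the paper about measure preservation and the pushforward computations.

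One claim is false: $Y$ is in general not a local bisection. Your justification confuses source classes with range classes. The $\mathcal{G}_A^A$-orbits are the fibers of $r$. So two chosen arrows with a common source but different ranges lie in different orbits, and nothing in the construction prevents this. For instance, take the full equivalence relation on $\{1,2\}$ with uniform measure and $A=\{1\}$. Then the range classes in $\mathcal{G}_A=\{(1,1),(2,1)\}$ are singletons, so $Y=\mathcal{G}_A$, and both arrows have source $1$. More generally, let $A$ be a complete unit section with $\mu(A)<1$. A bisection $Y$ with $r(Y)=\mathcal{G}^{(0)}$ and $s(Y)\subset A$ would give $1=\mu(r(Y))=\nu(Y)=\mu(s(Y))\le\mu(A)<1$, a contradiction.

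The claim is not needed, though. Your construction makes $r|_Y$ injective, and with $\nu=\mu_1$ this gives $m(Y\cap r^{-1}(E))=\int_E |r^{-1}(x)\cap Y|\,\mathrm{d}\mu(x)=\mu(E\cap r(\mathcal{G}_A))$ for every measurable $E$. This yields both $m(Y)=\mu(r(\mathcal{G}_A))\le 1$ and $(r|_Y)_*(m|_Y)=\mu|_{r(\mathcal{G}_A)}$, which are the only two facts you actually use. With this replacement your argument is complete.
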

\begin{proof}
    We claim that $\Omega\coloneqq \GG_A = \{\omega\in \GG:s(\omega)\in A\}$ equipped with the restriction measure from $\GG$ is a $(\GG,\GG_A^{A})$-coupling. The groupoid $\GG$ acts on $\Omega$ by left multiplication. The reduced groupoid $\GG_A^A$ acts on $\Omega$ by right multiplication given by $\omega\cdot a \coloneqq \omega a$ for $a\in \GG^A_A$. These two actions commute by associativity, and both actions are free.

    The subset $A \subset \GG_A$, identified with the unit arrows over $A$, is a finite-measure strict fundamental domain for the left $\GG$-action since every $\omega\in \GG_A$ is left-equivalent to $1_{s(\omega)}\in A$ under the $\GG$-action.

    For the right $\GG_A^A$-action, observe that the right $\GG_A^A$-orbits in $\GG_A$ are precisely the fibers of the range map $r\colon \GG_A\to\GG^{(0)}$. Since the fibers of $r$ are countable, the Lusin-Novikov uniformization theorem gives a measurable section $X\subset \GG_A$ of $r$, defined modulo null sets. This section meets each right $\GG_A^A$-orbit exactly once, so $X$ is a strict fundamental domain of the right $\GG_A^A$-action. Moreover, $X$ has finite measure, since it contains one arrow over almost every unit of the probability space $\GG^{(0)}$. This proves $\GG_A$ is a $(\GG,\GG_A^A)$-coupling and hence $\GG \sim_{\ME} \GG_A^A$.

    For the moreover statement, we also take the same coupling $\Omega= \GG_A$. We observe that when $A\subset \GG^{(0)}$ is a complete unit section, modulo a null set, every unit of $\GG^{(0)}$ is connected by an arrow of $\GG$ to a unit in $A$, so the range map $r\colon \GG_A\to \GG^{(0)}$ is essentially surjective. 
    Hence there exists a measurable section map $\sigma\colon \GG^{(0)}\to \Omega$ such that $r(\sigma(x))=x$ and $s(\sigma(x)) \in A$ for $x\in \GG^{(0)}$. Note that he image $\sigma(\GG^{(0)})$ meets every right $\GG_A^A$-orbit exactly once since all elements in a right orbit have the same image under $r$. Hence $\sigma(\GG^{(0)}$ is a finite-measure strict fundamental domain for the right $\GG_A^A$-action is naturally identified with $\GG^{(0)}$ under $\sigma$.
    It is then clear by \Cref{lem: inv null set} that $\Omega$ is a measure class preserving coupling for $(\GG,\mu\circ\lambda)$ and $(\GG_A^A,\mu_A\circ\lambda)$.
\end{proof}

We end this section by proving that (measure class preserving) measure equivalence and stably orbit equivalence correspond to the same notion for countable (p.m.p.) measured groupoids. 

\begin{thm} \label{prop: me equiv soe}
    Two countable measured groupoids $(\GG,\mu\circ\lambda)$ and $(\HH,\nu\circ\lambda)$ are measure equivalent if and only if they are stably orbit equivalent.

    If $(\GG,\mu\circ\lambda)$ and $(\HH,\nu\circ\lambda)$ are countable p.m.p.\ groupoids, then they are measure class preserving measure equivalent if and only if they are measure class preserving equivalent stably orbit equivalent.
\end{thm}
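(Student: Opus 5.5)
The plan is to prove the two directions separately, handling the non-measure-class version first and then checking that every construction used is measure class preserving when the original groupoids are p.m.p.

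\emph{Stable orbit equivalence implies measure equivalence.} Suppose we are given $p\colon\widetilde\GG\to\GG$, $q\colon\widetilde\HH\to\HH$ and $\widetilde\GG_A^A\simeq\widetilde\HH_B^B$. By \Cref{lem: ext is me} we have $\widetilde\GG\sim_{\ME}\GG$ and $\widetilde\HH\sim_{\ME}\HH$. By \Cref{lem: red is me} we have $\widetilde\GG\sim_{\ME}\widetilde\GG_A^A$ and $\widetilde\HH\sim_{\ME}\widetilde\HH_B^B$. An isomorphism of groupoids is trivially a measure equivalence: take the groupoid itself as the coupling, as in \Cref{lem: ext is me} with $p$ an isomorphism. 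Chaining these equivalences via transitivity of $\sim_{\ME}$ (\cite[Proposition 4.4]{bddcohomologytransversegroupoid}) gives $\GG\sim_{\ME}\HH$.

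In the p.m.p.\ case we add the hypotheses $p_*\widetilde\mu\sim\mu$ and $q_*\widetilde\nu\sim\nu$, that $A,B$ are complete unit sections, and that the measured isomorphism holds. Then the moreover parts of \Cref{lem: ext is me} and \Cref{lem: red is me} give mcpME at each step. A measure-preserving isomorphism gives an mcpME coupling, and \Cref{prop: mcpme is equiv rel} provides transitivity.

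\emph{Measure equivalence implies stable orbit equivalence.} Let $(\Omega,m)$ be a coupling with strict fundamental domains $X$ for the $\GG$-action and $Y$ for the $\HH$-action. We set $\widetilde\GG\coloneqq\GG\ltimes Y$ and $\widetilde\HH\coloneqq\HH\ltimes X$, using the induced actions described after \Cref{defn:meas equiv}. The projections $(g,y)\mapsto g$ and $(h,x)\mapsto h$ are locally bijective homomorphisms. Both groupoids are naturally restrictions of the transformation groupoid $\mathcal K\coloneqq(\GG\times\HH)\ltimes\Omega$.

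\begin{itemize}
\item Concretely, $\GG\ltimes Y$ identifies with the restriction of the quotient groupoid $\HH\backslash\mathcal K$ (acting on $\HH\backslash\Omega\cong Y$). Both $\GG\ltimes Y$ and $\HH\ltimes X$ are then reductions of the single groupoid $\mathcal K/{\sim}$ on $(\GG\times\HH)\backslash$-orbits.
\item A cleaner route is to consider the equivalence relation or groupoid on $\Omega$ generated by the $\GG\times\HH$-action. Its restriction to $Y$ is $\GG\ltimes Y$, since an arrow between points of $Y$ is $(g,h)$ with $h=\alpha(g,y)$ forced by freeness. Its restriction to $X$ is $\HH\ltimes X$.
\end{itemize}

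So both are reductions of the same discrete groupoid $\mathcal K$ on $\Omega$, to the sets $Y$ and $X$ respectively. Each of $X$ and $Y$ meets every $\mathcal K$-orbit: $\GG X=\Omega$, and $Y$ meets every $\HH$-orbit. Using Lusin–Novikov, we choose Borel subsets $Y_0\subset Y$ and $X_0\subset X$ together with a partial bisection of $\mathcal K$ mapping $Y_0$ onto $X_0$, such that $Y_0,X_0$ are complete sections. The standard Hopf/Zorn-type exhaustion argument produces these; one must match measures, which is fine since $\mathcal K$ preserves $m$. Then $(\GG\ltimes Y)_{Y_0}^{Y_0}\simeq\mathcal K_{Y_0}^{Y_0}\simeq\mathcal K_{X_0}^{X_0}\simeq(\HH\ltimes X)_{X_0}^{X_0}$.

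\emph{Measure class condition.} For the mcp statement, we get $p_*(m_Y)=(t_\GG|_Y)_*m_Y\sim\mu$ by \Cref{lem: inv null set}, since $t_\GG$ is $\HH$-invariant and $Y$ is an $\HH$-fundamental domain; symmetrically for $q$ and $X$. The isomorphism above is measure preserving because it is induced by $\mathcal K$, which preserves $m$.

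\emph{Main obstacle.} I expect the hardest step to be constructing complete unit sections $X_0,Y_0$ with a measure-preserving bisection between them when the coupling is not ergodic. Here I would exhaust by maximal partial bisections of $\mathcal K$ from $Y$ to $X$. Maximality, together with the fact that both sets meet every orbit, should force the domain to be a complete section. If this fails, I would pass to an ergodic decomposition, or use the paper's remark that one may take the coupling ergodic, and then compare measures.
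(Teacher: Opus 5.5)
Your proposal is correct. The direction from stable orbit equivalence to measure equivalence is exactly the paper's chain of equivalences, including the measure-class version via the moreover parts of \Cref{lem: ext is me} and \Cref{lem: red is me} together with \Cref{prop: mcpme is equiv rel}. The forward direction reaches the same conclusion by a different key step.

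\textbf{How the paper does the forward direction.} It takes the coupling ergodic and restricts $(\GG\times\HH)\ltimes\Omega$ to the finite-measure set $U=X\cup Y$. This gives a p.m.p.\ groupoid $\NN$ with $\NN_Y^Y\simeq\GG\ltimes Y$ and $\NN_X^X\simeq\HH\ltimes X$. It then chooses complete sections $X_0\subset X$, $Y_0\subset Y$ of equal positive measure in almost every ergodic component, and cites \cite[Proposition 2.18]{Bernoulligroupoids} to get $\NN_{X_0}^{X_0}\simeq\NN_{Y_0}^{Y_0}$.

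\textbf{How you do it.} You build the bisection by hand in the $\sigma$-finite groupoid $\mathcal K$. The step you flag as the ``main obstacle'' is not actually an obstacle:
\begin{itemize}
\item Write $\mathcal K=\bigsqcup_n B_n$ by Lusin--Novikov, and set greedily $\theta_n=\theta_{n-1}\sqcup\bigl(B_n\cap\mathcal K^{X\setminus r(\theta_{n-1})}_{Y\setminus s(\theta_{n-1})}\bigr)$.
\item The union $\theta$ admits no arrow from $Y\setminus s(\theta)$ to $X\setminus r(\theta)$. Hence on every orbit $O$, either $O\cap Y\subset s(\theta)$ or $O\cap X\subset r(\theta)$.
\item Since $X$ and $Y$ both meet $O$, in either case both $s(\theta)$ and $r(\theta)$ meet $O$.
\end{itemize}
So $Y_0=s(\theta)$ and $X_0=r(\theta)$ are complete sections with no ergodicity assumption, and no fallback to an ergodic decomposition is needed. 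Also, $m(Y_0)=m(X_0)$ and the measured isomorphism come for free because $\theta$ preserves $m$, so no separate measure matching is required.

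\textbf{Comparison.} Your route is self-contained: it avoids both the reduction to ergodic couplings and the external comparison result. The paper's route is shorter given that citation.

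\textbf{One cleanup.} Drop your first bullet about $\HH\backslash\mathcal K$ and ``$\mathcal K/{\sim}$''. A quotient by the $\GG\times\HH$-orbits is not a groupoid containing both restrictions. Your second bullet, $\mathcal K_Y^Y\simeq\GG\ltimes Y$ via $((g,\alpha(g,y)),y)\mapsto(g,y)$, is the correct identification and matches the paper's $\NN_Y^Y\simeq\GG\ltimes Y$.
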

\begin{proof}
    Suppose $(\GG,\mu\circ\lambda)$ and $(\HH,\nu\circ\lambda)$ are measure equivalent. Let $(\Omega,m)$ be an ergodic measure equivalence coupling, and $X,Y\subset \Omega$ be the strict fundamental domains of the $\GG$-action and the $\HH$-action, respectively. Let $m_X\coloneqq \frac{\restr{m}{X}}{m(X)}, m_Y\coloneqq \frac{\restr{m}{Y}}{m(Y)}$ be the probability measures on $X,Y$ by normalizing the restriction of $m$ on $X,Y$, respectively. By the remark after Definition \ref{defn:meas equiv}, $\GG$ and $\HH$ admit p.m.p.\ actions on $(Y,m_Y)$ and $(X,m_X)$, respectively. 
    
    Since the anchor maps always extend to locally bijective Borel groupoid homomorphisms from the transformation groupoids onto the acting groupoids, to show that $\GG$ is stably orbit equivalent to $\HH$, it suffices to show there exist subsets $A\subset \GG\ltimes Y$ and $B\subset \HH\ltimes X$ such that $(\GG\ltimes Y)_A^A \simeq (\HH\ltimes X)^B_{B}$. For this, we let $U\coloneqq X\cup Y\subset \Omega$, and note that $m(U)\leq m(X)+m(Y) <\infty$. Consider the restricted transformation groupoid $\NN\coloneqq ((\GG\times \HH)\ltimes\Omega)_U^{U}$ equipped with the normalized restriction measure $m_U\coloneqq \frac{\restr{m}{U}}{m(U)}$, so then $\NN$ is a countable p.m.p.\ groupoid. In fact, $X$ is a complete unit section of $\NN$ because $X$ is a strict fundamental domain of the $\GG$-action, and similarly $Y$ is also a complete unit section of $\NN$. Note that $\NN_Y^Y \simeq \GG\ltimes Y$. Indeed, every arrow of $\NN_Y^Y$ is represented by $((g,h),y)$ with $y\in Y$, $g\in \GG, h\in \HH$, and $(g,h)y\in Y$, and for fixed $g$ and $y$, the strict fundamental domain condition gives a unique $h = \alpha(g,y)$. This means that the map $((g,h),y)\mapsto (g,y)$ identifies $(\NN_Y^Y , \restr{m_U}{Y})$ with the transformation groupoid $(\GG\ltimes Y,m_Y)$, up to normalizing the measures on the unit spaces. Similarly, we have $(\NN_X^X, \restr{m_U}{X})\simeq (\HH\ltimes X,m_X)$. 
    
    The assumption that $X,Y$ are complete unit sections of $\NN$ is equivalent, modulo a $m_U$-null set, to $X,Y$ having positive measure within almost every ergodic component of $\NN$, so we may find complete unit sections $X_0\subset X, Y_0\subset Y$ such that $X_0$ and $Y_0$ have the same positive measure within almost every ergodic component of $\NN$, and hence $\NN_{X_0}^{X_0} \simeq \NN_{Y_0}^{Y_0}$ by \cite[Proposition 2.18]{Bernoulligroupoids}. Thus
    \[(\GG\ltimes Y)_{Y_0}^{Y_0} \simeq (\NN_Y^Y)_{Y_0}^{Y_0} = \NN_{Y_0}^{Y_0} \simeq \NN_{X_0}^{X_0} = (\NN_X^X)_{X_0}^{X_0} \simeq (\HH\ltimes X)_{X_0}^{X_0},\]
    which proves that $\GG$ is stably orbit equivalent to $\HH$.

    Conversely, suppose $\GG$ and $\HH$ are stably orbit equivalent. There exist countable p.m.p.\ groupoids $\widetilde{\GG}$ and $\widetilde{\HH}$ with locally bijective Borel groupoid homomorphisms $\widetilde{\GG} \xrightarrow{p}\GG, \widetilde{\HH} \xrightarrow{q}\HH$, and subsets $A\subset \widetilde{\GG}^{(0)},\: B\subset \widetilde{\HH}^{(0)}$, respectively, such that $\widetilde{\GG}_A^A \simeq \widetilde{\HH}^B_{B}$. By \cite[Proposition 4.4]{bddcohomologytransversegroupoid}, measure equivalence is transitive, so by the previous two lemmas we have
    \begin{equation} \label{eqn: chain of me}
        \GG \,\sim_{\ME}\, \widetilde{\GG} \,\sim_{\ME} \,\widetilde{\GG}_A^A \,\simeq\, \widetilde{\HH}_B^B \,\sim_{\ME} \,\widetilde{\HH} \,\sim_{\ME} \,\HH,
    \end{equation}
    so $\GG$ is measure equivalent to $\HH$.

    Now we check for the measure class preserving case. The forward direction follows by the exactly same argument as above. Indeed, it follows since the fundamental domains $X,Y$ are complete unit sections of the transformation groupoids $\HH\ltimes X$ and $\GG\ltimes Y$, respectively, and by \Cref{lem: inv null set}, when $(\Omega,m)$ is a measure class preserving coupling, we know the pushforward of the restricted measures on $X,Y$ by the anchor maps are in the same measure class of $\nu$ on $\HH^{(0)}$ and $\mu$ on $\GG^{(0)}$, respectively. The converse also follows by the same proof as above, as one can use the moreover statements in the previous two lemmas and transitivity of measure class preserving measure equivalence in \Cref{prop: mcpme is equiv rel} to check that $\sim_{\ME}$ in (\ref{eqn: chain of me}) can be replaced by $\sim_{\mcpME}$ in this case.
\end{proof}

\end{appendices}

\bibliographystyle{amsalpha}
\bibliography{bib}
\end{document}